\documentclass[11pt]{amsart}

\usepackage{amssymb,verbatim}
\usepackage{amsmath,amsfonts}
\usepackage[mathscr]{euscript} 
\usepackage{amsthm}
\usepackage{url}
\usepackage{graphicx} 
\usepackage{enumitem} 
\usepackage{hyperref}

\hypersetup{colorlinks} 
\usepackage{bbm} 
\usepackage{bm} 
\usepackage{mathrsfs} 
\usepackage{cancel} 

\usepackage[colorinlistoftodos]{todonotes}

\RequirePackage{cleveref}
\usepackage{hypcap}
\hypersetup{colorlinks=true, citecolor=darkblue, linkcolor=darkblue}
\definecolor{darkblue}{rgb}{0.0,0,0.7}
\newcommand{\darkblue}{\color{darkblue}}

\definecolor{darkred}{rgb}{0.68,0,0}
\newcommand{\darkred}{\color{darkred}}

\definecolor{darkgreen}{rgb}{0,.28,0}
\newcommand{\darkgreen}{\color{darkgreen}}

\definecolor{magenta}{rgb}{.36, 0, .36}
\newcommand{\magenta}{\color{magenta}}

\newcommand{\defn}[1]{\emph{\darkblue #1}}
\newcommand{\defna}[1]{\emph{\darkred #1}}
\newcommand{\defnb}[1]{\emph{\darkblue #1}}
\newcommand{\defng}[1]{\emph{\darkgreen #1}}
\newcommand{\defnm}[1]{\emph{\magenta #1}}

\setlist[enumerate]{
	label=\textnormal{({\roman*})},
	ref={\roman*}}

\makeatletter
\def\th@plain{%
	\thm@notefont{}
	\itshape 
}
\def\th@definition{%
	\thm@notefont{}
	\normalfont 
}
\makeatother

\newtheorem{thm}{Theorem}[section]
\newtheorem{lemma}[thm]{Lemma}

\newtheorem*{claim*}{Claim}

\newtheorem{conv}[thm]{Convention}

\theoremstyle{definition}
\newtheorem{ex}[thm]{Example}
\newtheorem{rem}[thm]{Remark}
\newtheorem{definition}[thm]{Definition}

\numberwithin{figure}{section}
\numberwithin{equation}{section}

\def\emp{\nothing}

\def\bu{\ast}
\def\sq{{\lozenge}}

\def\zz{\mathbb Z}
\def\nn{\mathbb N}
\def\cc{\mathbb C}

\def\rr{\mathbb R}

\def\ov{\overline}

\def\sm{\smallsetminus}

\def\Ga{\Gamma}

\def\la{\lambda}
\def\ga{\gamma}
\def\si{\sigma}
\def\de{\delta}
\def\ep{\ve}
\def\al{\alpha}
\def\be{\beta}
\def\om{\omega}

\def\ve{\varepsilon}

\def\ala{\al}
\def\beb{\be}
\def\gac{\ga}
\def\ded{\de}

\def\laone{\la^{(1)}}
\def\latwo{\la^{(2)}}
\def\muone{\mu^{(1)}}
\def\mutwo{\mu^{(2)}}

\def\cC{\mathcal C}

\def\cB{\mathcal B}

\def\cA{\mathcal A}

\def\cL{\mathcal L}

\def\cP{\mathcal P}
\def\cPo{\mathcal P}

\def\cQ{\mathcal Q}

\def\cY{\mathbb{Y}}
\def\cU{\mathcal U}
\def\cV{\mathcal V}

\def\bP{\mathbf{P}}

\def\<{\langle}
\def\>{\rangle}

\def\GL{ {\text {\rm GL} } }

\def\rS{{\text {\rm S} } }

\def\0{{\mathbf 0}}

\def\nothing{\varnothing}

\def\.{\hskip.06cm}
\def\ts{\hskip.03cm}

\def\lra{\leftrightarrow}

\def\nlra{\nleftrightarrow}

\def\bz{\textbf{\textit{z}}}

\def\zt{t}
\def\zq{q}

\def\La{\Lambda}

\def\ze{{\zeta}}

\newcommand{\maj}{\mathrm{maj}}

\newcommand{\LHS}{\mathrm{LHS}}
\newcommand{\RHS}{\mathrm{RHS}}

\newcommand{\SSYT}{\operatorname{SSYT}}
\newcommand{\SYT}{\operatorname{{\rm SYT}}}

\def\aJ{\textrm{J}}
\def\aJr{\textrm{\em J}}
\def\aK{\textrm{K}}

\DeclareMathOperator{\aL}{\textnormal{L}}
\def\aLr{\aL}

\def\.{\hskip.06cm}
\def\ts{\hskip.03cm}

\def\nin{\noindent}

\newcommand{\textsu}[1]{\textup{\textsf{#1}}}

\newcommand{\ComCla}[1]{\textup{\textsu{#1}}}

\newcommand{\sharpP}{\ComCla{\#P}}
\newcommand{\SP}{\ComCla{\#P}}

\newcommand{\PH}{\ComCla{PH}}

\def\SP{\sharpP}

\def\lE{{\ze}}

\DeclareMathOperator{\Ec}{\mathcal{E}} 

\DeclareMathOperator{\one}{\mathbf{1}} 
\DeclareMathOperator{\Rb}{\mathbb{R}} 
\DeclareMathOperator{\zero}{\mathbf{0}} 

\DeclareMathOperator{\cJ}{\mathcal{J}}
\DeclareMathOperator{\supp}{\textnormal{supp}\ts}

\DeclareMathOperator{\fa}{\textnormal{a}} 
\DeclareMathOperator{\fb}{\textnormal{b}} 
\DeclareMathOperator{\fc}{\textnormal{c}} 
\DeclareMathOperator{\fd}{\textnormal{d}} 
\DeclareMathOperator{\ff}{\textnormal{f}\ts} 

\DeclareMathOperator{\fF}{\textnormal{F}} 
\DeclareMathOperator{\fg}{\textnormal{g}\ts} 
\DeclareMathOperator{\fG}{\textnormal{G}} 
\DeclareMathOperator{\fh}{\textnormal{h}} 
\DeclareMathOperator{\fH}{\textnormal{H}} 
\DeclareMathOperator{\fm}{\textnormal{m}} 
\DeclareMathOperator{\fp}{\textnormal{p}} 
\DeclareMathOperator{\fq}{\textnormal{q}} 
\DeclareMathOperator{\fs}{\textnormal{s}} 
\DeclareMathOperator{\fA}{\textnormal{A}} 
\DeclareMathOperator{\fB}{\textnormal{B}} 
\DeclareMathOperator{\fC}{\textnormal{C}} 
\DeclareMathOperator{\fD}{\textnormal{D}} 

\newcommand{\blue}[1]{{\color{blue}{#1}}}
\newcommand{\red}[1]{{\color{red}{#1}}}

\title
[Equality conditions for correlation inequalities]
{Equality conditions for correlation inequalities}
\date{\today}

 \author{Swee Hong Chan}
 \address[Swee Hong Chan]{Department of Mathematics, Rutgers University,  Piscataway, NJ 08854.}
 \email{\texttt{sc2518@rutgers.edu}}

 \author[\ts Igor Pak]{Igor Pak}
 \address[Igor Pak]{Department of Mathematics, UCLA,  Los Angeles, CA 90095.}
 \email{\texttt{pak@math.ucla.edu}}

\begin{document}

\begin{abstract}
We prove equality conditions for the Ahlswede--Daykin (AD) inequality
and the Fortuin--Kasteleyn--Ginibre (FKG) inequality.  We then present
a number of applications and special cases of these equality conditions.
These include Bj\"orner's and Fishburn's inequalities for linear extensions
of finite posets, the Lam--Postnikov--Pylyavskyy (LPP) and the
Okounkov inequalities for Schur positivity of products of
Schur functions.  We conclude with equality conditions for the
Ahlswede--Daykin--Schur (ADS) inequality recently introduced in \cite{CCPS26},
which is an AD type extension of the LPP inequality.
\end{abstract}
	
\maketitle

\vskip-1.5cm


\section{Introduction}\label{s:intro}

\subsection{Foreword}\label{ss:intro-fore}
Correlation inequalities appear in different guises across the sciences,
going back to the classical Chebyshev inequality in analysis.  In the
cordillera of discrete correlation inequalities, two peaks tower over
all the others, largely due to their numerous and diverse applications.
These are the {\defna{Ahlswede--Daykin {\rm (AD)} inequality}} \cite{AD78},
also known as the \defng{four functions theorem}, and the
\defna{Fortuin--Kasteleyn--Ginibre {\rm (FKG)} inequality} \cite{FKG71}.

The importance of the AD and FKG inequalities in combinatorics and probability
can hardly be overstated.  Although somewhat technical to state (see below),
they are very natural and generalize a long sequence
of earlier correlation inequalities, notably  the
\defng{Harris--Kleitman {\rm (HK)} inequality} \cite{Har60,Kle66},
\defng{Seymour's inequality}  \cite{Sey73},
\defng{Holley's inequality}  \cite{Hol74}, and
\defng{Daykin's inequality} \cite{Day77}.  All of these can all be
summarized by a simple rule:  \.
 {\defnm{increasing events are positively correlated}}.

The AD and FKG inequalities are especially heavily studied in the areas
of extremal combinatorics, see e.g.\ \cite{And87,AB08,AS16,Bol86,Eng97}, and
percolation theory, see e.g.\ \cite{BR06,Gri99,Gri06,Kes82,Wer09}, and these
are just selected monographs.
In fact, even such basic notions as \emph{critical percolation} on general infinite
graphs require correlation inequalities to be well defined.\footnote{More precisely,
the HK inequality is used to show that the critical percolation \ts
$p_c(\Ga) := \sup\big\{p : \bP_p(x\lra \infty)=0\big\}$  \ts is
independent on the vertex $x$ in a connected graph~$\Ga$, see e.g.\ \cite{BR06,Gri99}.}

In this paper, we obtain the equality conditions for the AD and FKG inequalities
(Theorems~\ref{thm:AD-eq} and~\ref{thm:FKG-eq}), resolving two half-century-old
open problems in one swoop.  This is not a new direction, of course,
but a capstone on many earlier attempts in the area, going back to the
work of Daykin--Kleitman--West \cite{DKW79}.  We postpone further background
discussion until~$\S$\ref{ss:intro-prior}.

The equality conditions we establish have a curious property: the underlying
distributive lattice   must be a direct product of two, and the functions
in the assumptions factor into functions on these sublattices.
While structural rigidity is typical of equality cases,\footnote{In a
combinatorial setting, see, for example, rigid structures that emerge
in the equality cases of the \emph{Erd\H{o}s--Ko--Rado} and
\emph{Kruskal--Katona theorems}, the
\emph{Cauchy--Davenport} and \emph{Ruzsa triangle inequalities}, cf.~$\S$\ref{ss:finrem-why-eq}.}
such a plethora of possibilities is less common and makes it exceedingly
difficult to eliminate all other cases (see~$\S$\ref{ss:intro-why-hard}).

Our own motivation for this study lies in applications to order
theory and algebraic combinatorics.  Namely, we obtain equality
conditions for 
the \defng{Fishburn inequality} 
(Theorem~\ref{thm:Fis-eq}),
on the numbers of linear extensions of finite posets,
and for the \defng{Bj\"orner inequality} 
(Theorem~\ref{thm:Bjo-eq})
for the numbers of standard Young tableaux.
These results are celebrated applications of the FKG inequality,
and their equality conditions have been long sought in the literature,
cf.~\cite{CP-surv}. 

At this point let us caution the reader.  Although the derivations
of poset inequalities from the AD and FKG inequalities are now
completely streamlined, the same cannot be said about their equality
conditions, which cannot be used as a black box.  In fact, each case
requires both detailed understanding of the derivations and the proof
of equality conditions of the AD inequality, which does the
heavy lifting (cf.~$\S$\ref{ss:finrem-app}).

Next, we obtain the equality conditions for the remarkable
\defng{Lam--Postnikov--Pylyavskyy {\rm (LPP)} inequality} 
(Theorem~\ref{thm:LPP-eq}) and the \defng{Okounkov inequality} 
(Theorem~\ref{thm:Oko-eq}), which describe the
\defnm{log-supermodularity} \ts and \defnm{log-concavity} \ts of
products of Schur functions, respectively.
These two inequalities are especially prominent in algebraic combinatorics,
in connection with total positivity, recently giving rise to the most general
\defng{Ahlswede--Daykin--Schur {\rm (ADS)} inequality},  
for which we also obtain the equality conditions
(Theorem~\ref{thm:ADS-eq}).
%
We now proceed to state the main results,
followed by the prior work and further background sections.

\smallskip


\subsection{Ahlswede--Daykin inequality}\label{ss:intro-AD}
We start with one of most general and most consequential correlation inequality
in the area.

\smallskip
\begin{thm}[{\rm \defn{AD inequality}~\cite{AD78}}{}]\label{thm:AD}
Let \ts $\cL=(\aLr,\vee, \wedge)$  \ts be a finite distributive lattice.
Let \. $\fa,\fb,\fc,\fd: \aLr \to \rr_{\geq 0}$ \. be four nonnegative functions on $\cL$, such that
\begin{equation}\label{eq:AD-cd}\tag{AD-cond}
	\fa(x) \. \fb(y)  \ \leq \ \fc(x \vee y) \. \fd(x \wedge y) \qquad \forall \ x,y \in \aLr.
\end{equation}
Then
\begin{equation}\label{eq:AD}\tag{AD}
	\sum_{x \in \aLr}\. \fa(x) \,  \cdot \, \sum_{x \in \aLr}\. \fb(x)
\ \leq \  \sum_{x \in \aLr}\. \fc(x) \, \cdot \, \sum_{x \in \aLr}\. \fd(x)\..
\end{equation}
\end{thm}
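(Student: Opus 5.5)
We prove the AD inequality in the standard way: reduce to the Boolean lattice, then induct on the number of atoms. The base case is the four-number inequality on the two-element lattice.

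\emph{Reduction.} By Birkhoff's representation theorem, every finite distributive lattice $\cL$ embeds as a sublattice of a Boolean lattice $2^{[n]}$; concretely, take the down-sets of the poset of join-irreducibles. Extend $\fa,\fb,\fc,\fd$ by zero outside $\aLr$. Condition \eqref{eq:AD-cd} still holds on $2^{[n]}$. Indeed, if $x$ or $y$ lies outside $\aLr$, the left-hand side vanishes. If both lie in $\aLr$, then $x\vee y$ and $x\wedge y$ lie in $\aLr$, since $\aLr$ is closed under union and intersection. Both sides of \eqref{eq:AD} are unchanged by the extension. So it suffices to treat $\cL=2^{[n]}$.

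\emph{Base case $n=1$.} The lattice is $\{0,1\}$, and we must show
\[(\fa_0+\fa_1)(\fb_0+\fb_1)\le(\fc_0+\fc_1)(\fd_0+\fd_1).\]
The hypotheses are
\[\fa_0\fb_0\le \fc_0\fd_0,\quad \fa_1\fb_1\le \fc_1\fd_1,\quad \fa_0\fb_1\le \fc_1\fd_0,\quad \fa_1\fb_0\le \fc_1\fd_0.\]
Comparing the extreme terms is immediate, so it remains to show $\fa_0\fb_1+\fa_1\fb_0\le \fc_0\fd_1+\fc_1\fd_0$.

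If $\fc_1\fd_0=0$, then both cross terms on the left vanish and there is nothing to prove. Otherwise, it suffices to show
\[(\fa_0\fb_1+\fa_1\fb_0)\,\fc_1\fd_0\le \fc_1\fd_0\,(\fc_0\fd_1+\fc_1\fd_0).\]
This is equivalent to
\[(\fc_1\fd_0-\fa_0\fb_1)(\fc_1\fd_0-\fa_1\fb_0)\ \ge\ \fc_1\fd_0\fc_0\fd_1-\fa_0\fb_1\fa_1\fb_0.\]
The left side is a product of two nonnegative factors, hence $\ge 0$. The right side is $\le 0$, because $\fa_0\fb_0\fa_1\fb_1\le \fc_0\fd_0\fc_1\fd_1$ by the two diagonal hypotheses. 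This step is the main obstacle: it is the only place where a genuine algebraic trick is needed.

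\emph{Induction.} For $2^{[n]}=2^{[n-1]}\times\{0,1\}$, define functions on $2^{[n-1]}$ by summing over the last coordinate:
\[\fa'(S)=\fa(S)+\fa(S\cup\{n\}),\]
and similarly $\fb',\fc',\fd'$. We must check \eqref{eq:AD-cd} for these projected functions. Fix $S,T\subseteq[n-1]$ and apply the base case on $\{0,1\}$ to the four functions
\[i\mapsto \fa(S\cup iN),\quad \fb(T\cup iN),\quad \fc((S\cup T)\cup iN),\quad \fd((S\cap T)\cup iN),\]
where $iN$ denotes $\{n\}$ or $\varnothing$. Their hypotheses are exactly instances of \eqref{eq:AD-cd} on $2^{[n]}$, because $(S\cup iN)\cup(T\cup jN)=(S\cup T)\cup\max(i,j)N$, and similarly for intersections. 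The base case then gives $\fa'(S)\fb'(T)\le \fc'(S\cup T)\fd'(S\cap T)$.

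By induction on $n-1$, we get $\sum\fa'\sum\fb'\le\sum\fc'\sum\fd'$. Since $\sum\fa'=\sum\fa$, and likewise for the other three functions, this is \eqref{eq:AD}.
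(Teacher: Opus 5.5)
\textbf{Error in the base case.} Your overall plan is right: reduce to a Boolean lattice via Birkhoff with zero extension, sum out one coordinate, and induct. This is the standard proof. The paper cites \cite{AD78} rather than proving the theorem, but your reduction is the zero extension along Birkhoff's embedding used in the proof of Theorem~\ref{thm:AD-eq}, and your induction step is exactly Lemma~\ref{lem:AD-cd-square}.

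However, the step you single out as the main obstacle is wrong as written. Put $p=\fa_0\fb_1$, $q=\fa_1\fb_0$, $r=\fc_1\fd_0>0$ and $s=\fc_0\fd_1$. Then
\[
r(r+s)\,-\,r(p+q) \ = \ (r-p)(r-q)\,+\,(rs-pq).
\]
So your target $(p+q)\,r\le r\,(r+s)$ is equivalent to $(r-p)(r-q)\ge pq-rs$, not to $(r-p)(r-q)\ge rs-pq$ as you display. Your justification has the opposite sign error: since $pq=\fa_0\fb_0\fa_1\fb_1$ and $rs=\fc_0\fd_0\fc_1\fd_1$, the diagonal hypotheses show that your right-hand side $rs-pq$ is $\ge 0$, not $\le 0$. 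The displayed inequality is in fact false in general. Take $\fa\equiv 0$, $\fb\equiv 1$, $\fc_0=2$ and $\fc_1=\fd_0=\fd_1=1$. All hypotheses hold and the target reads $0\le 3$, but your display reads $1\ge 2$.

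\textbf{Repair.} The two slips cancel, and the fix is immediate. By the identity above, the target is the sum of two nonnegative terms:
\begin{itemize}
\item[$\circ$] $(r-p)(r-q)\ge 0$, by the off-diagonal hypotheses $p,q\le r$;
\item[$\circ$] $rs-pq\ge 0$, by multiplying the two diagonal hypotheses.
\end{itemize}
This is the same factor $(r-p)(r-q)$ that the paper isolates in the proof of Lemma~\ref{lem:AD-boolean-1}, where the diagonal equalities force it to vanish. With this correction, the rest of your argument is correct: the zero extension, the check of \eqref{eq:AD-cd} for the summed functions, and the induction on $n$.
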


\smallskip

To simplify the statement of equality conditions of the AD inequality, we need the following:

\smallskip

\begin{definition}[{\rm \defn{cross-factoring}\ts}{}]
\label{def:intro-factor}
We say that four functions \. $\fa,\fb,\fc,\fd: \aL \to \rr_{\geq 0}$ \.
\defn{cross-factor} \ts on a distributive lattice \. $\cL=(\aL,\vee, \wedge)$, if there exists

$\circ$ \ distributive lattices \. $\cL_1=(\aL_1,\vee', \wedge')$ \. and \. $\cL_2=(\aL_2,\vee'', \wedge'')$,

$\circ$ \ functions \. $\ff_1,\fg_1:\aL_1 \to \rr_{\geq 0}$ \. and \. $\ff_2,\fg_2:\aL_2 \to \rr_{\geq 0}\ts$, \. and

$\circ$ \ positive constants \. $\ala, \beb, \gac, \ded>0$, \. such that:

\nin
\begin{equation}\label{eq:AD-abstract-factor}
\cL \, \simeq \cL_1 \times \cL_2\,, \quad \ala\.\beb \, = \, \gac\. \ded\ts, \quad \text{and}
\end{equation}
\begin{equation}\label{eq:AD-abstract-2}
	\left\{ \ \begin{split}
			\fa(x_1,x_2) \ &= \ \ala \.  {\color{blue}\ff_1}(x_1) \. {\color{blue}\ff_2}(x_2)\ts, \quad
			&&\fb(x_1,x_2) \ = \  \beb \. {\color{red}\fg_1}(x_1) \. {\color{red}\fg_2}(x_2)\ts,\\
			\fc(x_1,x_2) \ &= \  \gac \. {\color{blue}\ff_1}(x_1) \. {\color{red}\fg_2}(x_2)\ts, \quad
			&&\fd(x_1,x_2) \ = \  \ded \. {\color{red}\fg_1}(x_1) \. {\color{blue}\ff_2}(x_2)\ts,
\end{split}\right.
	\end{equation}
for all \.  $(x_1,x_2)\in \aL_1\times \aL_2\ts$.
\end{definition}

\smallskip

%
%

Heuristically, these conditions define the notion of ``relative independence'' without
resorting to probabilistic language.  Note that when
four functions cross-factor on~$\cL$, we have an equality in~\eqref{eq:AD}.
Indeed, substituting \eqref{eq:AD-abstract-2} into \eqref{eq:AD} splits
each summation into a product of two.  The resulting products of four summations
on each side are identical, and the equality follows from \.
$\ala\ts\beb =\gac\ts\ded$ \ts in \eqref{eq:AD-abstract-factor}.


We are now ready to state the main result of this paper, which says that
cross-factoring of the four functions is the only way the equality in~\eqref{eq:AD}
can be achieved.  For a function \ts $f:\aL\to \rr_{\geq 0}$, denote by
\. $\supp(f) \. := \.  \{ \, x \in \aL \, : \, f(x) > 0\}$ \.
the \defnb{support} of~$f$.  For a subset $S \subseteq \aL$, the
\defnb{lattice closure} of $S$ is the minimal sublattice of $\cL$ that contains $S$.

\smallskip

\begin{thm}[{\rm \defn{Equality conditions for the AD inequality}\ts}{}]\label{thm:AD-eq}
	Let \ts $\cL=(\aL,\vee, \wedge)$ \ts be a finite distributive lattice.
	Let \. $\fa,\fb,\fc,\fd: \aLr \to \rr_{\geq 0}$ \. be four functions satisfying \eqref{eq:AD-cd},
and such that \ts $\cL$ \ts is the lattice closure of \. $\supp \ts (\fa+\fb+\fc+\fd)$.
	Then \ts \eqref{eq:AD} \ts is an equality~$:$
\begin{equation}\label{eq:AD-eq}\tag{AD-eq}
	\sum_{x \in \aLr}\. \fa(x) \,  \cdot \, \sum_{x \in \aLr}\. \fb(x)
\ = \  \sum_{x \in \aLr}\. \fc(x) \, \cdot \, \sum_{x \in \aLr}\. \fd(x)\..
\end{equation}
	\underline{\em if and only if} \, functions \ts $\fa,\fb,\fc,\fd$ \ts cross-factor on~$\cL$.
\end{thm}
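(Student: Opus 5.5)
The "if" direction was already verified after Definition~\ref{def:intro-factor}: substituting the cross-factored forms splits each sum into products, and $\ala\beb=\gac\ded$ gives equality. For the converse, I will follow the standard inductive proof of Theorem~\ref{thm:AD} and track where equality forces rigidity. By Birkhoff's theorem, view $\cL$ as a sublattice of a Boolean lattice $2^{[n]}$, or as the lattice of down-sets of a poset~$P$. The classical proof inducts on~$n$. Write $x=(x',\epsilon)$ with $\epsilon\in\{0,1\}$ and define projected functions
\[
A(x') \. = \. \fa(x',0)+\fa(x',1),
\]
and similarly $B,C,D$. One checks that these satisfy \eqref{eq:AD-cd} on the smaller lattice. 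This reduces to the two-point ($n=1$) inequality
\[
(a_0+a_1)(b_0+b_1)\.\le\.(c_0+c_1)(d_0+d_1)\quad\text{given}\quad a_ib_j\le c_{\max(i,j)}\ts d_{\min(i,j)}\ts.
\]
Equality in \eqref{eq:AD-eq} for $\fa,\fb,\fc,\fd$ forces equality in two places: in the induced AD inequality for $A,B,C,D$, and in every two-point inequality used, i.e.\ for every pair $x',y'$ with $A(x')B(y')=C(x'\vee y')D(x'\wedge y')$.

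\textbf{Step 1: the one-variable case.} First I classify equality in the $n=1$ inequality. Equality holds iff either (a)~the four functions are supported on a single point, or (b)~$a_0b_1=c_1d_0$, $a_1b_0=c_1d_0$, $a_0b_0=c_0d_0$, $a_1b_1=c_1d_1$ with all entries positive. Case (b) means $\fa,\fb,\fc,\fd$ cross-factor on $\{0,1\}=\cL_1\times\{\text{pt}\}$, with all of $\fa,\fb$ living on $\cL_1$ in the pattern $\fa\propto \ff$, $\fc\propto\ff$, $\fb\propto\fg$, $\fd\propto\fg$. The other alternative puts $\{0,1\}$ as $\cL_2$, with $\fa\propto\fd$ and $\fb\propto\fc$. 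Degenerate zero patterns must be handled by a short case analysis, using the lattice-closure hypothesis to exclude trivial supports.

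\textbf{Step 2: a local-to-global formulation.} The cross-factoring statement is equivalent to the following. There is a partition of the join-irreducibles (the elements of $P$) into two unions of connected components, $P=P_1\sqcup P_2$, such that along every cover relation $x\lessdot x\cup\{p\}$ the pairs of ratios behave consistently. Specifically, $\fa/\fc$ and $\fb/\fd$ are constant in the $P_1$-directions, while $\fa/\fd$ and $\fb/\fc$ are constant in the $P_2$-directions. Equivalently, $\fa\fb=\mathrm{const}\cdot\fc\fd$ pointwise, and $\fc(x)\fd(y)=\fc(x\vee y)\fd(x\wedge y)$ holds in the appropriate mixed form. So the plan is to show equality in \eqref{eq:AD-cd} along all relevant pairs, i.e.\ $\fa(x)\fb(y)=\fc(x\vee y)\fd(x\wedge y)$ whenever $x\wedge y,x\vee y$ lie in the lattice closure. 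I would try to prove this by a multiplicative ``propagation'' argument: equality in \eqref{eq:AD} forces, by the chain of inequalities in the inductive proof, equality in every one-step inequality. This yields equality conditions on each fiber over a coordinate. The direction $p$ is then labeled ``type~1'' or ``type~2'' according to which alternative of Step~1 occurs.

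\textbf{Step 3: consistency, the main obstacle.} The hard part is showing that the type assigned to a direction $p$ is the same across all fibers. It must also be compatible across comparable elements, so that type-1 and type-2 elements are mutually incomparable and $\cL$ splits as a product. Supports may be irregular, which is why the lattice-closure hypothesis is needed. I would first try to establish strict positivity of all four functions on $\cL$ after equality, by showing that $\supp$ of each function is a sublattice (from \eqref{eq:AD-cd} together with equality) containing the support of the sum. With positivity in hand, take logarithms. The condition becomes that $\log\fa+\log\fb-\log\fc-\log\fd$, evaluated on pairs, is a sub/supermodular-type inequality that is tight. Mixed second differences in two directions $p,q$ of different types would then be forced to vanish, and if $p<q$ this would contradict the fiber equalities. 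Making this rigorous with possibly vanishing values and non-Boolean $\cL$ is where I expect most of the case analysis to lie.
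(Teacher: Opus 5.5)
Your Steps~2 and~3 aim at statements that are false, so the outline cannot be completed as written. First, equality does not force $\fa(x)\fb(y)=\fc(x\vee y)\fd(x\wedge y)$ for all pairs. Take the two-element chain, with values listed at $0,1$: $\fa=\fc=(1,2)$ and $\fb=\fd=(1,1)$. Then \eqref{eq:AD-cd} holds, both sides of \eqref{eq:AD} equal $6$, and the support generates $\cL$, yet $\fa(0)\fb(1)=1<2=\fc(1)\fd(0)$. Cross-factoring gives $\fa(x_1,x_2)\fb(y_1,y_2)=\fc(x_1,y_2)\fd(y_1,x_2)$. This expresses tightness of \eqref{eq:AD-cd} at $(x,y)$ when $x_1\succcurlyeq y_1$ and $x_2\preccurlyeq y_2$, and is a different identity otherwise. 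Likewise, equality for $\fa,\fb,\fc,\fd$ is literally equality for the projections $A,B,C,D$. It does not propagate to every two-point inequality of the inductive proof. Second, positivity fails. On the same chain, $\fa=\fd=(1,0)$, $\fb=\fc=(0,1)$ is an equality case whose support generates $\cL$, but $\supp\fa\cap\supp\fb=\varnothing$. Individual supports are $\supp\ff_1\times\supp\ff_2$, etc., and need not contain the support of the sum, so there is no logarithm to take. Your Step~1 is also misstated. The correct $n=1$ answer is \eqref{eq:A0} together with \eqref{eq:A1} or \eqref{eq:A2}, with zeros allowed. Your (b) mixes one equation from each alternative and rejects the first example above.

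More importantly, the claim you defer to Step~3 --- that the label of a direction is the same on all fibers --- is false, and this is exactly where a new idea is needed. Let $\fa=\fb=\fc=\fd$ equal $1$ on $\{0,1\}^2$ except $0$ at $(0,1)$. The support is a $3$-chain, so both coordinates must lie on the same side. The fiber quadruple $\fa(\bu,1),\fb(\bu,0),\fc(\bu,1),\fd(\bu,0)$ is an equality case satisfying \eqref{eq:A1} but not \eqref{eq:A2}. The quadruple $\fa(\bu,0),\fb(\bu,1),\fc(\bu,1),\fd(\bu,0)$ is an equality case satisfying \eqref{eq:A2} but not \eqref{eq:A1}.

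The paper never labels fibers. It defines the type of coordinate $i$ globally, by averaging out all other coordinates (types \eqref{eq:B1}/\eqref{eq:B2}). It uses \eqref{eq:A0} for averaged functions to pass equality to diagonal restrictions (Lemma~\ref{lem:diag}). It then proves the Identification Lemma~\ref{lem:iden}: set $x_i=1$ exactly for the \eqref{eq:B1}-coordinates, where a consistent choice for coordinates of both types is necessary. Then $\fa(x+z)\fb(y+z)=\fc(1^n+z)\fd(0^n+z)$ for all $z\in\{0,\sq\}^n$. This goes through the $n=2$ Lemmas~\ref{lem:n2-B1B1} and~\ref{lem:n2-B1B2}, which handle zeros directly. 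The Consistency Lemma~\ref{lem:cst} then upgrades these identities to the cross identity on the Boolean lattice.

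You also still need to pass from a factorization of the zero-extended functions on $\{0,1\}^{\aJ(\cL)}$ to $\cL\simeq\cL_1\times\cL_2$. The paper does this with the Support Product Lemma~\ref{lem:structure-partition} and the lattice-closure hypothesis, rather than by checking incomparability of join-irreducibles directly.
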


\smallskip

We prove Theorem~\ref{thm:AD-eq} in Section~\ref{sec:AD-proof}.

\smallskip
{\small
\begin{rem}\label{rem:AD-lattice}
The assumption that \ts $\cL$ \ts is the lattice closure of \. $\rS:=\supp \ts (\fa+\fb+\fc+\fd)$ \.
cannot be omitted.  Indeed, without it, one could append a new maximum element $z$ to $\cL$,
creating an irreducible lattice $\cL'$, and extend the four functions by letting \.
$\fa(z)=\fb(z)=\fc(z)=\fd(z)=0$.  The conclusion of the theorem fails since $\cL'$ is not
a direct product of two lattices.  However, this assumption does not make the theorem
less general, as the functions can always be restricted to the lattice closure of~$\rS$.
\end{rem}
}
\smallskip


\subsection{Fortuin--Kasteleyn--Ginibre inequality}\label{ss:special-FKG}
The following inequality is ubiquitous in statistical physics and is
an easy consequence of the AD inequality.

 \smallskip

 \begin{thm}[{\rm \defn{FKG inequality}~\cite{FKG71}}{}]\label{thm:FKG}
Let \ts $\cL=(\aLr,\vee, \wedge)$  \ts be a finite distributive lattice, and
let \ts $\mu:\aLr\to \rr_{\geq 0}$ \ts be a log-supermodular function on~$\cL:$
\begin{equation}\label{eq:FKG-mu}
	\mu(x) \. \mu(y)  \ \leq \ \mu(x \vee y) \. \mu(x \wedge y) \qquad \forall \ x,\ts y \ts \in \ts \aLr\ts.
\end{equation}
Then, for all increasing functions \ts $\ff,\fg:\aLr\to\rr$\,
\begin{equation}\label{eq:FKG}\tag{FKG}
  \sum_{x \in \aLr} \. \ff(x)\.\mu(x)  \,\cdot \,\sum_{x \in \aLr} \.
  \fg(x) \. \mu(x)  \ \leq \ \sum_{x \in \aLr} \.  \ff(x) \. \fg(x) \. \mu(x) \, \cdot \, \sum_{x \in \aLr} \. \mu(x)\ts.
\end{equation}
 \end{thm}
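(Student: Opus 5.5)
The plan is to derive \eqref{eq:FKG} from the AD inequality (Theorem~\ref{thm:AD}), after a reduction to nonnegative functions.

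The first step is to note that \eqref{eq:FKG} is invariant under adding constants to $\ff$ or $\fg$. Write $M:=\sum_x\mu(x)$. Replacing $\ff$ by $\ff+s$ changes the left-hand side by $s\cdot M\cdot\sum_x \fg(x)\mu(x)$. It changes the right-hand side by $s\cdot\sum_x \fg(x)\mu(x)\cdot M$, which is the same amount, so the difference of the two sides is unchanged. The same holds for $\fg$. Since $\aL$ is finite, we may therefore shift both functions by a large constant and assume $\ff,\fg\ge 0$. Monotonicity is preserved by the shift.

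The second step is to apply Theorem~\ref{thm:AD} with the choice
\[
\fa:=\ff\mu,\qquad \fb:=\fg\mu,\qquad \fc:=\ff\fg\mu,\qquad \fd:=\mu.
\]
All four functions are nonnegative. With this choice, \eqref{eq:AD} is exactly \eqref{eq:FKG}, so it remains only to verify \eqref{eq:AD-cd}:
\[
\ff(x)\,\mu(x)\,\fg(y)\,\mu(y)\ \le\ \ff(x\vee y)\,\fg(x\vee y)\,\mu(x\vee y)\,\mu(x\wedge y).
\]
Three facts give this. Since $\ff$ is increasing and nonnegative, $0\le \ff(x)\le \ff(x\vee y)$. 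Likewise $0\le\fg(y)\le \fg(x\vee y)$. Finally, $0\le\mu(x)\mu(y)\le\mu(x\vee y)\mu(x\wedge y)$ by \eqref{eq:FKG-mu}. Multiplying these three inequalities between nonnegative quantities yields the condition.

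There is no real obstacle here. The only subtle point is the first step: the AD inequality requires nonnegative functions, while \eqref{eq:FKG} allows real-valued $\ff,\fg$. This is exactly why the shift invariance has to be checked before applying Theorem~\ref{thm:AD}.
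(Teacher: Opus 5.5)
Your proof is correct and is the route the paper has in mind: the paper cites \cite{FKG71} for \eqref{eq:FKG}, calls it an easy consequence of the AD inequality, and in the proof of Theorem~\ref{thm:FKG-eq} (Section~\ref{s:FKG}) makes exactly your reduction. It shifts $\ff,\fg$ by constants to make them positive, then applies the AD machinery to $\fa=\ff\mu$, $\fb=\fg\mu$, $\fc=\ff\fg\mu$, $\fd=\mu$. Your explicit verification of shift invariance and of \eqref{eq:AD-cd} fills in the details the paper leaves implicit.
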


 \smallskip

Here the function \ts $\ff:\aLr\to\rr$ \ts is called
\defn{increasing} \ts if \ts $f(x) \le f(y)$ \ts
for all \ts $x\preccurlyeq y$ \ts in the partial order given by~$\cL$.
The log-supermodular function $\mu$ is usually called a \defn{measure} \ts
in a probabilistic context, so each summation in \eqref{eq:FKG} can be viewed
as a \emph{discrete integration} \ts over measure~$\mu$.
The equality condition for~\eqref{eq:FKG} are given by the following result.

\smallskip
\begin{thm}[{\rm \defn{Equality conditions for the FKG inequality}\ts}{}]\label{thm:FKG-eq}
Let \ts $\cL=(\aLr,\vee, \wedge)$  \ts be a finite distributive lattice, and
let \ts $\mu:\aLr\to \rr_{> 0}$ \ts be a strictly positive log-supermodular function,
and let \ts $\ff,\fg:\aLr\to\rr$ \ts be increasing functions.
Then \eqref{eq:FKG} is an equality \,
	\underline{\em if and only if} \,  $\cL \simeq \cL_1 \times \cL_2$ \. for some
distributive lattices \. $\cL_1=(\aLr_1,\vee', \wedge')$ \. and \. $\cL_2=(\aLr_2,\vee'', \wedge'')$,
such that
	\begin{align}\label{eq:FKG-equal-1}
		\ff({\color{blue}x_1},{\color{red}x_2}) \ = \  \ff'({\color{blue}x_1}) \quad \text{ and } \quad \fg({\color{blue}x_1},{\color{red}x_2}) \ = \  \fg'({\color{red}x_2}) \quad \text{ for all } \quad ({\color{blue}x_1},{\color{red}x_2}) \in \aLr_1 \times \aLr_2
	\end{align}
and	
\begin{align}
		&\mu(x_1,x_2) \ = \  \mu_1(x_1) \. \mu_2(x_2) \qquad \forall \ (x_1,x_2) \in \aLr\ts,
	\end{align}
for some functions \. $\ff':\aLr_1 \to \rr$, \. $\fg':\aLr_2 \to \rr$, \.
$\mu_1:\aLr_1 \to \rr_{> 0}$ \.  and \. $\mu_2:\aLr_2 \to \rr_{> 0}\ts$.
\end{thm}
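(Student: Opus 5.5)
The plan is to derive Theorem~\ref{thm:FKG-eq} from Theorem~\ref{thm:AD-eq}, using the standard reduction of \eqref{eq:FKG} to \eqref{eq:AD}.

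\emph{Normalization.} Both sides of \eqref{eq:FKG} change by the same amount when $\ff$ is replaced by $\ff+C$: each side gains $C\sum_x \fg(x)\mu(x)\cdot\sum_x \mu(x)$. The same holds for $\fg$. So equality in \eqref{eq:FKG} is invariant under adding constants to $\ff$ and $\fg$. The property ``$\ff$ depends only on $x_1$, and $\fg$ only on $x_2$'' is also invariant under such shifts. We may therefore assume $\ff,\fg\ge 1$ on~$\aL$.

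\emph{Reduction.} Define
\[
\fa:=\ff\mu,\qquad \fb:=\fg\mu,\qquad \fc:=\ff\fg\mu,\qquad \fd:=\mu .
\]
Condition \eqref{eq:AD-cd} holds. Indeed, monotonicity and nonnegativity give $\ff(x)\fg(y)\le \ff(x\vee y)\fg(x\vee y)$. Multiplying this by the log-supermodularity inequality \eqref{eq:FKG-mu} (both inequalities have nonnegative terms) yields $\fa(x)\fb(y)\le \fc(x\vee y)\fd(x\wedge y)$. With these choices, \eqref{eq:AD} is exactly \eqref{eq:FKG}, so equality in one is equality in the other. Since $\mu>0$, we have $\supp(\fd)=\aL$. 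Hence $\cL$ is trivially the lattice closure of $\supp(\fa+\fb+\fc+\fd)$, and Theorem~\ref{thm:AD-eq} applies.

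\emph{Extracting the factorization.} Suppose \eqref{eq:FKG} is an equality. Theorem~\ref{thm:AD-eq} gives $\cL\simeq\cL_1\times\cL_2$, functions $\ff_1,\fg_1,\ff_2,\fg_2$, and constants $\ala,\beb,\gac,\ded>0$ as in \eqref{eq:AD-abstract-2}. From $\mu=\fd=\ded\,\fg_1(x_1)\ff_2(x_2)>0$ we get $\fg_1>0$ and $\ff_2>0$ everywhere, and $\mu$ factors with $\mu_1:=\ded\,\fg_1$ and $\mu_2:=\ff_2$. Dividing then gives
\[
\ff=\fa/\mu=\tfrac{\ala}{\ded}\,\ff_1(x_1)/\fg_1(x_1),\qquad
\fg=\fb/\mu=\tfrac{\beb}{\ded}\,\fg_2(x_2)/\ff_2(x_2).
\]
So $\ff$ depends only on $x_1$ and $\fg$ only on $x_2$. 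Undoing the shift preserves this, which gives \eqref{eq:FKG-equal-1}.

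\emph{Converse.} Suppose $\mu=\mu_1\mu_2$, $\ff=\ff'(x_1)$ and $\fg=\fg'(x_2)$. Then each sum in \eqref{eq:FKG} splits over $\aL_1\times\aL_2$. Both sides equal
\[
\sum_{x_1}\ff'\mu_1\cdot\sum_{x_1}\mu_1\cdot\sum_{x_2}\fg'\mu_2\cdot\sum_{x_2}\mu_2 ,
\]
so equality holds. If one factor lattice is a single point, this covers the case where $\ff$ or $\fg$ is constant.

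\emph{Main obstacle.} All the real difficulty sits in Theorem~\ref{thm:AD-eq}. Here the only points needing care are the following.
\begin{enumerate}
\item The shift to $\ff,\fg\ge1$. It makes $\fa,\fb,\fc,\fd$ nonnegative and none of them identically zero, so the positive constants $\ala,\beb,\gac,\ded$ are meaningful.
\item Strict positivity of $\mu$. It guarantees the lattice-closure hypothesis and lets us divide by~$\mu$.
\end{enumerate}
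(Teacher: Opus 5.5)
Your proposal is correct and is essentially the paper's proof: shift $\ff,\fg$ to be positive, apply Theorem~\ref{thm:AD-eq} to $\fa=\ff\mu$, $\fb=\fg\mu$, $\fc=\ff\fg\mu$, $\fd=\mu$, and read off $\mu_1,\mu_2,\ff',\fg'$ from the cross-factorizations of $\fd$, $\fa$, $\fb$. The only differences are that the paper first rescales so that $\ala=\beb=\gac=\ded=1$ while you carry the constants along, and that you spell out points the paper leaves implicit: the verification of \eqref{eq:AD-cd}, the lattice-closure hypothesis via $\supp(\fd)=\aL$, and the converse direction.
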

\smallskip

We prove Theorem~\ref{thm:FKG-eq} in Section~\ref{s:FKG}
as an easy corollary of Theorem~\ref{thm:AD-eq}.  Note that condition \eqref{eq:FKG-equal-1}
is easier than \eqref{eq:AD-abstract-2}, since \ts $\ff$ \ts
depends only on \ts $\aL_1$, and \ts $\fg$ \ts depends only on \ts $\aL_2$.

 \smallskip
{\small
\begin{rem}\label{rem:intro-FKG}
Without the assumption that function $\mu$ is be strictly positive, it follows from \eqref{eq:FKG-mu}
log-supermodularity that \ts $\cL':=\supp(\mu)$ \ts is a distributive sublattice of~$\cL$.  Thus, having
this assumption does not to be in fact make Theorem~\ref{thm:FKG-eq} less general, as we can always
restrict the inequality to~$\cL'$.
\end{rem}}

\smallskip


\subsection{Prior work}\label{ss:intro-prior}
The literature on correlation inequalities and their application is much too diverse
to allow for a quick overview.  We refer to \cite{AB08} for a historical review and
applications in extremal combinatorics, to \cite{Gri06,Gri18,HK86} for applications
in probability and statistical physics, and to \cite{CP-surv,FS00} for applications in
order theory.  A short summary of the role the AD and FKG inequalities
was given by Bollob\'as--Riordan:

\smallskip

\begin{center}\begin{minipage}{13.25cm}%
{{\em ``The FKG inequality is the most often quoted correlation inequality in
physics: even {\rm [the HK inequality]}  tends to be called the `FKG inequality'.
Ahlswede and Daykin (1978) extended the FKG inequality to a very
general correlation inequality on lattices. What is amazing is that
such an inequality \eqref{eq:AD} could be true: its proof, although far
from trivial, is not very difficult.''} \cite[p.~45]{BR06}.}
\end{minipage}\end{center}

\smallskip

The study of equality conditions for the  Ahlswede--Daykin inequality traces back to the work
of  Daykin--Kleitman--West \cite{DKW79}, motivated by combinatorial number theory.
There it is phrased as a minimization problem
for the size of \. $A\wedge B = \{a\wedge b \.:\. a,b\in \aL\}$ \. when the sizes of
\ts $|A|$ \ts and \ts $|B|$ \ts are fixed, where \ts $(\aL,\vee,\wedge)$ \ts
is a distributive lattice given as a product of chains.  Among other results,
the authors of \cite{DKW79} proved that the minimum is achieved only on products
of sublattices.

The problem of finding equality conditions of various correlation inequalities attracted
a great deal of attention.   See, e.g., \cite{Bec90,Bri90,AH93,MR94} for special cases and
applications of \eqref{eq:AD}.  Notably, McQuillan--Richter \cite{MR94} obtained the
equality conditions for the \defn{HK inequality}:
\begin{equation}\label{eq:HK}\tag{HK}
|\cU| \cdot |\cV| \, \le \, |\cU\cap \cV| \ts \cdot \ts 2^n\ts,
\end{equation}
for every two up-closed collections \ts $\cU,\cV\subseteq 2^{[n]}$.  Here a
collection \. $\cU\subseteq 2^{[n]}$ \.  of subsets of \ts $[n]:=\{1,\ldots,n\}$ \ts
is called \defn{up-closed}, if \ts $B \in \cU$ \ts for all \ts $B \subseteq A$, where \ts $A\in \cU$.

The main result by Ahlswede--Khachatrian in \cite[Thm~1]{AK95}, extending the work
in \cite{DKW79}, is a special case of Theorem~\ref{thm:FKG-eq}, when $\cL$ \ts
is a product of chains, \ts $f,g$ \ts are indicator functions of upper order
ideals of~$\cL$, and \ts $\mu$ \ts is the uniform measure on~$\ts\aL$.
Let us also mention Brightwell's equality conditions \cite{Bri90} for the GYY
and Fishburn's inequalities (see~$\S$\ref{ss:poset-Fish}), and Aharoni--Holzman \cite{AH93}
equality conditions for the remarkable \defn{Marica--Sch\"onheim inequality} \cite{MS69}:
\begin{equation}\label{eq:MS}\tag{MS}
|\{A - B \.:\.A,B\in \cU\}| \, \ge \, |\cU| \quad \text{for every} \quad
\cU\subseteq 2^{[n]}.
\end{equation}

Note that \eqref{eq:MS} is a one-line corollary from \eqref{eq:HK},
see e.g.\ \cite[Cor.~6.1.4]{AS16}.  Consequently, the equality conditions for \eqref{eq:MS}
follow easily from \cite{MR94}.  However, the original proof of these equality conditions
given in \cite{AH93} is rather technical, since the authors obtained it by a
deconstruction of a very different proof of \eqref{eq:MS} given in \cite{DL76}.

Soon after \cite{MR94}, Talagrand \cite{Tal96} independently rediscovered the
equality cases of \eqref{eq:HK}.  Remarkably, he gave an explicit lower bound
on the defect of \eqref{eq:HK} that is of \defn{Bonnesen type$:$} the
bound is zero only on equality cases.  See also a simpler proof in \cite{KKM16},
an alternative correlation lower bound in \cite{KMS14}, and \cite{CP-AF,Oss79}
for a discussion of ``Bonnesen type'' stability results.

Talagrand's result was greatly generalized over the years, most recently in
\cite[Thm~38]{DNS21}, where a general bound was obtained for the defect of the FKG
inequality for the product of chains (Theorem~\ref{thm:FKG-chain}).  Unfortunately, this
version is not Bonnesen type, and thus only gives necessary conditions for
equality cases in terms of Fourier coefficients.  Note that when $\cL$ is a 
products of chains, the difficulty is that the given $\mu$ is a priori not 
decomposable, and proving that is the missing piece in the literature.

In~\cite{YK}, Yang and Klein developed a formula for the defect of the FKG inequality
on Boolean lattices, when $\ff,\fg$ are indicator functions of upper order ideals of~$\cL$.
Unfortunately, their formula is much too cumbersome to allow an explicit description
of equality conditions.  In \cite{Win10}, Winkler proved a strict version of the remarkable
\defng{BK inequality}, which is a {negative correlation inequality} for disjoint events,
see e.g.\ \cite[$\S$4.3]{Gri18} for some background.

The equality cases have also
been studied in connection to Markov structures using the language of \emph{independence models},
see \cite[$\S$5]{F+17} for some combinatorial examples. Finally, in the context of
continuous strictly positive multivariate distributions, Perlman--Olkin
noted that the \eqref{eq:FKG} is strict under minor assumptions \cite[Prop.~2.4]{PO80}.

\smallskip

{\small
\begin{ex}\label{ex:Chebyshev}
For comparison, consider the following
\defn{Chebyshev inequality}, often viewed as the most basic example
of a correlation inequality \cite{Gra83},
see~$\S$\ref{ss:finrem-hist} for the background.
Let
\begin{equation}\label{eq:Cheb-cd}
a_1 \. \leq \. \ldots \. \leq \. a_n \,, \quad b_1 \. \leq \. \ldots \. \leq \. b_n \,
\quad \text{ and } \quad p_1,\ldots, p_n \. > \. 0\ts.
\end{equation}
Then:\footnote{To emphasize the relevance of the Chebyshev inequality, Graham writes:
``Basically, the FKG inequality represents a way of extending \eqref{eq:Cheb}
to the situation in which the underlying index set is only partially ordered,
as opposed to the totally ordered index set of integers occurring in \eqref{eq:Cheb}''
\cite{Gra83}. }
\begin{equation}\label{eq:Cheb}
\sum_{i=1}^n \. a_i \. p_i\,\cdot \, \sum_{i=1}^n \. b_i \. p_i
\ \le \ \sum_{i=1}^n \. a_i \. b_i \. p_i\,\cdot \,\sum_{i=1}^n \. p_i\.,
\end{equation}
and the equality holds \ \underline{if and only if} \
\begin{equation}\label{eq:Cheb-eq}
a_1\. = \. \ldots \. = \. a_n \quad \text{ or } \quad b_1\. = \. \ldots \. = \. b_n\..
\end{equation}

The inequality \eqref{eq:Cheb} follows immediately from \eqref{eq:FKG}. Indeed, take
a lattice \ts $\cL=([n],\vee,\wedge)$, where \ts $i \vee j:=\max\{i,j\}$ \ts
and \ts $i\wedge j:= \min\{i,j\}$, for all \ts $i,j\in [n]$.  Substituting \. $\ff(i) \gets a_i$,
\ts $\fg(i)\gets b_i$, \ts $\mu(i)\gets p_i$ \ts into \eqref{eq:FKG} gives \eqref{eq:Cheb}.
The equality conditions \eqref{eq:Cheb-eq} follow from Theorem~\ref{thm:FKG-eq},
where the two cases correspond to two ways of writing \ts $\cL$ \ts as a direct product:
\ts $\cL = \cL \times \one$ \ts and \ts $\cL = \one \times \cL$.

Compare this with the analytic proof, see e.g.~\cite[$\S$2.17]{HLP52}.
Write the defect \. $\de$ \.
of \eqref{eq:Cheb} as follows:
\begin{equation}\label{eq:Cheb-proof}
\de \ := \ \RHS \. - \. \LHS \ = \ \sum_{1\le i<j\le n} \. (a_j-a_i)(b_j-b_i) \. p_i \. p_j \ \ge \, 0.
\end{equation}
For the equality conditions, note that we must have \ts $(a_n-a_1)(b_n-b_1)=0$.
This and \eqref{eq:Cheb-cd} imply \eqref{eq:Cheb-eq}.
\end{ex}}


\smallskip

\subsection{Why these equality conditions are hard to prove}\label{ss:intro-why-hard}
In the introduction to their paper, Ahlswede and Khachatrian lamented:
``\emph{However, for \eqref{eq:AD} it seems to be difficult to classify
the cases of equality}'' \cite{AK95}.
This is in sharp contrast with the \eqref{eq:HK} and
Chebyshev inequality \eqref{eq:Cheb},
where the equality cases are straightforward.  In fact, there is
a fundamental difference between equality conditions for the AD
and FKG inequalities, versus those for other (simpler) inequalities.

Continuing with the Chebyshev inequality example, note that the
equality conditions \eqref{eq:Cheb-eq} take the form of
equalities for a subset of assumptions \eqref{eq:Cheb-cd}.
This is a natural byproduct of the proof of the
inequality, where the steps giving the proof can be examined to obtain
the equality conditions and allow for two possibilities at the end.

By contrast, the equality conditions for the AD inequality
in Theorem~\ref{thm:AD-eq} clearly do not follow this pattern,
as cross-factoring (Definition~\ref{def:intro-factor})
allows for numerous choices which do not naturally
arise in the standard proofs of \eqref{eq:AD}.
This is why a naive approach is likely to fail in this case.\footnote{For the
HK inequality, a different phenomenon happens:
the underlying measure function~$\mu$ is \emph{degenerate},
in a sense that it factorizes naturally, greatly simplifying the proof.}
Instead, our approach  gives a new elaborate proof of Theorem~\ref{thm:AD-eq},
which involves introductions of various structures and operations specifically
designed to obtain cross-factoring while preserving the assumptions \eqref{eq:AD-cd}.
This approach occupies much of the paper (Sections~\ref{sec:prelim}--\ref{sec:AD-proof}).

In other words, one way to think of the problem of finding equality conditions
of the AD and FKG inequalities, is to restrict the proofs to have only certain
tools which allow for a ``deconstruction''.  There is a formal complexity theoretic
framework for this given by Ikenmeyer and second author \cite{IP22}, where only
\emph{oblivious} \ts operations are allowed to describe the defect, i.e.\ operations
which are not allowed to make choices depending on the values of the functions.
In this restricted setting, negative results hold for the AD inequality
for $n\ge 2$ \cite[Prop.~2.5.1]{IP22}, and for the FKG inequality for $n\ge 3$
\cite[Ch.~6]{Gla-thesis}.

Back to the informal discussion, if there is a way to give a one-line explanation
for the difficulty of characterizing equality cases of certain analytic inequalities,
it is \defnm{the issue of zeros}.  Indeed, in many proofs, including standard proofs of
the AD and FKG inequalities (see e.g.~\cite[Ch.~6]{AS16}), when one needs to divide
by a variable, two cases are considered: when the variable is zero or not.  Combined
with the induction, these choices compound into a large unwieldy family of potential
equality cases.

Alternatively, various proofs of inequalities avoid the issue of zeros altogether by
having only strictly positive underlying variables, and using some kind of monotonicity
or coupling type arguments to prove the inequalities.  Then, applying elementary \defnm{limit
arguments}, the inequalities are extended to all nonnegative variables.  See, e.g.,
elegant proofs of the FKG type inequalities by Holley \cite{Hol74} and Bakry--Michel
\cite{BM92}.

When it comes to equality conditions,
the issues with these proofs is the limit argument itself,
as additional equality cases can, and often do, emerge in the limit.
For example, clearly, to prove the Chebyshev inequality \eqref{eq:Cheb},
it suffices to consider only the case of strict inequalities in \eqref{eq:Cheb-cd}.
But then there are no equality cases, as both cases in \eqref{eq:Cheb-eq} emerge only in the limit.

In summary, the AD and the FKG equality conditions are hard to prove because the
existing proofs do not easily extend to give equality conditions.  In contrast,
the equality cases of these inequalities for strictly positive functions
are really easy to describe: there are none.
However, the nonstrict version is essential for applications to many combinatorial
inequalities in algebraic combinatorics and order theory (see below).

\smallskip

\subsection{Other equality conditions are even harder}\label{ss:intro-why-other}
The difficulty of finding equality conditions go beyond the universe of
correlation inequalities, and extend to other areas.  A typical example
briefly mentioned in the Foreword ($\S$\ref{ss:intro-fore}) is the
\defng{Cauchy--Davenport inequality} \ts in additive combinatorics,
a classical result with a straightforward proof by induction.  There,
the equality conditions are given by a separate elaborate argument known
as \defng{Vosper's theorem}, see e.g.\ \cite[$\S$5.1]{TV06}.

Characterizing the equality cases of \defnm{geometric inequalities}
has proved to be a fascinating area of research in its own right.
Notably, finding equality conditions for the
\defna{Alexandrov--Fenchel {\rm (AF)} inequality} \ts remains a major
open problem even when convex bodies are full-dimensional, see e.g.\
\cite{HW20,Sch94,Sch14}, in part because such cases are ``numerous''
\cite[$\S$20.5]{BZ-book}.  While the area is over a century old,
there has been a flurry of activity in recent years, motivated in
part by Schneider's conjectural characterization of the equality cases
\cite{Sch85,Sch94}.

In a major development, Shenfeld and van~Handel obtained long sought equality
conditions for the \defng{Minkowski quadratic inequality} \cite{SvH-duke},
which is a special case of the AF inequality for three convex bodies,
and then for the AF inequality in the case of convex polytopes \cite{SvH-acta}.
Both advances required technology going beyond the classical mixed volume theory.

Briefly, when it comes to equality conditions,
the issue with Alexandrov's original proof of the AF~inequality
for polytopes \cite{Ale37}, is the limit argument at the end.
Polytopes are called \defn{strongly isomorphic} \ts if for every 
direction $u$, the exposed faces of the polytopes with normal 
direction $u$ have the same dimension.  In particular, they must 
have the same set of normals to facets.  This technical nondegeneracy 
condition turned out to be extremely helpful in the inductive argument.  
Additionally,
by \defng{Minkowski's existence theorem}, see e.g.\ \cite{BZ-book,Sch14},
one can obtain any family of convex polytopes as
the limit of strongly isomorphic polytopes.  Meanwhile, for strongly
isomorphic polytopes, the equality conditions of the AF inequality are
straightforward: the polytopes have to be homothetic, while the numerous
equality cases arise in the limit.\footnote{Going from polytopes to general
convex bodies also requires a limit argument, further complicating the problem.}

The limit argument similarly appears in other polytopal
proofs of the AF inequality, see e.g.\ \cite{BL26,CP-intro,SvH-pams}.\footnote{We are
omitting many analytic proofs here, since that technology was built on the limits, obviously.}
The same holds for algebro geometric proofs of many matroid inequalities by Huh and his
coauthors, see e.g.~\cite{Huh18} and references therein, even though the limits are
buried deep inside the algebraic geometry arguments.  The limits are more transparent in
our linear algebraic \defng{combinatorial atlas} \ts approach \cite{CP-intro,CP-atlas},
since we use induction on matrices with exactly one positive
eigenvalue, while in the limit an additional zero eigenvalue emerges.

Much of our recent work is aimed to understand and explain this phenomenon.
In the positive direction, by modifying the proof, we are able to strengthen
the inequalities while still obtaining the desired equality conditions.
Notably, we use the heavy but elementary combinatorial atlas technology
to obtain equality conditions for many matroid inequalities  \cite{CP-atlas}.
This work has been instrumental in our efforts, and motivated the approach
we took in this paper.

In the negative direction, we prove that explicit characterization of
equality cases is computationally intractable in several notable cases.
Formally, we show that characterizing equality cases of the following
inequalities is not in the polynomial hierarchy \ts $\PH$, i.e.\
\emph{hard} \ts from computational complexity point of view:

\quad $\circ$ \, \defng{AF for polytopes}, even when only four polytopes are allowed to be
distinct \cite{CP-AF},\footnote{This does not contradict Shenfeld and van~Handel's result,
since their characterization is {geometric} rather than combinatorial,
see a discussion in \cite{CP-AF,SvH-icbs}.}

\quad $\circ$ \, \defng{Stanley's inequality} \ts for the numbers of linear extensions \cite{CP-AF},

\quad $\circ$ \, \defng{Stanley--Yan inequality} \ts for numbers of certain bases in a matroid \cite{CP-SY}, and

\quad $\circ$ \, quadratic inequality for the permanent \cite{CP-perm}.

\smallskip


We view the equality conditions for the AD and FKG inequalities in a similar
spirit: it is a minor miracle that they can be described at all, albeit with
a major effort.  It would be interesting to see if our results can be
further extended  (see~$\S$\ref{ss:finrem-many}).
Fortunately, the equality conditions for the AD and FKG inequalities
are quite enough for our applications, leading to further minor miracles.


\smallskip





\subsection{Paper structure}\label{ss:intro-structure}
We start with standard definition and notation in algebraic combinatorics
and order theory in Section~\ref{s:def}, including a brief discussion of
Schur functions and distributive lattices.
In Section~\ref{s:Schur}, we state applications of the equality conditions
for \eqref{eq:AD} to Schur positive inequalities, followed by Section~\ref{s:poset}
with applications to inequalities in order theory.  Of these, the equality conditions
for the LPP inequality plays a central role and connects both sections.
After that, the paper is divided into two major parts.

The first part, consisting of Section~\ref{sec:prelim}--\ref{s:FKG},
focuses on the main results.  We first prove the equality of the AD
inequality for Boolean lattices (Theorem~\ref{thm:AD-boolean}),
followed by the general case (Theorem~\ref{thm:AD-eq}),
and the special case of product of chains (Theorem~\ref{thm:AD-chains}).
The proof is based on a number of technical results, including
\defng{consistency lemma} (Lemma~\ref{lem:cst}), the
\defng{identification lemma} (Lemma~\ref{lem:iden}), and
the \defng{support product lemma} (Lemma~\ref{lem:structure-partition}).
The latter is a lattice theoretic result allowing the reduction
to Boolean lattices.  We conclude the proof of
Theorem~\ref{thm:AD-eq} in Section~\ref{sec:AD-proof}, and
the proof of
Theorem~\ref{thm:FKG-eq} in Section~\ref{s:FKG}.

The second part, consisting of Sections \ref{sec:LPP}--\ref{s:ADS},
gives proofs of all applications of the AD and FKG equality conditions.
Of these, the LPP equality conditions are both the most technical and
the most useful for applications, so we start with that (Section~\ref{sec:LPP}).
The equality conditions of the ADS inequality give the most general result,
and come at the end (Section~\ref{s:ADS}), after all necessary ingredients and
technical tools introduced in earlier sections.
We conclude with final remarks and open problems in Section~\ref{sec:finrem}.

\medskip


\section{Definitions and notation}\label{s:def}

In this section we present standard definitions and notation in
algebraic combinatorics and order theory.  For further background in
algebraic combinatorics, see \cite{Mac,Sag01} and \cite[Ch.~7]{Sta-EC}.
For further background in order theory, see \cite{Gra98},
\cite[Ch.~3]{Sta-EC},
and \cite[Ch.~12]{West21}.

\subsection{Basic notation}\label{ss:def-basic}
We use \ts $\nn=\{0,1,2,\ldots\}$ \ts and \ts $\nn_{\ge 1} = \{1,2,\ldots\}$
\ts to denote the sets of nonnegative and positive integers.  Let
\ts $[n]=\{1,2,\ldots,n\}$ \ts and \ts $\rr_+ = \{x\ge 0\}$.
We use \ts $\zero$ \ts and \ts $\one$ \ts to denote the zero
and all-one vector.  When the dimension is relevant, we write
\ts $0^d=(0,\ldots,0)$ \ts and \ts $1^d=(1,\ldots,1)\in \rr^d$.
For a vector \. $\al = (\al_1, \dots, \al_d) \in \mathbb{R}^{d}$, denote
\[
\lfloor \al \rfloor \. := \. (\lfloor \al_1 \rfloor, \dots, \lfloor \al_d \rfloor)
\quad \text{and} \quad \lceil \al \rceil \. := \. (\lceil \al_1 \rceil, \dots, \lceil \al_d \rceil).
\]

We use \ts $2^X$ \ts to denote the set of subsets of~$X$.
The indicator function of a subset \ts $A$ \ts is denoted~$\mathbf{1}_A\ts$.
To simplify the notation, for an element \ts $a\in X$,  we use
\ts $X-a$ \ts to denote the subset \ts $X\sm \{a\}$.
Similarly,
we write \ts $X+b$ \ts to denote \ts $X\cup \{b\}$.
Next, for a subset \ts $Y\subseteq X$, we write \ts $X-Y$ \ts in place
of more standard \ts $X \sm Y$.
This way, both \ts $X-a+b$ \ts and \ts $X-Y-Z$ have a clear meaning.

For an inequality \ts $a \ge b$, the difference \ts $(a-b)$ \ts
is called the \defn{defect}.  For polynomials \ts $f,g \in \rr[x_1,x_2,\ldots]$,
we write \ts $f\geqslant g$ \ts if the defect \ts $(f-g)\ge 0$ \ts for all \ts
$x_1,x_2,\ldots \ge 0$.  We write \ts $f\geqslant_{\fm} g$ \ts if \ts
$(f-g)\in \rr_+[x_1,x_2,\ldots]$.  These properties are called the \defn{evaluation}
and the \defn{monomial positivity}, respectively.

The \defn{equality conditions} \ts of the inequality is an explicit description
of cases when the defect is zero.  For the remainder of the paper, we use
``AD equality" as shorthand for the ``equality conditions for the AD inequality'',
and the same for other inequalities.

We employ logical formulas to describe equality conditions.  For conditions $\cA$ and $\cB$, we use
{\small
$$
\left[ \. \aligned &\cA \\
& \cB \endaligned\right. \qquad \text{and} \qquad
\left\{ \. \aligned &\cA \\
& \cB \endaligned\right.
$$
}
to denote the \defn{disjunction} \ts $\cA\vee \cB$ \.
and \defn{conjunction} \ts $\cA\wedge \cB$, respectively.

An undirected graph \ts $G=(V,E)$ \ts is called \defn{simple}, if it has
no loops or multiple edges.  For vertices \ts $a,b\in V$, we write \ts
$a\lra b$ \ts if they are connected in~$G$, and \ts $a\nleftrightarrow b$ \ts
if they are not.  More generally, for subsets \ts $A,B\subset V$, we write
$A\lra B$ \ts if \ts $a\lra b$ \ts for some \ts $a\in A$, \ts $b\in B$, i.e.\
if subsets $A$ and~$B$  are connected in~$G$.  We write \ts $A\nleftrightarrow B$ \ts
otherwise.

 \smallskip

\subsection{Partitions}\label{ss:def-part}
%
An integer sequence  \ts $\la=(\la_1,\la_2,\ldots)$ \ts is a \defn{partition},
if \. $\la_1 \ge \la_2 \ge \ldots \ge \la_\ell>\la_{\ell+1} = 0$.  The \defn{size} \ts of~$\la$ \ts is the sum \ts
$|\la|:=\la_1+\ldots + \la_\ell$.  We write  \ts $\la \vdash n$ \ts for partitions with $|\la|=n$.
The \defnb{length} \ts  $\ell(\lambda)$ \ts is the number of nonzero parts of the partition.
We append a partition with zeros whenever convenient and refer to it as the same partition.
We use \ts $\cY$ \ts to denote the set of all partitions.

A \defn{Young diagram} \ts corresponding to partition~$\la$ \ts
 is the set of squares \. $\big\{(i,j)\in \nn^2 \. : \. 1\le j \leq  \la_i,
\ts 1\le i \le \ell\big\}$.  In a mild abuse of notation, we use \ts $\la$ \ts to also
denote the corresponding Young diagram, and refer to both as the \defn{straight shape}.
A \defn{conjugate partition} \ts $\la'=(\la_1',\la_2',\ldots)$ \ts is defined by
\ts $\la_j' = |\{i\.: \. \la_i\ge j\}|$.  The corresponding Young diagram is a
reflection of \ts $\la$ \ts across the \ts $x=y$ \ts line.

Let \ts $\mu=(\mu_1,\mu_2,\ldots)$ \ts be a partition such that \.
$\mu_i \le \la_i$ \. for all \ts $i\ge 1$.  We write \ts $\mu \subseteq \la$ \ts
in this case.  The difference of Young diagrams
is denoted by \. $\la/\mu$ \. and called the \defn{skew Young diagram}.
We use \defn{skew shape} \ts to refer to both pair of partitions as above,
and to the corresponding shew Young diagram. We use \ts $|\la/\mu|:=|\la|-|\mu|$ \ts
to denote the number of squares in~$\la/\mu$.

For partitions \ts $\al=(\al_1,\al_2,\ldots)$ \ts and \ts $\be=(\be_1,\be_2,\ldots)$,
denote by \. $\al+\be=(\al_1+\be_1,\al_2+\be_2,\ldots)$ \. and  \.
$\al\cup \be:=(\al'+\be')'$ \. the \defn{sum} \ts and \defn{union} \ts of parts,
respectively.  Denote by \ts $\al\vee \be$ \ts and \ts $\al\wedge \be$ \ts
the shapes given by the union and the intersection of the corresponding Young diagrams, respectively:
\begin{equation}\label{eq:LPP-union-int}\quad
\al\vee \be \. := \. \big(\max\{\al_1,\be_1\}, \ts \max\{\al_2,\be_2\}, \ts \ldots\big), \quad
\al\wedge \be \. := \. \big(\min\{\al_1,\be_1\}, \ts \min\{\al_2,\be_2\}, \ts \ldots\big).
\end{equation}
Note that \ts ${\mathcal{Y}}=(\cY,\vee,\wedge)$ \ts is a distributive lattice with
respect to these \defn{meet} \ts and \defn{join} \ts operations.
Finally, we write \ts $\la \vartriangleright \mu$ \ts for the \defn{dominance order} \ts
on partitions: \ts $\la_1 \ge \mu_1\ts$, \ts $\la_1+\la_2 \ge \mu_1 + \mu_2\ts$,
\ldots , \ts $|\la|=|\mu|$.

\smallskip

\subsection{Young tableaux} \label{ss:def-YT}
A \defn{standard Young tableau} \ts of shape $\la/\mu$ \ts
is a bijection \ts $T: \la/\mu\to [n]$ \ts which increases in rows and columns: \.
$T(i,j) < T(i+1,j)$ \ts and \ts $T(i,j) <  T(i,j+1)$ \ts whenever these are defined.
Denote by \ts $\SYT(\la/\mu)$ \ts the set of standard Young tableaux of shape~$\la/\mu$.
Famously, a \defng{hook-length formula} \ts gives a product formula for \ts $|\SYT(\la)|$.

Let \ts $A: \la/\mu \to \nn_{\ge 1}$ \ts be a function which weakly increases in rows and strictly
increases in columns.  We think of \ts $A$ \ts as a Young tableau with integers written
in squares of~$\la/\mu$.  Such~$A$ is called a \defn{semistandard Young tableau}.
The set of such tableaux is denoted \ts $\SSYT(\la/\mu)$.  We use \ts $\SSYT(\la/\mu,t)$ \ts
to denote semistandard Young tableaux with entries $\le t$.

The \defn{weight} \ts of a tableau \ts $A\in \SSYT(\la/\mu)$ \ts is a sequence \.
$\big(m_1(A),m_2(A),\ldots\big)$, where \ts $m_i(A):=|A^{-1}(i)|$ \ts is the number
of $i$'s in~$A$.  \defn{Kostka number} \ts $K_{\la,\mu}$ \ts is the number of
$A\in \SSYT(\la)$ \ts of weight~$\mu$.

\smallskip

\subsection{Symmetric polynomials and functions}\label{ss:def-sym}
For a partition $\la$,  a
\defn{monomial symmetric polynomial} \ts is given by
$$
\fm_\la(z_1,\ldots,z_n) \, := \, \sum_{\si} \. z^{\la_1}_{\si(1)}\. z^{\la_2}_{\si(2)} \. \cdots \,,
$$
where the summation is over a set of permutations \ts $\si\in S_n$ \ts representing the distinct rearrangements of $\lambda$,   to ensure each unique monomial terms appear
exactly once.  The
\defn{skew Schur polynomial} \ts is a symmetric polynomial associated with the skew shape \ts $\la/\mu$ \ts
and can be defined as
\begin{equation}\label{eq:Schur-def}
\fs_{\la/\mu}(z_1,\ldots,z_n) \, := \, \sum_{A \ts \in \ts \SSYT(\la/\mu, \ts n)} \. z_1^{m_1(A)} \ts \cdots \. z_n^{m_n(A)}\..
\end{equation}
Here we include only combinatorial definitions, rather than the (more standard)
determinantal definition.
Similarly, \defn{skew Schur functions} \ts are defined as
\begin{equation}\label{eq:Schur-def-inf}
\fs_{\la/\mu}(z_1,z_2,\ldots) \, := \, \sum_{A \ts \in \ts \SSYT(\la/\mu)} \. z_1^{m_1(A)} \. z_2^{m_2(A)} \ts \cdots
\end{equation}
They are the stable limits of Schur polynomials as \ts $n\to \infty$. Note that Schur functions
\ts $\fs_{\la/\mu}$ \ts are homogenous of degree \ts $|\la/\mu|$.

\defn{Schur functions} \ts $\fs_\la$ \ts correspond to straight shapes, i.e.\ when
\ts $\mu=\emp$.  They form a linear basis in the \defn{ring of symmetric functions} \.
$\La=\varprojlim \La_n$\ts, where \. $\La_n=\cc[x_1,\ldots,x_n]^{S_n}$.
For two symmetric functions \ts $\fp,\fq \in \La$, we write \. $\fp \leqslant_s \fq$ \.
if \. $(\fq-\fp)$ \. is \defn{Schur positive}, i.e.\ if
\. $(\fq-\fp)\in \rr_{\ge 0}\<s_\la\>$ \. is a nonnegative sum of Schur functions.

The \defn{Littlewood--Richardson coefficients} \ts are the structure constants
of multiplication of Schur functions:
$$
\fs_\mu \.\cdot\. \fs_\nu \ = \ \sum_{\la\in \cY} \. c^\la_{\mu,\nu} \. \fs_\la\ts.
$$
Taking the degrees shows that \ts $c^\la_{\mu,\nu}=0$ \ts unless \ts $|\la|=|\mu|\ts+\ts |\nu|$.

Recall the following \defn{principal evaluation identity} \ts due to Stanley:
\begin{equation}\label{eq:P-part}
	s_{\la/\mu}(1,q,q^2,\ldots) \ = \ \frac{1}{(1-q)(1-q^2)\cdots (1-q^{|\la/\mu|})} \,\. \sum_{T \ts \in \ts \SYT(\la/\mu)} \. q^{\maj(T)}\.,
\end{equation}
where \ts $\maj: \SYT(\la/\mu) \to \nn$ \ts is the \defng{major index} \ts of a tableau, see e.g.\ \cite[Thm~7.19.11]{Sta-EC}.

Finally, recall the following powerful result on equality of products
of Schur functions, which provides an important context for our equality conditions:


\begin{thm}[{\rm Rajan \cite{Raj}}{}]\label{t:Rajan}
Let \. $\la,\mu,\nu,\al,\be,\ga,\ldots \in \cY$.  Then:
$$
s_\la \cdot s_\mu \cdot s_\nu \. \cdots \ = \ s_\al \cdot s_\be \cdot s_\ga \. \cdots
$$
\underline{\em if and only if} \,  we have an  equality of multisets: \ts
$\{\la,\mu,\nu, \ldots\} \. = \. \{\al,\be,\ga, \ldots\}$.
\end{thm}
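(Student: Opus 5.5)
The plan is to treat this as a unique-factorization statement. The ``if'' direction is trivial, so everything lies in the ``only if'' direction. First, I would pass from symmetric functions to polynomials. Fix $n$ larger than the total number of rows of all partitions involved. The specialization $\La\to\La_n$ is injective on the span of $\{\fs_\la : \ell(\la)\le n\}$, so the identity can be read in the polynomial ring $\rr[z_1,\ldots,z_n]$, which is a UFD. Both sides are products of nonzero polynomials. The goal is then to show two things: each $\fs_\la(z_1,\ldots,z_n)$ factors in a controlled way, and the factorization determines $\la$.

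The first reduction is to strip monomial factors. If $\ell(\la)=n$, then $\fs_\la(z_1,\ldots,z_n)=(z_1\cdots z_n)\ts \fs_{\la-1^n}(z_1,\ldots,z_n)$. Since I choose $n$ strictly larger than every length, no factor is divisible by any $z_i$, so this does not arise at the chosen $n$; it is only used when passing to fewer variables below. The key claim is that for $n>\ell(\la)$ the polynomial $\fs_\la(z_1,\ldots,z_n)$ is \emph{irreducible}, possibly after enlarging $n$, in the spirit of the irreducibility of generalized Vandermonde quotients $a_{\la+\de}/a_\de$. Granting this, unique factorization in $\rr[z_1,\ldots,z_n]$ matches the factors on the two sides up to scalars. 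Schur polynomials are monic with leading monomial $z^\la$ in lexicographic order, so matched factors are equal, and $\fs_\la=\fs_\al$ forces $\la=\al$. This gives the equality of multisets.

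For irreducibility, which I expect to be the main obstacle, I would first try induction on $n$ using the expansion in the last variable. As a polynomial in $z_n$, $\fs_\la(z_1,\ldots,z_n)$ has degree $\la_1$. Its leading coefficient is $\fs_{(\la_2,\la_3,\ldots)}(z_1,\ldots,z_{n-1})$, since the $n$'s must occupy the bottoms of all $\la_1$ columns. Its constant term is $\fs_\la(z_1,\ldots,z_{n-1})$. Suppose $\fs_\la=PQ$ nontrivially. Using symmetry, the Galois group permuting the $z_i$ permutes the irreducible factors. Comparing leading coefficients and constant terms with the inductive hypothesis in $n-1$ variables, and using the $\om$-involution to get the analogous column statement, should force one of $P,Q$ to be constant. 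If this direct induction stalls, I would fall back to a second route: specialize $z_i=q^{i-1}$ and use the principal evaluation \eqref{eq:P-part} together with the hook-content formula. There, the multiset of cyclotomic factors coming from the contents and hooks can be compared on both sides of the product identity.

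As an independent consistency check, and possibly an alternative inductive route avoiding irreducibility altogether, I would extract partial information directly. In dominance order, the top and bottom Schur terms of the product are $\fs_{\la+\mu+\ldots}$ and $\fs_{\la\cup\mu\cup\ldots}$. The leading coefficient in $z_n$ shows that equality of products implies equality of products of the first-row-deleted shapes, with equal sums of first parts. Iterating these over rows and, via $\om$, over columns gives strong constraints. Still, I do not expect them alone to pin down the multiset, which is why the factorization argument is the backbone of the plan.
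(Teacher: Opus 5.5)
The paper gives no proof of this statement; it only cites Rajan, whose theorem concerns representations in finite rank, where characters can genuinely factor. So your argument has to stand on its own, and it has a real gap at exactly the step you flag.

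The irreducibility of $s_\la(z_1,\ldots,z_n)$ is asserted, and the induction on $n$ you sketch cannot prove it. Setting $z_n=0$ sends any nontrivial factorization of the homogeneous polynomial $s_\la(z_1,\ldots,z_n)$ to one of $s_\la(z_1,\ldots,z_{n-1})$ with the same degrees. So irreducibility in $n-1$ variables already implies irreducibility in $n$ variables. The inductive step is therefore free, and all the content lies in a base case at some threshold $n_0(\la)$, which you never supply. That threshold is not $\ell(\la)+1$: for example $s_{(3)}(z_1,z_2)=(z_1+z_2)(z_1^2+z_2^2)$ and $s_{(2,1)}(z_1,z_2,z_3)=(z_1+z_2)(z_1+z_3)(z_2+z_3)$. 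Hence the hypothesis you want to invoke in $n-1$ variables is false where the induction would start. The Galois and $\om$ remarks do not exclude a factorization. The principal-specialization fallback only compares pooled multisets of hooks and contents, and you give no argument that these determine the multiset of partitions.

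The gap can be closed more cheaply by not specializing at all. The statement lives in $\La_{\mathbb{Q}}=\mathbb{Q}[h_1,h_2,\ldots]$, a UFD whose units are the nonzero constants, and there every $s_\la$ with $\la\ne\varnothing$ is irreducible. To see this, induct on $|\la|$ using $\ell=\ell(\la)$ and the Jacobi--Trudi matrix $(h_{\la_i-i+j})$.

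\begin{itemize}
\item The variable $h_N$, with $N=\la_1+\ell-1$, occurs only in entry $(1,\ell)$. Hence $s_\la=\pm h_N\, s_{\la^-}+R$, where $R$ is free of $h_N$ and $\la^-=(\la_2-1,\ldots,\la_\ell-1)$.
\item If $\la^-=\varnothing$, this is linear in $h_N$ with unit coefficient, hence irreducible.
\item Otherwise, a nontrivial factorization has a nonunit factor free of $h_N$ dividing the irreducible $s_{\la^-}$. It follows that $s_{\la^-}\mid s_\la$.
\item The Desnanot--Jacobi identity for the Jacobi--Trudi matrix then gives $s_{\la^-}\mid s_{(\la_2,\ldots,\la_\ell)}\, s_{(\la_1,\ldots,\la_{\ell-1})}$. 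This is impossible, since both factors are irreducible of strictly larger degree.
\end{itemize}

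Since $s_\la=c\, s_\al$ forces $\la=\al$, unique factorization gives the theorem for nonempty partitions.

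If you prefer to stay in $n$ variables, your symmetry idea transfers this result. For $n>\max(|\la|,4)$, the $S_n$-orbits of irreducible factors have size less than $n$, so each factor is $A_n$-invariant. It is then symmetric, because an $A_n$-invariant non-symmetric polynomial has degree at least $\binom{n}{2}>|\la|$. The factorization therefore lifts to $\La$, where the argument above applies.
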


\smallskip

\subsection{Posets}\label{ss:def-posets}
Let \ts $\cPo=(X,\prec)$ \ts be a \defnb{partially ordered set}
on the ground set $X$ of size $|X|=n$, and with the partial order~``$\prec$''.
We use \ts $\cC_n$ \ts and \ts $\cA_n$ \ts to denote the chain and the antichain
on~$n$ elements.
A \defn{subposet} \ts is an induced poset \ts $\cP|_Y=(Y,\prec)$ \ts on the subset
\ts $Y\subseteq X$.

A \defn{linear extension} of \ts $\cPo$ \ts is a bijection \ts $\lE : X \to [n]$ \ts
that is order-preserving: \ts $x \prec y$ \ts implies \ts $\lE(x)<\lE(y)$, for all \ts
$x,y\in X$.
Denote by \ts $\Ec(\cPo)$ \ts the set of linear extensions of \ts $\cPo$, and let \ts
$e(\cPo):=|\Ec(\cPo)|$ \ts be the number of linear extensions.

A subset \ts $A \subseteq X$ \ts is an \defn{upper order ideal} \ts if
\ts $x \in A$ \ts and \ts $y \succ x$ \ts implies \ts $y \in A$.  Similarly,
a subset \ts $A \subseteq X$ \ts is a
\defn{lower order ideal} \ts if \. $x \in A$ \ts and \ts $y \prec x$ \ts implies \ts $y \in A$.

The \defn{comparability graph} \ts of $\cP$ is the graph \ts $\Ga(\cP)=(X,E)$,
where the edges \ts $(x,y)\in E$ \ts are pairs of comparable elements: \ts $x \prec y$ \ts
or \ts $x \succ y$.
For elements \ts $x,y \in X$, we write \. $x \. || \. y $ \. if \ts $x$ \ts is incomparable
to \ts $y$ \ts in~$\ts\cP$.

For posets \ts $\cP=(X,\prec')$ \ts and \ts $\cQ=(Y,\prec'')$,
the \defn{parallel sum} \. $\cP+\cQ=(Z,\prec)$ \. is the poset on the disjoint union
\ts $Z=X \sqcup Y$, where elements of $X$ retain the partial order of~$\cP$, elements of $Y$ retain
the partial order of~$\cQ$, and elements \ts $x \in X$ \ts and  \ts $y \in Y$ are incomparable.

Finally, for a skew shape \ts $\la/\mu$, denote by \ts $\cP_\la$ \ts a \defn{Young poset} \ts
on squares of the corresponding Young diagram:  \. $(i,j)\preccurlyeq (i',j')$ \. if \.
$i\le i'$ \. and \. $j\le j'$. Note that linear extensions of \ts $\cP_{\la/\mu}$ \ts are in
natural bijection with standard Young tableaux of shape~$\la/\mu$, so \ts $e(\cP_{\la/\mu})=|\SYT(\la/\mu)|$.

\smallskip

\subsection{Lattices}\label{ss:def-lattices}
A \defn{lattice} \ts $\cL=(\aL,\vee,\wedge)$ \ts is a set equipped with two commutative
and associative operations called \defn{join} and \defn{meet}, respectively,
 which satisfy \. $x\wedge x = x \vee x = x$, \ts
$x\wedge (y\vee x) = x$ \ts and \ts
$x\vee (y\wedge x) = x$, for all \ts $x,y\in \aL$.
A lattice \ts $\cL'=(\aL',\vee,\wedge)$ \ts is called a
\defn{sublattice} \ts of $\cL=(\aL,\vee,\wedge)$,
if \. $\aL'\subseteq \aL$, and meet/join operations coincide on~$\aL'$.
A lattice is \defn{distributive} \ts if we have in addition
$$
x \wedge (y\vee z) \. = \. (x \wedge y) \vee (x \wedge z) \quad \text{and}
\quad x \vee (y\wedge z) \. = \. (x \vee y) \wedge (x \vee z)\ts.
$$
Note that a lattice defines a poset structure: \ts $\cP_\cL = (\aL,\prec)$,
where \ts $x\preccurlyeq y$ \ts if and only if \ts $x\vee y = y$.

By a mild abuse of notation we use \ts $\cC_n$ \ts to denote a lattice
whose poset is an $n$-chain, as in the Example~\ref{ex:Chebyshev}.
We write \ts $\one=\cC_1$ \ts to denote a \defn{trivial lattice} \ts
with one element.
For distributive lattices \ts $\cL'=(\aL',\vee',\wedge')$ \ts and \ts
$\cL''=(\aL'',\vee'',\wedge'')$, their \defn{direct product} \ts
$\cL = \cL'\times \cL''$ \ts is a lattice on \ts $\aL'\times \aL''$ \ts
with
$$
(x',x'') \vee (y',y'') \, := \, (x'\vee' y',x''\vee'' y'') \quad  \ \
\text{and} \quad \ \
(x',x'') \wedge (y',y'') \, := \, (x'\wedge' y',x''\wedge'' y'').
$$
The direct product of distributive lattices is also distributive.
We use \. $\bigotimes_{i=1}^n \cL_i$ \. to denote a direct product of
$n$ lattices.
Lattice \ts $\cL$ \ts is called \defn{directly indecomposable} \ts if \ts
$\cL = \cL\times \one= \one \times \cL$ \ts are the only decompositions
as a direct product of two lattices.  A chain \ts $\cC_n$ \ts is
directly indecomposable.

The \defn{Boolean lattice} \ts $\cB_n=\big(2^{[n]},\cup,\cap)$ \ts
is the lattice of subsets of \ts $n$-set.  Clearly,
\ts $\cB_n=\bigotimes_{i=1}^n \cB_1 = \cC_2 \times \cdots \times \cC_2$ \ts ($n$ times).
The \defn{Young lattice} \. ${\mathcal{Y}}=(\cY,\vee,\wedge)$ \ts
defined above is the distributive lattice of partitions.
For a poset \ts $\cP=(X,\prec)$, the set of lower order ideals in~$\cP$
forms a distributive lattice. 
For example, partitions can be viewed as lower order ideals in \ts $\nn^2$.

\smallskip

\subsection{Birkhoff's theorem} \label{ss:def-Birk}
We say that finite distributive lattice \ts $\cL=(\aL,\vee,\wedge)$ \ts is
\defn{irreducible} \ts if it is not isomorphic to a
direct product of two nontrivial distributive lattices.
An element \ts $x \in \aL$ \ts is \defnb{join-irreducible} if $x$ is
neither the minimal element nor the join of any two smaller elements.
We denote by \ts $\aJ(\cL)$ \ts the set of join-irreducible elements of~$\ts\cL$.
Denote by \. $\cJ_{\!\cL}=(\{0,1\}^{\aJ(\cL)},\vee, \wedge)$ \. the Boolean lattice
on join irreducibles.

\smallskip

\begin{thm}[{\rm \defn{Birkhoff's representation theorem} \cite[Thm~17.3]{Bir}}{}]\label{thm:Birkhoff}
Every finite distributive lattice \ts $\cL=(\aL,\vee,\wedge)$ \ts
is isomorphic to a sublattice of  the Boolean lattice \. $\cJ_{\!\cL}$ \. under the lattice embedding
	\. $\phi: \aLr \to  \{0,1\}^{\aJr(\cL)}$ \.  given by
	\begin{align*}
		\phi(x) \ = \ (v_y)_{y \in \aJr(\cL)}\,, \quad \text{ where } \quad
		v_y  \, = \,
		\begin{cases}
			1 & \text{ if } \  y \preccurlyeq x \text{ \ in \ } \cP_\cL\., \\
			0 & \text{ otherwise}.
		\end{cases}
	\end{align*}
\end{thm}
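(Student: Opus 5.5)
The statement immediately above is Birkhoff's representation theorem, and I would prove it directly from the definition of $\phi$. The plan has three steps: check that $\phi$ preserves meets and joins, check that it is injective, and conclude that its image is a sublattice isomorphic to $\cL$.

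\emph{Meets.} For every $y$, the condition $y \preccurlyeq x\wedge x'$ holds if and only if $y\preccurlyeq x$ and $y\preccurlyeq x'$. This is just the definition of a meet, so $\phi(x\wedge x') = \phi(x)\wedge\phi(x')$ coordinatewise.

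\emph{Joins.} This is where distributivity enters, and it is the one step needing care. Clearly $y\preccurlyeq x$ or $y\preccurlyeq x'$ implies $y\preccurlyeq x\vee x'$. For the converse, let $y\in \aJ(\cL)$ with $y \preccurlyeq x\vee x'$. By distributivity,
\[
y \ = \ y\wedge(x\vee x') \ = \ (y\wedge x)\vee(y\wedge x').
\]
Since $y$ is join-irreducible and is not the minimum, it cannot be the join of two elements both strictly smaller than itself. Hence $y = y\wedge x$ or $y = y\wedge x'$, that is, $y\preccurlyeq x$ or $y\preccurlyeq x'$. Therefore $\phi(x\vee x')=\phi(x)\vee\phi(x')$.

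\emph{Injectivity.} I would first show, by induction on the height of $x$ in the finite poset $\cP_\cL$, that every $x$ is the join of the join-irreducibles below it, with the empty join equal to the minimum $\hat 0$. If $x=\hat 0$ there is nothing to prove. If $x$ is join-irreducible, it appears among the elements below itself. Otherwise $x=u\vee v$ with $u,v\prec x$, and we apply the induction hypothesis to $u$ and $v$: every join-irreducible below $u$ or $v$ is also below $x$, so the join of all join-irreducibles below $x$ is at least $u\vee v = x$, and it is at most $x$ trivially. Now if $\phi(x)=\phi(x')$, then $x$ and $x'$ lie above the same set of join-irreducibles, so both equal its join, and $x=x'$.

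Putting the steps together, $\phi$ is an injective lattice homomorphism. Its image is therefore closed under coordinatewise $\vee$ and $\wedge$, so it is a sublattice of $\cJ_{\!\cL}$, and $\phi$ is an isomorphism from $\cL$ onto it.
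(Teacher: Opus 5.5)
Your proof is correct and complete, and it is the standard argument. Distributivity makes every join-irreducible element join-prime, meaning that $y \preccurlyeq x\vee x'$ forces $y\preccurlyeq x$ or $y\preccurlyeq x'$, and this gives preservation of joins. Preservation of meets and the join-density of join-irreducibles, which gives injectivity, hold in any finite lattice and do not use distributivity.

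The paper does not prove this theorem. It only cites Birkhoff and Stanley's \emph{Enumerative Combinatorics}, $\S$3.4, where essentially your argument appears.

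The cited form gives one thing more than what is stated here, namely the remark after the theorem that $\phi(\aL)$ is exactly the set of lower order ideals of the poset $\aJ(\cL)$. If you want that too, two short steps suffice:
\begin{itemize}
\item Extend join-primeness to finite joins by induction, using that no join-irreducible lies below the minimum.
\item For a lower order ideal $I\subseteq \aJ(\cL)$, the join-irreducibles below $\bigvee I$ are then precisely the elements of $I$, so $\phi(\bigvee I)=\mathbf{1}_I$.
\end{itemize}
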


\smallskip

The theorem says that every lattice can be obtained as a lattice of
lower order ideals of a poset.  It is also called the
\defng{fundamental theorem for finite distributive lattices}, 
see e.g.\ \cite[$\S$3.4]{Sta-EC}.


\medskip

\section{Schur positive inequalities}\label{s:Schur}

In this section, we present equality conditions for the LPP and the Okounkov
inequalities for products of skew Schur functions.  We then present equality
conditions for the ADS inequality which extends the LPP inequality for straight
shapes to the AD type inequality.

\smallskip

\subsection{Lam--Postnikov--Pylyavskyy inequality}\label{ss:Schur-LPP}
The following \defng{log-supermodularity} \ts of Schur functions is both remarkable
and unexpected:

\smallskip

\begin{thm}[{\rm \defn{LPP inequality} \cite[Thm~5]{LPP07}}{}]
	\label{thm:LPP}
	Let \ts $\lambda/\mu$ \ts and \ts $\nu/\rho$ \ts be skew shapes.
	Then:
	\begin{equation}\label{eq:LPP}\tag{LPP}
		 s_{\lambda/\mu} \. \cdot \. s_{\nu/\rho}
		 \ \leqslant_s \ s_{\la/\mu \. \vee \. \nu/\rho} \. \cdot \.  s_{\la/\mu \. \wedge \. \nu/\rho}\.,
	\end{equation}
	\vskip.18cm
	\nin
	where \. $\la/\mu \. \vee \. \nu/\rho \ts := \ts (\la\vee \nu)/(\mu\vee \rho)$ \. and \.
	$\la/\mu \. \wedge \. \nu/\rho \ts := \ts (\la\wedge \nu)/(\mu\wedge \rho)$.
\end{thm}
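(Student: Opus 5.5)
The plan is to follow the approach of \cite{LPP07} through Jacobi--Trudi determinants and Temperley--Lieb immanants. Take $n$ at least the lengths of all partitions involved and let $H=(h_{j-i})_{i,j\ge 1}$ be the Toeplitz matrix of complete homogeneous symmetric functions, with $h_k=0$ for $k<0$. The Jacobi--Trudi identity gives
\[
s_{\la/\mu} \. = \. \det\big(h_{\la_i-\mu_j-i+j}\big)_{i,j=1}^n \. = \. \Delta_{I,J}(H),
\]
where $\Delta_{I,J}(H)$ is a minor of $H$, the row set is $I=\{\mu_j-j+n\}$ and the column set is $J=\{\la_i-i+n\}$. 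Sort $I=\{i_1<\dots<i_n\}$ and $I'=\{i'_1<\dots<i'_n\}$, and define $\max(I,I')$ and $\min(I,I')$ by taking entrywise maxima and minima of the sorted sequences. Under this dictionary one checks directly that $\la\vee\nu$ and $\la\wedge\nu$ correspond to $\max(J,J')$ and $\min(J,J')$, and likewise for $\mu,\rho$. So \eqref{eq:LPP} becomes the Schur positivity of the minor inequality
\[
\Delta_{\max(I,I'),\max(J,J')}(H)\.\Delta_{\min(I,I'),\min(J,J')}(H) \. - \. \Delta_{I,J}(H)\.\Delta_{I',J'}(H).
\]

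The key step is to write both products of minors through Lindstr\"om--Gessel--Viennot. A product $\Delta_{I,J}\Delta_{I',J'}$ is a signed sum over pairs of path families in the lattice-path network whose weight matrix is $H$. Superimposing the two families gives a multiset of $2n$ paths. After the usual sign-reversing involution, this multiset is recorded by a $2n\times 2n$ submatrix $M$ of $H$ with repeated rows and columns, together with a ``coloring'' of the sources and sinks. Following the Kauffman/Temperley--Lieb expansion, one proves the identity
\[
\Delta_{I,J}\Delta_{I',J'} \. = \. \sum_{\tau} \. \mathrm{Imm}_\tau(M),
\]
where $\tau$ ranges over the Temperley--Lieb (noncrossing matching) basis elements compatible with the coloring, and $\mathrm{Imm}_\tau$ is the corresponding Temperley--Lieb immanant. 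Compatibility means that each arc joins a source or sink of one color to one of the opposite color.

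Then compare the sets of compatible matchings. The coloring defining $(\max,\min)$ is the ``sorted'' coloring. Every noncrossing matching compatible with the coloring of $(I,J),(I',J')$ is also compatible with the sorted coloring. This is a purely combinatorial statement about noncrossing matchings on two interleaved sequences, and it is proved by checking that each arc still connects opposite colors after sorting. Hence the defect equals $\sum_\tau \mathrm{Imm}_\tau(M)$ over the extra matchings. Finally invoke the theorem of Rhoades--Skandera, building on Haiman, that Temperley--Lieb immanants of Jacobi--Trudi matrices are Schur positive. This gives $\leqslant_s$.

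The main obstacle is the expansion step: establishing the coloring-compatibility description of the product of two minors in the Temperley--Lieb basis, and the inclusion of the compatible sets. I would first try to handle this by induction on the number of ``crossings'' between the colorings, using local swaps of adjacent sources of different colors. The shape bookkeeping (skew shapes, padding by zeros, and checking that $\max$ and $\min$ of sorted index sets match $\vee$ and $\wedge$ of partitions) is routine.
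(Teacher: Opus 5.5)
Your outline is correct and is essentially the original argument of \cite{LPP07}, which is all the paper relies on for this statement, since it cites the theorem rather than reproving it. The ingredients match: Jacobi--Trudi minors of the Toeplitz matrix; expansion of a product of two minors into Temperley--Lieb immanants of a generalized Jacobi--Trudi matrix, summed over compatible matchings; inclusion of the compatible matchings when passing to $(\max,\min)$; and Schur positivity of these immanants, since Rhoades--Skandera identify them with Kazhdan--Lusztig immanants and Haiman's theorem covers those even with repeated rows and columns. The expansion step you flag as the main obstacle does not need your speculative induction on crossings: it is a published result (Rhoades--Skandera for complementary minors, extended to repeated rows and columns in \cite{LPP07}) and can simply be cited.
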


\smallskip

An injective proof of the monomial positive version of \eqref{eq:LPP} was given
in~\cite[Thm~4.5]{LP07}, see also~$\S$\ref{ss:finrem-inj}.
%
%
In~\cite[$\S$4.1]{CP-multi}, the authors deduced \eqref{eq:LPP}
from a multivariate generalization of~\eqref{eq:AD}.
This approach turned out to be crucial for this paper.


For straight shapes, 
the inequality \eqref{eq:LPP} states:
	\begin{equation}\label{eq:LPP-straight}
		 s_{\lambda} \. \cdot \. s_{\nu}
		 \ \leqslant_s \ s_{\la \vee \nu} \. \cdot \.  s_{\la\wedge\nu}\..
	\end{equation}
The equality conditions of \eqref{eq:LPP-straight} are straightforward in this case:
\begin{equation}\label{eq:LPP-straight-eq}
\la \subseteq \nu \qquad \text{or} \qquad \nu \subseteq \la\ts.
\end{equation}
This dichotomy is an immediate consequence of Rajan's Theorem~\ref{t:Rajan}.

\smallskip
{\small
\begin{rem}\label{rem:LPP-LR}
Inequality \eqref{eq:LPP-straight} has a natural
interpretation in terms of LR coefficients: \.
$c^\tau_{\la,\nu}  \le  c^\tau_{\la \vee \nu, \ts\la \wedge \nu}\.$,
cf.~$\S$\ref{ss:finrem-more}.
In \cite{Pak-OPAC}, the second author conjectured that the defect of this
inequality is not in~$\SP$, a conjecture recently disproved in \cite{PS26}.
See also \cite[$\S$4.7]{PPY19} for implications of \eqref{eq:LPP-straight}
to the maximal LR coefficient.
\end{rem}
}

\smallskip

The following results show that the equality conditions for \eqref{eq:LPP} generalize
the dichotomy in~\eqref{eq:LPP-straight-eq}.  A skew shape is called \defn{connected}
\ts if the squares in the corresponding Young diagram are rook-connected.
\smallskip

\begin{thm}[{\rm \defn{Equality conditions for the LPP inequality}\ts}{}]\label{thm:LPP-eq}
		Let \ts ${\color{blue}\la}/{\color{blue}\mu}$ \ts and \ts ${\color{red}\nu}/{\color{red}\rho}$ \ts be skew shapes,
such that \. $(\blue{\la}\vee \red{\nu})/(\blue{\mu} \wedge \red{\rho})$ \.  is  connected.
Then \eqref{eq:LPP} is an equality \ \, \underline{\em if and only if} \   	
\begin{equation}\label{eq:LPP-eq-comb}\tag{LPP-eq-cond}
\text{either} \qquad 	
\left\{\. \aligned   {\color{blue}\la} &\subseteq {\color{red}\nu} \\
{\color{blue}\mu} & \subseteq {\color{red}\rho}
\endaligned \right.
\qquad \text{or }
\qquad
\left\{\. \aligned
{\color{red}\nu} & \subseteq  {\color{blue}\la} \\ 	
{\color{red}\rho} &\subseteq {\color{blue}\mu}
\endaligned\right.\hskip2.cm
\end{equation}
\end{thm}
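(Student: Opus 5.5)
The "if" direction is immediate: if $\la\subseteq\nu$ and $\mu\subseteq\rho$, then $\la/\mu\vee\nu/\rho=\nu/\rho$ and $\la/\mu\wedge\nu/\rho=\la/\mu$, so both sides of \eqref{eq:LPP} coincide; the other case is symmetric. For the converse, we would specialize the Schur-positive equality to a numerical equality and then invoke Theorem~\ref{thm:AD-eq}.

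Suppose equality holds in \eqref{eq:LPP}. Evaluating at finitely many variables $z_1,\dots,z_t$ (or taking a coefficient of a monomial) gives an equality of generating functions of pairs of semistandard tableaux. Following the derivation of \eqref{eq:LPP} from the AD inequality in \cite[$\S$4.1]{CP-multi}, we would encode a pair of tableaux as a pair of lattice points. Concretely, a tableau $A\in\SSYT(\la/\mu,t)$ is a chain of partitions $\mu=\ka^0\subseteq\ka^1\subseteq\dots\subseteq\ka^t=\la$ with horizontal-strip differences. Such chains form a subset of the distributive lattice $\cY^{t+1}$, closed under componentwise $\vee,\wedge$ (the horizontal-strip interlacing is preserved). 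Define $\fa$ as the indicator of chains from $\mu$ to $\la$, $\fb$ from $\rho$ to $\nu$, $\fc$ from $\mu\vee\rho$ to $\la\vee\nu$, and $\fd$ from $\mu\wedge\rho$ to $\la\wedge\nu$, each weighted by the monomial $z^{\text{weight}}$ (the weights multiply correctly since $|\ka\vee\kappa'|+|\ka\wedge\kappa'|=|\ka|+|\kappa'|$). Then \eqref{eq:AD-cd} holds, and \eqref{eq:AD-eq} is exactly our equality specialized at positive $z_i$. By Theorem~\ref{thm:AD-eq}, after restricting to the lattice closure of the supports, the four functions cross-factor: the lattice splits as $\cL_1\times\cL_2$, with $\fa,\fc$ sharing the $\cL_1$ factor $\ff_1$ and $\fa,\fd$ sharing the $\cL_2$ factor $\ff_2$.

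Next we extract shapes from the cross-factoring. The endpoints $\ka^0,\ka^t$ are fixed on each support, so the product decomposition must be compatible with coordinates of $\cY^{t+1}$. Choosing $t$ large, say $t\ge|\la\vee\nu|$ so that standard fillings are available, we would argue that the decomposition $\cL\simeq\cL_1\times\cL_2$ corresponds to a partition of the set of squares of $(\la\vee\nu)/(\mu\wedge\rho)$ into two parts $S_1,S_2$ that do not interact: squares in $S_1$ can be added independently of those in $S_2$, so no square of $S_1$ is rook-adjacent to one in $S_2$. This is the step where the structure of join-irreducibles (Birkhoff, Theorem~\ref{thm:Birkhoff}) enters: join-irreducibles of the chain lattice are indexed by (square, time) pairs, and directly decomposing the lattice splits them into two mutually incomparable classes. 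Connectedness of $(\la\vee\nu)/(\mu\wedge\rho)$ then forces one part to be essentially trivial in the square direction, i.e.\ all squares lie in one factor. Cross-factoring then says that $\fa=\alpha\ff_1\ff_2$ with $\ff_2$ constant in the relevant coordinates, so $\fa$ and $\fc$ agree up to scalar (or $\fa$ and $\fd$ do). Comparing the supports gives $\la/\mu=(\la\vee\nu)/(\mu\vee\rho)$ or $\la/\mu=(\la\wedge\nu)/(\mu\wedge\rho)$ as tableau sets. This yields $\la=\la\vee\nu$ and $\mu=\mu\vee\rho$ (the second case of \eqref{eq:LPP-eq-comb}), or the first case respectively.

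The main obstacle will be this middle step: translating an abstract lattice decomposition of the lattice closure of the supports into a geometric split of squares. Its difficulty lies in showing that the closure is the full chain lattice between the relevant bounds, so that the decomposition is the coordinatewise one. A further subtlety is that cross-factoring could in principle split the time coordinates rather than the squares. We would handle this by first using the $\mathfrak{S}_t$-symmetry of Schur functions to rule out time-splittings, and then using connectedness, where two adjacent squares in different factors would violate the interlacing constraints.
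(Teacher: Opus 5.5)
Your reduction is fine as far as it goes, and it is the paper's plan: specialize the equality at a positive $\bz$, encode pairs of tableaux so that \eqref{eq:AD-cd} holds, and invoke the AD equality conditions. Your chain-of-partitions encoding does satisfy \eqref{eq:AD-cd}: interlacing is preserved by componentwise $\vee,\wedge$, and the weights match. The gap is the middle step, which carries the whole content of the theorem, and the mechanism you sketch for it does not work.

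First, Theorem~\ref{thm:AD-eq} only gives an abstract splitting of the lattice closure of the supports. In your encoding the coordinates are the entries $\kappa^s_i$, indexed by (time, row). So even with something like Lemma~\ref{lem:lattice-category} you would at best split those coordinates, not squares. Your appeal to the symmetry of Schur functions cannot exclude time-splittings: that symmetry is realized by Bender--Knuth-type bijections, not by lattice automorphisms permuting time coordinates. So it says nothing about how the support lattice factors. The paper avoids all this by encoding tableaux as fillings, i.e.\ points of a product of chains on $[N]\cup\{\pm\infty\}$ indexed by the squares of $(\la\vee\nu)/(\mu\wedge\rho)$. Theorem~\ref{thm:AD-chains} then immediately gives a subset $A$ of squares and the factorizations \eqref{eq:umi-1}--\eqref{eq:umi-4}, with no need to identify the closure.

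Second, even granting a split of squares, ``adjacent squares in different parts violate interlacing'' holds only when both squares lie in a common one of the four shapes $\la/\mu$, $\nu/\rho$, $\la/\mu\vee\nu/\rho$, $\la/\mu\wedge\nu/\rho$. That is Lemma~\ref{lem:LPP-adjacent}, proved by mixing the row-index tableau with its shift by one. Connectivity of the big shape does not supply such pairs. For $\la=(1)$, $\mu=\varnothing$, $\nu=(2)$, $\rho=(1)$, the big shape is a connected domino, but the four shapes are $(1)$, $(2)/(1)$, $(2)/(1)$, $(1)$, so nothing couples its two squares.

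What the paper uses instead:
\begin{enumerate}
\item Lemma~\ref{lem:LPP-adjacent-2}: membership of a square in $A$ dictates whether $\la/\mu$ is paired there with the $\wedge$-shape or the $\vee$-shape.
\item Together with Lemma~\ref{lem:LPP-adjacent}, this gives Lemma~\ref{lem:R1R2} (every row satisfies \eqref{eq:R1} or \eqref{eq:R2}) and Lemma~\ref{lem:R3}: a row satisfying only \eqref{eq:R1} lies in $A$ within both $\la/\mu$ and $\nu/\rho$, and a row satisfying only \eqref{eq:R2} lies outside $A$.
\item Finally, take rows $i<j$ satisfying only \eqref{eq:R1} and only \eqref{eq:R2} respectively, with all rows in between satisfying both. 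Connectivity, in the form $\min\{\mu_i,\rho_i\}<\max\{\la_{i+1},\nu_{i+1}\}$, gives $\mu_i<\la_{i+1}$. So column $\mu_i+1$ contains vertically adjacent squares of $\la/\mu$, along which membership in $A$ propagates down to row $j$. This contradicts Lemma~\ref{lem:R3} at row $j$.
\end{enumerate}

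Your assertion that connectivity forces all squares into one factor is essentially the theorem itself. It needs this row-by-row analysis, which your proposal does not provide.
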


\smallskip

We prove Theorem~\ref{thm:LPP-eq} in Section~\ref{sec:LPP} as
a corollary of Theorem~\ref{thm:AD-eq}.  Clearly, the equality conditions
for \eqref{eq:LPP} are the same as for monomial positive version of~\eqref{eq:LPP},
cf.~$\S$\ref{ss:finrem-inj}. Our proof is based on the technology we developed
in~\cite{CP-multi}.

\smallskip
{\small
\begin{rem}\label{rem:intro-LPP}
The connectivity assumption does not, in fact, make Theorem~\ref{thm:LPP-eq} less general,
as one can consider each connected component separately.  In that case, the inclusions in
\eqref{eq:LPP-eq-comb} can go in different directions for different connected components.
Note also that there is no known extension of Rajan's Theorem~\ref{t:Rajan}
to products of skew Schur function.  In fact, even characterizing coincidences
\. $s_{\la/\mu} = s_{\al/\be}$ \. among skew Schur functions, is a major open problem,
see \cite{BTW06,RSW09}.  Curiously, this problem can also be stated in terms of LR coefficients:
find all skew shapes \ts $\la/\mu$, $\al/\be$, such that
\begin{equation}\label{eq:LPP-LR-coinc}
c^\la_{\mu,\nu} \, = \,  c^\al_{\be,\nu}\quad
\text{for all} \quad \nu \vdash |\la|  - |\mu| = |\al|  - |\be|\ts.
\end{equation}
\end{rem}
}

\smallskip

\subsection{Okounkov's inequality}\label{ss:Schur-Oko}
The following Schur positive inequality was conjectured by Okounkov, who proved
the monomial positive version in the straight shape case \cite[p.~269]{Oko97}.
The conjecture was proved and generalized by Lam--Postnikov--Pylyavskyy \cite[Thms~5,\ts 11]{LPP07}.

\smallskip

\begin{thm}[{\rm \defn{Okounkov's inequality} \cite{Oko97,LPP07}}{}]\label{thm:Oko}
Let \ts $\lambda/\mu$ \ts and \ts $\nu/\rho$ \ts be skew shapes.  Then:
\begin{equation}\label{eq:Oko}\tag{Ok}
	  s_{\lambda/\mu} \.\cdot \. s_{\nu/\rho} \ \leqslant_{s} \ s_{\lceil \al/\be\rceil}
\. \cdot\.
s_{\lfloor \al/\be\rfloor}\.,
\end{equation}
where \. $\al:=(\lambda+\nu)/2$, \. $\be:=(\mu+\rho)/2$, \.
$\lfloor \al/\be\rfloor:=\lfloor \al\rfloor/\lfloor \be\rfloor$ \.
and \. $\lceil \al/\be\rceil := \lceil \al\rceil/\lceil \be\rceil$.
\end{thm}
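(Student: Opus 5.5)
Okounkov's inequality \eqref{eq:Oko} is stated with attribution to \cite{Oko97,LPP07}, so the task is to prove it as a consequence of the LPP inequality (Theorem~\ref{thm:LPP}). The reduction is the standard one from \cite{LPP07}: two-variable log-concavity statements follow from log-supermodularity after reindexing the partitions.

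Write $\la,\nu$ and $\mu,\rho$ as infinite sequences (padded with zeros). The first step is to build two auxiliary shapes whose join and meet are exactly $\lceil\al\rceil/\lceil\be\rceil$ and $\lfloor\al\rfloor/\lfloor\be\rfloor$, while their product of Schur functions equals $s_{\la/\mu}s_{\nu/\rho}$. To do this, I would pass to the description of a skew Schur function via the Jacobi--Trudi determinant $s_{\la/\mu}=\det[h_{\la_i-\mu_j-i+j}]$. The point is that $s_{\la/\mu}$ depends only on the shifted sequences $(\la_i-i)$ and $(\mu_j-j)$, and each of these is strictly decreasing.

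The key device from \cite[Thm~11]{LPP07} is to interleave: consider the shapes $\la/\mu$ and $\nu/\rho$ as "half" of a doubled shape. Concretely, set $\tilde\la:=(\la_1+\nu_1,\ldots)$-type combinations by merging the sequences $2\la_i$ and $2\nu_i$ after suitable shifts. Alternatively, one can use the identity $s_{\la/\mu}(x)s_{\nu/\rho}(x)=s_{(\la/\mu)*(\nu/\rho)}$ for disjoint placement, and then apply the LPP inequality to the pair of shapes
\[
A:=\lambda/\mu,\qquad B:=\nu/\rho
\]
repeatedly. Each application replaces $(\la,\nu)$ by $(\la\vee\nu,\la\wedge\nu)$ coordinatewise, which does not move toward the average. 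So instead I would use the LPP "shifted" version: apply Theorem~\ref{thm:LPP} to $\la/\mu$ and to the translate of $\nu/\rho$ by a vector $t$. Translating a skew shape by adding a constant $c$ to all parts of both $\nu$ and $\rho$ leaves $s_{\nu/\rho}$ unchanged. More generally, any rearrangement of rows that preserves the determinant also leaves it unchanged.

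Choosing the translation so that the max/min of $\la_i$ and $\nu_i+c_i$ equal $\lceil(\la_i+\nu_i)/2\rceil$ and $\lfloor(\la_i+\nu_i)/2\rfloor$ is not possible with a constant shift. The main obstacle is therefore this reindexing step. In \cite{LPP07} it is handled by a cyclic reordering trick: replace each partition by a partition with parts doubled and interlaced, so that LPP applied in the enlarged space yields the ceiling/floor averages after projecting back. The ingredients are the principal specialization-free determinantal identity and Schur positivity being preserved under the plethystic projection. I would follow \cite[\S3--4]{LPP07} there, verifying that the interlaced shapes are genuine skew shapes (nonincreasing parts). This verification uses $\al$, $\be$ integer-halves and strict decrease of $\la_i-i$, and I expect it to be the main technical point. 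Schur positivity of the final defect then follows directly from \eqref{eq:LPP}.
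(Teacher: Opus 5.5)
The paper does not prove \eqref{eq:Oko}. It quotes the inequality from \cite{Oko97,LPP07} and later uses it as a black box in the proof of Theorem~\ref{thm:Oko-eq-gen}.

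Taken on its own, your proposal has a genuine gap exactly at the step you call the main obstacle. You correctly observe that one application of \eqref{eq:LPP} to $\lambda/\mu$ and a constant translate of $\nu/\rho$ cannot produce $\lceil\alpha\rceil/\lceil\beta\rceil$ and $\lfloor\alpha\rfloor/\lfloor\beta\rfloor$. What you put in its place (``parts doubled and interlaced'', a ``plethystic projection'', a ``principal specialization-free determinantal identity'') defines no auxiliary shapes. It states no identity relating their Schur functions to $s_{\lambda/\mu}\,s_{\nu/\rho}$, and gives no reason why Schur positivity would survive the projection. There is nothing there that can be checked, and the Jacobi--Trudi discussion never feeds into any step. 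Deferring to \cite{LPP07} for an unspecified trick is a citation, not a proof.

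The missing idea is simpler: you dismissed iteration too early. Iterating \eqref{eq:LPP} without shifts is stationary, but with a fresh constant shift at each step it converges to the rounded average.

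Encode a skew shape with at most $n$ rows as $p=(\lambda,\mu)\in\mathbb{Z}^{2n}$, so that $s_p=s_{p+c\mathbf{1}}$; first shift everything up so all entries stay nonnegative. Applying Theorem~\ref{thm:LPP} to $p-k\mathbf{1}$ and $q$ gives
\[
s_p\,s_q \ \leqslant_s \ s_{(p-k\mathbf{1})\vee q}\cdot s_{p\wedge (q+k\mathbf{1})}\,, \qquad k\ge 0 .
\]
The new pair $(p',q')$ satisfies $p'+q'=p+q$. Writing $d=p-q$, its difference is $d'_i=d_i-2k$ if $d_i\ge k$, and $d'_i=-d_i$ if $d_i<k$.

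Translate one factor so that $\min_i d_i=0$, let $M=\max_i d_i$, and take $k=\lceil M/2\rceil$. Then $d'\in[-k,0]^{2n}$, so the spread of $p-q$ drops from $M$ to at most $\lceil M/2\rceil$.

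After finitely many steps, and a final translation, you reach $p^*-q^*\in\{0,1\}^{2n}$. That is, $p^*=\lceil (p^*+q^*)/2\rceil$ and $q^*=\lfloor (p^*+q^*)/2\rfloor$, where $p^*+q^*=p+q+C\mathbf{1}$ for some integer $C$. If $C$ is even, this is the pair in \eqref{eq:Oko} up to translation. If $C$ is odd, the identities $\lceil (x+1)/2\rceil=\lfloor x/2\rfloor+1$ and $\lfloor (x+1)/2\rfloor=\lceil x/2\rceil$ (used in the proof of Theorem~\ref{thm:Oko-eq-gen}), together with translation invariance, show that the two factors are merely swapped. Transitivity of $\leqslant_s$ then gives \eqref{eq:Oko}.

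This is the $L$-convexity mechanism the paper alludes to: translation-submodularity implies midpoint log-concavity. A single step of it is exactly how the proof of Theorem~\ref{thm:Oko-eq-gen} combines \eqref{eq:LPP} with \eqref{eq:Oko}.
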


\smallskip

For straight shapes \ts $\la,\nu$ \ts with even parts, and \ts $\mu=\rho=\emp$, we get
a \defng{log-concave inequality}~$:$
\begin{equation}\label{eq:Oko-straight-even}
s_{\lambda} \cdot s_{\nu} \ \leqslant_{s} \ s_{(\la+\nu)/2}^2\..
\end{equation}
By Rajan's Theorem~\ref{t:Rajan}, the equality conditions are straightforward
in this case: $\la=\nu$. However, already for the general straight shapes (i.e.,
without the even part requirement), the equality conditions are more involved.
More precisely, the inequality
\begin{equation}\label{eq:Oko-straight-gen}
s_{\lambda} \cdot s_{\nu} \ = \ s_{\lceil(\la+\nu)/2\rceil} \cdot s_{\lfloor(\la+\nu)/2\rfloor}\..
\end{equation}
has equality cases coming from the ``rounding'':
\begin{equation}\label{eq:Oko-straight-eq}
\la \. - \. \nu \ = \ \pm (\ve_1,\ve_2,\ldots), \quad \text{where} \quad \ve_1,\ve_2,\ldots \in \{0,1\}.
\end{equation}
The following result gives complete equality conditions for \eqref{eq:Oko}, and generalizes \eqref{eq:Oko-straight-eq}.

\smallskip

\begin{thm}[{\rm \defn{Equality conditions for the Okounkov inequality}\ts}{}]\label{thm:Oko-eq}
Let \ts $\lambda/\mu$ \ts and \ts $\nu/\rho$ \ts be connected skew shapes.
Then \eqref{eq:Oko} is an equality \
	\underline{\em if and only if} \   	
\begin{align}\label{eq:Oko-eq-comb}
\la \ts - \ts \nu \, = \, a\ts (\ve_1,\ve_2,\ldots) \. + \. (b,b,\ldots), \quad
\mu \ts - \ts \rho \, = \, a\ts (\ve'_1,\ve'_2,\ldots) \. + \. (b,b,\ldots),
%
\end{align}
for some \. $a \in \{\pm 1\}$, \. $b \in \zz$, \. and \.
$\ve_1,\ve_1',\ve_2,\ve_2',\ldots \in \{0,1\}$.
\end{thm}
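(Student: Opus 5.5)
We plan to derive Theorem~\ref{thm:Oko-eq} from the LPP equality conditions (Theorem~\ref{thm:LPP-eq}), exactly as Lam--Postnikov--Pylyavskyy derive \eqref{eq:Oko} from \eqref{eq:LPP}. The standard reduction uses the shift invariance of skew Schur functions: translating a skew diagram does not change $s_{\la/\mu}$. Concretely, we replace $\la/\mu$ by a shifted shape $\tilde\la/\tilde\mu$, with $\tilde\la_i = \la_i + c_i$ and $\tilde\mu_i=\mu_i+c_i$ for a suitable weakly decreasing integer sequence $(c_i)$. Similarly we replace $\nu/\rho$ by $\tilde\nu/\tilde\rho$ with shifts $(d_i)$. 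The shifts are chosen so that $\tilde\la\vee\tilde\nu = \lceil\al\rceil$ up to a common translation, and likewise for the meet, $\lfloor \al \rfloor$ and $\lceil\be\rceil$, $\lfloor\be\rfloor$. The natural choice is $c_i=\lceil(\nu_i-\la_i)/2\rceil$-type corrections, adjusted row by row so that the rows of the two shapes are ``interleaved'': the longer row of one shape coincides with the ceiling of the average. We then apply \eqref{eq:LPP} to the shifted shapes. Since Schur functions are translation invariant, the right side becomes $s_{\lceil\al/\be\rceil}s_{\lfloor\al/\be\rfloor}$, and equality in \eqref{eq:Oko} becomes equality in \eqref{eq:LPP} for the shifted pair.

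Key steps, in order. First, we verify the precise form of the LPP shifting from \cite{LPP07} and check that the shifted sequences are still partitions. If the direct shift fails to be monotone, we instead pass to row-by-row comparisons, which is where LPP use the general skew statement. Second, we check connectivity: we need $(\tilde\la\vee\tilde\nu)/(\tilde\mu\wedge\tilde\rho)$ connected in order to invoke Theorem~\ref{thm:LPP-eq}. This should follow from connectedness of $\la/\mu$ and $\nu/\rho$ together with the fact that the shifted rows overlap in the averaged shape. If it fails, we use Remark~\ref{rem:intro-LPP} and argue componentwise, showing that differing inclusion directions across components contradict connectedness of the original shapes.

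Third, we translate \eqref{eq:LPP-eq-comb} back. Suppose $\tilde\la\subseteq\tilde\nu$ and $\tilde\mu\subseteq\tilde\rho$. Since the shifted rows are averages rounded up or down, each row difference $\tilde\nu_i-\tilde\la_i$ lies in $\{0,1\}$ after accounting for the global translation. This gives $\nu-\la = \ve + (b,b,\ldots)$ and $\rho-\mu=\ve'+(b,b,\ldots)$ with the same $b$, because the same translation is applied to both the outer and inner shapes. The opposite inclusion gives $a=-1$. For the converse direction, we verify directly that shapes satisfying \eqref{eq:Oko-eq-comb} produce the multiset equality $\{\la/\mu,\nu/\rho\}=\{\lceil\al/\be\rceil,\lfloor\al/\be\rfloor\}$ up to horizontal translation by $b/2$ rounding. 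In the case $b$ odd we must check this via the shift more carefully. Equality then follows.

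The main obstacle is the bookkeeping of the shift. The rows must be shifted by possibly different amounts (since $\la_i-\nu_i$ varies in parity and sign), and the single constant $b$ appearing for both $\la-\nu$ and $\mu-\rho$ must emerge from connectivity. We expect connectedness to force the row shifts to be constant across rows, because adjacent rows share columns. This is precisely where the hypothesis that both shapes are connected enters, and we would first try to prove that any variation in the shift would disconnect $\la/\mu$ or $\nu/\rho$.
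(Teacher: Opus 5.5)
\textbf{There is a genuine gap: the reduction at the heart of your plan is not valid.}

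Skew Schur functions are invariant only under a \emph{uniform} translation, i.e.\ the same integer added to every part of both $\la$ and $\mu$. A row-dependent shift $(c_i)$ changes how consecutive rows overlap, hence changes the shape and in general the function. For instance, $(2,2)/(1)$ is connected with $s_{(2,2)/(1)}=s_{21}$. The weakly decreasing shift $c=(1,0)$ turns it into the disconnected shape $(3,2)/(2)$, with $s_{(3,2)/(2)}=s_1s_2=s_3+s_{21}$.

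Your $c_i$ are designed precisely so that the shifted rows become rounded averages. So $\tilde\la/\tilde\mu$ and $\tilde\nu/\tilde\rho$ are genuinely different shapes, and the identity $s_{\la/\mu}\,s_{\nu/\rho}=s_{\tilde\la/\tilde\mu}\,s_{\tilde\nu/\tilde\rho}$ on which all three of your steps rest is false. Step~3 is also circular. The differences $\tilde\nu_i-\tilde\la_i$ lie in $\{-1,0,1\}$ by construction, whatever $\la-\nu$ is. They therefore say something about $\la-\nu$ only when $c_i-d_i$ is constant, i.e.\ for uniform shifts.

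Uniform shifts do not rescue the plan, because a single LPP step does not in general reach the right-hand side of \eqref{eq:Oko}. Take $\la=(5,3,1)$, $\nu=(1,1,1)$, $\mu=\rho=\varnothing$. The right side of \eqref{eq:Oko} is $s_{321}^2$. The LPP right-hand sides obtainable by uniform shifts are $s_{531}s_{111}$, $s_{421}s_{221}$, $s_{311}s_{331}$ and $s_{211}s_{431}$, and none equals $s_{321}^2$ by Theorem~\ref{t:Rajan}. So equality in \eqref{eq:Oko} cannot be transported to equality in one LPP instance with the Okounkov right-hand side. There is also nothing for connectivity to ``force'': the shifts are yours to choose.

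\textbf{The missing idea is to sandwich one LPP step between the two sides of \eqref{eq:Oko}, using \eqref{eq:Oko} itself as the second link.} After translating each shape so that all parts are large, put $\la^\circ:=\la+c\cdot 1^\ell$ and $\mu^\circ:=\mu+c\cdot 1^\ell$. Then
\begin{align*}
s_{\la/\mu}\,s_{\nu/\rho} \, &= \, s_{\la^\circ/\mu^\circ}\,s_{\nu/\rho} \ \leqslant_s \ s_{(\la^\circ/\mu^\circ)\vee(\nu/\rho)}\,s_{(\la^\circ/\mu^\circ)\wedge(\nu/\rho)} \\
&\leqslant_s \ s_{\lceil\al/\be\rceil}\,s_{\lfloor\al/\be\rfloor}\,.
\end{align*}
The first inequality is \eqref{eq:LPP}. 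The second is \eqref{eq:Oko} applied to the join/meet pair. Since $\max+\min$ equals the sum, that pair has half-sums $(\la^\circ+\nu)/2$ and $(\mu^\circ+\rho)/2$. A uniform shift of one shape merely interchanges and translates the ceiling and floor shapes, so this right-hand side is the original one. Hence equality in \eqref{eq:Oko} forces equality in the LPP step for every~$c$.

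Now suppose \eqref{eq:Oko-eq-comb} fails. Then two entries of $(\la-\nu,\mu-\rho)$ differ by at least~$2$. So some $c$ makes $(\la^\circ-\nu,\mu^\circ-\rho)$ have both a strictly positive and a strictly negative entry, and neither alternative of \eqref{eq:LPP-eq-comb} holds. Theorem~\ref{thm:LPP-eq} then forces $(\la^\circ\vee\nu)/(\mu^\circ\wedge\rho)$ to be disconnected. That gives a row $i$ with $\mu_i\ge\la_{i+1}$ and $\rho_i\ge\nu_{i+1}$, since uniform shifts do not change these differences. This contradicts connectivity in the form \eqref{eq:Oko-Irr}.

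So the forward direction is a proof by contradiction with a single well-chosen uniform shift, and connectivity is used only to exclude the disconnected case. Your converse direction is fine, since the right-hand side of \eqref{eq:Oko} is invariant under uniform shifts of either shape.
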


\smallskip

Here the parameter \ts $b$ \ts corresponds to the shift of skew shapes which leaves
skew Schur functions invariant: \ts $s_{\la/\mu} = s_{(\la+\one)/(\mu+\one)}\ts$, where \ts $\one=(1,1,\ldots)$.
We prove Theorem~\ref{thm:Oko-eq} in Section~\ref{sec:proof-Oko-eq}, by combining a finer version of
equality conditions for the LPP inequality (Theorem~\ref{thm:LPP-eq}),
with the framework of $L$-convexity, inspired by the recent work of Speyer~\cite{Spe26}.
In fact, we prove a slightly more general result, weakening the connectivity assumption
(Theorem~\ref{thm:Oko-eq-gen}).

\smallskip



\subsection{Ahlswede--Daykin--Schur inequality}\label{ss:Schur-ADS}
Most recently, Chen, Soskin and the authors introduced
the following \defn{Ahlswede--Daykin--Schur {\rm (ADS)} inequality},
which can be viewed as a Schur positive version of the AD inequality.

\smallskip

\begin{thm}[{\rm \defna{ADS inequality}~\cite[Thm~1.6]{CCPS26}}{}]\label{thm:ADS}
	 Let \. $\fa,\fb,\fc,\fd: \cY \to \rr_{\geq 0}$ \ts be functions satisfying
\begin{equation}\label{eq:ADS-cond}\tag{{\defn{\em ADS-cond}}}
 	\fa({\lambda}) \. \fb({\mu}) \ \leq \  \fc({\lambda \vee \mu}) \. \fd({\lambda \wedge \mu})
 \quad \text{ for all \ \ \. $\lambda,\mu \in \cY$.} \qquad
 \end{equation}
\nin
\ Then we have:
	 	\begin{equation}\label{eq:ADS}\tag{\defn{\em ADS}}
 \sum_{\lambda \in \cY} \. \fa({\lambda}) \. \fs_{\lambda} \,\cdot \,
 \sum_{\lambda \in \cY} \. \fb({\lambda}) \. \fs_{\lambda}
		 \ \leqslant_{\fs} \
		 \sum_{\lambda \in \cY} \. \fc({\lambda}) \. \fs_{\lambda}
        \,\cdot\,\sum_{\lambda \in \cY} \. \fd({\lambda}) \. \fs_{\lambda}
	 \end{equation}
\end{thm}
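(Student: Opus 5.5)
The plan is to reduce \eqref{eq:ADS} to a family of scalar inequalities, one for each Schur coefficient, and to prove each of these by an AD-type argument on a finite distributive lattice. For the finite-lattice step I intend to use the multivariate generalization of \eqref{eq:AD} from \cite[$\S$4.1]{CP-multi}, which was used there to derive \eqref{eq:LPP}. I have not checked that its exact hypotheses match what is needed here, so that match is the main point to verify.

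\emph{Step 1: reduce to finitely many partitions and to scalar inequalities.} Fix a partition $\tau$. Since $c^\tau_{\la,\nu}=0$ unless $\la,\nu\subseteq\tau$, the coefficient of $\fs_\tau$ involves only partitions in the finite distributive sublattice $\cY_\tau:=\{\la\in\cY:\la\subseteq\tau\}$. The Schur positivity claim is therefore equivalent to
$$
\sum_{\la,\nu\in\cY_\tau} \fa(\la)\.\fb(\nu)\. c^\tau_{\la,\nu} \ \le \ \sum_{\la,\nu\in\cY_\tau} \fc(\la)\.\fd(\nu)\. c^\tau_{\la,\nu} \qquad \text{for every } \tau.
$$
Restricting $\fa,\fb,\fc,\fd$ to $\cY_\tau$ preserves \eqref{eq:ADS-cond}, because $\cY_\tau$ is closed under $\vee$ and $\wedge$.

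\emph{Step 2: make the kernel multiplicative.} Rather than working with the coefficients $c^\tau_{\la,\nu}$ directly, I would use $c^\tau_{\la,\nu}=\<\fs_{\tau/\la},\fs_\nu\>$. This writes the left side as the $\fs_\tau$-coefficient of the product $\sum_\la\fa(\la)\fs_\la\cdot\sum_\nu\fb(\nu)\fs_\nu$. I would then pass to a monomial expansion in finitely many variables, where each $\fs_\la$ becomes a sum over tableaux (equivalently, over families of nonintersecting lattice paths). In that form the pairs $(\la,A)$, with $A$ a tableau of shape $\la$, carry a lattice structure extending that of $\cY$, given by rowwise and entrywise $\max/\min$ in the lattice-path encoding. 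This is exactly the structure that makes \eqref{eq:LPP} provable via AD.

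\emph{Step 3: apply the multivariate AD inequality.} I would apply the inequality of \cite{CP-multi} with weights $\fa(\la)z^A$, $\fb(\nu)z^B$, $\fc(\la)z^A$, $\fd(\nu)z^B$. Its hypothesis should reduce to two facts. The first is \eqref{eq:ADS-cond}, applied on the shape coordinates. The second is the compatibility $z^A z^B = z^{A\vee B}z^{A\wedge B}$, together with the requirement that $A\vee B$ and $A\wedge B$ are valid tableaux of shapes $\la\vee\nu$ and $\la\wedge\nu$; this is the LPP/cell-transfer lemma for straight shapes. The conclusion gives monomial positivity.

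\emph{Step 4: upgrade monomial positivity to Schur positivity.} I would apply the same argument to coefficients extracted by Jacobi--Trudi determinants, i.e.\ to the $\fs_\tau$-pairing. Equivalently, I would run Step 3 with the Lindstr\"om--Gessel--Viennot nonintersecting-path model restricted to Yamanouchi (LR) tableaux of shape $\tau/\la$, so that the coefficient inequality of Step 1 appears directly.

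\emph{Main obstacle.} I expect Step 4 to be the hard part. For fixed $\la,\nu$, \eqref{eq:LPP-straight} already gives $c^\tau_{\la,\nu}\le c^\tau_{\la\vee\nu,\la\wedge\nu}$, but the AD argument needs this joint compatibility simultaneously across all pairs. What I would try first is to encode the LR coefficients through a lattice structure on LR fillings for which $\max/\min$ is closed, mimicking the injection of \cite[Thm~4.5]{LP07}. If that fails, I would fall back on the total positivity, i.e.\ Temperley--Lieb immanant, proof of \cite{LPP07}, applied linearly in the weights.
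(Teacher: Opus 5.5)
Your Step~1 is a correct reformulation, and Steps~2--3 are sound. However, they prove only the monomial-positive (hence evaluation) version of \eqref{eq:ADS}. For a fixed monomial $z^\al$, the set of pairs of fillings with $z^Az^B=z^\al$ is a union of full fibers of $(A,B)\mapsto(A\vee B,A\wedge B)$, and that is why the multivariate AD/RLS argument goes through. One bookkeeping caveat: if you pad with $\infty$ outside the shape, join of fillings corresponds to meet of shapes, so $\fc$ and $\fd$ must be exchanged. This is the easy part; compare Section~\ref{ss:ADS-finite}, where the evaluation inequality comes from \eqref{eq:LPP} plus AD.

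Step~4 carries the whole content of the theorem, and none of the routes you list works as stated.
\begin{itemize}
\item Jacobi--Trudi extraction writes an $\fs_\tau$-coefficient as a \emph{signed} sum of monomial coefficients. Nonnegativity of each monomial coefficient of the defect therefore says nothing about it.
\item An LR tableau of shape $\tau/\la$ and content $\nu$ is a single object coupling $\la$ and $\nu$. It is not a pair $(x,y)$ to which $\vee,\wedge$ can be applied. The natural pair model, highest-weight elements of $B(\la)\otimes B(\nu)$, is not a union of fibers of $(x,y)\mapsto(x\vee y,x\wedge y)$. One factor must be the Yamanouchi tableau $u_\la$, and entrywise $\max/\min$ with the other factor destroys this.
\item TL immanants ``applied linearly in the weights'' cannot be used term by term. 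In the defect $\sum_{\la,\mu}\big(\fc(\la)\fd(\mu)-\fa(\la)\fb(\mu)\big)\fs_\la\fs_\mu$, individual coefficients can be negative. This is because \eqref{eq:ADS-cond} bounds $\fa(\la)\fb(\mu)$ by a \emph{different} term, $\fc(\la\vee\mu)\fd(\la\wedge\mu)$. So single-pair statements like \eqref{eq:LPP-straight} do not suffice without regrouping.
\end{itemize}

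The missing idea is to localize before proving Schur positivity. Group the pairs $(\la,\mu)$ by $(\la\wedge\mu,\la\vee\mu)$. As in \eqref{eq:expand-1}, the defect of \eqref{eq:ADS} is then $\sum_{(\om,\pi)}\Phi_{\fs}(\om,\pi)$. Each $\Phi_{\fs}(\om,\pi)$ involves only the finite fiber $\cU(\om,\pi)$, whose pairs arise by distributing the rows with $\pi_i>0$ between $\la$ and $\mu$.

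The key lemma is the Schur-positive localization inequality $\Phi_{\fs}(\om,\pi)\geqslant_{\fs}0$ (Theorem~\ref{thm:SAAD}, i.e.\ \cite[Thm~8.1]{CCPS26}). It is a Schur-weighted analogue of the Lov\'asz--Saks/RLS inequality (Theorem~\ref{thm:RLS}). Summing it over $(\om,\pi)$ gives \eqref{eq:ADS} at once; this is exactly the decomposition used in Section~\ref{ss:ADS-general}.

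The $\fs_\tau$-coefficient of $\Phi_{\fs}(\om,\pi)$ is precisely your Step~1 inequality restricted to one fiber, so your reformulation fits this scheme. What is missing is a proof of the fiberwise statement. It requires Schur-positive control of all products $\fs_\la\fs_\mu$, $(\la,\mu)\in\cU(\om,\pi)$, simultaneously. That is the ``joint compatibility'' you correctly flag as the crux but leave open. A TL-immanant expansion could help only if organized fiber by fiber: for each Schur-positive piece, the pairs whose product contains it would again have to form a family on which the scalar RLS inequality applies.
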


\smallskip

Note that the LPP inequality \eqref{eq:LPP-straight} for straight shapes follows
from \eqref{eq:ADS} by taking  \ts $\fa,\fb,\fc,\fd$ \ts to be the indicator
functions of \ts $\la,\mu,\la\vee \mu,\la\wedge\mu$, respectively.



\smallskip

\begin{thm}[{\rm \defn{equality conditions for the ADS inequality}\ts}{}]\label{thm:ADS-eq}
Let \. $\fa,\fb,\fc,\fd: \cY \to \rr_{\geq 0}$ \. be  functions satisfying \eqref{eq:AD-cd}.
Then \eqref{eq:ADS} is an equality \
	\underline{if and only if} \  one of the following two conditions holds:
\begin{equation}\label{eq:ADS-eq-1}\tag{ADS-eq1}
\left[ \. \aligned \fa \. & \equiv \. 0 \\
\fb \. & \equiv \. 0 \endaligned\right.
\qquad \text{and} \qquad
\left[ \. \aligned \fc \. & \equiv \. 0 \\
\fd \. & \equiv \. 0 \endaligned\right.
\end{equation}
or
\begin{equation}\label{eq:ADS-eq-2}\tag{ADS-eq2}
\left\{ \. \aligned \fa \. & = \. \theta \cdot \fc \\
\fb \. & =  \. \theta^{-1} \cdot \fd \endaligned\right.
\qquad \text{or} \qquad
\left\{ \. \aligned \fa \. &= \. \theta \cdot \fd \\
\fb \. & = \. \theta^{-1} \cdot\fc \endaligned\right. \ ,
\end{equation}
for some constant \. $\theta \ne 0$.
\end{thm}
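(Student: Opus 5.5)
We prove sufficiency first, then reduce necessity to Theorem~\ref{thm:AD-eq} via evaluations, and finally rule out nontrivial cross-factorings using Rajan's Theorem~\ref{t:Rajan}.

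\emph{Sufficiency.} This direction is direct.
\begin{itemize}
\item Under \eqref{eq:ADS-eq-1}, both sides of \eqref{eq:ADS} are the zero symmetric function.
\item Under the first alternative of \eqref{eq:ADS-eq-2}, the left side is $\theta\,\Sigma_\fc\cdot\theta^{-1}\Sigma_\fd$, where $\Sigma_\fh:=\sum_\la \fh(\la)\ts\fs_\la$. This equals the right side. The second alternative is identical after swapping the two factors.
\end{itemize}

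\emph{Reduction to the AD equality.} Assume \eqref{eq:ADS} is an equality. The $\fs_\la$ are linearly independent and $\fa,\fb,\fc,\fd\ge 0$, so $\Sigma_\fa\Sigma_\fb=0$ if and only if $\fa\equiv0$ or $\fb\equiv0$, and likewise for $\Sigma_\fc\Sigma_\fd$. Hence either \eqref{eq:ADS-eq-1} holds, or all four functions are nonzero; assume the latter. Fix $n$ and a point $z\in\rr_{>0}^n$, and set $w_z(\la):=\fs_\la(z)\ge 0$.

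Evaluating \eqref{eq:LPP-straight} at $z$ gives $w_z(\la)w_z(\mu)\le w_z(\la\vee\mu)\,w_z(\la\wedge\mu)$. Together with \eqref{eq:ADS-cond}, this shows that $\fa w_z,\fb w_z,\fc w_z,\fd w_z$ satisfy \eqref{eq:AD-cd} on the finite lattice closure $\cL$ of the union of the supports. Take $n$ at least the maximal length of partitions in the supports, so that $w_z>0$ there. Evaluating the equality \eqref{eq:ADS} at $z$ then gives \eqref{eq:AD-eq}. By Theorem~\ref{thm:AD-eq}, these four functions cross-factor on $\cL\simeq\cL_1\times\cL_2$.

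\emph{Reading off the conclusion.} Suppose one factor is trivial.
\begin{itemize}
\item If $\cL_2=\one$, then \eqref{eq:AD-abstract-2} reads $\fa w_z=\ala\ff_1$, $\fb w_z=\beb\fg_1$, $\fc w_z=\gac\ff_1$, $\fd w_z=\ded\fg_1$. Cancelling $w_z>0$ gives $\fa=\theta\fc$ and $\fb=\theta^{-1}\fd$ with $\theta=\ala/\gac=\ded/\beb$.
\item If $\cL_1=\one$, the same computation gives $\fa=\theta\fd$ and $\fb=\theta^{-1}\fc$.
\end{itemize}
Since $\cL$ and the functions $\fa,\fb,\fc,\fd$ do not depend on $z$, landing in one of these cases for a single $z$ already yields \eqref{eq:ADS-eq-2}.

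\emph{Main obstacle: excluding a nontrivial decomposition.} The hard step is ruling out $\cL_1,\cL_2$ both nontrivial. Suppose this happens. Then for $\la=(x_1,x_2)$ and $\mu=(y_1,y_2)$ the factorization gives
\[
\fa(\la)\fb(\mu)\,w_z(\la)w_z(\mu)\;=\;\fc(x_1,y_2)\,\fd(y_1,x_2)\,w_z(x_1,y_2)\,w_z(y_1,x_2).
\]
The right-hand factors are positive whenever the left-hand ones are, by the support structure. Because the lattice closure is all of $\cL$, I will choose $\la,\mu$ in the supports of $\fa,\fb$ with $x_1\ne y_1$ and $x_2\ne y_2$. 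Then $\{\la,\mu\}\ne\{(x_1,y_2),(y_1,x_2)\}$ as multisets.

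The identity above holds for every $z$, and the ratios $\fa\fb/\fc\fd$ are constants independent of $z$. So it becomes a polynomial identity
\[
\fs_\la\,\fs_\mu\;=\;\kappa\,\fs_{(x_1,y_2)}\,\fs_{(y_1,x_2)}
\]
for a constant $\kappa>0$. Comparing coefficients in the Schur basis forces $\kappa=1$ by the positivity of LR coefficients; for instance, take the dominance-maximal term $\la+\mu$, whose coefficient is $1$. Rajan's Theorem~\ref{t:Rajan} then contradicts the multiset inequality.

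The delicate points are the following.
\begin{itemize}
\item Making the lattice $\cL$, and thus a single common decomposition, independent of $z$.
\item Actually producing such a pair $\la,\mu$ from the closure assumption, which I would do by the support product lemma's description of supports as products.
\end{itemize}
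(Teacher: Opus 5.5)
Your finite-support argument follows the paper's route: evaluate at $z\in\rr^n_{>0}$, use \eqref{eq:LPP-straight} to get \eqref{eq:AD-cd}, apply the AD equality, upgrade the cross-factoring to a Schur identity, and invoke Rajan. Your contradiction endgame plays the role of Lemmas~\ref{lem:rosewood} and~\ref{lem:Rajan-2}. The choice of $x_1\ne y_1$, $x_2\ne y_2$ is fine, since $\supp\fa=\supp\ff_1\times\supp\ff_2$, $\supp\fb=\supp\fg_1\times\supp\fg_2$, and $\cL_i$ is the closure of $\supp\ff_i\cup\supp\fg_i$.

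The genuine gap is that the theorem concerns arbitrary $\fa,\fb,\fc,\fd:\cY\to\rr_{\ge 0}$, whereas you tacitly assume finite supports. With infinite supports, ``the finite lattice closure of the union of the supports'' and ``the maximal length of partitions in the supports'' need not exist. Several things then break. The sums in \eqref{eq:ADS} are only formal (finite in each degree). The evaluations $\sum_\la \fa(\la)\fs_\la(z)$ can diverge for every $z\in\rr^n_{>0}$; for example, $\fa=\fb=\fc=\fd=e^{|\la|^2}$ satisfies \eqref{eq:ADS-cond} by convexity of $k\mapsto k^2$. And Theorem~\ref{thm:AD-eq} is only available for finite lattices. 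Truncating does not rescue this by itself. The restrictions to the finite sublattice $\cY^{\ell,w}$ still satisfy \eqref{eq:ADS-cond}, so their defect is Schur positive. But vanishing of the full defect does not force vanishing of the truncated one, because the rest of the defect is not known to be Schur positive. The missing idea is the localized inequality, Theorem~\ref{thm:SAAD}:
\begin{itemize}
\item grouping the defect by $(\la\wedge\mu,\la\vee\mu)=(\om,\om+\pi)$ writes it as $\sum_{(\om,\pi)}\Phi_{\fs}(\om,\pi)$, with every term $\geqslant_{\fs}0$;
\item so equality forces all $\Phi_{\fs}(\om,\pi)=0$;
\item since $\cY^{\ell,w}$ is a down-closed sublattice, its defect is a sub-sum of these blocks and hence vanishes;
\item the finite case then applies, and one lets $\ell,w\to\infty$.
\end{itemize}

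Two smaller repairs are needed inside the finite case. First, ``the identity above holds for every $z$'' is exactly the issue you flag but do not resolve, since Theorem~\ref{thm:AD-eq} gives a decomposition for each $z$ separately. It is fixable. The lattice $\cL$ has only finitely many product decompositions, and for each one the cross-factoring identities are polynomial in $z$. If no $z$ yields a trivial decomposition, then finitely many Zariski-closed sets attached to nontrivial decompositions cover $\rr^n_{>0}$. One of them therefore contains an open set, and its identities hold identically. Second, take $n\ge 2\max\ell(\la)$ (or let $n$ grow) before applying Rajan's theorem, so that the identity of Schur polynomials lifts to one of Schur functions. With only $n=\max\ell(\la)$ variables, spurious identities such as $\fs_{\la+1^n}\fs_\mu=\fs_\la\fs_{\mu+1^n}$ hold. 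Your normalization $\kappa=1$ is fine, because $\la+\mu=(\la\vee\mu)+(\la\wedge\mu)=(x_1,y_2)+(y_1,x_2)$.
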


\smallskip

This resolves an open problem in \cite[$\S$12.2]{CCPS26}.
In the case of \eqref{eq:LPP-straight} as above, the equality conditions \eqref{eq:LPP-straight-eq}
follow from \eqref{eq:ADS-eq-2}, with \ts $\theta=1$ \ts being the only possible choice.

\smallskip

{\small
\begin{rem}\label{rem:ADS-LPP}
In \cite[Thm~1.7]{CCPS26}, we give a generalization of Theorem~\ref{thm:ADS}
to skew shapes, called the \defng{skew ADS inequality}, which simultaneously
generalizes \eqref{eq:LPP}.   It would be really interesting and a
major challenge to obtain the equality condition for this inequality.
Unfortunately, the original proof of the skew ADS inequality is much too
involved for the approach in the proof of Theorem~\ref{thm:ADS-eq} to apply.
\end{rem}}

{\small
\begin{rem}\label{rem:ADS-Groth}
In \cite[Thms~1.8, \ts 1.10]{CCPS26},
the ADS inequality was used to prove log-supermodularity for the stable
and dual stable Grothendieck polynomials.   The resulting inequalities
generalize \eqref{eq:LPP} and coincide with it for lowest/highest degree terms.
Thus, in both cases, the equality conditions coincide with equality conditions
\eqref{eq:LPP-eq-comb} for the LPP inequality.
\end{rem}}

\medskip


\section{Poset inequalities}\label{s:poset}

In this section we present three more results on equality conditions
of combinatorial inequalities, all related to each other and the
LPP inequality.

\subsection{Bj\"{o}rner's inequality}\label{ss:poset-Bjo}
For a skew shape \ts $|\la/\mu|=n$, let
\begin{equation}\label{eq:Fis-f}
	\ff(\la/\mu) \, :=  \,  \frac{|\SYT(\la/\mu)|}{n!}\..
\end{equation}

The following inequality was originally proved by Bj\"{o}rner for straight shapes
\cite[Prop.~6.1]{Bjo11} via the hook-length formula.  He also noted that it is
a special case of Fishburn's inequality \eqref{eq:Fis}, see below.
In \cite[Cor~3.3]{CP-multi}, the authors recently extended
Bj\"{o}rner's inequality to skew shapes as follows:

\smallskip

\begin{thm}[{\rm \defn{Generalized Bj\"{o}rner inequality} \cite{Bjo11,CP-multi}}{}]\label{thm:Bjo}
	Let \ts $\la/\mu$ \ts and \ts $\nu/\rho$ \ts be skew shapes.  Then:
	\begin{equation}\label{eq:Bjo}\tag{Bj\"{o}}
		\ff(\la/\mu) \. \cdot \.  \ff(\nu/\rho) \ \leq \
        \ff(\la/\mu \ts \vee \ts \nu/\rho) \. \cdot \. \ff(\la/\mu \ts \wedge \ts  \nu/\rho),
	\end{equation}
where skew shapes on the RHS are defined as in Theorem~\ref{thm:LPP}.
\end{thm}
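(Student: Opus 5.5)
Two routes are available for Theorem~\ref{thm:Bjo}. One goes through Fishburn's inequality applied to the Young poset. The other goes through principal specialization of the LPP inequality, and this is the one I will follow, since Theorem~\ref{thm:LPP} is already stated.

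First, I extract $\ff(\la/\mu)$ from the principal evaluation identity \eqref{eq:P-part}. Write $n=|\la/\mu|$ and set $q\to 1^-$. Multiplying both sides by $(1-q)^n$ gives
\[
(1-q)^n \, s_{\la/\mu}(1,q,q^2,\ldots) \ = \ \prod_{i=1}^n \frac{1-q}{1-q^i}\,\sum_{T \in \SYT(\la/\mu)} q^{\maj(T)} \ \longrightarrow \ \frac{|\SYT(\la/\mu)|}{n!} \ = \ \ff(\la/\mu).
\]
Next I check the degrees. Since $\max\{a,b\}+\min\{a,b\}=a+b$ coordinatewise, we have $|\la\vee\nu|+|\la\wedge\nu|=|\la|+|\nu|$, and the same holds for $\mu,\rho$. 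Hence the two sides of \eqref{eq:LPP} are homogeneous of the same degree $N=|\la/\mu|+|\nu/\rho|$.

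Then I specialize. The difference of the two sides of \eqref{eq:LPP} is a nonnegative combination of Schur functions $s_\tau$, and each $s_\tau$ is monomial positive. Evaluating at $z_i=q^{i-1}$ with $0<q<1$ makes every term converge to a nonnegative number. The principal specialization is therefore order preserving, so
\[
s_{\la/\mu}(1,q,\ldots)\, s_{\nu/\rho}(1,q,\ldots) \ \le \ s_{\la/\mu\vee\nu/\rho}(1,q,\ldots)\, s_{\la/\mu\wedge\nu/\rho}(1,q,\ldots).
\]
I multiply both sides by $(1-q)^N$ and split this factor as $(1-q)^{|\cdot|}(1-q)^{|\cdot|}$ to match the sizes of the skew shapes in each product; this split is possible by the degree check above. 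Letting $q\to1^-$ and applying the limit formula to each factor gives \eqref{eq:Bjo}.

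One point needs a word: if $\mu\not\subseteq\la$, or the shapes on the right are not valid, the convention is that the corresponding $s$ and $\ff$ vanish. For the right side, $\mu\subseteq\la$ and $\rho\subseteq\nu$ imply $\mu\vee\rho\subseteq\la\vee\nu$ and $\mu\wedge\rho\subseteq\la\wedge\nu$, so those shapes are always valid.

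The main obstacle is justifying the limit. I need that the difference, being a finite nonnegative combination of $s_\tau$ with $|\tau|=N$, remains nonnegative after multiplying by $(1-q)^N$. This is immediate, since each $(1-q)^N s_\tau(1,q,\ldots)$ is itself nonnegative and converges to $f^\tau/N!$.

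If one prefers to avoid LPP, the alternative route is to write $\ff(\la/\mu)$ as the probability that a random linear extension of a large product poset has a suitable shape. This reduces \eqref{eq:Bjo} to Fishburn's inequality, or directly to the FKG inequality on the lattice of order ideals. That route requires checking log-supermodularity, which is harder; the LPP route above is shorter.
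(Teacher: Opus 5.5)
Your proof is correct and follows essentially the paper's own argument. The paper derives \eqref{eq:Bjo} from \eqref{eq:LPP} (citing \cite{CP-multi}; the same computation appears in the proof of Theorem~\ref{thm:Bjo-eq}) by the principal specialization $z_i=q^{i-1}$, applying \eqref{eq:P-part}, multiplying by $(1-q)^m$ with $m=|\la/\mu|+|\nu/\rho|$, and letting $q\to 1^-$.
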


\smallskip

We note in \cite[$\S$4.1]{CP-multi}, that \eqref{eq:Bjo} follows easily
from \eqref{eq:LPP} by an asymptotic argument, cf.~$\S$\ref{sec:proof-Bjo-eq}.
We now give equality conditions for~\eqref{eq:Bjo}.

\smallskip

\begin{thm}[{\rm \defn{Equality conditions for the generalized Bj\"orner inequality}\ts}{}]\label{thm:Bjo-eq}
		Let \ts ${\color{blue}\la}/{\color{blue}\mu}$ \ts and \ts ${\color{red}\nu}/{\color{red}\rho}$ \ts be skew shapes,
such that \. $(\blue{\la}\vee \red{\nu})/(\blue{\mu} \wedge \red{\rho})$ \.  is  connected.
Then the generalized Bj\"orner inequality~\eqref{eq:Bjo} is an equality \
	\underline{if and only if } \ the equality condition \eqref{eq:LPP-eq-comb} holds.
\end{thm}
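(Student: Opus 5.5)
The plan is to follow the asymptotic route by which \eqref{eq:Bjo} is derived from \eqref{eq:LPP}, and to check that equality survives the limit. The ``if'' direction is immediate. If $\la\subseteq\nu$ and $\mu\subseteq\rho$, then $\la/\mu\vee\nu/\rho=\nu/\rho$ and $\la/\mu\wedge\nu/\rho=\la/\mu$, so the two sides of \eqref{eq:Bjo} coincide. The symmetric case is the same with the roles of the two shapes exchanged.

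For the ``only if'' direction, I would pass from Schur functions to the quantities $\ff$ via the principal specialization \eqref{eq:P-part}. For a skew shape of size $n$, the identity gives
\[
\ff(\la/\mu) \ = \ \lim_{q\to 1}\. (1-q)^n\, s_{\la/\mu}(1,q,q^2,\ldots).
\]
Both sides of \eqref{eq:LPP} are homogeneous of the same degree $N=|\la/\mu|+|\nu/\rho|$. Write $\Delta$ for the defect of \eqref{eq:LPP}; it is a nonnegative combination of Schur functions $\fs_\tau$ with $\tau\vdash N$. The defect of \eqref{eq:Bjo} is then
\[
\lim_{q\to1}(1-q)^N \Delta(1,q,q^2,\ldots) \ = \ \sum_\tau c_\tau\, \ff(\tau),
\]
where $c_\tau\ge 0$ is the coefficient of $\fs_\tau$ in $\Delta$. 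Each $\ff(\tau)=|\SYT(\tau)|/N!$ is strictly positive. Hence equality in \eqref{eq:Bjo} forces every $c_\tau$ to vanish, i.e.\ $\Delta=0$, which is equality in \eqref{eq:LPP}. Theorem~\ref{thm:LPP-eq} then applies directly under the same connectivity hypothesis and yields \eqref{eq:LPP-eq-comb}.

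The main point to check is the interchange of the limit with the finite Schur expansion. This is routine because the expansion is a finite sum in a fixed degree $N$, and $(1-q)^N\fs_\tau(1,q,\ldots)\to \ff(\tau)$ for each $\tau$, again by \eqref{eq:P-part}. One must also check that the skew shapes appearing on the right side of \eqref{eq:Bjo} are exactly those of Theorem~\ref{thm:LPP}, with total size $N$. This holds since $|\la\vee\nu|+|\la\wedge\nu|=|\la|+|\nu|$, and the same identity holds for $\mu,\rho$.

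A potential obstacle is degenerate cases. If some shape on the left is not valid, or some $\ff$ vanishes, the argument needs care, but $\ff$ of a genuine skew shape is always positive, and Theorem~\ref{thm:LPP} already presupposes valid shapes. I therefore expect no real obstacle: all the difficulty sits in Theorem~\ref{thm:LPP-eq}, and the key observation is that the Schur positivity of the defect, combined with the positivity of $\ff$ on every straight shape, makes the limit faithful to equality.
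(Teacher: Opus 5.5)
Your proposal is correct and follows essentially the same route as the paper. You write the defect of \eqref{eq:LPP} as $\sum_{\tau\vdash m} a_\tau\, s_\tau$ with $a_\tau\ge 0$, apply the principal specialization \eqref{eq:P-part}, multiply by $(1-q)^m$ and let $q\to 1^-$ to obtain $\sum_\tau a_\tau\, \ff(\tau)$, use $\ff(\tau)>0$ to force all $a_\tau=0$, and then invoke Theorem~\ref{thm:LPP-eq}. The only cosmetic difference is in the easy direction: you check directly that the shapes on the two sides of \eqref{eq:Bjo} coincide, whereas the paper passes the LPP equality through the same $q\to 1^-$ limit.
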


\smallskip

In other words, the inequality \eqref{eq:Bjo} is an equality if and only if \eqref{eq:LPP}
is an equality.  Since \eqref{eq:Bjo} follows from \eqref{eq:LPP}, it may seem
surprising the equality cases of \eqref{eq:Bjo} are limited only to those of \eqref{eq:LPP}.
Thus one can view Theorem~\ref{thm:Bjo-eq} as an extension of Theorem~\ref{thm:LPP-eq}.
Note also that Theorem~\ref{thm:Bjo-eq} gives a new result even in the straight shape case
\ts $\mu=\rho=\emp$, since Rajan's Theorem~\ref{t:Rajan} is no longer applicable.

We prove Theorem \ref{thm:Bjo-eq} in Section \ref{sec:proof-Bjo-eq}.
Rather than applying the Ahlswede--Daykin equality conditions (Theorem \ref{thm:AD-chains}) directly,
we instead present a proof that relies on the Schur positivity  in the LPP inequality.

\smallskip

\subsection{SSYT variation}\label{ss:poset-SSYT}
For a skew shape \ts $\la/\mu$, let
\begin{equation}\label{eq:GL-def}
	\fg(\la/\mu,n) \, :=  \,  |\SSYT(\la/\mu,n)|\ts.
\end{equation}
By definition \eqref{eq:Schur-def}, we have \ts $\fg(\la/\mu,n) = s_{\la/\mu}(1^n)$,
is an evaluation of the Schur polynomial.  Thus, the LPP inequality gives:
\begin{equation}\label{eq:SSYT}
		\fg(\la/\mu,n) \. \cdot \.  \fg(\nu/\rho,n) \ \leq \ \fg(\la/\mu \ts \vee \ts \nu/\rho, n) \. \cdot \. \fg(\la/\mu \ts \wedge \ts  \nu/\rho,n),
\end{equation}
for all \ts $n\ge 1$. One can think of \eqref{eq:Bjo} as a result about (scaled) dimensions
of $S_n$ modules, and of \eqref{eq:SSYT} as a similar result about dimensions
of $\GL(n,\cc)$ modules.  The following result gives equality conditions for \eqref{eq:SSYT},
for sufficiently large~$n$.

\smallskip

\begin{thm}\label{thm:SSYT-eq}
		Let \ts ${\color{blue}\la}/{\color{blue}\mu}$ \ts and
\ts ${\color{red}\nu}/{\color{red}\rho}$ \ts be skew shapes, such that
\. $(\blue{\la}\vee \red{\nu})/(\blue{\mu} \wedge \red{\rho})$ \.  is  connected.
Let \. $n>\max\{\ell(\lambda),\ell(\nu)\}$.  Then \ts \eqref{eq:SSYT} \ts  is an equality \
	\underline{if and only if } \ the equality condition \eqref{eq:LPP-eq-comb} holds.
\end{thm}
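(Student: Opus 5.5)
The ``if'' direction is immediate: under \eqref{eq:LPP-eq-comb} the multiset of shapes on the right of \eqref{eq:SSYT} coincides with the one on the left. For example, if $\la\subseteq\nu$ and $\mu\subseteq\rho$, then $\la/\mu\vee\nu/\rho=\nu/\rho$ and $\la/\mu\wedge\nu/\rho=\la/\mu$, so the two sides are literally equal. The content is the ``only if'' direction, and I would derive it from the Schur positivity in Theorem~\ref{thm:LPP} together with Theorem~\ref{thm:LPP-eq}.

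Write the LPP defect as
\[
D \, := \, s_{\la/\mu\vee\nu/\rho}\cdot s_{\la/\mu\wedge\nu/\rho} \. - \. s_{\la/\mu}\cdot s_{\nu/\rho} \, = \, \sum_{\tau} d_\tau\, s_\tau,
\]
where $d_\tau\ge 0$ by \eqref{eq:LPP}. The defect of \eqref{eq:SSYT} is then $D(1^n)=\sum_\tau d_\tau\, s_\tau(1^n)$. Each $s_\tau(1^n)$ counts $\SSYT(\tau,n)$, so it is a nonnegative integer, and it is strictly positive exactly when $\ell(\tau)\le n$. Hence $D(1^n)=0$ forces $d_\tau=0$ for every $\tau$ with $\ell(\tau)\le n$.

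It therefore remains to show that every $\tau$ appearing in the Schur expansion of either product has length at most $n$. Then $D(1^n)=0$ gives $D=0$, i.e.\ equality in \eqref{eq:LPP}, and Theorem~\ref{thm:LPP-eq} yields \eqref{eq:LPP-eq-comb} under the stated connectivity assumption.

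For the length bound, recall that a skew Schur function $s_{\al/\be}$ expands only in $s_\tau$ with $\ell(\tau)\le$ the maximal column height of $\al/\be$, which is at most $\ell(\al)$. The Littlewood--Richardson rule also gives $\ell(\tau)\le\ell(\si)+\ell(\pi)$ whenever $s_\tau$ appears in $s_\si s_\pi$. So for a product $s_{\al/\be}\cdot s_{\ga/\de}$, every $\tau$ satisfies $\ell(\tau)\le h(\al/\be)+h(\ga/\de)$, where $h$ denotes the maximal column height.

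The main obstacle is that this sum bound, at most $\ell(\la)+\ell(\nu)$, is not good enough against the hypothesis $n>\max\{\ell(\la),\ell(\nu)\}$. I would handle this in two ways.

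First, since every $d_\tau\ge0$ and both products contain only $\tau$ of length at most $L:=h(\la/\mu)+h(\nu/\rho)$, an equality at small $n$ still forces $d_\tau=0$ for all short $\tau$. I would then exploit the structure of the LPP defect more finely. Using the injective proof of the monomial positive version \cite{LP07}, or the multivariate AD setup of \cite{CP-multi}, I expect equality in \eqref{eq:LPP} when restricted to $n$ variables to already force \eqref{eq:LPP-eq-comb} once $n$ exceeds the column heights.

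Concretely, the equality $s_{\la/\mu}s_{\nu/\rho}(z_1,\dots,z_n)=s_{\vee}s_{\wedge}(z_1,\dots,z_n)$, as a monomial-positive statement, is the AD equality on products of chains in which each variable indexes one column entry. The cross-factoring in Theorem~\ref{thm:AD-eq} then applies as in the proof of Theorem~\ref{thm:LPP-eq}, and the condition $n>\max\ell$ is precisely what ensures that every relevant column can be filled, so that the lattice closure hypothesis holds.

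So I would check that the proof of Theorem~\ref{thm:LPP-eq} in Section~\ref{sec:LPP} uses only $n>\max\{\ell(\la),\ell(\nu)\}$ variables. Rerunning that argument with $n$ variables is the step I expect to carry the real difficulty.
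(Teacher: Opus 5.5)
The reduction through Schur positivity does not close, and the proposal never supplies a working replacement. As you note, $D(1^n)=0$ only kills the coefficients $d_\tau$ with $\ell(\tau)\le n$. Nothing you write excludes a nonzero defect supported on longer $\tau$.

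Invoking Theorem~\ref{thm:LPP-eq} cannot fix this, because in the paper the implication runs the other way. The $n$-variable evaluation statement, Theorem~\ref{thm:LPP-eq-eval}, is proved first. Theorem~\ref{thm:LPP-eq} is deduced from it, and the present theorem is just its special case $\bz=(1^n)$.

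Your length bound is also reversed. The maximal column height of $\al/\be$ is a \emph{lower} bound for $\ell(\tau)$ in $s_{\al/\be}$; for instance $s_{(2,1)/(1)}=s_2+s_{11}$. The upper bound is the number of nonempty rows.

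Your second route is the right one and is exactly the paper's, but you leave its entire content as an expectation. You also misplace where the hypothesis $n>\max\{\ell(\la),\ell(\nu)\}$ enters. Theorem~\ref{thm:AD-chains}, for products of chains, needs no lattice-closure hypothesis at all.

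The missing argument runs as follows.
\begin{itemize}
\item Take the product, over squares of $(\la\vee\nu)/(\mu\wedge\rho)$, of chains $\{-\infty\}\cup[n]\cup\{\infty\}$. Let the four functions be the indicators of padded SSYT of $\la/\mu$, $\nu/\rho$, and the meet and join shapes. These satisfy \eqref{eq:AD-cd} by \cite{CP-multi}, so equality in \eqref{eq:SSYT} is an AD equality.
\item Theorem~\ref{thm:AD-chains} then yields a set $A$ of squares such that all four supports are product sets with respect to the splitting of coordinates into $A$ and its complement.
\item The hypothesis on $n$ is used to fit two fillings of $\la/\mu$ into the support: $T$, with entry $k$ in row $k$, and $T'=T+1$. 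The latter is valid since $k+1\le\ell(\la)+1\le n$.
\item Splicing $T'$ on $A$ with $T$ off $A$ stays in the support. So no two adjacent squares of any one of the four shapes can be separated by $A$; this is Lemma~\ref{lem:LPP-adjacent}.
\item Comparing projections of supports shows: a square in $A$ lies in $\la/\mu$ iff it lies in the meet shape, and in $\nu/\rho$ iff it lies in the join shape. Off $A$, meet and join are swapped.
\item Hence every row satisfies \eqref{eq:R1} or \eqref{eq:R2}. The squares of $\la/\mu$ and $\nu/\rho$ in a row satisfying only \eqref{eq:R1} all lie in $A$, and in a row satisfying only \eqref{eq:R2} all lie outside $A$.
\item If \eqref{eq:LPP-eq-comb} failed, both kinds of rows would exist. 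Connectivity of $(\la\vee\nu)/(\mu\wedge\rho)$ would then produce a chain of vertical adjacencies inside $\la/\mu$ joining them, through intermediate rows with $\la_r=\nu_r$ and $\mu_r=\rho_r$. This is a contradiction.
\end{itemize}

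The threshold on $n$ is genuinely needed. For $\la/\mu=(3)/\varnothing$, $\nu/\rho=(2)/(1)$ and $n=1$, both sides of \eqref{eq:SSYT} equal $1$, yet \eqref{eq:LPP-eq-comb} fails.
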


\smallskip

In other words, the inequality \eqref{eq:SSYT} is an equality if and only if \eqref{eq:LPP}
is an equality.  This is also quite surprising, as one can view Theorem~\ref{thm:SSYT-eq}
as yet another extension of Theorem~\ref{thm:LPP-eq}.  In $\S$\ref{ss:LPP-setup}, we prove
an even further extension (Theorem~\ref{thm:LPP-eq-eval}), which we use to prove
Theorem~\ref{thm:LPP-eq}.  Note again that Theorem~\ref{thm:SSYT-eq} is new even
for the straight shapes.



\smallskip

\subsection{Fishburn's inequality}\label{ss:poset-Fish}
For a poset \ts $\cP=(X,\prec)$ \ts and a subset \ts $A\subseteq X$, denote by
\. $e(A)$ \. the number of linear extensions of the subposet \ts $\cP|_A=(A,\prec)$.
Let
$$
\ff(A) \. := \. \frac{e(A)}{|A|!}
$$
denote the probability that a random total order of \ts $A$ \ts is a linear extension of~$\ts\cP|_A$.
For the Young poset \ts $\cP_\la$ \ts and a subset of squares $A$ corresponding to the
skew shape \ts $\la/\mu$, this notation is consistent: \ts $\ff(A)=\ff(\la/\mu)$.

The following inequality was originally proved by Fishburn~\cite{Fish84} in the case
\ts $C=D=\varnothing$, by using the AD inequality; it is called \defn{Fishburn's inequality}
\cite[$\S$7.2]{CP-surv}.
We proved the general version in \cite[Thm~3.4]{CP-multi}.\footnote{In
\cite[Thm~3.4]{CP-multi}, we have a weaker assumption \. $A\cap C =B \cap D =\varnothing$,
but at the cost of a somewhat more cumbersome statement of the inequality. Both versions are in fact equivalent, and we find the version
as in the theorem most convenient for our purposes.}

\smallskip

\begin{thm}[{\rm \defn{Generalized Fishburn inequality}~\cite{Fish84,CP-multi}}{}]\label{thm:Fis}
	Let \ts $\cPo=(X,\prec)$ \ts  be a finite poset. Let \ts $\blue{A},\red{B} \subseteq X$ \ts be lower order  ideals,
	and let \ts $\blue{C},\red{D} \subseteq X$ \ts be upper order ideals of~$\cPo$, such that \.
	\. $\blue{A},\red{B},\blue{C},\red{D}$ \. are pairwise disjoint.
	Then:
	\begin{equation}\tag{Fis}\label{eq:Fis}
		\ff(X-\blue{A}-\blue{C}) \.\cdot \. \ff(X-\red{B}-\red{D}) \ \leq \ \ff(X-\blue{C}-\red{D}) \.\cdot\. \ff(X-\blue{A}-\red{B}).
	\end{equation}
\end{thm}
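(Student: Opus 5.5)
We derive \eqref{eq:Fis} from the AD inequality (Theorem~\ref{thm:AD}) on the distributive lattice of lower order ideals of a suitable poset, following Fishburn's original argument. The key object is the set of linear extensions seen as maximal chains of order ideals. Let $Y := X - \blue{A}-\red{B}-\blue{C}-\red{D}$. A linear extension of $\cP|_{X-\blue{A}-\blue{C}}$ is a total order in which the elements of $\red{B}$ come first, as a lower set. Since $\red{B}$ is a lower ideal disjoint from $\blue{A}$, and $\red{D}$ is an upper ideal, the elements of $\red{B}$ and $\red{D}$ can only sit at the bottom and top, respectively, relative to the elements they are comparable with. Rather than count orders directly, we use the standard device: write $\ff(S)$ as the probability that independent uniform labels $t_x\in[0,1]$, $x\in S$, are order-preserving. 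Equivalently, discretize $t_x\in[N]$ and pass to the limit $N\to\infty$.

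The plan is as follows. First, encode a labeling $t:X\to[N]$ as an element of the distributive lattice $\cL=[N]^X$, a product of chains, with coordinatewise $\max/\min$. Second, define indicator functions on $\cL$:
\begin{itemize}
\item $\fa(t)=1$ if $t$ is order-preserving on $X-\blue{A}-\blue{C}$ and $t_x=1$ on $\blue{A}$, $t_x=N$ on $\blue{C}$;
\item $\fb$ likewise with $\red{B},\red{D}$;
\item $\fc$ for $X-\blue{C}-\red{D}$, with $t=N$ on $\blue{C}\cup\red{D}$ and free bottom;
\item $\fd$ for $X-\blue{A}-\red{B}$, with $t=1$ on $\blue{A}\cup\red{B}$.
\end{itemize}
Third, verify \eqref{eq:AD-cd}. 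If $t,t'$ are order-preserving on the respective sets, then $t\vee t'$ is order-preserving on $X-\blue{C}-\red{D}$, and $t\wedge t'$ on $X-\blue{A}-\red{B}$.

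This verification is the main obstacle. It rests on the fact that the max of two order-preserving maps is order-preserving, together with a check of the comparisons involving elements where one of $t,t'$ is frozen. For instance, take $x\in\blue{A}$ and $y\succ x$. Then $t_x=1$, so $\max(t_x,t'_x)=t'_x\le t'_y\le\max(t_y,t'_y)$, using that $t'$ is monotone on $X-\red{B}-\red{D}\ni x$. The element $y$ is not in $\red{B}$, since $\red{B}$ is a lower ideal disjoint from $\blue{A}$. If $y\in\red{D}$, then $y$ is excluded from the join set anyway. The frozen values $1$ and $N$ are exactly what make the boundary comparisons automatic, and the pairwise disjointness together with the ideal conditions handle the remaining cases.

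Finally, compute the sums. We have $\sum\fa = N^{|X-\blue{A}-\blue{C}|}\,\ff(X-\blue{A}-\blue{C})(1+o(1))$, with ties contributing lower order, and similarly for the other three sums. The powers of $N$ match on both sides because $|X-\blue{A}-\blue{C}|+|X-\blue{B}-\blue{D}| = |X-\blue{C}-\red{D}|+|X-\blue{A}-\red{B}|$. Applying \eqref{eq:AD}, dividing by the common power of $N$, and letting $N\to\infty$ yields \eqref{eq:Fis}.

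As a sanity check on the frozen coordinates, consider the case where the order-preserving condition is strict. For frozen elements we instead require only weak inequalities, so ties with the frozen values are permitted. Either convention affects only lower-order terms in $N$, so the limit is unchanged.
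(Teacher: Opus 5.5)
Your proof is correct. With the frozen values $1$ on $A\cup B$ and $N$ on $C\cup D$, the check of \eqref{eq:AD-cd} goes through as you indicate. The meet case on $X-A-B$ is the mirror image: it uses that $x\prec y\in C$ forces $x\notin D$, because $D$ is an upper ideal disjoint from $C$. The exponents of $N$ match by pairwise disjointness, and the strict or weak order polynomial of $P$ has leading term $e(P)N^{|P|}/|P|!$.

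It is, however, Fishburn's asymptotic argument, which the paper deliberately avoids. The argument underlying the paper comes from \cite{CP-multi}, and its set-up is reused in Section~\ref{sec:proof-Fis}. There $N=2n$ is fixed and the frozen values are $0$ and $\infty$. Instead of summing over all of $\aL$, one sums only over the set $\aK$ of pairs $(S,T)$ whose non-frozen values together form a prescribed set $Y$ of distinct integers. Ties are then impossible, and the two sides of \eqref{eq:Fis} are \emph{exactly} $\frac{1}{|Y|!}\sum_{\aK}\fa(S)\fb(T)$ and $\frac{1}{|Y|!}\sum_{\aK}\fc(S)\fd(T)$, with no limit. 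The price is that a sum over $\aK$ is not of the form \eqref{eq:AD}, so plain AD does not apply. One uses instead the localized RLS inequality (Theorem~\ref{thm:RLS}) on each fiber $\{S\vee T=U,\ S\wedge T=V\}$; these fibers partition $\aK$, since $\{S(x),T(x)\}=\{U(x),V(x)\}$ coordinatewise.

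Your route is more elementary, needing only Theorem~\ref{thm:AD} and an order-polynomial asymptotic. The paper's route writes the defect as a finite sum of nonnegative terms $\psi(U,V)$. Equality in \eqref{eq:Fis} then forces each $\psi(U,V)=0$, and this is exactly what drives the proof of Theorem~\ref{thm:Fis-eq}. In your argument, equality in \eqref{eq:Fis} only kills the top coefficient of the polynomial defect $\sum\fc\sum\fd-\sum\fa\sum\fb$, not the defect at any finite $N$. So no AD equality statement can be invoked; this is the limit obstruction described in $\S$\ref{ss:intro-why-hard}.

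Two small inaccuracies, neither of which you use:
\begin{itemize}
\item In a linear extension of $X-A-C$ the elements of $B$ need not come first, since a lower ideal need not be an initial segment.
\item Your final ``sanity check'' is vacuous. In your definitions the frozen coordinates are subject to no order constraints at all; what matters is only that the same convention, strict or weak, is used on the free coordinates for all four functions.
\end{itemize}
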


\smallskip

For partition posets and two different skew shapes, this inequality gives \ts \eqref{eq:Bjo}.
In~\cite[Lemma~10]{Bri88}, Brightwell rederived Fishburn's inequality
from the \defng{Graham--Yao--Yao {\rm (GYY)} inequality} \cite{GYY80}.
The original paper proved the GYY inequality for posets of width two;
in full generality, GYY inequality is proved by Shepp in \cite{She80},
by an application of the FKG inequality.

In \cite{Bri90}, Brightwell
obtains equality conditions for both the GYY and Fishburn's inequalities,
i.e.\ for \eqref{eq:Fis} in the case \ts $C=D=\varnothing$.  At heart,
his proof uses an explicit injective argument.
In the next theorem we characterize the equality conditions for the
generalized Fishburn  inequality~\eqref{eq:LPP}, thus extending both
Brightwell's theorem and our Theorem~\ref{thm:Bjo-eq}.

\smallskip

\begin{thm}[{\rm \defn{Equality conditions for the generalized Fishburn inequality}\ts}{}]\label{thm:Fis-eq}
	Let \ts $\cPo=(X,\prec)$ \ts  be a finite poset. Let \ts $\blue{A},\red{B} \subseteq X$ \ts be lower  order ideals,
	and let \ts $\blue{C},\red{D} \subseteq X$ \ts be upper order ideals of~$\ts\cPo$, such that \.
	$\blue{A},\red{B},\blue{C},\red{D}$ \. are disjoint.
	Then equality occurs in the generalized Fishburn's inequality~\eqref{eq:Fis}:
	\begin{equation}\tag{Fis-eq}\label{eq:Fis-eq}
		\ff(X-\blue{A}-\blue{C}) \.\cdot \. \ff(X-\red{B}-\red{D}) \ = \ \ff(X-\blue{C}-\red{D}) \.\cdot\. \ff(X-\blue{A}-\red{B}).
	\end{equation}
	\underline{\em if and only if} \ the following four connectivity conditions hold in the comparability graph \. $\Ga(\cPo)$:
\begin{equation}\label{eq:it-Fis}
\blue{A} \. \nlra \. \red{B}, \quad \blue{C}\. \nlra \. \red{D}, \quad \blue{A}\. \nlra \. \blue{C}, \quad \red{B}\. \nlra \. \red{D}.
\end{equation}
\end{thm}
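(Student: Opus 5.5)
The plan is to reproduce the proof of \eqref{eq:Fis} from \cite{CP-multi} as an instance of \eqref{eq:AD}, and then feed equality into Theorem~\ref{thm:AD-eq}. I would realize the four quantities as normalized sums over one common distributive lattice. The natural choice is the lattice $\cL$ of pairs (lower ideal $I$, upper ideal $J$) of $\cPo$ with $I\cap J=\emp$, ordered so that $x\vee y$ has the larger lower part and the smaller upper part; more concretely, one can use the lattice of order-preserving maps $X\to$ a chain, where $\ff(\cdot)$ arises as a volume, i.e.\ a limit of counts of lattice points in order polytopes. The four functions $\fa,\fb,\fc,\fd$ are (weighted) indicators supported on configurations compatible with $X-A-C$, $X-B-D$, $X-C-D$, $X-A-B$, respectively. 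The verification of \eqref{eq:AD-cd} is the one in \cite{CP-multi}: if $x$ avoids $A\cup C$ and $y$ avoids $B\cup D$, then $x\vee y$ avoids $C\cup D$ and $x\wedge y$ avoids $A\cup B$, using that $A,B$ are down-closed and $C,D$ are up-closed.

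For the ``if'' direction, assume \eqref{eq:it-Fis}. Let $X_1$ be the union of the connected components of $\Ga(\cPo)$ meeting $A\cup D$, and let $X_2:=X-X_1$. Then $B\cup C\subseteq X_2$, so $\cPo=\cPo|_{X_1}+\cPo|_{X_2}$. For a parallel sum, $e$ factors as a product times a binomial coefficient, so $\ff(Y)=\ff(Y\cap X_1)\,\ff(Y\cap X_2)$. Applying this to the four sets in \eqref{eq:Fis-eq} makes both sides equal to $\ff(X_1-A)\,\ff(X_1-D)\,\ff(X_2-B)\,\ff(X_2-C)$. This is also the cross-factoring of Definition~\ref{def:intro-factor} seen concretely.

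For the ``only if'' direction, suppose \eqref{eq:Fis-eq} holds. Because $\ff$ is a limit (as the chain length $t\to\infty$) of normalized AD sums, equality of the limit does not immediately give equality at finite~$t$. Following the pattern of the Bj\"orner case, I would therefore instead use a finite exact model. There are two options:
\begin{enumerate}
\item use the multivariate AD inequality of \cite{CP-multi}, and extract a polynomial identity whose leading coefficient is $\ff$;
\item show that the defect at finite $t$ is monotone in $t$, so that equality in the limit forces equality at every $t$.
\end{enumerate}
I would try the first option. Once an exact AD equality is in hand, Theorem~\ref{thm:AD-eq} yields a decomposition $\cL\simeq\cL_1\times\cL_2$ together with a cross-factoring. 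The remaining step is to translate this lattice decomposition back into a partition $X=X_1\sqcup X_2$ of the poset. By Birkhoff's Theorem~\ref{thm:Birkhoff}, a product decomposition of a lattice of ideals corresponds to a partition of the join-irreducibles, that is, of the poset elements, into two mutually incomparable parts. The pattern of the factorization, $\fa=\ff_1\ff_2$, $\fc=\ff_1\fg_2$, $\fd=\fg_1\ff_2$, then forces the following: the constraint coming from $A$ lives in one factor and the constraint from $C$ in the other, and similarly $A$ is separated from $B$ and $C$ from $D$. This gives exactly the four non-connectivity conditions \eqref{eq:it-Fis}.

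The main obstacle is the passage from equality of the limiting quantity $\ff$ to an exact equality case of \eqref{eq:AD} on a fixed finite lattice. A secondary difficulty is checking that the lattice closure assumption of Theorem~\ref{thm:AD-eq} holds, or restricting to the closure. One must then argue that this closure still remembers enough of the comparability structure of $\cPo$ to read off \eqref{eq:it-Fis}. In particular, elements of $X$ lying in none of $A,B,C,D$ must be shown to attach to a single factor, which is where connectivity in $\Ga(\cPo)$ enters.
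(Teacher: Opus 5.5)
Your ``if'' direction is correct and is essentially the paper's argument; you group the components meeting $A\cup D$ into one block instead of checking component by component. The ``only if'' direction has a genuine gap exactly at the step you call the main obstacle, and neither of your options closes it. Theorem~\ref{thm:AD-eq} needs equality in \eqref{eq:AD} for full sums over a finite lattice, and \eqref{eq:Fis-eq} does not supply that.

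No limit is needed to realize $\ff$ exactly. Take $n=|X|$, labels in $[2n]\cup\{0,\infty\}$, and $\fa=\phi^{X-A-C}$, $\fb=\phi^{X-B-D}$, $\fc=\phi^{X-C-D}$, $\fd=\phi^{X-A-B}$. Then $\ff(X-A-C)\,\ff(X-B-D)=\frac{1}{|Y|!}\sum_{(S,T)\in\aK}\fa(S)\fb(T)$ for a suitable label set $Y\subseteq[2n]$. Here $\aK$ is the set of pairs whose joint weight $\bz^S\bz^T$ is one fixed squarefree monomial $\zq^{|A|+|B|}\zt^{|C|+|D|}\bz^Y$.

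That is only a single coefficient of the multivariate AD defect. The defect is coefficientwise nonnegative, and \eqref{eq:Fis-eq} forces only this coefficient (and its relabelings) to vanish. Coefficients with repeated labels or other $0/\infty$ counts are not controlled. So option (i) yields no polynomial identity and no AD equality to feed into Theorem~\ref{thm:AD-eq}.

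The Bj\"orner trick does not transfer either. There, Schur positivity gives $\sum_\tau a_\tau s_\tau$ where every term has a strictly positive leading coefficient $\ff(\tau)$. An individual monomial contributes nothing to the $q\to1$ leading term. Option (ii) is unsupported and is essentially as strong as the theorem itself.

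The missing idea is to localize with the RLS inequality (Theorem~\ref{thm:RLS}) rather than use Theorem~\ref{thm:AD-eq}. Since $\bz^S\bz^T=\bz^{S\vee T}\bz^{S\wedge T}$, the set $\aK$ is a union of fibers $\{(S,T): S\vee T=U,\ S\wedge T=V\}$. RLS says each fiber defect $\psi(U,V)=\sum\bigl(\fc(S)\fd(T)-\fa(S)\fb(T)\bigr)$ is nonnegative, so \eqref{eq:Fis-eq} forces $\psi(U,V)=0$ on every fiber.

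One well-chosen fiber then finishes the proof:
\begin{itemize}
\item Let $U$ be $\infty$ on $C\cup D$ and label $X-C-D$ by $n+1,\dots,n+|X-C-D|$ along a linear extension.
\item Let $V$ be $0$ on $A\cup B$ and label $X-A-B$ by $1,\dots,|X-A-B|$ along a linear extension.
\item Then $\fc(U)\fd(V)=1$, so some $(S,T)$ in that fiber has $\fa(S)\fb(T)=1$.
\item Put $X_1=\{x: S(x)=U(x)\}$ and $X_2=\{x: S(x)=V(x)\}$.
\item Every value of $U$ exceeds $n$, every value of $V$ is at most $n$, and $S,T$ are monotone; hence comparable elements lie on the same side.
\item The boundary values $S|_A=0$, $S|_C=\infty$, $T|_B=0$, $T|_D=\infty$ give $A\cup D\subseteq X_2$ and $B\cup C\subseteq X_1$. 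This is \eqref{eq:it-Fis}.
\end{itemize}

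Separately, your Birkhoff step is imprecise. The lattice carrying the AD functions is a product of chains indexed by $X$, and its join-irreducibles are pairs (element, level), not elements of $X$. Also, Theorem~\ref{thm:AD-eq} only decomposes the lattice closure of the support. So even granted an AD equality, you would still need Theorem~\ref{thm:AD-chains} and a support argument like Lemma~\ref{lem:LPP-adjacent}. That is what produces the partition of $X$ and the separation claims, which your proposal only asserts.
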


\smallskip
We prove Theorem~\ref{thm:Fis-eq} in Section~\ref{sec:proof-Fis}.
We should also note that Fishburn's original proof of \eqref{eq:Fis}
used the asymptotic limit of special instances of  \eqref{eq:AD}.
Similarly, Shepp's proof of GYY, later used by Brightwell~\cite{Bri88} to prove \eqref{eq:Fis}, used the
asymptotic limit of special instances of \eqref{eq:FKG}.
Instead, we employ an alternative argument that, while still based on the AD inequality, avoids the need for taking limits.
Furthermore, while Theorem~\ref{thm:Fis-eq} can be deduced from the AD equality (Theorem~\ref{thm:AD-eq}),
we provide a distinct proof that exploits different aspects of the AD inequality.


\medskip

\section{AD equality for Boolean lattices}\label{sec:prelim}

The next four sections are dedicated to proving equality conditions for the AD inequality on
Boolean lattices.  We prove various preliminary lemmas in the rest of this section.
We then prove the two main lemmas:
the \defng{consistency lemma} in Section~\ref{sec:cst}, and the \defng{identification lemma}
in Section~\ref{sec:iden}.  We conclude the proof of Theorem~\ref{thm:AD-boolean}
in~$\S$\ref{ss:AD-boolean-proof}.

\subsection{Setup} \label{ss:prelim-setup}
Let \ts $n\ge 1$. Observe that the \defn{Boolean lattice}
\. $\cB_n=\big(\aL_n,\vee,\wedge\big)$, where \. $\aL_n :=\{0,1\}^{n}$,
can be viewed as the lattice of $n$-dimensional vectors with entries in $\{0,1\}$,
where the join and meet operations are given by entrywise
maximum and minimum, respectively.
For a subset \ts $J\subseteq [n]$, we use \ts $\{0,1\}^J$ \ts to denote
vectors in \ts $\aL_n$ \ts whose support is in coordinates~$J$.
The following result is a special case of Theorem~\ref{thm:AD-eq} for \ts
$\cL=\cB_n\ts$.

\smallskip

\begin{thm}[{\rm \defn{AD equality for Boolean lattices}\ts}{}]\label{thm:AD-boolean}
	Let  \. $\fa,\fb,\fc,\fd: \aL_n \to \rr_{\geq 0}$ \. satisfy \eqref{eq:AD-cd}.
	Then the AD equality holds:
	\begin{equation}\label{eq:AD-eq-Boolean}
	\sum_{x \in \aLr_n}\.\fa(x) \. \cdot \.  \sum_{x \in \aLr_n}\. \fb(x)
    \ = \  \sum_{x \in \aLr_n}\fc(x) \. \cdot\.  \sum_{x \in \aLr_n}\.\fd(x)
	\end{equation}
	\underline{\em if and only if} \
	there exist a subset \ts $A \subseteq [n]$, such that
	\begin{equation}\label{eq:AD-equal-boolean}
		\fa({\color{blue}x_1},{\color{blue}x_2}) \. \fb({\color{red}y_1},{\color{red}y_2}) \  = \  \fc({\color{blue}x_1},{\color{red}y_2}) \. \fd({\color{red}y_1},{\color{blue}x_2}) \qquad \forall \ ({\color{blue}x_1},{\color{blue}x_2}), ({\color{red}y_1},{\color{red}y_2}) \in \{0,1\}^{A} \times \{0,1\}^{[n]- A}.
	\end{equation}
\end{thm}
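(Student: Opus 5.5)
Throughout, write $\alpha(\cdot)$ for a quantity computed from the restricted functions $\fa_0,\fb_0,\fc_0,\fd_0$ introduced below.

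The ``if'' direction is immediate. Summing \eqref{eq:AD-equal-boolean} over all $(x_1,x_2)$ and $(y_1,y_2)$ gives $\sum \fa \cdot \sum \fb = \sum_{x_1,y_2} \fc(x_1,y_2)\cdot \sum_{y_1,x_2}\fd(y_1,x_2)$, which is \eqref{eq:AD-eq-Boolean}.

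For the ``only if'' direction I would induct on $n$, following the standard proof of \eqref{eq:AD} for $\cB_n$ but keeping track of where equality forces structure. Write $x=(x',t)$ with $t\in\{0,1\}$ the last coordinate. Define marginals on $\cB_{n-1}$:
\[
\fa'(x') := \fa(x',0)+\fa(x',1),
\]
and similarly $\fb',\fc',\fd'$. The one-variable AD inequality, i.e.\ the case $n=1$, shows that $\fa',\fb',\fc',\fd'$ satisfy \eqref{eq:AD-cd} on $\cB_{n-1}$. Concretely, for fixed $x',y'$ put $\alpha_t=\fa(x',t)$, $\beta_t=\fb(y',t)$, $\gamma_t=\fc(x'\vee y',t)$, $\delta_t=\fd(x'\wedge y',t)$; these satisfy $\alpha_s\beta_t\le \gamma_{s\vee t}\delta_{s\wedge t}$, hence $(\alpha_0+\alpha_1)(\beta_0+\beta_1)\le(\gamma_0+\gamma_1)(\delta_0+\delta_1)$. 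Equality in \eqref{eq:AD-eq-Boolean} then forces two things. First, equality holds for the marginals on $\cB_{n-1}$, so by induction there is $A'\subseteq[n-1]$ with the cross-factoring identity \eqref{eq:AD-equal-boolean} for $\fa',\fb',\fc',\fd'$. Second, at the pairs $(x',y')$ where $\fa'(x')\fb'(y')>0$ we get pointwise equality in the $n=1$ inequality. Note that the marginal inequality can be strict elsewhere, but only where the left side vanishes.

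The next step is to classify equality in the $n=1$ case, which is a direct computation. Equality $(\alpha_0+\alpha_1)(\beta_0+\beta_1)=(\gamma_0+\gamma_1)(\delta_0+\delta_1)$ under $\alpha_0\beta_0\le\gamma_0\delta_0$, $\alpha_1\beta_1\le\gamma_1\delta_1$, $\alpha_0\beta_1,\alpha_1\beta_0\le\gamma_1\delta_0$ forces one of two patterns, exactly as in the Chebyshev defect \eqref{eq:Cheb-proof}:
\begin{itemize}[label=\textbullet]
\item either $(\gamma,\delta)$ is proportional to $(\alpha,\beta)$ with the coordinate placed in ``$A$'', i.e.\ $\fc\propto\fa$ and $\fd\propto\fb$ in $t$;
\item or the coordinate is placed in the complement, i.e.\ $\fc\propto\fb$ and $\fd\propto\fa$ in $t$.
\end{itemize}
Degenerate cases with zeros are allowed in both patterns.

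The main obstacle is gluing. For every supported pair $(x',y')$ we have chosen ``$n\in A$'' or ``$n\notin A$'', and these local choices must be shown to be consistent across all pairs. They must also be compatible with the factorization from $A'$, so that a single global $A=A'$ or $A'+n$ works. This is exactly where the issue of zeros bites, because degenerate pairs allow both choices. I would handle it with a consistency lemma: take two supported pairs making different forced choices, apply \eqref{eq:AD-cd} to mixed pairs $(x',\bar y')$ built from them using the product structure given by induction, and derive a strict inequality somewhere, contradicting equality. When every supported pair is degenerate, one of the patterns holds vacuously; I would choose it by an identification argument that extends $\fa(\cdot,t)$ and the other functions as products in $t$.

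Finally, I would verify \eqref{eq:AD-equal-boolean} in full. This needs the pointwise identity $\fa\fb=\fc\fd$ on the cross-pairs, not merely an identity for the marginals. I would obtain it by noting that with the global choice fixed, the defect is a sum of nonnegative terms $\fc\fd-\fa\fb$ over the cross-pairs, so each term must vanish.
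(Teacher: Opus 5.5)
Your ``if'' direction and the first half of the induction are fine: the marginals satisfy \eqref{eq:AD-cd}, equality passes to the totals of the marginals, and induction gives some $A'$. The next claim is false, though. Equality does \emph{not} force $\fa'(x')\fb'(y')=\fc'(x'\vee y')\fd'(x'\wedge y')$ at supported pairs, because \eqref{eq:AD} is not obtained by summing the pointwise conditions \eqref{eq:AD-cd}, so there is nothing to squeeze. For instance, take $n=2$, $\fa(x_1,x_2)=\fc(x_1,x_2)=f(x_1)$ and $\fb(x_1,x_2)=\fd(x_1,x_2)=g(x_1)$, with $f(0)=g(1)=1$ and $f(1)=g(0)=2$. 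Then \eqref{eq:AD-cd} and \eqref{eq:AD-eq-Boolean} hold, yet at $(x',y')=(0,1)$ you get $\fa'(0)\,\fb'(1)=4<16=\fc'(1)\,\fd'(0)$. So your per-pair ``forced choices'' of whether $n\in A$ are not defined, and the consistency argument has no input. What equality really yields pointwise is much weaker: the diagonal identity $\fa(x)\fb(x)=\fc(x)\fd(x)$ (Lemma~\ref{lem:diag}), and equalities for restricted or averaged quadruples that still satisfy \eqref{eq:AD-cd} (Lemmas~\ref{lem:AD-cd-res} and~\ref{lem:AD-cd-square}). The identities you actually need pair $\fa(x)\fb(y)$ with $\fc(x_1,y_2)\fd(y_1,x_2)$. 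Quadruples such as $\fa(1,\ast),\fb(0,\ast),\fc(0,\ast),\fd(1,\ast)$ do not satisfy \eqref{eq:AD-cd}, so neither the induction hypothesis nor \eqref{eq:AD} applies to them directly.

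Your final step is circular for the same reason. For \emph{every} $A$, the terms $\fc(x_1,y_2)\fd(y_1,x_2)-\fa(x_1,x_2)\fb(y_1,y_2)$ sum to the defect, since this is only a reindexing. But \eqref{eq:AD-cd} controls only $\fc(x\vee y)\fd(x\wedge y)$, so the individual terms have no sign. Take the equality case $\fa=\fc=1-\mathbf{1}_{\{(0,0)\}}$, $\fb=\fd\equiv 1$, with $A=\{1\}$, $x=(0,1)$, $y=(1,0)$: the term is $\fc(0,0)\fd(1,1)-\fa(0,1)\fb(1,0)=-1$. Nonnegativity of all terms for the chosen $A$ is equivalent to the conclusion itself.

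The missing mechanism is what the paper supplies:
\begin{enumerate}
\item Each coordinate gets a type \eqref{eq:B1}/\eqref{eq:B2} via identities for \emph{averaged} functions, where \eqref{eq:AD-cd} does hold.
\item The identification lemma (Lemma~\ref{lem:iden}) shows $\fa(x+z)\fb(y+z)=\fc(1^n+z)\fd(0^n+z)$ for all $z\in\{0,\sq\}^n$, with $x$ read off from the types and coordinates of both types resolved uniformly. The $n=2$ zero-case analysis of Lemma~\ref{lem:n2-B1B2} is where the issue of zeros is actually handled.
\item The consistency lemma (Lemma~\ref{lem:cst}) propagates these to all cross-identities through restrictions that \emph{do} satisfy \eqref{eq:AD-cd}. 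This continues until only the single pair $(0^k,1^{n-k}),(1^k,0^{n-k})$ remains, which is then forced by the total sum.
\end{enumerate}
Your proposal names ``consistency'' and ``identification'' but provides none of this.
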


\smallskip

\subsection{The case $n=1$}\label{sec:n1}
In this subsection we prove the special case of Theorem~\ref{thm:AD-boolean} for $n=1$, and which will be crucial in proving Theorem~\ref{thm:AD-boolean} in the full generality.

\smallskip

\begin{lemma}\label{lem:AD-boolean-1}
	Let \ts $\aLr:=\{0,1\}$ \ts 
and let  \. $\fa,\fb,\fc,\fd:  \aL \to \rr_{\geq 0}$ \. satisfy \eqref{eq:AD-cd}.
Define the following conditions:
\begin{alignat}{3}
		& \fa(0) \. \fb(0) \ = \  \fc(0) \. \fd(0) \quad &&\text{ and } \quad &&\fa(1) \. \fb(1) \ = \  \fc(1) \. \fd(1),  \label{eq:A0} \tag{A0} \\
		&   \fa(0) \. \fb(1) \ = \  \fc(0) \. \fd(1) \ &&\text{ and } \ &&\fa(1) \. \fb(0) \ = \  \fc(1) \. \fd(0), \label{eq:A1} \tag{A1} \\
		&   \fa(0) \. \fb(1) \ = \  \fc(1) \. \fd(0) \ &&\text{ and } \ &&\fa(1) \. \fb(0) \ = \  \fc(0) \. \fd(1). \label{eq:A2} \tag{A2}
	\end{alignat}
	Then \eqref{eq:AD-eq-Boolean} holds \, \underline{\em if and only if} \,
	\eqref{eq:A0} holds, and either \eqref{eq:A1} or \eqref{eq:A2} holds.
\end{lemma}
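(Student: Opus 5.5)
The plan is to write the defect of \eqref{eq:AD-eq-Boolean} for $n=1$ explicitly and decompose it into three individually nonnegative pieces. Abbreviate $a_i=\fa(i)$, $b_i=\fb(i)$, $c_i=\fc(i)$, $d_i=\fd(i)$ for $i\in\{0,1\}$. Condition \eqref{eq:AD-cd} on $\{0,1\}$ then reads
\[
a_0b_0\le c_0d_0,\qquad a_1b_1\le c_1d_1,\qquad a_0b_1\le c_1d_0,\qquad a_1b_0\le c_1d_0 .
\]
The last two both have right-hand side $c_1d_0$, since $0\vee 1=1$ and $0\wedge 1=0$. The defect is
\[
\de \;:=\;(c_0+c_1)(d_0+d_1)-(a_0+a_1)(b_0+b_1)
\;=\;(c_0d_0-a_0b_0)+(c_1d_1-a_1b_1)+\bigl(c_0d_1+c_1d_0-a_0b_1-a_1b_0\bigr).
\]

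The first two brackets are nonnegative by \eqref{eq:AD-cd}. For the third bracket I will use the classical AD argument for $n=1$. Set $P:=a_0b_1$, $Q:=a_1b_0$, $R:=c_0d_1$ and $S:=c_1d_0$. We know $\max(P,Q)\le S$. Multiplying the diagonal conditions gives $PQ=(a_0b_0)(a_1b_1)\le (c_0d_0)(c_1d_1)=RS$. From these, $R+S-P-Q\ge 0$: if $S=0$ then $P=Q=0$; otherwise
\[
R+S-P-Q\;\ge\;\frac{PQ}{S}+S-P-Q\;=\;\frac{(S-P)(S-Q)}{S}\;\ge\;0 .
\]
So $\de\ge 0$, and $\de=0$ holds if and only if three things happen: both diagonal equalities hold, which is exactly \eqref{eq:A0}, and $R+S=P+Q$.

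The main step is to show that, given \eqref{eq:A0} and the constraints, $R+S=P+Q$ is equivalent to \eqref{eq:A1} or \eqref{eq:A2}. The converse is immediate: \eqref{eq:A1} says $P=R$ and $Q=S$, while \eqref{eq:A2} says $P=S$ and $Q=R$; either gives $R+S=P+Q$, so combined with \eqref{eq:A0} we get $\de=0$.

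For the forward direction, first take $S=0$. Then $P=Q=0$ and $R=0$, so both \eqref{eq:A1} and \eqref{eq:A2} hold. Now take $S>0$. Equality forces equality throughout the chain above, so $R=PQ/S$, which yields $RS=PQ$ (this also follows from \eqref{eq:A0} directly), and $(S-P)(S-Q)=0$. If $S=Q$, then $R=P$, which is \eqref{eq:A1}. If $S=P$, then $R=Q$, which is \eqref{eq:A2}.

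The only delicate point is the issue of zeros, namely dividing by $S$. I handle it by splitting off the case $S=0$ as above. I expect no real obstacle beyond keeping this case analysis clean.
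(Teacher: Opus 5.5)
Your proof is correct. It differs from the paper's proof in how \eqref{eq:A0} is obtained.

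The paper first reduces to the case where all four sums are positive, and then argues by perturbation. If $\fa(0)\fb(0)<\fc(0)\fd(0)$, lowering $\fc(0)$ by a small $\varepsilon>0$ preserves \eqref{eq:AD-cd}, since $\fc(0)$ enters only the constraint with $x=y=0$. Applying the AD inequality (Theorem~\ref{thm:AD}) to the perturbed functions then gives a strict inequality, contradicting equality. The symmetric argument gives $\fa(1)\fb(1)=\fc(1)\fd(1)$.

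You instead write the defect explicitly as a sum of three nonnegative terms. This is in effect the $n=1$ proof of the AD inequality, written so that the equality cases can be read off. As a result, \eqref{eq:A0} is immediate, you do not use Theorem~\ref{thm:AD} as a black box, and you need no preliminary reduction to positive sums.

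Your treatment of the cross terms is essentially the paper's. Both arguments split on whether $S=\fc(1)\fd(0)$ vanishes, and both end with $(S-P)(S-Q)=0$. The one difference is that the paper substitutes the exact identity $RS=PQ$ (already available from \eqref{eq:A0}) and factors the resulting equation. You instead get $R=PQ/S$ as the equality case of $PQ\le RS$ inside your chain of inequalities. Both work.

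Your version also gives, for $n=1$, an explicit expression of the defect as a sum of nonnegative pieces. The paper's perturbation argument needs only the inequality itself.
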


\smallskip

\begin{proof}
	The  $\ts\Leftarrow\ts$ direction is straightforward.
	We now present  the proof of $\ts\Rightarrow\ts$ direction.
	We will without loss of generality assume that
	\[ \sum_{x \in \aL}\.\fa(x)\. >  \. 0\., \quad \sum_{x \in \aL}\fb(x)\. >  \. 0\. ,
\quad \sum_{x \in \aL}\.\fc(x)\. >  \. 0 \., \quad \sum_{x \in \aL}\. \fd(x) \. >  \. 0. \]
Indeed, if say \. $ \sum \. \fa(x) =0$\., then \eqref{eq:AD-eq-Boolean} implies that either
\. $ \sum \. \fc(x) =0$ \. or  \. $ \sum \. \fd(x) =0$.
In either case, this implies that every term in
\eqref{eq:A0}, \eqref{eq:A1}, \eqref{eq:A2} is equal to~$0$,
and the conclusion follows.
	
\smallskip

First, let us prove \eqref{eq:A0}.
	Suppose to the contrary that \. $\fa(0) \fb(0) <   \fc(0)\fd(0)$.
	In particular, this implies that \ts $\fc(0), \fd(0)>0$\ts.
	Decrease  the value of \ts $\fc(0)$ \ts by sufficiently small \ts $\ep>0$ \ts
    while keeping the values of the other functions unchanged.
	That is, let \ts $\fc':\{0,1\} \to \rr_{\geq 0}$ \ts be the function
    given by  \ts $\fc'(0):= \fc(0)-\ep$ \ts and   \ts $\fc'(1):= \fc(1)$\ts.
	Notice that 	\ts $\fa,\fb,\fc',\fd$   \ts continue to  satisfy \eqref{eq:AD-cd}.
	It then follows from the AD inequality that
\begin{align*}
& \sum_{x \in \aL} \. \fa(x) \. \cdot \. \sum_{x \in \aL}\. \fb(x)
\ \ \leq \ \ \sum_{x \in \aL}\fc'(x) \.\cdot\.  \sum_{x \in \aL_n}\.\fd(x)    \\
& \qquad   \le \ \ \sum_{x \in \aL_n}\. \fc(x) \.\cdot\.  \sum_{x \in \aL}\fd(x)  \. - \.
\ep \. \sum_{x \in \aL}\. \fd(x) \ \ <  \ \ \sum_{x \in \aL}\fc(x) \.\cdot\.   \sum_{x \in \aL}\. \fd(x)\ts.
	\end{align*}
	This contradicts the assumption that equality occurs in \eqref{eq:AD}, as desired.
	By the same argument, we conclude that  \. $\fa(1) \fb(1) =   \fc(1)\fd(1)$\..
	This proves \eqref{eq:A0}.
	
	Note that \eqref{eq:AD-eq-Boolean} is given by
	\begin{align*}
		\big(\fa(0)+\fa(1)\big) \big(\fb(0)+\fb(1)\big) \ = \  \big(\fc(0)+\fc(1)\big) \big(\fd(0)+\fd(1)\big).
	\end{align*}
	It then follows from \eqref{eq:A0} that
	\begin{equation}\label{eq:AD-equal-1}
		\fa(0) \fb(1)  \. +  \.  \fa(1) \fb(0) \ = \ \fc(0) \fd(1)  \. + \. \fc(1) \fd(0).
	\end{equation}
	
Now, suppose that $\fc(1) \fd(0)=0$. Then we have \.
$\fa(1) \fb(0)= \fa(0) \fb(1)= 0$ \. because both quantities
are less than \ts $\fc(1) \fd(0)$ \ts by \eqref{eq:AD-cd}.
	It then follows from \eqref{eq:AD-equal-1} that \ts $\fc(0) \fd(1)=0$.  Thus,
	\[ \fa(0) \ts \fd(1) \, = \, \fc(0) \ts \fd(1) \. = \. 0, \qquad  \fa(1) \ts \fd(0) \, = \, \fc(1) \ts \fd(0) \. = \. 0, \]
	from which \eqref{eq:A1} follows.

Now assume that \ts $\fc(1) \fd(0) >0$\ts.
It follows from \eqref{eq:A0} that
	\begin{align*}
		\fc(0)\.\fd(1) \ = \   \frac{\fa(0) \.\fb(0) \.\fa(1) \.\fb(1)}{\fc(1) \.\fd(0)}\,.
	\end{align*}
	We can then rewrite  \eqref{eq:AD-equal-1} as
	\begin{equation*}
		\fa(0) \fb(1)  \. +  \.  \fa(1) \fb(0) \ = \  \frac{\fa(0) \. \fb(0) \.\fa(1) \.\fb(1)}{\fc(1) \.\fd(0)}  \. + \. \fc(1) \. \fd(0),
	\end{equation*}
	which is equivalent to
	\begin{align*}
		\frac{-1}{\fc(1) \fd(0)} \. \big(\fc(1) \fd(0) \. -  \.
\fa(1) \fb(0) \big) \. \big(\fc(1) \fd(0) \. - \. \fa(0) \fb(1)\big) \ = \ 0.
	\end{align*}
	This implies that, either \. $ \fc(1) \fd(0) = \fa(1) \fb(0)$ \. or \. $ \fc(1) \fd(0) = \fa(0) \fb(1)$.
	In the former case, it follows from \eqref{eq:AD-equal-1} that
	\[    \fa(0) \fb(1) \ = \  \fc(0) \fd(1) \quad \text{ and } \quad \fa(1) \fb(0) \ = \  \fc(1) \fd(0).  \]
	This gives \eqref{eq:A1}.
	In the latter case, it also follows from \eqref{eq:AD-equal-1} that
	\[    \fa(0) \fb(1) \ = \  \fc(1) \fd(0) \ \text{ and } \ \fa(1) \fb(0) \ = \  \fc(0) \fd(1), \]
	This gives \eqref{eq:A2} and completes the proof of the lemma.
\end{proof}

\smallskip

\subsection{Restriction operation}
We now introduce the  \defnb{restriction} operation.
Let $f: \{0,1\}^n \to \rr$ be a function, and let  $x \in \{0,1\}$.
The function  \. $f(x,\bu^{n-1}):\{0,1\}^{n-1} \to \rr$  \. is given by
\[ (y_1,\ldots, y_{n-1}) \quad \mapsto \quad f(x,y_1,\ldots, y_{n-1}),  \]
the function on $\rr^{n-1}$ obtained from $f$ by restricting the first coordinate input to $x$.
Here ($\bu$) represents the input variable to the function, and \. $\bu^{i} := \underbrace{(\bu, \ldots, \bu)}_i$ \. represent a string of  $i$ variables.
For each $i\in [n]$, the function \. $f(\bu^{i-1}, x,\bu^{n-i})$ \. is defined analogously.
The following result follows directly from  the definition of \eqref{eq:AD-cd}.

\smallskip

\begin{lemma}\label{lem:AD-cd-res}
	Let   \. $\fa,\fb,\fc,\fd: \{0,1\}^n \to \rr_{\geq 0}$ \. be four functions satisfying \eqref{eq:AD-cd}, and let $i\in [n]$.
Then, for all  $x, y \in \{0,1\}$,  the following four functions also satisfy \eqref{eq:AD-cd}~$:$
\[
\fa(\bu^{i-1}, x,\bu^{n-i})\., \quad \fb(\bu^{i-1}, y,\bu^{n-i})\., \quad
\fc(\bu^{i-1}, x\vee y,\bu^{n-i})\., \quad \fd(\bu^{i-1}, x\wedge y,\bu^{n-i})\ts.
\]
\end{lemma}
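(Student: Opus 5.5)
The statement is Lemma~\ref{lem:AD-cd-res}: fixing the $i$-th coordinate of the inputs to $\fa$ and $\fb$ at $x$ and $y$, and correspondingly fixing it at $x\vee y$ for $\fc$ and $x\wedge y$ for $\fd$, preserves \eqref{eq:AD-cd}. The plan is to verify the defining inequality directly, using that join and meet on $\aL_n=\{0,1\}^n$ are computed coordinatewise.

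Fix $i\in[n]$ and $x,y\in\{0,1\}$. Write the restricted functions as $\fa'(u):=\fa(\bu^{i-1},x,\bu^{n-i})(u)$, $\fb'(v):=\fb(\bu^{i-1},y,\bu^{n-i})(v)$, $\fc'(w):=\fc(\bu^{i-1},x\vee y,\bu^{n-i})(w)$ and $\fd'(w):=\fd(\bu^{i-1},x\wedge y,\bu^{n-i})(w)$, for $u,v,w\in\{0,1\}^{n-1}$. Let $\iota_z(u)\in\{0,1\}^n$ denote the vector obtained by inserting $z$ into position $i$ of $u$.

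The key observation is that
\[
\iota_x(u)\vee\iota_y(v)=\iota_{x\vee y}(u\vee v)
\quad\text{and}\quad
\iota_x(u)\wedge\iota_y(v)=\iota_{x\wedge y}(u\wedge v).
\]
This holds because both operations act entrywise: coordinate $i$ gives $x\vee y$ and $x\wedge y$, and the remaining coordinates give $u\vee v$ and $u\wedge v$.

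Applying \eqref{eq:AD-cd} for the original functions to the pair $\iota_x(u)$ and $\iota_y(v)$ gives
\[
\fa'(u)\,\fb'(v)=\fa(\iota_x(u))\,\fb(\iota_y(v))\le \fc(\iota_{x\vee y}(u\vee v))\,\fd(\iota_{x\wedge y}(u\wedge v))=\fc'(u\vee v)\,\fd'(u\wedge v).
\]
This is exactly \eqref{eq:AD-cd} for the four restricted functions on $\cB_{n-1}$. Nonnegativity is inherited from the original functions.

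There is no real obstacle in this argument. The only point requiring care is matching the fixed coordinates: $\fc$ must be fixed at $x\vee y$ and $\fd$ at $x\wedge y$, which is precisely what the coordinatewise identity above provides.
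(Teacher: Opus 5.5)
Your proof is correct and is exactly the argument the paper has in mind; the paper gives no written proof, saying only that the lemma follows directly from the definition of \eqref{eq:AD-cd}. Your coordinatewise identities $\iota_x(u)\vee\iota_y(v)=\iota_{x\vee y}(u\vee v)$ and $\iota_x(u)\wedge\iota_y(v)=\iota_{x\wedge y}(u\wedge v)$ are precisely the content of that remark.
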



\smallskip

It is important to note that the lemma above does not prove \eqref{eq:AD-cd} for the functions \. $\fa(1,\bu), \fb(0,\bu), \fc(0,\bu), \fd(1, \bu)$ \. and \. $\fa(0,\bu), \fb(1,\bu), \fc(0,\bu), \fd(1, \bu)$\.,
because one  cannot have \. $x \vee y =0$ \. and \. $x \wedge y =1$ \. together.
Therefore  we cannot conclude  \eqref{eq:AD} for these functions.
Throughout this paper we  will deliberately avoid relying on \eqref{eq:AD} for these specific cases.

\medskip

\subsection{Averaging operation}
We now introduce the  \defnb{averaging} operation.
Let $f: \{0,1\}^n \to \rr$ be a function.
The function  \. $f(\sq,\bu^{n-1}):\{0,1\}^{n-1} \to \rr$  \. is given by
\[ (y_1,\ldots, y_{n-1}) \quad \mapsto \quad f(0,y_1,\ldots, y_{n-1}) \. + \. f(1,y_1,\ldots, y_{n-1}),  \]
the function on $\rr^{n-1}$ obtained from $f$ by summing over all possible values of  the first coordinate.
For each $i\in [n]$, the function \. $f(\bu^{i-1}, \sq ,\bu^{n-i})$ \. is defined analogously.
In particular, we have
\[
    f(\sq^{n}) \ = \   \sum_{x \in \aL_n} \. f(x),
\]
where \. $\sq^n:= \underbrace{(\sq,\ldots, \sq)}_{n}$ \. represents the string of $n$ circles.

\smallskip

\begin{lemma}\label{lem:AD-cd-square}
	Let   \. $\fa,\fb,\fc,\fd: \{0,1\}^n \to \rr_{\geq 0}$ \. be four functions satisfying \eqref{eq:AD-cd}, and let \ts $i\in [n]$.
	Then the following functions also satisfy \eqref{eq:AD-cd}~$:$
\[
\fa(\bu^{i-1}, \sq,\bu^{n-i})\., \quad \fb(\bu^{i-1}, \sq,\bu^{n-i})\.,
\quad \fc(\bu^{i-1}, \sq,\bu^{n-i})\., \quad \fd(\bu^{i-1}, \sq,\bu^{n-i})\ts.
\]
\end{lemma}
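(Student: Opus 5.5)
The plan is to reduce the claim to the AD inequality on the one-dimensional Boolean lattice $\cB_{n-1}$ by fixing all coordinates except the $i$-th. Fix $x,y\in\{0,1\}^{n-1}$, viewed as inputs in the coordinates other than~$i$. We must show
\[
\fa(\bu^{i-1},\sq,\bu^{n-i})(x)\;\fb(\bu^{i-1},\sq,\bu^{n-i})(y)\ \le\ \fc(\bu^{i-1},\sq,\bu^{n-i})(x\vee y)\;\fd(\bu^{i-1},\sq,\bu^{n-i})(x\wedge y).
\]
Define four functions on the two-element lattice $\{0,1\}$:
\[
\fa'(t):=\fa(x\text{ with }t\text{ in slot }i),\quad \fb'(t):=\fb(y\text{ with }t),\quad \fc'(t):=\fc(x\vee y\text{ with }t),\quad \fd'(t):=\fd(x\wedge y\text{ with }t).
\]

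The key step is to verify that $\fa',\fb',\fc',\fd'$ satisfy \eqref{eq:AD-cd} on $\{0,1\}$. For $s,t\in\{0,1\}$, the join of ($x$ with $s$) and ($y$ with $t$) in $\cB_n$ is ($x\vee y$ with $s\vee t$). Likewise their meet is ($x\wedge y$ with $s\wedge t$), since join and meet are coordinatewise. Hence the hypothesis \eqref{eq:AD-cd} for $\fa,\fb,\fc,\fd$ gives $\fa'(s)\fb'(t)\le \fc'(s\vee t)\fd'(s\wedge t)$.

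Applying Theorem~\ref{thm:AD} (the AD inequality) to the lattice $\{0,1\}$ then yields $(\fa'(0)+\fa'(1))(\fb'(0)+\fb'(1))\le(\fc'(0)+\fc'(1))(\fd'(0)+\fd'(1))$. This is exactly the desired inequality, by the definition of the averaging operation.

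Nonnegativity of the averaged functions is clear. There is no real obstacle: the only point to check carefully is the coordinatewise compatibility of join and meet. If one prefers not to invoke Theorem~\ref{thm:AD}, the $n=1$ case can be checked by hand. The terms $\fa'(0)\fb'(0)$ and $\fa'(1)\fb'(1)$ are bounded directly. For the cross terms, one uses $\fa'(0)\fb'(1)\,\fa'(1)\fb'(0)\le \fc'(1)\fd'(0)\,\fc'(1)\fd'(0)$-type bounds together with the elementary fact that $u+v\le p+q$ whenever $u,v\le p$ and $uv\le pq$.
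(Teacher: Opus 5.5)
Your proposal is correct and is the paper's argument: fix the other coordinates, check \eqref{eq:AD-cd} for the four one-variable restrictions, and apply \eqref{eq:AD} on $\{0,1\}$. The paper gets the condition from repeated use of Lemma~\ref{lem:AD-cd-res}, while you check it directly from the coordinatewise join and meet; that is the same step.

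Two small slips. First, the reduction is to the two-element lattice $\{0,1\}=\cB_1$, not to $\cB_{n-1}$. Second, in your optional hand check, the cross-term bound you actually need is $uv=\fa'(0)\fb'(0)\,\fa'(1)\fb'(1)\le \fc'(0)\fd'(0)\,\fc'(1)\fd'(1)=pq$, not the bound by $p^2$ you wrote.
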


\smallskip

\begin{proof}
	We present only the proof of  the case $i=1$, as the proof of the other cases are analogous.
	Let \ts $x,y$ \ts  be any elements of $\{0,1\}^{n-1}$.
	It follows from consecutive applications of  Lemma~\ref{lem:AD-cd-res} that
	\. $\fa(\bu,x), \fb(\bu,y), \fc(\bu, x \vee y), \fd(\bu, x \wedge y)$ \.
	satisfy \eqref{eq:AD-cd}.
	Now note that
	\begin{alignat*}{3}
	\fa(\sq,x) \ &= \    \sum_{z \in \{0,1\}} \fa(z,x), \qquad  && \fb(\sq,y) \ &&= \    \sum_{z \in \{0,1\}} \fb(z,y) \\
	\fc(\sq,x \vee y) \ &= \    \sum_{z \in \{0,1\}} \fc(z,x \vee y), \qquad && \fd(\sq,x \wedge y) \ &&= \    \sum_{z \in \{0,1\}} \fb(z,x \wedge y).
	\end{alignat*}
	Applying  \eqref{eq:AD} to these four functions gives
	\begin{align*}
		\fa(\sq,x) \. \fb(\sq,y)  \ & \leq  \  \fc(\sq,x \vee y) \. \fd(\sq,x \wedge y)\ts,
	\end{align*}
	as desired.
\end{proof}

\smallskip

\smallskip

\begin{lemma}\label{lem:diag}
	Let   \. $\fa,\fb,\fc,\fd: \{0,1\}^n \to \rr_{\geq 0}$ \. be four functions satisfying \eqref{eq:AD-cd} and \eqref{eq:AD-eq}.
	Then, for
	all \ts $x \in \{0,1\}^n$, we have:
\[
\fa(x)\ts \fb(x) \ = \  \fc(x) \ts \fd(x)\ts.
\]
\end{lemma}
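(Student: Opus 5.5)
We argue by induction on $n$, following the strategy of Lemma~\ref{lem:AD-boolean-1}: first isolate one coordinate by averaging over all the others, and then use restriction to move down the lattice. The base case $n=1$ is exactly condition \eqref{eq:A0} of Lemma~\ref{lem:AD-boolean-1}, which gives $\fa(x)\fb(x)=\fc(x)\fd(x)$ for $x\in\{0,1\}$.

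For the inductive step, fix $n\ge 2$ and consider the functions $\fa(\bu,\sq^{n-1})$, $\fb(\bu,\sq^{n-1})$, $\fc(\bu,\sq^{n-1})$, $\fd(\bu,\sq^{n-1})$ on $\{0,1\}$. Applying Lemma~\ref{lem:AD-cd-square} repeatedly, once for each coordinate $2,\dots,n$, shows that these four functions satisfy \eqref{eq:AD-cd}. Their total sums coincide with those of $\fa,\fb,\fc,\fd$, so they satisfy \eqref{eq:AD-eq}. Lemma~\ref{lem:AD-boolean-1} with \eqref{eq:A0} then gives
\[
\fa(z,\sq^{n-1})\,\fb(z,\sq^{n-1}) \ = \ \fc(z,\sq^{n-1})\,\fd(z,\sq^{n-1}) \qquad \text{for } z\in\{0,1\}.
\]
Equivalently, the four restricted functions $\fa(z,\bu^{n-1}),\fb(z,\bu^{n-1}),\fc(z,\bu^{n-1}),\fd(z,\bu^{n-1})$ satisfy \eqref{eq:AD-eq} on $\{0,1\}^{n-1}$. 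These are exactly the restrictions allowed by Lemma~\ref{lem:AD-cd-res} with $x=y=z$, since $z\vee z=z\wedge z=z$. So they also satisfy \eqref{eq:AD-cd}, and we avoid the forbidden mixed restrictions noted after that lemma.

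The induction hypothesis now yields $\fa(z,w)\fb(z,w)=\fc(z,w)\fd(z,w)$ for every $w\in\{0,1\}^{n-1}$ and each $z\in\{0,1\}$, which covers all $x\in\{0,1\}^n$.

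The only point that needs care is verifying that the averaged functions inherit both hypotheses. The inheritance of \eqref{eq:AD-cd} is exactly what Lemma~\ref{lem:AD-cd-square} provides, so no real obstacle remains. An alternative is a direct perturbation argument as in the proof of \eqref{eq:A0}. Suppose $\fa(x)\fb(x)<\fc(x)\fd(x)$ for some $x$. Decreasing $\fc(x)$ slightly would keep \eqref{eq:AD-cd} only if $x$ is never of the form $u\vee v$ with $\fa(u)\fb(v)$ tight. This can fail, which is why the inductive averaging route is the preferred one.
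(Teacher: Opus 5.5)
Your proposal is correct and follows the paper's proof essentially step by step. You induct on $n$ with base case \eqref{eq:A0}, average out coordinates $2,\ldots,n$ via Lemma~\ref{lem:AD-cd-square} to get \eqref{eq:A0} in the first coordinate, and then apply the induction hypothesis to the restrictions $\fa(z,\bu^{n-1}),\ldots,\fd(z,\bu^{n-1})$, which satisfy \eqref{eq:AD-cd} by Lemma~\ref{lem:AD-cd-res}; the closing perturbation remark is not needed.
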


\smallskip

\begin{proof}
	We prove the claim by induction on $n$.
	The case $n=1$ follows from \eqref{eq:A0} in Lemma~\ref{lem:AD-boolean-1}.
For $n\geq 2$, assume the lemma holds for $n-1$.
Let  \. $x=(x_1,\ldots, x_n) \in\{0,1\}^n\.$.
	
	Since \. $\fa, \fb, \fc, \fd$ \. satisfies \eqref{eq:AD-eq},
	it follows that  the (one-dimensional) functions \. $\fa(\bu,\sq^{n-1})$, $\fb(\bu,\sq^{n-1})$, $\fc(\bu,\sq^{n-1})$, $\fd(\bu,\sq^{n-1})$ \.
	also satisfy \eqref{eq:AD-eq}. 	Note that the same four functions also satisfy
	\eqref{eq:AD-cd} by Lemma~\ref{lem:AD-cd-square}.
	Therefore, we can apply \eqref{eq:A0} in Lemma~\ref{lem:AD-boolean-1}, to get
	\begin{align*}
		\fa(x_1,\sq^{n-1}) \. \fb(x_1,\sq^{n-1})  \ = \  \fc(x_1,\sq^{n-1}) \. \fd(x_1,\sq^{n-1})\ts.
	\end{align*}
	This is equivalent to saying that \. $\fa(x_1,\bu^{n-1})$, \. $\fb(x_1,\bu^{n-1})$, \. $\fc(x_1,\bu^{n-1})$, \. $\fd(x_1,\bu^{n-1})$ \. satisfy \eqref{eq:AD-eq}.
	Since these four functions also satisfy \eqref{eq:AD-cd} by Lemma~\ref{lem:AD-cd-res} and are $(n-1)$-dimensional,
	it then follows from the induction that
	\begin{align*}
		\fa(x_1,x_2,\ldots, x_n) \.\cdot\. \fb(x_1,x_2,\ldots, x_n)  \ = \
\fc(x_1,x_2,\ldots, x_n) \. \cdot \. \fd(x_1,x_2,\ldots, x_n),
	\end{align*}
	as desired.
\end{proof}

\medskip

\section{Consistency lemma}\label{sec:cst}

\subsection{Setup} \label{ss:cst-setup}
The main result  of this section requires the following notations.
Recall binary addition on \ts $\{0,1\}$, given by \. $0+0 = 1 + 1 = 0$ \.
 and \. $0+1 =  1$.
This operation extends to the set \ts $\{0,1, \sq\}$ \ts as follows:
\begin{align*}
 0+ \sq    \, = \, 1+ \sq \, = \,    \sq + \sq \, = \, \sq\ts,
\end{align*}
where \ts $\sq$ \ts acts similar to $\infty$.
We denote \. $1^n := (1,\ldots,1)$,
$0^n := (0,\ldots,0) \in \{0,1\}^n$, and write \. $1^k0^{n-k}$ \. to mean
\. $(1^k,0^{n-k})$ \. when the context is clear.

In the next lemma we show that Theorem~\ref{thm:AD-boolean}  follows from a list
of conditions specifically chosen so that they can be verified by induction.

\smallskip

\begin{lemma}[{\rm \defn{Consistency lemma}\ts}{}]\label{lem:cst}
	Let   \. $\fa,\fb,\fc,\fd: \{0,1\}^n \to \rr_{\geq 0}$ \. be four functions
satisfying \eqref{eq:AD-cd} and \eqref{eq:AD-eq}, and let \. $k \in \{0,1,\ldots n\}$.
	Suppose that, for all sequences \. $z \in \{0,\sq\}^n$, we have
	\begin{align}\label{eq:cst-cd}
		\fa\big(1^k0^{n-k} +z\big) \. \fb\big(0^k1^{n-k} +z\big) \ = \   \fc\big(1^n +z\big) \. \fd\big(0^n +z\big).
	\end{align}
	Then,
	for all \. $(x_1,x_2), (y_1,y_2) \in \{0,1\}^{k} \times \{0,1\}^{n-k}$, we have:
	\begin{align}\label{eq:cst}
		\fa({\color{blue}x_1},{\color{blue}x_2}) \. \fb({\color{red}y_1},{\color{red}y_2}) \ = \
\fc({\color{blue}x_1},{\color{red}y_2}) \. \fd({\color{red}y_1},{\color{blue}x_2})\ts.
	\end{align}
\end{lemma}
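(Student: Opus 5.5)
The plan is induction on $n$, peeling off one coordinate. By symmetry between the two blocks I assume $k<n$, so coordinate $n$ lies in the second block: there $\fa$ is evaluated at $0$, $\fb$ at $1$, $\fc$ at $1$ and $\fd$ at $0$. The case $k=n$ is handled identically by peeling off the first coordinate instead.

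\emph{Base case $n=1$.} Here $k\in\{0,1\}$.
\begin{itemize}
\item If $k=1$, the instance $z=0$ of \eqref{eq:cst-cd} gives $\fa(1)\fb(0)=\fc(1)\fd(0)$. Lemma~\ref{lem:AD-boolean-1} gives \eqref{eq:A0}, and combined with \eqref{eq:AD-equal-1} this forces $\fa(0)\fb(1)=\fc(0)\fd(1)$. So \eqref{eq:A1} holds, which together with \eqref{eq:A0} is exactly \eqref{eq:cst} for $k=1$.
\item If $k=0$, the same argument yields \eqref{eq:A2}, which is \eqref{eq:cst} for $k=0$.
\end{itemize}

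\emph{Inductive step: the two slices that satisfy \eqref{eq:AD-cd}.} For $u,v\in\{0,1\}$, consider the $(n-1)$-dimensional slices $\fa(\bu^{n-1},u)$, $\fb(\bu^{n-1},v)$, $\fc(\bu^{n-1},u\vee v)$, $\fd(\bu^{n-1},u\wedge v)$. By Lemma~\ref{lem:AD-cd-res} these satisfy \eqref{eq:AD-cd} only when $(u,v)\in\{(0,0),(1,1),(0,1)\}$.

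First take $(u,v)=(0,1)$. Then \eqref{eq:cst-cd} with $z=(z',0)$, $z'\in\{0,\sq\}^{n-1}$, is precisely the hypothesis of the lemma for this slice with the same $k$. Its instance $z'=\sq^{n-1}$ is the needed \eqref{eq:AD-eq}. Induction then gives
$$\fa(x_1,x_2',0)\,\fb(y_1,y_2',1)\,=\,\fc(x_1,y_2',1)\,\fd(y_1,x_2',0).$$

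Next take the diagonal slices $u=v$. For each $z'\in\{0,\sq\}^{n-1}$, apply Lemma~\ref{lem:AD-boolean-1} to the one-dimensional functions $t\mapsto \fa(1^k0^{n-1-k}+z',t)$, $t\mapsto\fb(0^k1^{n-1-k}+z',t)$, $t\mapsto\fc(1^{n-1}+z',t)$, $t\mapsto\fd(0^{n-1}+z',t)$.
\begin{itemize}
\item These satisfy \eqref{eq:AD-cd}, because they are restrictions of averages and Lemmas~\ref{lem:AD-cd-res} and~\ref{lem:AD-cd-square} apply.
\item They satisfy \eqref{eq:AD-eq}, by \eqref{eq:cst-cd} at $z=(z',\sq)$.
\end{itemize}
By \eqref{eq:cst-cd} at $z=(z',0)$, the $(0,1)$ cross relation already holds. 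Combined with \eqref{eq:AD-equal-1}, the lemma then yields both \eqref{eq:A0} and \eqref{eq:A1}. Hence the hypothesis \eqref{eq:cst-cd} holds for the diagonal slices $(0,0)$ and $(1,1)$. Their \eqref{eq:AD-eq} is the case $z'=\sq^{n-1}$, and they satisfy \eqref{eq:AD-cd}. By induction, \eqref{eq:cst} holds on both diagonal slices.

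\emph{The remaining pair $(u,v)=(1,0)$.} It remains to show
$$\fa(x_1,x_2',1)\,\fb(y_1,y_2',0)\,=\,\fc(x_1,y_2',0)\,\fd(y_1,x_2',1).$$
As the paper warns, this slice does not satisfy \eqref{eq:AD-cd}, so induction cannot be applied to it. I would instead glue the three known slices: multiplying the two diagonal identities and dividing by the $(0,1)$ identity gives the claim whenever $\fa(x_1,x_2',0)\fb(y_1,y_2',1)>0$.

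\emph{Main obstacle: the zero case.} This is the step I expect to be hardest. I would first try to show that, with $x,y$ held fixed, both sides of the target identity factor as products of one-variable quantities. On the left this comes from the diagonal cross-factorizations, which separate dependence on $x$ from dependence on $y$. The aim is to reduce the identity to the averaged statement $\fa(\sq^{n-1},1)\,\fb(\sq^{n-1},0)=\fc(\sq^{n-1},0)\,\fd(\sq^{n-1},1)$ (from \eqref{eq:A1}) plus the instances already proved with other $x',y'$ at which the divisor is positive.

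If that fails, I would argue by contradiction using the diagonal identities. When $\fa(x,0)=0$, say, I would use the zeros forced in the $\fc,\fd$ values via the $(0,0)$ slice to transport the vanishing to the right-hand side. The aim is to show both sides are then $0$, or to derive a contradiction with the averaged equality.
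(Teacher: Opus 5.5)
Your treatment of the slices $(u,v)=(0,0),(1,1),(0,1)$ in the last coordinate is sound and matches the paper's Cases~1 and~2. There is one harmless slip: coordinate $n$ lies in the second block, so \eqref{eq:cst-cd} at $z=(z',0)$ reads $\fa(\cdot,0)\,\fb(\cdot,1)=\fc(\cdot,1)\,\fd(\cdot,0)$, and Lemma~\ref{lem:AD-boolean-1} then gives \eqref{eq:A2}, not \eqref{eq:A1}.

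The genuine gap is the slice $(1,0)$. Division only covers pairs with $\fa(x,0)\,\fb(y,1)>0$, and for the zero case you offer hopes rather than an argument. Your first plan cannot work as stated. Nothing in \eqref{eq:AD-cd} bounds $\fa(x,1)\,\fb(y,0)$ against $\fc(x_1,y_2',0)\,\fd(y_1,x_2',1)$, so there is no sign information to exploit. The averaged identity $\fa(\sq^{n-1},1)\,\fb(\sq^{n-1},0)=\fc(\sq^{n-1},0)\,\fd(\sq^{n-1},1)$ is just the sum of all $4^{n-1}$ identities of this slice. It can therefore recover at most one missing identity, not an arbitrary set of zero-case pairs. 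The second plan (zero-chasing) is not carried out at all.

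The missing idea is that you need not always peel the last coordinate. Your slice arguments work verbatim for any coordinate $i$; the block sizes simply drop to $k-1,\,n-k$ or to $k,\,n-k-1$. Given a pair $(x,y)$, look for a coordinate with $x_i=y_i$, or with $(x_i,y_i)$ matching the pattern, i.e.\ $(1,0)$ for $i\le k$ or $(0,1)$ for $i>k$. If one exists, restrict in that coordinate and apply induction. This proves \eqref{eq:cst} for every pair except the single pair $x=0^k1^{n-k}$, $y=1^k0^{n-k}$.

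For that last pair, sum \eqref{eq:cst} over all pairs. The left sides total $\sum\fa\cdot\sum\fb$. The right sides total $\sum\fc\cdot\sum\fd$, because $\big((x_1,x_2),(y_1,y_2)\big)\mapsto\big((x_1,y_2),(y_1,x_2)\big)$ is a bijection on pairs. Then \eqref{eq:AD-eq} forces the remaining identity. This is the paper's proof: it needs no division and no zero case, so your gluing step becomes unnecessary. Equivalently, your averaged identity does suffice once the unresolved set has been reduced to this one pair.
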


\smallskip

Note that the conditions in \eqref{eq:cst-cd} do not include identities such as
\. $\fa(1,\sq^{n-1}) \fb(0,\sq^{n-1})=\fc(0,\sq^{n-1}) \fd(1,\sq^{n-1})$\..
This omission occurs because the functions
 \. $\fa(1,\bu^{n-1})$,  $\fb(0,\bu^{n-1})$, $\fc(0,\bu^{n-1})$,  $\fd(1,\bu^{n-1})$ \. do not necessarily satisfy \eqref{eq:AD-cd}.
 Consequently, inductive arguments are not applicable to these functions.

\smallskip

\subsection{Proof of Lemma~\ref{lem:cst}}\label{ss:cst-proof}
	We prove the lemma by induction on $n$.
	First let  $n=1$.
	Note that the case $k=0$ corresponds to Lemma~\ref{lem:AD-boolean-1} case \eqref{eq:A0} and \eqref{eq:A2},
	and the case $k=1$ corresponds to Lemma~\ref{lem:AD-boolean-1} case \eqref{eq:A0} and \eqref{eq:A1}.
	This completes the proof of the base case.
	For the rest of the proof, let $n\geq 2$, and suppose that the lemma holds for $n-1$.
	We split the proof into several different cases.
	
	\smallskip
	
\nin
	\textbf{Case 1.} {\em
		Let \. $(x_1,x_2)$, $(y_1,y_2)\in \{0,1\}^k \times \{0,1\}^{n-k}$ \. be given by
\begin{equation}\label{eq:case-lemma}
(x_1,x_2)= (x_{1,1},\ldots, x_{1,k}, x_{2,1},\ldots, x_{2,n-k})\., \quad
(y_1,y_2)= (y_{1,1},\ldots, y_{1,k}, y_{2,1},\ldots, y_{2,n-k}).
\end{equation}
Suppose that \.  $x_{1,i}  =   y_{1,i}$ \. for some \ts $i \in [k]$, or
\.  $x_{2,j}  =   y_{2,j}$ \. for some \ts $j \in [n-k]$.
Then  \eqref{eq:cst} holds for \. $(x_1,x_2)$ \. and \. $(y_1,y_2)$.}
		
\begin{proof}[Proof of Case 1]
We present only the proof for  $i=1$  and \.  $x_{1,i}  =   y_{1,i}=0$,
as the proof of other subcases is analogous.
Note that this subcase is equivalent to showing that, for  all
\. $(x_1',x_2')$, \. $(y_1',y_2') \in \{0,1\}^{k-1} \times \{0,1\}^{n-k}$, we have:
\[  		
\fa(0,x_1',x_2') \. \fb(0,y_1',y_2') \ = \  \fc(0,x_1',y_2') \. \fd'(0,x_2',y_1').
\]
		
	Let \. $\fa',\fb', \fc',\fd':\{0,1\}^{n-1} \to \rr_{\geq 0}$ \. be functions defined by
	\begin{align}\label{eq:cst-1}
\fa':=\fa(0,\bu^{n-1}),  \quad \fb':=\fb(0,\bu^{n-1}), \quad \fc':=\fc(0,\bu^{n-1}), \quad \fd':=\fa(0,\bu^{n-1}).
	\end{align}

By Lemma~\ref{lem:AD-cd-res}, these functions satisfy \eqref{eq:AD-cd}.
Note also that \eqref{eq:AD-eq} for these four functions is equivalent to
\[
\fa(0,\sq^{n-1}) \. \fb(0,\sq^{n-1}) \ = \  \fc(0,\sq^{n-1}) \. \fd(0,\sq^{n-1}),
\]
which holds  by applying \eqref{eq:A0} in Lemma~\ref{lem:AD-boolean-1} to functions
\. $\fa(\bu,\sq^{n-1})$, $\fb(\bu,\sq^{n-1})$, $\fc(\bu,\sq^{n-1})$, $\fd(\bu,\sq^{n-1})$.
By induction, it then suffices to show that, for all \. $z' \in \{0,\sq\}^{n-1}$, we have:
\begin{align*}
	\fa'\big(1^{k-1}0^{n-k} +z'\big) \. \fb'\big(0^{k-1}1^{n-k} +z'\big) \ = \
\fc'\big(1^{n-1} +z'\big) \. \fd'\big(0^{n-1} +z\big).
\end{align*}

Given the vector $z'$, we now define four function \. $\fa'',\fb'',\fc'', \fd'':\{0,1\}\to \rr_{\geq 0}$ \. by
\begin{equation}\label{eq:cst-2}
\begin{aligned}
	\fa'' \ &:= \ \fa\big(\bu,1^{k-1}0^{n-k} +z'\big), \qquad && \fb'' \ := \ \fb\big(\bu,0^{k-1},1^{n-k} +z'\big),\\
	\fc'' \ &:= \ \fc\big(\bu, 1^{n-1} +z'\big), \qquad
	&& \fd'' \ := \ \fd\big(\bu, 0^{n-1} +z'\big).
\end{aligned}
\end{equation}
It follows from Lemmas~\ref{lem:AD-cd-res} and~\ref{lem:AD-cd-square},
that these four functions satisfy \eqref{eq:AD-cd},
and from \eqref{eq:cst-cd} that these four functions satisfy \eqref{eq:AD-eq} (namely by substituting $z=(\sq,z')$).
It then follows from  \eqref{eq:A0} in Lemma~\ref{lem:AD-boolean-1}, that
\[  \fa''(0) \ts \fb''(0) \, = \,   \fc''(0) \ts \fd''(0)\ts.   \]
This is equivalent to
\begin{align*}
	\fa'(1^{k-1}0^{n-k} +z') \. \fb'(0^{k-1}1^{n-k} +z'\big) \ = \   \fc'(1^{n-1} +z') \. \fd'(0^{n-1} +z),
\end{align*}
as desired.
\end{proof}

\smallskip

\nin
\textbf{Case 2.} {\em
Let \. $(x_1,x_2)$, $(y_1,y_2) \in \{0,1\}^n$ \. be as given  in \eqref{eq:case-lemma}.
Suppose that \.  $x_{1,i}  =  1$ \. and \. $y_{1,i}=0$ \. for some \ts $i \in [k]$,
or \.  $x_{2,j}  = 0$ \. and \. $y_{2,j}=1$ \. for some \ts $j \in [n-k]$.
Then \eqref{eq:cst} holds for \. $(x_1,x_2), (y_1,y_2)$.}


\begin{proof}[Proof of Case 2.]
		As before, we present the proof only for $i=1$.
		This case is equivalent to showing that, for  all
\. $(x_1',x_2'), (y_1',y_2') \in \{0,1\}^{k-1} \times \{0,1\}^{n-k}$,
we have:
\[  		\fa(1,x_1',x_2') \. \fb(0,y_1',y_2') \ = \  \fc(1,x_1',y_2') \. \fd'(0,x_2',y_1').
\]
		
			Let \. $\fa',\fb', \fc',\fd':\{0,1\}^{n-1} \to \rr_{\geq 0}$ \. be four functions defined by
		\begin{align}\label{eq:cst-3}
			\fa':=\fa(1,\bu^{n-1}),  \quad \fb':=\fb(0,\bu^{n-1}), \quad \fc':=\fc(1,\bu^{n-1}), \quad \fd':=\fa(0,\bu^{n-1}).
		\end{align}
		
		Note that these functions satisfy \eqref{eq:AD-cd} by Lemma~\ref{lem:AD-cd-res}.
		Also note that  \eqref{eq:AD-eq} for these four functions is equivalent to
		\[  \fa(1,\sq^{n-1}) \. \fb(0,\sq^{n-1}) \ = \  \fc(1,\sq^{n-1}) \. \fd(0,\sq^{n-1}), \]
		which holds by \eqref{eq:cst-cd} (by substituting $z=(0,\sq^{n-1})$).
		By induction, it then suffices to show that, for all \. $z' \in \{0,\sq\}^{n-1}$,	we have:
		\begin{align*}
            \fa'\big(1^{k-1}0^{n-k} +z'\big) \. \fb'\big(0^{k-1}1^{n-k} +z'\big) \ = \
            \fc'\big(1^{n-1} +z'\big) \. \fd'\big(0^{n-1} +z'\big).
		\end{align*}
		This condition is equivalent to
		\begin{align*}
			\fa\big(1,1^{k-1}0^{n-k} +z'\big) \. \fb\big(0,0^{k-1}1^{n-k} +z'\big) \ = \
            \fc\big(1,1^{n-1} +z'\big) \. \fd\big(0,0^{n-1} +z'\big),
		\end{align*}
		which follows by applying \eqref{eq:cst-cd} for $z=(0,z')$.
		This completes the proof of this case.		
\end{proof}

\smallskip

	Combining Case 1 and Case 2 above, we have \eqref{eq:cst} is verified in all cases except for
	\[
        (x_1,x_2) \ = \  (0^k,1^{n-k}), \quad  (y_1,y_2) \ = \  (1^k,0^{n-k}).
    \]
	Note that the sum of \eqref{eq:cst} over all pairs \. $(x_1,x_2), (y_1,y_2) \in \{0,1\}^n$ \.
    equals to \eqref{eq:AD-cd}.
	Thus, it follows from \eqref{eq:AD-eq} that \eqref{eq:cst} is also satisfied for the pair
\. $(0^k,1^{n-k}), (1^k,0^{n-k})$. This completes the proof of Lemma~\ref{lem:cst}. \qed

\medskip

\section{Identification lemma}\label{sec:iden}

In this section we finish the proof of Theorem~\ref{thm:AD-boolean}.  We derive it
from the Identification Lemma~\ref{lem:iden}, a technical result which we prove first.

\subsection{Setup}\label{ss:iden-setup}
The  primary challenge in proving Theorem~\ref{thm:AD-boolean} lies in identifying
which \ts $i \in [n]$ \ts belong to the set~$A$.  This motivates the following definition.

\smallskip

\begin{definition}
Let   \. $\fa,\fb,\fc,\fd: \{0,1\}^n \to \rr_{\geq 0}$ \. be four functions satisfying \eqref{eq:AD-cd}.
Fix \ts $i\in [n]$.
We say that \ts $i$ \ts has type \eqref{eq:B1}, if
\begin{equation}\label{eq:B1}\tag{B1}
	\fa(\sq^{i-1}, 1, \sq^{n-i}) \. \fb(\sq^{i-1}, 0, \sq^{n-i}) \ = \  \fc(\sq^{i-1}, 1, \sq^{n-i}) \. \fd(\sq^{i-1}, 0, \sq^{n-i}).
\end{equation}
Similarly, we say that \ts $i$ \ts has type \eqref{eq:B2}, if
\begin{equation}\label{eq:B2}\tag{B2}
	\fa(\sq^{i-1}, 0, \sq^{n-i}) \. \fb(\sq^{i-1}, 1, \sq^{n-i}) \ = \  \fc(\sq^{i-1}, 1, \sq^{n-i}) \. \fd(\sq^{i-1}, 0, \sq^{n-i}).
\end{equation}
It follows from Lemma~\ref{lem:AD-boolean-1}, that each \. $i \in [n]$ \. must have at least
one of these two types.
Note that it is possible that \ts $i$ \ts has both types \eqref{eq:B1} and \eqref{eq:B2}
\end{definition}

\smallskip

Observe that the types are preserved under the averaging operation.
Formally, if \ts $i \in [n-1]$ \ts has type \eqref{eq:B1} with respect to the functions  \. $\fa,\fb,\fc,\fd$\.,
then $i$ also has type \eqref{eq:B1} with respect to the functions \. $\fa(\bu^{n-1},\sq)$, $\fb(\bu^{n-1},\sq)$, $\fc(\bu^{n-1},\sq)$, $\fd(\bu^{n-1},\sq)$.  The same holds for type \eqref{eq:B2}.

On the other hand, the types are \emph{not necessarily preserved} \ts
 under  the restriction operation, i.e., with respect to the functions
 \. $\fa(\bu^{n-1},x)$, $\fb(\bu^{n-1},y)$, $\fc(\bu^{n-1},x\vee y)$,
 $\fd(\bu^{n-1},x \wedge y)$ \. for \emph{arbitrary} \. $x,y \in \{0,1\}$.
Indeed, let \. $\fa,\fb,\fc,\fd:\{0,1\}^2 \to \rr_{\geq 0}$ \. be given by
\begin{align}\label{eq:counterexample-1}
	\fa(x) \ = \  \fb(x) \ = \  \fc(x) \ = \  \fd(x) \ = \
	\begin{cases}
		0 & \text{ if } \ x=(0,1),\\
		1&  \text{ otherwise}.
	\end{cases}
\end{align}
It is easy to check that $i=1$ is  of type  \eqref{eq:B2} with respect to the functions \. $\fa,\fb,\fc,\fd$\.,
but  is not of type \eqref{eq:B2} with respect to  the functions \. $\fa(\bu,1),\fb(\bu,0), \fc(\bu,1),\fd(\bu,0)$\..
This motivates the next lemma, which identifies elements \. $x,y \in \{0,1\}$ \. for which the restriction
operation  preserves the types.


\smallskip

\begin{lemma}[{\rm Identification lemma}{}]\label{lem:iden}
	Let   \. $\fa,\fb,\fc,\fd: \{0,1\}^n \to \rr_{\geq 0}$ \. be  functions satisfying condition \eqref{eq:AD-cd} and \eqref{eq:AD-eq}.
	Let \. $x=(x_1,\ldots, x_n)$, \. $y=(y_1,\ldots,y_n) \in \{0,1\}^n$ \. be  given by:
	\begin{align*}
		x_i \ &:= \ \begin{cases}
			1 & \text{ if \, $i$ \, is of type \eqref{eq:B1}},\\
			0 & \text{ if \, $i$ \, is of type \eqref{eq:B2} but not \eqref{eq:B1}},
		\end{cases}\\
		y_i \ &:= \  1+ x_i\..
	\end{align*}
	Then, for all \. $z \in \{0,\sq\}^n$, we have:
		\begin{align}\label{eq:iden}
		\fa\big(x +z\big)\. \fb\big(y +z\big) \ = \   \fc\big(1^n +z\big) \. \fd\big(0^n +z\big).
	\end{align}
	Similarly, let
		 \. $x'=(x_1',\ldots, x_n')$, \. $y'=(y_1',\ldots,y_n') \in \{0,1\}^n$ \. be  given by:
		 	\begin{align*}
		 	x_i' \ &:= \ \begin{cases}
		 		1 & \text{ if \, $i$ \, has type \eqref{eq:B1} but not \eqref{eq:B2}},\\
		 		0 & \text{ if \, $i$ \, has type \eqref{eq:B2}},
		 	\end{cases}\\
		 	y_i' \ &:= \  1+ x_i'\..
		 \end{align*}
	Then, for all $z \in \{0,\sq\}^n$,  we have:
\begin{align}\label{eq:iden-prime}
	\fa\big(x' +z\big) \. \fb\big(y' +z\big) \ = \   \fc\big(1^n +z\big) \. \fd\big(0^n +z\big).
\end{align}
\end{lemma}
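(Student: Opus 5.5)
The plan is to prove both statements \eqref{eq:iden} and \eqref{eq:iden-prime} together by induction on~$n$. The base case $n=1$ is exactly Lemma~\ref{lem:AD-boolean-1}. There, \eqref{eq:A0} covers $z=\sq$ and the $z=0$ identities coming from $\fa(\sq)\fb(\sq)$. Type \eqref{eq:B1} is the second half of \eqref{eq:A1} (and \eqref{eq:A0}+\eqref{eq:B1} give \eqref{eq:A1}), and type \eqref{eq:B2} is the first half of \eqref{eq:A2}. So the choice $x_1=1,\ y_1=0$ (resp.\ $x_1=0,\ y_1=1$) is precisely the equality needed at $z=0$.

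For the inductive step, fix $z\in\{0,\sq\}^n$ and treat separately the cases $z_n=\sq$ and $z_n=0$; by symmetry of coordinates this covers everything. If $z_n=\sq$, average out the last coordinate. The functions $\fa(\bu^{n-1},\sq),\dots,\fd(\bu^{n-1},\sq)$ satisfy \eqref{eq:AD-cd} by Lemma~\ref{lem:AD-cd-square} and \eqref{eq:AD-eq} trivially.
If $z_n=0$, restrict the last coordinate instead, to
\[
\fa(\bu^{n-1},x_n),\quad \fb(\bu^{n-1},y_n),\quad \fc(\bu^{n-1},1),\quad \fd(\bu^{n-1},0).
\]
Since $\{x_n,y_n\}=\{0,1\}$, these satisfy \eqref{eq:AD-cd} by Lemma~\ref{lem:AD-cd-res}. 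They satisfy \eqref{eq:AD-eq} precisely because $n$ has the type (\eqref{eq:B1} or \eqref{eq:B2}) used to define $x_n$. In both cases the required identity \eqref{eq:iden} for $z$ becomes the corresponding identity for the reduced $(n-1)$-dimensional quadruple, with the truncated vectors $x,y$ and truncated~$z$.

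The induction hypothesis only yields this if, for each $i<n$, the value $x_i$ is still an admissible choice for the reduced quadruple. Concretely, if $x_i=1$ then $i$ must still be of type \eqref{eq:B1} for the reduced functions, and if $x_i=0$ it must still be of type \eqref{eq:B2}. The danger is that the reduced quadruple may give $i$ a new type. For instance, $i$ might become of both types while being only \eqref{eq:B2} originally, so the ``prefer \eqref{eq:B1}'' rule would switch $x_i$ to~$1$. I would handle this by choosing, for the reduced quadruple, whichever of the two statements \eqref{eq:iden}, \eqref{eq:iden-prime} is compatible. More robustly, I would strengthen the inductive statement to hold for any vector $x$ such that $x_i=1$ only when $i$ has type \eqref{eq:B1} and $x_i=0$ only when $i$ has type \eqref{eq:B2}, taking the canonical $x,x'$ as special cases, provided the preservation step below goes through. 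In the averaging case preservation is immediate, as observed before the lemma.

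The main obstacle is the restriction case. There we must show that if $i$ has type \eqref{eq:B1} (resp.\ \eqref{eq:B2}) for $\fa,\fb,\fc,\fd$, then it keeps that type for the restricted quadruple $\fa(\bu,x_n),\fb(\bu,y_n),\fc(\bu,1),\fd(\bu,0)$; the counterexample \eqref{eq:counterexample-1} shows this can fail for a wrong choice of $(x_n,y_n)$. My first attempt is to average out all coordinates except $i$ and $n$, which reduces the question to a two-dimensional quadruple on $\{0,1\}^2$ satisfying \eqref{eq:AD-cd} and \eqref{eq:AD-eq}. I would then do a complete case analysis of the $n=2$ situation, using Lemma~\ref{lem:AD-boolean-1} on each of the one-dimensional slices and averages, Lemma~\ref{lem:diag} for the diagonal identities, and the two-dimensional instance of Lemma~\ref{lem:cst} once the relevant identities are available. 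The aim is to show that the type of $i$, together with the type of $n$ that dictated $(x_n,y_n)$, forces the type identity for $i$ in the restricted slice. If some configurations escape this (for example when both coordinates have both types, or when there are zeros), I expect they are exactly the ones where the alternative statement \eqref{eq:iden-prime} applies, which is why the lemma is stated in two versions.
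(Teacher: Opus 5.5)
Your base case and your averaging step for $z_n=\sq$ are fine. The restriction step, which you correctly flag as the crux, is a genuine gap, and neither proposed remedy closes it.

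The one-directional fact you want (type \eqref{eq:B1} for the restricted quadruple when $x_i=1$, type \eqref{eq:B2} when $x_i=0$) is in fact available once $n=2$ is settled. It is exactly \eqref{eq:iden} for $z$ equal to $\sq$ off positions $i,n$. But your induction hypothesis concerns the \emph{canonical} vectors of the restricted quadruple. These need not be the truncation of $x$, because coordinates can acquire the other type. For example, if the restricted $\fa$ vanishes identically (this can happen with the canonical choice of $x_n$), every coordinate of the restriction has both types. Its canonical vectors are then $1^{n-1}$ and $0^{n-1}$, while your truncated $x$ may be mixed. So ``use whichever of \eqref{eq:iden}, \eqref{eq:iden-prime} is compatible'' fails.

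The strengthening to all vectors with $x_i=1$ only on \eqref{eq:B1}-coordinates and $x_i=0$ only on \eqref{eq:B2}-coordinates is false. For \eqref{eq:counterexample-1} both coordinates have both types, yet $x=(1,0)$, $y=(0,1)$, $z=(0,0)$ gives $\fa(1,0)\fb(0,1)=0\neq 1=\fc(1,1)\fd(0,0)$; this is the remark following the lemma. Also, \eqref{eq:iden} and \eqref{eq:iden-prime} are not complementary alternatives: both hold for every quadruple.

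For $n=2$ your scheme is circular. The type of coordinate~$1$ in the one-dimensional restriction \emph{is} the claim, so the promised case analysis is the entire content. The paper supplies it in two lemmas:
\begin{itemize}
\item Lemma~\ref{lem:n2-B1B1} handles equal $x$-values, by combining the two slices via Lemma~\ref{lem:AD-boolean-1} and Lemma~\ref{lem:diag}.
\item Lemma~\ref{lem:n2-B1B2} handles mixed $x$-values. It uses positivity on the diagonal, a case split on zeros, and, crucially, that the coordinate given the non-preferred value lacks the preferred type.
\end{itemize}

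For $n\geq 3$ the missing idea is to avoid single-coordinate restriction altogether:
\begin{enumerate}
\item Average out any $\sq$-coordinate of $z$; types are preserved under averaging, so only $z=0^n$ remains.
\item By pigeonhole pick $i\neq j$ with $x_i=x_j$, and write them as the first two coordinates.
\item Restrict every other coordinate $k$ to $(x_k,y_k,1,0)$.
\item Verify the hypotheses of Lemma~\ref{lem:n2-B1B1} (or its \eqref{eq:B2}-analogue) for this two-dimensional quadruple from already-proved instances of \eqref{eq:iden}. The case $z=(\sq,\sq,0^{n-2})$ gives \eqref{eq:AD-eq}, and $z=(0,\sq,0^{n-2})$, $(\sq,0,0^{n-2})$ give the types of $i$ and $j$.
\end{enumerate}
Lemma~\ref{lem:n2-B1B1} does not care whether $i,j$ also carry the other type, so the type drift you ran into never enters.
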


\smallskip

{\small
\begin{rem}
Let us note a delicate issue when interpreting Lemma~\ref{lem:iden}.
Since the pairs \. $x,y$ \. and \. $x',y'$ \. differ only at entries indexed by $i\in [n]$ has  both type \eqref{eq:B1} and \eqref{eq:B2}, it might give the wrong impression  that the lemma would still hold even if we do not impose any conditions on entries indexed by such $i$'s.
However, when multiple distinct indices $i, j \in [n]$, have both \eqref{eq:B1} and \eqref{eq:B2}, it becomes necessary for $x_i$ and $x_j$ to be equal to ensure the lemma holds, as demonstrated in the following example.

Let \. $\fa,\fb,\fc,\fd:\{0,1\}^2 \to \rr_{\geq 0}$ \. be as in \eqref{eq:counterexample-1}.
It is easy to check that these four functions satisfy \eqref{eq:AD} and \eqref{eq:AD-cd}.
Furthermore, we have $i=1$ and $j=2$ has both types \eqref{eq:B1} and \eqref{eq:B2}.
However, if \eqref{eq:iden} were to hold in this case for \ts $x=(1,0)$, $y=(0,1)$, $z=(0,0)$
(note that \ts $x_i\neq x_j$ \ts with \ts $i=1$ \ts and \ts $j=2$), we would have
\[ \fa(1,0) \. \fb(0,1) \ = \  \fc(1,1) \. \fd(0,0).  \]
This gives a contradiction, since the \. LHS $=0$ \. while the \. RHS $=1$.
\end{rem}
}
\smallskip

\subsection{Technical lemmas}\label{ss:iden-tech-lemmas}
We now build toward the proof of Lemma~\ref{lem:iden}.
We will start with the following two lemmas for $n=2$.

\smallskip

\begin{lemma}\label{lem:n2-B1B1}
	Let $n=2$, and let   \. $\fa,\fb,\fc,\fd: \{0,1\}^n \to \rr_{\geq 0}$ \.
be four functions satisfying \eqref{eq:AD-cd} and \eqref{eq:AD-eq}.
	Suppose that for both \ts $i=1$ \ts and \ts $j=2$ \ts  have type \eqref{eq:B1}.
	Then we have:
	\[ \fa(1,1) \. \fb(0,0)  \ = \  \fc(1,1) \. \fd(0,0). \]
\end{lemma}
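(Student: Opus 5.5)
The plan is to get the desired identity from the one‑dimensional structure along each coordinate. I will use Lemma~\ref{lem:AD-boolean-1}, applied to suitable restrictions, together with the diagonal identity of Lemma~\ref{lem:diag}. The inequality $\fa(1,1)\,\fb(0,0)\le \fc(1,1)\,\fd(0,0)$ is just \eqref{eq:AD-cd}, but the argument below gives the equality directly.

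\emph{Step 1: one‑dimensional restrictions.} Since $i=1$ has type \eqref{eq:B1}, the four one‑dimensional functions $\fa(1,\bu)$, $\fb(0,\bu)$, $\fc(1,\bu)$, $\fd(0,\bu)$ satisfy \eqref{eq:AD-eq}. By Lemma~\ref{lem:AD-cd-res} with $x=1$, $y=0$ (so $x\vee y=1$, $x\wedge y=0$), they also satisfy \eqref{eq:AD-cd}. Lemma~\ref{lem:AD-boolean-1} then says that \eqref{eq:A1} or \eqref{eq:A2} holds for them. Written out:
\[
\text{(A1)}:\ \ \fa(1,1)\,\fb(0,0)=\fc(1,1)\,\fd(0,0), \qquad
\text{(A2)}:\ \ \fa(1,0)\,\fb(0,1)=\fc(1,1)\,\fd(0,0),\ \ \fa(1,1)\,\fb(0,0)=\fc(1,0)\,\fd(0,1).
\]
In the \eqref{eq:A1} case we are done immediately.

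Symmetrically, since $j=2$ has type \eqref{eq:B1}, we apply the same reasoning to $\fa(\bu,1)$, $\fb(\bu,0)$, $\fc(\bu,1)$, $\fd(\bu,0)$. Either $\fa(1,1)\,\fb(0,0)=\fc(1,1)\,\fd(0,0)$ and we are done, or
\[
\fa(0,1)\,\fb(1,0)=\fc(1,1)\,\fd(0,0) \quad\text{and}\quad \fa(1,1)\,\fb(0,0)=\fc(0,1)\,\fd(1,0).
\]

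\emph{Step 2: the remaining case, where \eqref{eq:A2} holds in both coordinates.} Multiplying the two "cross" identities gives
\[
\fa(1,0)\,\fb(0,1)\,\fa(0,1)\,\fb(1,0)=\big(\fc(1,1)\,\fd(0,0)\big)^2 .
\]
The other two identities give
\[
\big(\fa(1,1)\,\fb(0,0)\big)^2=\fc(1,0)\,\fd(0,1)\,\fc(0,1)\,\fd(1,0).
\]
Now regroup the left-hand side of the first display as $\big[\fa(1,0)\,\fb(1,0)\big]\big[\fa(0,1)\,\fb(0,1)\big]$. Lemma~\ref{lem:diag} turns this into $\fc(1,0)\,\fd(1,0)\,\fc(0,1)\,\fd(0,1)$, which is exactly the right-hand side of the second display. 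Hence $\big(\fc(1,1)\,\fd(0,0)\big)^2=\big(\fa(1,1)\,\fb(0,0)\big)^2$, and taking nonnegative square roots gives the claim.

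\emph{Main obstacle.} The only delicate point is this doubly‑\eqref{eq:A2} case. The key is to notice that the diagonal identities of Lemma~\ref{lem:diag} exactly match the regrouped products. I do not expect any issue with zeros here, since only multiplication and square roots of nonnegative numbers are used.
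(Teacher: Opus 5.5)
Your proof is correct and follows the paper's argument essentially step for step. Applying Lemma~\ref{lem:AD-boolean-1} to the restrictions along each coordinate reduces to the case where \eqref{eq:A2} holds in both. There, multiplying the cross identities and using Lemma~\ref{lem:diag} on the regrouped product gives $\big(\fa(1,1)\,\fb(0,0)\big)^2=\big(\fc(1,1)\,\fd(0,0)\big)^2$.
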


\begin{proof}
	First note that the lemma follows immediately if the
	case \eqref{eq:A1} holds for \. $\fa(1,\cdot)$, \. $\fb(0,\cdot)$, \. $\fc(1,\cdot)$, $\fd(0,\cdot)$\..
Suppose that case \eqref{eq:A2} holds. This implies:
	\begin{align}
		\label{eq:HP-1}	\fa(1,0)\. \fb(0,1) \ &= \  \fc(1,1)\. \fd(0,0)\ts,\\
		\label{eq:HP-2}	\fa(1,1)\. \fb(0,0) \ &= \  \fc(1,0)\. \fd(0,1)\ts.
	\end{align}
	Applying the same argument to
	\. 	$\fa(\cdot,1)$, \. $\fb(\cdot,0)$, \. $\fc(\cdot,1)$, $\fd(\cdot,0)$ \. gives
	\begin{align}
		\label{eq:HP-3}	\fa(0,1) \.\fb(1,0) \ &= \  \fc(1,1) \. \fd(0,0)\ts,\\
		\label{eq:HP-4} 	\fa(1,1)\. \fb(0,0) \ &= \  \fc(0,1) \fd(1,0)\ts.
	\end{align}
	Combining \eqref{eq:HP-1} and \eqref{eq:HP-3} gives
	\begin{align*}
		\fa(1,0) \. \fb(1,0) \.\fa(0,1) \. \fb(0,1) \ = \ \big(\fc(1,1) \. \fd(0,0)\big)^2.
	\end{align*}
	Applying Lemma~\ref{lem:diag} to the LHS gives
	\begin{align*}
		\fc(1,0) \.\fd(1,0) \.\fc(0,1)\. \fd(0,1) \ = \ \big(\fc(1,1) \. \fd(0,0)\big)^2.
	\end{align*}
	Applying \eqref{eq:HP-2} and \eqref{eq:HP-4} to the LHS again gives
	\begin{align*}
		\big(\fa(1,1) \. \fb(0,0)\big)^2 \ = \ \big(\fc(1,1) \. \fd(0,0)\big)^2,
	\end{align*}
	as desired.
\end{proof}

\smallskip

\begin{lemma}\label{lem:n2-B1B2}
	Let $n=2$, and let   \. $\fa,\fb,\fc,\fd: \{0,1\}^n \to \rr_{\geq 0}$ \. be four functions satisfying \eqref{eq:AD-cd} and \eqref{eq:AD-eq}.
	Suppose that $i=1$  has type \eqref{eq:B1}, and $j=2$ has type \eqref{eq:B2}.
	Suppose also that
\begin{equation}\label{eq:lemma-B1B2}
\fa(1,0) \. \fb(0,1)  \ \neq \  \fc(1,1) \.\fd(0,0)\ts.
\end{equation}	
	Then  $i=1$  also has type \eqref{eq:B2}, and $j=2$ also has type \eqref{eq:B1}.
\end{lemma}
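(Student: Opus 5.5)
The plan is to extract as many exact identities as possible from the two hypotheses on types. Each hypothesis feeds Lemma~\ref{lem:AD-boolean-1} with a suitable restricted quadruple, and the condition \eqref{eq:lemma-B1B2} kills one of the two alternatives each time. I would then check the two target identities by comparing them term by term with \eqref{eq:AD-cd}.

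\emph{Step 1 (coordinate $1$).} Since $i=1$ has type \eqref{eq:B1}, the one-dimensional functions $\fa(1,\cdot),\fb(0,\cdot),\fc(1,\cdot),\fd(0,\cdot)$ satisfy \eqref{eq:AD-eq}. They satisfy \eqref{eq:AD-cd} by Lemma~\ref{lem:AD-cd-res}, with $x=1$, $y=0$. Lemma~\ref{lem:AD-boolean-1} then gives \eqref{eq:A0}:
\[
\fa(1,0)\fb(0,0)=\fc(1,0)\fd(0,0), \qquad \fa(1,1)\fb(0,1)=\fc(1,1)\fd(0,1),
\]
together with \eqref{eq:A1} or \eqref{eq:A2}. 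The alternative \eqref{eq:A2} contains $\fa(1,0)\fb(0,1)=\fc(1,1)\fd(0,0)$, which is excluded by \eqref{eq:lemma-B1B2}. Hence \eqref{eq:A1} holds:
\[
\fa(1,0)\fb(0,1)=\fc(1,0)\fd(0,1), \qquad \fa(1,1)\fb(0,0)=\fc(1,1)\fd(0,0).
\]

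\emph{Step 2 (coordinate $2$).} Symmetrically, type \eqref{eq:B2} for $j=2$ makes $\fa(\cdot,0),\fb(\cdot,1),\fc(\cdot,1),\fd(\cdot,0)$ satisfy both \eqref{eq:AD-cd} and \eqref{eq:AD-eq}. Here \eqref{eq:A1} is the alternative excluded by \eqref{eq:lemma-B1B2}. So we obtain \eqref{eq:A0} for this quadruple together with \eqref{eq:A2}:
\[
\fa(0,0)\fb(1,1)=\fc(1,1)\fd(0,0), \qquad \fa(1,0)\fb(0,1)=\fc(0,1)\fd(1,0).
\]
Moreover, by \eqref{eq:AD-cd} we have $\fa(1,0)\fb(0,1)\le \fc(1,1)\fd(0,0)$, so \eqref{eq:lemma-B1B2} says this inequality is strict. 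In particular $\fc(1,1)\fd(0,0)>0$.

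\emph{Step 3 (cross term).} Next I would pin down $\fa(0,1)\fb(1,0)$, imitating the proof of Lemma~\ref{lem:n2-B1B1}. Lemma~\ref{lem:diag} gives $\fa(0,1)\fb(0,1)=\fc(0,1)\fd(0,1)$ and $\fa(1,0)\fb(1,0)=\fc(1,0)\fd(1,0)$. Multiplying, and using the two expressions for $\fa(1,0)\fb(0,1)$ from Steps 1 and 2, yields
\[
\fa(1,0)\fb(0,1)\cdot \fa(0,1)\fb(1,0) \, = \, \big(\fa(1,0)\fb(0,1)\big)^2 .
\]
Hence, when $\fa(1,0)\fb(0,1)>0$, we get $\fa(0,1)\fb(1,0)=\fc(1,0)\fd(0,1)=\fc(0,1)\fd(1,0)$. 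The degenerate case $\fa(1,0)\fb(0,1)=0$ must be treated separately. There $\fc(1,0)\fd(0,1)=\fc(0,1)\fd(1,0)=0$, and I would argue directly, via \eqref{eq:AD-cd}, that $\fa(0,1)\fb(1,0)=0$ as well. This is the step I am least sure of.

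\emph{Step 4 (target identities).} Finally, expand the two target identities \eqref{eq:B2} for coordinate $1$ and \eqref{eq:B1} for coordinate $2$ into four products on each side. Most LHS terms are now known to equal specific RHS terms: $\fa(0,0)\fb(1,1)$, $\fa(1,1)\fb(0,0)$, and $\fa(0,1)\fb(1,0)$ after Step 3. Each remaining LHS term is bounded by \eqref{eq:AD-cd} by its natural partner, for example $\fa(0,0)\fb(1,0)\le \fc(1,0)\fd(0,0)$. To force these inequalities to be equalities, I would use the global equality \eqref{eq:AD-eq}, written as the sum of all sixteen products $\fa(x)\fb(y)$, together with the \eqref{eq:A0} equations from Steps 1--2 and for the averaged functions $\fa(\bu,\sq)$ and $\fa(\sq,\bu)$. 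Summing the \eqref{eq:AD-cd} bounds over a suitable grouping should show that the total slack is zero, so every inequality is tight.

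The main obstacle is this slack bookkeeping: finding a grouping of the sixteen terms in which every LHS product is dominated by a distinct RHS product. In the two-coordinate setting the naive pairing $x,y\mapsto x\vee y, x\wedge y$ has repeated targets. I would first try to resolve it using the strict positivity $\fc(1,1)\fd(0,0)>0$, which allows division, as in the proof of Lemma~\ref{lem:AD-boolean-1}.
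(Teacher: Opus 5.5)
Steps 1--3 are sound and follow the paper. Condition \eqref{eq:lemma-B1B2} forces \eqref{eq:A1} for $\fa(1,\bu),\fb(0,\bu),\fc(1,\bu),\fd(0,\bu)$ and \eqref{eq:A2} for $\fa(\bu,0),\fb(\bu,1),\fc(\bu,1),\fd(\bu,0)$, and your cross-term argument is the paper's. The degenerate case of Step~3 needs more than \eqref{eq:AD-cd}, but it closes easily once you note $\fa(1,1)\fb(0,0)=\fc(1,1)\fd(0,0)=\fa(0,0)\fb(1,1)>0$.
\begin{itemize}
\item If $\fa(1,0)=0$, then \eqref{eq:A0} from Step~2 gives $\fc(1,1)\fd(1,0)=\fa(1,0)\fb(1,1)=0$, so $\fd(1,0)=0$. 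Then $\fa(1,1)\fb(1,0)\le\fc(1,1)\fd(1,0)=0$ forces $\fb(1,0)=0$.
\item If $\fb(0,1)=0$, then \eqref{eq:A0} from Step~1 gives $\fd(0,1)=0$. Then $\fa(0,1)\fb(1,1)\le\fc(1,1)\fd(0,1)=0$ forces $\fa(0,1)=0$.
\end{itemize}

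The genuine gap is Step~4, which carries the lemma's content. The slack bookkeeping you propose cannot close it. First, no domination by distinct targets exists: \eqref{eq:AD-cd} bounds $\fa(x)\fb(y)$ only by products $\fc(u)\fd(v)$ with $v\le u$ coordinatewise, and only nine of the sixteen products have this form. Second, the linear information runs out before the goal. Take the averaged identities $\fa(t,\sq)\fb(t,\sq)=\fc(t,\sq)\fd(t,\sq)$ and $\fa(\sq,t)\fb(\sq,t)=\fc(\sq,t)\fd(\sq,t)$, and cancel what Steps~1--2 and Lemma~\ref{lem:diag} already give. The result is
\begin{gather*}
\fa(0,1)\fb(1,1)=\fc(0,1)\fd(1,1), \qquad \fa(0,0)\fb(1,0)=\fc(0,0)\fd(1,0),\\
\fa(0,1)\fb(0,0)=\fc(0,0)\fd(0,1), \qquad \fa(1,1)\fb(1,0)=\fc(1,0)\fd(1,1).
\end{gather*}
These pair each of the four unresolved products with something other than its \eqref{eq:AD-cd} partner. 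Treat the thirty-two products as independent unknowns. Then everything you have is consistent with the conclusion failing: Steps~1--3, Lemma~\ref{lem:diag}, the averaged identities, the global \eqref{eq:AD-eq}, all \eqref{eq:AD-cd} bounds, and \eqref{eq:lemma-B1B2}. For instance, set every product to $1$ except $\fa(1,0)\fb(0,1)$, $\fa(0,1)\fb(1,0)$, $\fc(1,0)\fd(0,1)$, $\fc(0,1)\fd(1,0)$, $\fa(0,1)\fb(0,0)$ and $\fc(0,0)\fd(0,1)$, which you set to $\tfrac12$. So the last step must use that these are products of values of four functions.

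The missing idea is cross-multiplication followed by cancellation of a nonzero diagonal identity. From $\fa(1,1)\fb(0,0)=\fa(0,0)\fb(1,1)>0$ you get $\fa(0,0),\fa(1,1),\fb(0,0),\fb(1,1)>0$. Hence the identities $\fa(x)\fb(x)=\fc(x)\fd(x)$ of Lemma~\ref{lem:diag} at $x=(0,0)$ and $x=(1,1)$ are between nonzero quantities.
\begin{itemize}
\item Multiply $\fa(1,1)\fb(0,0)=\fc(1,1)\fd(0,0)$ by the first displayed identity and cancel $\fa(1,1)\fb(1,1)=\fc(1,1)\fd(1,1)$. This gives $\fa(0,1)\fb(0,0)=\fc(0,1)\fd(0,0)$.
\item Multiply it by the second identity and cancel at $(0,0)$. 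This gives $\fa(1,1)\fb(1,0)=\fc(1,1)\fd(1,0)$.
\end{itemize}
Together with Steps~1 and~3, this yields \eqref{eq:B1} for $j=2$, exactly as in the paper. For \eqref{eq:B2} at $i=1$, start from $\fa(0,0)\fb(1,1)=\fc(1,1)\fd(0,0)$. Multiply by the third and fourth identities, cancelling at $(0,0)$ and $(1,1)$ respectively.
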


\begin{proof}
	We will only show that  $j=2$  satisfies \eqref{eq:B1}, as the proof of the other case is analogous.
	
	Consider the functions \. 	$\fa(1,\bu)$, \. $\fb(0,\bu)$, \. $\fc(1,\bu)$, $\fd(0,\bu)$\..
	These four functions satisfy \eqref{eq:AD-cd} by Lemma~\ref{lem:AD-cd-res}, and satisfies \eqref{eq:AD-eq} since
	$i=1$  satisfies \eqref{eq:B1}.
	Since \. $\fa(1,0) \fb(0,1)  \ \neq \  \fc(1,1) \fd(0,0)$ \. by assumption,
	it then follows from Lemma~\ref{lem:AD-boolean-1} that case \eqref{eq:A1} holds.  This implies
	\begin{align}
		\fa(1,0)\. \fb(0,0) \ &= \ \fc(1,0) \. \fd(0,0)\ts, \label{eq:march-1}\\
		\fa(1,0)\. \fb(0,1) \ &= \ \fc(1,0) \. \fd(0,1)\ts, \label{eq:march-3}\\
		\fa(1,1)\. \fb(0,0) \ &= \ \fc(1,1) \. \fd(0,0)\ts, \label{eq:march-0}\\
		\fa(1,1)\. \fb(0,1) \ &= \ \fc(1,1) \. \fd(0,1)\ts. \label{eq:march-2}
	\end{align}	
	Applying an analogous argument  to
	\. 	$\fa(\bu,0)$, \. $\fb(\bu,1)$, \. $\fc(\bu,1)$, $\fd(\bu,0)$ \.
	gives us
	\begin{align}
	\fa(0,0) \. \fb(0,1) \ &= \ \fc(0,1)\. \fd(0,0)\ts, \label{eq:march-2.5} \\
	\fa(0,0) \. \fb(1,1) \ &= \ \fc(1,1)\.\fd(0,0)\ts, \label{eq:march-0.5} \\
	\fa(1,0) \. \fb(0,1) \ &= \ \fc(0,1)\.\fd(1,0)\ts, \label{eq:march-0.1} \\
	\fa(1,0) \. \fb(1,1) \ &= \ \fc(1,1)\. \fd(1,0)\ts. \label{eq:march-1.5}
	\end{align}

	We now consider the functions \. 	$\fa(\bu,1)$, \. $\fb(\bu,1)$, \. $\fc(\bu,1)$, $\fd(\bu,1)$\..
		These four functions satisfy \eqref{eq:AD-cd} by Lemma~\ref{lem:AD-cd-res}.
		Also note that these four functions satisfy \eqref{eq:AD-eq} as a result of applying
\eqref{eq:A0} in Lemma~\ref{lem:AD-boolean-1}  to the functions  \. 	$\fa(\sq,\bu)$, \. $\fb(\sq,\bu)$, \. $\fc(\sq,\bu)$, $\fd(\sq,\bu)$\..
		Also note that \. 	$\fa(\bu,1)$, \. $\fb(\bu,1)$, \. $\fc(\bu,1)$, $\fd(\bu,1)$\. satisfies \eqref{eq:A1} because of \eqref{eq:march-2}, so it follows from Lemma~\ref{lem:AD-boolean-1} that
		\begin{align}
	\fa(0,1) \.\fb(1,1) \ &= \ \fc(0,1) \.\fd(1,1)\ts, \label{eq:march-4} \\
	\fa(1,1) \. \fb(0,1) \ &= \ \fc(1,1)\. \fd(0,1)\ts.  \label{eq:march-4.1}
		\end{align}

		We now consider the functions \. 	$\fa(\bu,0)$, \. $\fb(\bu,0)$, \. $\fc(\bu,0)$, $\fd(\bu,0)$\..
	These four functions satisfy \eqref{eq:AD-cd} by Lemma~\ref{lem:AD-cd-res}.
	Also note that these four functions satisfy \eqref{eq:AD-eq} as a result of applying Lemma~\ref{lem:AD-boolean-1} \eqref{eq:A0} to the functions  \. 	$\fa(\sq,\bu)$, \. $\fb(\sq,\bu)$, \. $\fc(\sq,\bu)$, $\fd(\sq,\bu)$\..
	Also note that \. 	$\fa(\bu,0)$, \. $\fb(\bu,0)$, \. $\fc(\bu,0)$, $\fd(\bu,0)$\. satisfies \eqref{eq:A1} because of \eqref{eq:march-1}, so it follows from Lemma~\ref{lem:AD-boolean-1} that
	\begin{align}
		\fa(0,0) \fb(1,0) \ &= \ \fc(0,0)\fd(1,0), \label{eq:march-5} \\
		\fa(1,0) \fb(0,0) \ &= \ \fc(1,0)\fd(0,0).  \notag
	\end{align}

	Note that we also have
\[ \fc(1,1)\, > \,  0 \qquad \text{and} \qquad \fd(0,0) \, > \,  0\ts.
\]
	Indeed, it follows from \eqref{eq:AD-cd} that
	\[   \fa(1,0) \fb(0,1) \ \leq \ \fc(1,1) \fd(0,0). \]
By assumption \eqref{eq:lemma-B1B2}, this implies that
	\. $\fc(1,1)\ts \fd(0,0)  >   0$.
Together with \eqref{eq:march-0} and \eqref{eq:march-0.5}, this implies that
	\[ \fa(1,1) \. \fb(0,0)  \ = \  \fa(0,0) \. \fb(1,1)  \ > \  0.\]
On the other hand, we have from Lemma~\ref{lem:diag} that
    \[ \fa(0,0) \. \fb(0,0)  \ = \  \fc(0,0) \. \fd(0,0), \quad
       \fa(1,1) \. \fb(1,1)  \ = \  \fc(1,1)\. \fd(1,1).
    \]
This gives:
\[  \fc(0,0) \, > \, 0 \qquad \text{and} \qquad \fd(1,1)  \, > \, 0. \]
Putting everything together, we conclude:
\begin{equation}\label{eq:iden-nonzero}
\fa(x,x), \. \fb(x,x), \. \fc(x,x), \. \fd(x,x) \  > \ 0 \quad \text{ for all } \ x \in \{0,1\}.
\end{equation}

	Now, combining \eqref{eq:march-0} and \eqref{eq:march-4}, we get
\[
    \fa(0,1) \. \fa(1,1) \. \fb(1,1) \. \fb(0,0) \ = \
    \fc(0,1) \. \fc(1,1) \. \fd(1,1) \. \fd(0,0).
\]
	On the other hand, we have  \. $\fa(1,1)\ts\fb(1,1)=\fc(1,1) \ts \fd(1,1)$ \.
    by  Lemma~\ref{lem:diag}, and all these terms are nonzero by \eqref{eq:iden-nonzero}.
	Hence we can cancel these terms from the previous equation to obtain
	\begin{align}\label{eq:march-6}
			  \fa(0,1)  \fb(0,0) \ = \ \fc(0,1) \fd(0,0).
	\end{align}

		Now, combining \eqref{eq:march-0} and \eqref{eq:march-5}, we get
\[  \fa(1,1) \. \fa(0,0) \. \fb(0,0) \. \fb(1,0) \ = \ \fc(1,1) \. \fc(0,0) \. \fd(0,0) \. \fd(1,0).
\]
	By the reasoning as in the previous paragraph,
	we can cancel the terms from \. $\fa(0,0)\ts\fb(0,0)=\fc(0,0) \ts\fd(0,0)$ \. to obtain
		\begin{align}\label{eq:march-7}
		\fa(1,1) \ts \fb(1,0) \ = \ \fc(1,1)\ts \fd(1,0).
	\end{align}
	
	Let us now show that
	\begin{equation}\label{eq:march-8}
		\fa(0,1) \ts \fb(1,0) \ = \ \fc(0,1) \ts \fd(1,0).
	\end{equation}
	We split the proof into three subcases.
First suppose that \. $\fa(1,0) \ts \fb(0,1)>0$.
Note that we have from Lemma~\ref{lem:diag} that
\begin{equation}\label{eq:eq-above}
\aligned
			\fa(0,1)  \fb(0,1) \ &= \ \fc(0,1) \fd(0,1), \qquad
			\fa(1,0)  \fb(1,0) \ = \ \fc(1,0) \fd(1,0).
\endaligned
\end{equation}
Since we have \. $\fa(1,0) \fb(0,1) =  \fc(1,0)\fd(0,1)$ \. from \eqref{eq:march-3},
and these products are nonzero by assumption, we can cancel them
from \eqref{eq:eq-above} to conclude \eqref{eq:march-8} in this case.

For the second case, suppose that $\fa(1,0)=0$. Since \. $\fb(1,1), \fc(1,1)>0$ \.
from \eqref{eq:iden-nonzero}, it then follows from \eqref{eq:march-1.5} that $\fd(1,0)=0$.
Similarly, since \. $\fb(0,0), \fd(0,0)>0$ \. from \eqref{eq:iden-nonzero},
it then follows from \eqref{eq:march-1} that \ts $\fc(1,0)=0$.
Finally, since \. $\fa(1,1), \fc(1,1)>0$ \. from \eqref{eq:iden-nonzero},
it then follows from  \eqref{eq:march-7} that \ts $\fb(1,0)=0$.
Putting these observations implies that both sides of \eqref{eq:march-8}
are equal to~$0$, as desired.

For the third case, suppose that \ts $\fb(0,1)=0$.  Since \. $\fa(1,1), \fc(1,1)>0$ \.
from \eqref{eq:iden-nonzero}, it then follows from \eqref{eq:march-2} that \ts $\fd(0,1)=0$.
Similarly, since \. $\fa(0,0), \fd(0,0)>0$ \. from \eqref{eq:iden-nonzero},
it then follows from \eqref{eq:march-2.5} that $\fc(0,1)=0$.
Finally, since \. $\fb(0,0), \fd(0,0)>0$ \. from \eqref{eq:iden-nonzero},
it then follows from \eqref{eq:march-6}   that \ts $\fa(0,1)=0$.
Putting these observations implies that both sides of \eqref{eq:march-8}
are equal to~$0$, as desired.  This completes the proof of \eqref{eq:march-8}.

Finally, note that \ts \eqref{eq:march-0}, \eqref{eq:march-6}, \eqref{eq:march-7},  \eqref{eq:march-8}
imply that \.  $\fa(\sq,1)\ts \fb(\sq,0)=\fc(\sq,1) \ts \fd(\sq,0)$.
This shows that $j=2$ satisfies \eqref{eq:B1}, and completes the proof.
\end{proof}

\smallskip

\subsection{Proof of Lemma~\ref{lem:iden}}\label{ss:iden-proof}
We prove only the first equation \eqref{eq:iden} in the lemma
as the proof of \eqref{eq:iden-prime} is analogous.
We use  induction on $n$.  For $n=1$ there is nothing to prove.

\smallskip

For $n=2$, we split the proof into two cases.  First, suppose that \ts $x_1=x_2$.
We will without loss of generality assume that \ts $x_1=x_2=1$,
as the subcase \ts $x_1=x_2=0$ \ts is similar.  It then follows
from the definition that both \ts $i=1$ \ts and \ts $j=2$ \ts has type  \eqref{eq:B1}.
Then \eqref{eq:iden} follows from 
 	\[ \fa(1,1)\. \fb(0,0)  \ = \  \fc(1,1) \. \fd(0,0). \]
This in turn follows directly from Lemma~\ref{lem:n2-B1B1}.

Second, suppose that \ts $x_1\neq x_2$.
Without loss of generality, assume that \ts $x_1=1$ \ts and \ts $x_2=0$.
It then follows from the definition that \ts  $i=1$ \ts has type  \eqref{eq:B1},
while \ts $j=2$ \ts satisfies \eqref{eq:B2} and does not have type  \eqref{eq:B1}.
Then \eqref{eq:iden} follows from
\[
    \fa(1,0) \. \fb(0,1)  \ = \  \fc(1,1) \. \fd(0,0).
\]
Suppose to the contrary, that this equation does not hold.
It then follows from Lemma~\ref{lem:n2-B1B2}, that \ts $j=2$ \ts has type \eqref{eq:B1},
which gives a contradiction.  This completes the proof for the case $n=2$.

\smallskip

For \ts $n\geq 3$, suppose that the lemma holds for the case \ts $n-1$.
Let $z=(z_1,\ldots, z_n)$ be an element in \ts $\{0,\sq\}^n$, and
suppose that \. $z_k=\sq$ \.  for some \ts $k \in [n]$.
Consider the functions \. $\fa',\fb',\fc',\fd':\{0,1\}^{n-1} \to \rr_{\geq 0}$  given by
	\begin{align*}
	&\fa'\. := \. \fa(\bu^{k-1},\sq, \bu^{n-k})\ts, \ \ \quad \fb'\. := \. \fb(\bu^{k-1},\sq, \bu^{n-k})\ts,\\
	&\fc'\. := \. \fc(\bu^{k-1},\sq, \bu^{n-k})\ts,  \ \ \quad \fd'\. := \. \fd(\bu^{k-1},\sq, \bu^{n-k})\ts.
\end{align*}
Note that these four functions satisfy \eqref{eq:AD-cd} by Lemma~\ref{lem:AD-cd-square}.
Additionally, they satisfy \eqref{eq:AD-eq} by the assumption that \ts $\fa,\fb,\fc,\fd$ \ts
satisfy \eqref{eq:AD-eq}.

Let \. $x',y'\in \{0,1\}^{n-1}$  \. and \. $z' \in \{0,\sq\}^{n-1}$ \. be given by
\begin{align*}
& x' \, := \, (x_1,\ldots, x_{k-1}, x_{k+1},\ldots, x_n),\\
& y' \, := \, (y_1,\ldots, y_{k-1}, y_{k+1},\ldots, y_n),\\
& z' \, := \, (z_1,\ldots, z_{k-1}, z_{k+1},\ldots, z_n).
\end{align*}
It then follows from the induction assumption that
		\begin{align*}
	\fa'\big(x' +z'\big) \. \fb'\big(y' +z'\big) \ = \   \fc'\big(1^{n-1} +z'\big) \. \fd'\big(0^{n-1} +z'\big).
\end{align*}
On the other hand, it follows from the definition that
\begin{alignat*}{3}
\fa'\big(x' +z'\big)  \ &= \ \fa\big(x+z\big), \qquad \qquad \quad \ \ \fb'\big(y' +z'\big)  \ &&= \ \fb\big(y+z\big), \\
\fc'\big(1^{n-1} +z'\big)  \ &= \ \fc\big(1^n+z\big), \qquad \qquad\fd'\big(0^{n-1} +z'\big)  \ &&= \ \fd\big(0^n+z\big).
\end{alignat*}
It then follows that \eqref{eq:iden} holds for all $z \in \{0,\sq\}^n$ that is not equal to $(0,\ldots,0)$.

We now prove that \eqref{eq:iden} holds for  $z=(0,\ldots,0)$.
This is equivalent to
\[ 		\fa\big(x\big)  \fb\big(y\big) \ = \   \fc\big(1,\ldots,1 \big)  \fd\big(0,\ldots,0 \big).  \]
Since \ts $n\geq 3$, there exists distinct \ts $i,j \in [n]$ \ts such that \ts $x_i=x_j$.
Without loss of generality assume that \ts $i=1$ and $j=2$, and that $x_1=x_2=1$.
Then consider the functions \. $\fa',\fb',\fc',\fd':\{0,1\}^{2} \to \rr_{\geq 0}$  \. given by
\begin{alignat*}{4}
	&\fa' \ &&:= \ \fa(\bu,\bu,x_{3},\ldots, x_k),  \quad && \fb' \ &&:= \ \fb(\bu,\bu,y_{3},\ldots, y_k)\\
	&\fc' \  &&:= \ \fc(\bu,\bu,1,\ldots,1), \quad && \fd'\ &&:= \ \fd(\bu,\bu,0,\ldots, 0).
\end{alignat*}
	It follows from Lemma~\ref{lem:AD-cd-res}, that these four functions satisfy \eqref{eq:AD-cd}.
	On the other hand,
 it follows from the conclusion of the previous paragraph (i.e., applying $z=(\sq^2,0^{n-2})$ to \eqref{eq:iden}),
 that
\[   \fa(\sq,\sq,x_{3},\ldots, x_k) \. \fb(\sq,\sq,y_{3},\ldots, y_k) \ = \ \fc(\sq,\sq,1,\ldots,1)\. \fd(\sq,\sq,0,\ldots, 0),
\]
which is equivalent to \. $\fa',\fb',\fc',\fd'$ \. satisfying \eqref{eq:AD-eq}.
By the same reasoning, we also have
\begin{align*}
  \fa(1,\sq,x_{3},\ldots, x_k) \. \fb(0,\sq,y_{3},\ldots, y_k) \ &= \ \fc(1,\sq,1,\ldots,1)\. \fd(0,\sq,0,\ldots, 0),\\
    \fa(\sq,1,x_{3},\ldots, x_k) \. \fb(\sq,0,y_{3},\ldots, y_k) \ &= \ \fc(\sq,1,1,\ldots,1)\. \fd(\sq,0,0,\ldots, 0).
\end{align*}
This is equivalent to $i=1$ and $j=2$ having type \eqref{eq:B1} with respect to \. $\fa',\fb',\fc',\fd'$.
Applying Lemma~\ref{lem:n2-B1B1} to \. $\fa',\fb',\fc',\fd'$, gives
\[ \fa'(1,1) \. \fb'(0,0)  \ = \  \fc'(1,1) \. \fd'(0,0).
\]
By definition, this is equivalent to
\[ 		\fa\big(x\big) \. \fb\big(y\big) \ = \   \fc\big(1,\ldots,1 \big) \. \fd\big(0,\ldots,0 \big).  \]
This completes the proof of Lemma~\ref{lem:iden}. \qed
\smallskip

\subsection{Proof Theorem~\ref{thm:AD-boolean}}\label{ss:AD-boolean-proof}
The $\ts\Leftarrow\ts$ direction is clear, so it suffices to prove the $\ts\Rightarrow\ts$ direction.
Let
\[ A \ := \ \{ \.  i \in [n] \. : \. i \ \text{ has type } \eqref{eq:B1}   \}.   \]
By permuting indices if necessary, without loss of generality we assume that \ts $A=\emp$ \ts
or \ts $A=[k]$ for some \ts $k \in [n]$.
It then follows from the Identification Lemma~\ref{lem:iden}, that \. $\fa,\fb,\fc,\fd$ \.
satisfy the assumptions in the Consistency Lemma~\ref{lem:cst}.
Now the claim \eqref{eq:AD-equal-boolean} follows from the Consistency Lemma~\ref{lem:cst}.
This completes the proof of the theorem.
 \qed

\medskip


\section{Equality conditions for the AD inequality}\label{sec:AD-proof}

In this section, we prove equality conditions for the AD inequality (Theorem~\ref{thm:AD-eq}),
followed by the version for the product of chains (Theorem~\ref{thm:AD-chains}).
Theorem~\ref{thm:AD-eq} is  derived from the version for the Boolean lattice
(Theorem~\ref{thm:AD-boolean}), while Theorem~\ref{thm:AD-chains} is derived directly from Theorem~\ref{thm:AD-eq}.  We need to prove two lemmas in lattice
theory along the way (Lemmas~\ref{lem:structure-partition} and~\ref{lem:lattice-category}).

\subsection{Cross-factoring version}\label{sec:product}
In this section, we present the cross-factoring version of AD equality
conditions for Boolean lattices.  This formulation proves to be more practical
in applications and is a step towards the proof of Theorem~\ref{thm:AD-eq}.

\smallskip

	Let   \. $\fa,\fb,\fc,\fd: \{0,1\}^n \to \rr_{\geq 0}$ \. be
functions satisfying condition \eqref{eq:AD-cd} and \eqref{eq:AD-eq}.
Note that if
\[  \sum_{x \in \{0,1\}^n} \fa(x) \.  \sum_{x \in \{0,1\}^n} \fb(x)  \ = \  \sum_{x \in \{0,1\}^n} \fc(x)  \sum_{x \in \{0,1\}^n} \fd(x)  \  = \  0,\]
then it readily follows that, one of the function $\fa,\fb$ is identically $0$, and one of the functions  $\fc,\fd$ is identically $0$.
This is  a (trivial) sufficient condition for \eqref{eq:AD-eq}.
The next theorem addresses the remaining sufficient and necessary
conditions in terms of auxiliary functions \ts $\fa,\fb,\fc,\fd$.

\smallskip

\begin{thm}[{\rm Cross-factoring version of Theorem~\ref{thm:AD-boolean}}{}]\label{thm:AD-boolean-2}
Let   \. $\fa,\fb,\fc,\fd: \{0,1\}^n \to \rr_{\geq 0}$ \. be  functions satisfying
condition \eqref{eq:AD-cd}, and let $A\subseteq [n]$ be a subset satisfying \eqref{eq:AD-equal-boolean}.
	Then there exist  functions
\. $\ff_1, \fg_1: \{0,1\}^{A}\to \rr_{\geq 0}\ts$,
\. $\ff_2, \fg_2: \{0,1\}^{[n]-A}\to \rr_{\geq 0}\ts$,
and nonnegative constants \. $\ala, \beb, \gac, \ded\geq 0$ \.
such that  \. $\ala\ts\beb=\gac\ts\ded$, \ts and
	\begin{equation}\label{eq:AD-boolean-cross}
	 \left\{\aligned
		\fa(x_1,x_2) \ &= \ \ala \.   {\color{blue}\ff_1}(x_1) \. {\color{blue}\ff_2}(x_2),\\
		\fb(x_1,x_2) \ &= \  \beb \.  {\color{red}\fg_1}(x_1) \. {\color{red}\fg_2}(x_2),\\
		\fc(x_1,x_2) \ &= \  \gac \.  {\color{blue}\ff_1}(x_1) \. {\color{red}\fg_2}(x_2),\\
		\fd(x_1,x_2) \ &= \  \ded \.  {\color{red}\fg_1}(x_1) \. {\color{blue}\ff_2}(x_2).
	\endaligned \right.  \qquad
\text{for all \, $(x_1,x_2) \in \{0,1\}^{A} \times \{0,1\}^{[n]-A}$.}
	\end{equation}
\end{thm}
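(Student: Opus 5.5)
Given \eqref{eq:AD-equal-boolean}, the plan is to read off the factors directly by freezing one argument. Write $\aL'=\{0,1\}^A$, $\aL''=\{0,1\}^{[n]-A}$. The identity
\[
\fa(x_1,x_2)\,\fb(y_1,y_2) \ = \ \fc(x_1,y_2)\,\fd(y_1,x_2)
\]
holds for all $x_1,y_1\in\aL'$, $x_2,y_2\in\aL''$. First dispose of the degenerate cases: if $\fa\equiv 0$, then $\fc(x_1,y_2)\fd(y_1,x_2)=0$ for all arguments. Hence either $\fc\equiv0$ or $\fd\equiv0$; otherwise, picking $\fc(p_1,p_2)>0$ and $\fd(q_1,q_2)>0$ and setting $x_1=p_1$, $y_2=p_2$, $y_1=q_1$, $x_2=q_2$ gives a contradiction. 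In that case we can take $\ala=0$ together with $\gac=0$ or $\ded=0$ accordingly, and choose the remaining factors freely (e.g.\ so that $\fb$ and the nonzero one of $\fc,\fd$ are represented). One must check that such a choice exists. For instance, if $\fa\equiv0\equiv\fc$, set $\ala=\gac=0$, $\beb=\ded=1$, $\ff_2\equiv1$, $\ff_1\equiv 1$, and we then need $\fb=\fg_1\fg_2$ and $\fd=\fg_1\ff_2$. This requires a product structure of $\fb$ and $\fd$ that is not automatic, so a bit more care is needed. Indeed, since nonnegative constants are allowed, I expect the trivial case to be handled by noting that \eqref{eq:AD-equal-boolean} with $\fa\equiv0$ gives no information. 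Therefore the theorem presumably intends the nontrivial case where all four sums are positive, as the preceding paragraph suggests. I would state this reduction explicitly and concentrate on that case.

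In the main case, all four functions are not identically zero. Since the right side is then nonzero for suitable arguments, there exist $p=(p_1,p_2)$ with $\fa(p)>0$ and $q=(q_1,q_2)$ with $\fb(q)>0$. Applying the identity at $(x,y)=(p,q)$ gives $\fc(p_1,q_2)\,\fd(q_1,p_2)=\fa(p)\fb(q)>0$. Define
\[
\ff_1(x_1):=\fc(x_1,q_2),\quad \fg_2(y_2):=\fc(p_1,y_2),\quad \fg_1(y_1):=\fd(y_1,p_2),\quad \ff_2(x_2):=\fd(q_1,x_2).
\]
Then I will verify each factorization by specializing the identity.
\begin{itemize}
\item Taking $y=q$ gives $\fa(x_1,x_2)\fb(q)=\fc(x_1,q_2)\fd(q_1,x_2)$, so $\fa=\ala\ff_1\ff_2$ with $\ala=1/\fb(q)$.
\item Taking $x=p$ gives $\fb(y_1,y_2)=\fg_2(y_2)\fg_1(y_1)/\fa(p)$, so $\beb=1/\fa(p)$.
\item Taking $x_2=p_2$ and $y_1=q_1$ gives $\fa(x_1,p_2)\fb(q_1,y_2)=\fc(x_1,y_2)\fd(q_1,p_2)$.
\end{itemize}
Substituting the factorizations of $\fa$ and $\fb$ already obtained into the third specialization expresses $\fc(x_1,y_2)$ as a constant times $\ff_1(x_1)\ff_2(p_2)\fg_1(q_1)\fg_2(y_2)$. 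This yields $\gac=\ala\beb\,\ff_2(p_2)\fg_1(q_1)/\fd(q_1,p_2)$; note that $\ff_2(p_2)=\fg_1(q_1)=\fd(q_1,p_2)$, so $\gac=\ala\beb\,\fd(q_1,p_2)$. Symmetrically, specializing $x_1=p_1$ and $y_2=q_2$ gives $\fd=\ded\,\fg_1\ff_2$ with $\ded=\ala\beb\,\fc(p_1,q_2)$.

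It remains to check $\ala\beb=\gac\ded$. We have $\gac\ded=(\ala\beb)^2\fc(p_1,q_2)\fd(q_1,p_2)=(\ala\beb)^2\fa(p)\fb(q)=\ala\beb$, since $\ala\beb=1/(\fa(p)\fb(q))$. The main obstacle is purely bookkeeping: ensuring that the chosen base points make all divisions legitimate. This is exactly the role of the positivity $\fc(p_1,q_2)\fd(q_1,p_2)>0$, together with consistent handling of the degenerate zero cases.
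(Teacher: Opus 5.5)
Your treatment of the nondegenerate case is the paper's proof. Your $p,q$ are the paper's $u,v$, and your functions $\fc(\cdot,q_2)$, $\fc(p_1,\cdot)$, $\fd(\cdot,p_2)$, $\fd(q_1,\cdot)$ are, up to normalization, the paper's $\ff_1,\fg_2,\fg_1,\ff_2$. The factorizations come from the same specializations of \eqref{eq:AD-equal-boolean}. The only difference is that you carry $\ala,\beb,\gac,\ded$ explicitly, whereas the paper first rescales so that $\fa(u)=\fb(v)=\fc(u_1,v_2)=\fd(v_1,u_2)=1$.

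Where you stop short is the degenerate case, and your hesitation there is justified: for the \emph{given} $A$ the conclusion can genuinely fail. Take $n=1$, $A=\{1\}$, $\fa\equiv\fc\equiv 0$, $\fb=\mathbf{1}_{\{0\}}$ and $\fd=\mathbf{1}_{\{1\}}$. Then \eqref{eq:AD-cd} and \eqref{eq:AD-equal-boolean} hold trivially. But $\{0,1\}^{[n]-A}$ is a single point, so \eqref{eq:AD-boolean-cross} would make $\fb$ and $\fd$ both scalar multiples of the same $\fg_1$, which is impossible.

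The paper's proof handles this by changing $A$. It takes $A=\varnothing$ when $\fa\equiv\fc\equiv0$ or $\fb\equiv\fd\equiv0$, and $A=[n]$ when $\fa\equiv\fd\equiv0$ or $\fb\equiv\fc\equiv0$. This way the factor shared by the two surviving functions lives on a one-point set. For instance, if $\fa\equiv\fc\equiv0$ and $A=\varnothing$, put $\ff_1=\fg_1=1$, $\ff_2=\fd$, $\fg_2=\fb$, $\ala=\gac=0$ and $\beb=\ded=1$. (Your choice $\ff_2\equiv 1$ is what over-constrained $\fd$.)

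In the degenerate case every $A$ satisfies \eqref{eq:AD-equal-boolean}, since both sides vanish. The proof of Theorem~\ref{thm:AD-eq} only uses the existence of some suitable $A$. So the right fix is to read the theorem as asserting the factorization ``for some $A$'' and close the degenerate case this way, rather than excluding it.
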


\begin{proof}
Without loss of generality, assume that functions $\fa, \fb, \fc, \fd$ are not identically zero.
For example, if \ts $\fa=\fc= \zero$, we can set \ts $A = \varnothing$, \ts $\ff_1=\fg_1=\one$,
\ts $\ff_2 = \fd$, \ts $\fg_2 = \fb$, \ts $\al = \ga = 0$,\ts $\be = \de = 1$, and
note that \eqref{eq:AD-boolean-cross} holds.  Other cases are analogous.
	
	Let \ts $A$ \ts as given in Theorem~\ref{thm:AD-boolean}.
	Let \. $(u_1,u_2), (v_1,v_2) \in \{0,1\}^{A} \times \{0,1\}^{[n]-A}$ \. be such that
	\[ \fa(u_1,u_2) \, > \, 0 \qquad \text{and} \qquad \fb(v_1,v_2) \, > \, 0,   \]
	which exist by assumption that \ts $\fa,\fb\ne \zero$.
	It then follows from \eqref{eq:AD-equal-boolean} that
	\[   \fa(u_1,u_2) \. \fb(v_1,v_2) \ = \  \fc(u_1,v_2) \. \fd(v_1,u_2) \ > \ 0. \]
	By rescaling the functions if necessary, without loss of generality, we can assume that
	\[  \fa(u_1,u_2) \, = \,  \fb(v_1,v_2) \, = \,  \fc(u_1,v_2) \, = \, \fd(v_1,u_2) \, = \, 1.  \]
	
	It now follows from Theorem~\ref{thm:AD-boolean}, that
	\begin{align}\label{eq:boolean-2-1}
\fa({\color{magenta}x_1},u_2) \ = \  \fa({\color{magenta}x_1},u_2)  \fb(v_1,v_2) \ = \ \fc({\color{magenta}x_1},v_2)  \fd(v_1,u_2) \ = \ \fc({\color{magenta}x_1},v_2),
	\end{align}
for all \ts $x_1 \in \{0,1\}^{A}$.
	We then define \. $\ff_1:  \{0,1\}^{A} \to \rr_{\geq 0}$ \. by
	\begin{align*}
\ff_1({\color{magenta}x_1}) \ := \  \fa({\color{magenta}x_1},u_2) \ = \ \fc({\color{magenta}x_1},v_2),
	\end{align*}
	and note that the second equality is due to \eqref{eq:boolean-2-1}.
	
	By a similar argument, for all
\.$({\color{magenta}x},{\color{magenta}y}) \in \{0,1\}^{A} \times \{0,1\}^{[n]-A}$\., we have:
	\begin{align*}
				\fb({\color{magenta}x_1}, v_2) \ &= \  \fd({\color{magenta}x_1},u_2), \quad
		\fa(u_1, {\color{magenta}x_2}) \, = \,  \fd(v_1,{\color{magenta}x_2}), \quad
		\fb(v_1, {\color{magenta}x_2}) \, = \,  \fc(u_1,{\color{magenta}x_2}).
	\end{align*}
	We then define functions \. $\fg_1: \{0,1\}^{A}\to \rr_{\geq 0}$ \. and
\. $\ff_2, \fg_2: \{0,1\}^{[n]-A}\to \rr_{\geq 0}$ \. as follows:
$$\fg_1({\color{magenta}x_1})  := 	\fb({\color{magenta}x_1}, v_2)  =   \fd({\color{magenta}x_1},u_2), \ \
\ff_2({\color{magenta}x_2})  :=  \fa(u_1, {\color{magenta}x_2})  =  \fd(v_1,{\color{magenta}x_2}), \ \
\fg_2({\color{magenta}x_2})  :=  \fb(v_1, {\color{magenta}x_2}) =   \fc(u_1,{\color{magenta}x_2}).
$$	
	
Now note that, for all \.$(x_1,x_2) \in \{0,1\}^{A} \times \{0,1\}^{[n]-A} $\., we have:
\begin{align*}
\fa( {\color{magenta}x_1}, {\color{magenta}x_2}) \ = \  \fa( {\color{magenta}x_1}, {\color{magenta}x_2}) \fb(v_1,v_2) \ = \ \fc( {\color{magenta}x_1},v_2) \fd(v_1, {\color{magenta}x_2})   \ = \ \ff_1({\color{magenta}x_2}) \ff_2({\color{magenta}x_2}),
\end{align*}
	where the second equality follows from Theorem~\ref{thm:AD-boolean}, and the third equality is by the definition
of functions \. $\ff_1,\ff_2\ts$. Similarly, we have:
	\begin{alignat*}{3}
		\fb( {\color{magenta}x_1}, {\color{magenta}x_2}) \ &= \  \fa(u_1,u_2) \fb( {\color{magenta}x_1}, {\color{magenta}x_2})  \ &&= \ \fc(u_1,{\color{magenta}x_2}) \fd({\color{magenta}x_1},u_2)   \ &&= \ \fg_1({\color{magenta}x_1}) \fg_2({\color{magenta}x_2}),\\
\fc( {\color{magenta}x_1}, {\color{magenta}x_2}) \ &= \  \fc( {\color{magenta}x_1}, {\color{magenta}x_2}) \fd(v_1,u_2) \ &&= \ \fa( {\color{magenta}x_1},u_2) \fb(v_1, {\color{magenta}x_2})   \ &&= \ \ff_1({\color{magenta}x_1}) \fg_2({\color{magenta}x_2}),\\
		\fd( {\color{magenta}x_1}, {\color{magenta}x_2}) \ &= \  \fc(u_1,v_2) \fd( {\color{magenta}x_1}, {\color{magenta}x_2})  \ &&= \ \fa(u_1,{\color{magenta}x_2}) \fb({\color{magenta}x_1},v_2)   \ &&= \ \fg_1({\color{magenta}x_1}) \ff_2({\color{magenta}x_2}).
	\end{alignat*}
	This completes the proof.
\end{proof}


\smallskip

\subsection{Support product lemma}\label{sec:structural}
The following lemma will be used to generalize Theorem~\ref{thm:AD-boolean-2}
to apply to other types of lattices that are not Boolean lattices.
Recall the notation of Birkhoff's Theorem~\ref{thm:Birkhoff}.

\smallskip

\begin{lemma}[{\rm \defn{Support product lemma}\ts}{}] \label{lem:structure-partition}
Let  \ts $\cL=(\aL,\vee,\wedge)$ \ts  be a distributive lattice.  Let
\. $\ff: \aLr \to \rr_{\geq 0}$ \. be a function, which can be uniquely extended  to \. $\fF:\{0,1\}^{\aJr(\cL)} \to \rr_{\geq 0}$ \. by
\[\fF(y) \ := \
\begin{cases}
	\ff(x) & \text{ if } \  y = \phi(x) \  \text{ for some } \ x \in \aLr,\\
	0 & \text{ otherwise.}
	\end{cases}
\]
Let \. $X_1,X_2\subseteq \aJr(\cL)$ \. such that \ts $X_1 \cap X_2= \emp$ \ts and \ts $X_1 \cup X_2=\aJr(\cL)$.
Suppose there exist functions \. $\fF_1:\{0,1\}^{X_1} \to \rr_{\geq 0}$ \. and \. $\fF_2:\{0,1\}^{X_2} \to \rr_{\geq 0}$ \.
such that
\begin{equation}\label{eq:f-dec}
	\fF(x_1,x_2) \ = \  \fF_1(x_1) \fF_2(x_2) \qquad \text{for all \quad }  (x_1,x_2) \in \{0,1\}^{X_1} \times \{0,1\}^{X_2}.
\end{equation}
Then:
$$\overline{\supp \ff} \ \simeq  \ \overline{\supp \fF_1} \. \times \. \overline{\supp \fF_2}\,.
$$
\end{lemma}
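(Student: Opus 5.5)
The plan is to pass everything through the Birkhoff embedding $\phi$ of Theorem~\ref{thm:Birkhoff} and reduce the statement to a purely set-theoretic fact about lattice closures of product sets. Here $\overline{S}$ denotes the lattice closure of $S$, and I write $\rS_1:=\supp\fF_1\subseteq\{0,1\}^{X_1}$ and $\rS_2:=\supp\fF_2\subseteq\{0,1\}^{X_2}$.

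\emph{Step 1: identify the supports.} Since $\fF$ vanishes outside $\phi(\aL)$ and agrees with $\ff\circ\phi^{-1}$ on $\phi(\aL)$, we have $\supp\fF=\phi(\supp\ff)$. The factorization \eqref{eq:f-dec} gives $\fF(x_1,x_2)>0$ if and only if $\fF_1(x_1)>0$ and $\fF_2(x_2)>0$. Hence
\[
\supp\fF \. = \. \rS_1\times\rS_2 \qquad \text{inside} \qquad \{0,1\}^{X_1}\times\{0,1\}^{X_2}=\{0,1\}^{\aJr(\cL)}.
\]
If either $\rS_1$ or $\rS_2$ is empty, both sides of the claim are empty, so from now on I assume both are nonempty.

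\emph{Step 2: closures commute with $\phi$.} The map $\phi$ is an injective lattice homomorphism, so $\phi(\aL)$ is a sublattice of $\cJ_{\!\cL}$. Consequently the lattice closure of $\phi(\supp\ff)$ taken in $\cJ_{\!\cL}$ already lies in $\phi(\aL)$, and it equals $\phi\big(\overline{\supp\ff}\big)$. This gives $\overline{\supp\ff}\simeq\overline{\rS_1\times\rS_2}$, where the right-hand closure is taken in the Boolean lattice $\{0,1\}^{X_1}\times\{0,1\}^{X_2}$.

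\emph{Step 3: closure of a product set.} It remains to show $\overline{\rS_1\times\rS_2}=\overline{\rS_1}\times\overline{\rS_2}$; this is the only real content of the lemma. The inclusion $\subseteq$ holds because $\overline{\rS_1}\times\overline{\rS_2}$ is a sublattice, since join and meet act coordinatewise, and it contains $\rS_1\times\rS_2$.

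For $\supseteq$, let $C:=\overline{\rS_1\times\rS_2}$.
\begin{enumerate}
\item Fix $t\in\rS_2$. Joins and meets of elements of $\rS_1\times\{t\}$ have second coordinate $t$, so $\overline{\rS_1\times\{t\}}=\overline{\rS_1}\times\{t\}$. Therefore $\overline{\rS_1}\times\rS_2\subseteq C$.
\item Fix $s\in\overline{\rS_1}$. By the same argument in the second coordinate, $C$ contains $\{s\}\times\overline{\rS_2}$.
\end{enumerate}
Together these give $\overline{\rS_1}\times\overline{\rS_2}\subseteq C$.

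Combining Steps 1–3 yields $\overline{\supp\ff}\simeq\overline{\supp\fF_1}\times\overline{\supp\fF_2}$. I expect no serious obstacle; the only delicate point is Step 2, namely checking that closing up inside $\cJ_{\!\cL}$ does not leave $\phi(\aL)$, which is exactly the fact that $\phi$ is a lattice embedding.
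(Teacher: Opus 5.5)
Your proof is correct and follows essentially the paper's argument. Both transport the problem through the Birkhoff embedding $\phi$ and then show that the lattice closure of the product set $\supp\fF_1\times\supp\fF_2$ equals the product of the two closures, by applying the same join/meet sequences fiberwise. Your version is slightly more direct: the paper instead identifies the closure as $\aL_1\vee\aL_2$, with $\aL_1=\overline{\supp\fF_1}\times\{z_2\}$ and $\aL_2=\{z_1\}\times\overline{\supp\fF_2}$ for the minimum element $(z_1,z_2)$, which tacitly requires nonempty support, whereas you treat the empty-support case explicitly.
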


\begin{proof}
Let \. $\aL_1,\aL_2 \subseteq \{0,1\}^{\aJ(\cL)}$ \. given by
	\begin{align*}
		\aL_1 \ &:= \ \big \{ \. (x_1, {\color{magenta}{z_2}}) \in \{0,1\}^{X_1} \times \{0,1\}^{X_2} \, : \,  x_1 \in \overline{\supp \fF_1} \.  \big \},\\
		\aL_2 \ &:= \ \big \{ \. ({\color{magenta}{z_1}}, x_2) \in \{0,1\}^{X_1} \times \{0,1\}^{X_2} \, : \,  x_2 \in \overline{\supp \fF_2} \.  \big \},
	\end{align*}
	where \ts $({\color{magenta}{z_1}}, {\color{magenta}{z_2}})$ \ts is the minimum element of \. $\phi(\overline{\supp \ff})$\..
Denote \ts $\cL_1 = (\aL_1,\vee,\wedge)$, \.  $\cL_2 = (\aL_2,\vee,\wedge)$, \. and note that these are sublattices of the Boolean lattice.
	It suffices to show that
	\[ \phi(\overline{\supp \ff}) \ = \  \aL_1 \vee \aL_2.   \]

	We first show that \. $\phi(\overline{\supp \ff}) \. \subseteq \aL_1 \vee \aL_2$\..
	Let $(x_1,x_2)$ be an arbitrary element of $\phi(\supp \ff)$.
	It follows from \eqref{eq:f-dec} that \ts $x_1 \in \supp \fF_1$ \ts and \ts $x_2 \in \supp \fF_2$\ts.
	Hence we have
	\[   \phi(\supp \ff) \ \subseteq  \ \aL_1 \vee \aL_2.    \]
	Since \ts $\phi$ \ts is a lattice embedding and \ts $\cL_1, \cL_2$ \ts are lattices, it then follows that
	\[ \phi(\overline{\supp \ff})  \ = \  \overline{\phi(\supp \ff)} \ \subseteq
\ \overline{\aL_1 \vee \aL_2} \ = \  \aL_1 \vee \aL_2\.,  \]
	as desired.
	
We now show that \. $\aL_1 \vee \aL_2 \subseteq \phi(\overline{\supp \ff}).$
By the symmetry, it suffices to show that \. $\aL_1\subseteq \phi(\overline{\supp \ff})$.
Let \. $x$ \. be an arbitrary element of $\overline{\supp \fF_1}$.
It  follows that there exists
\[ y_1, \. y_2, \. \ldots \., \. y_k  \ \in \supp \fF_1 \]
such that $x$ can be obtained by applying a sequence of  join/wedge operations to those elements.
Now $w$ be an arbitrary element of $\supp \fF_2$. It then follows from \eqref{eq:f-dec} that
\[  (y_{1}, w), \   (y_{2}, w), \. \ldots \. , \  (y_{k}, w)  \ \in  \ \supp \fF_1  \times \supp  \fF_2 \ = \  \phi(\supp \ff).\]
By applying the same sequence of join/wedge operations to these new elements, we conclude that
\begin{equation}\label{eq:f-dec-2}
 (x,w)  \ \in \ \phi(\overline{\supp \ff})  \quad \text{ for any } \ w \in \supp \fF_2.
\end{equation}

Now, since  \ts $({\color{magenta}{z_1}}, {\color{magenta}{z_2}})$ \ts is the minimum element of \. $\phi(\overline{\supp \ff})$\.,  it follows that there exists
\[   w_{1}, \. w_{2}, \. \ldots \. , \. w_{\ell} \  \in \  \supp \fF_2 \]
such that $ {\color{magenta}{z_2}}$ can be obtained by applying a sequence of  join/wedge operations to those elements.
Then, it follows from \eqref{eq:f-dec-2} that
\[ (x, w_{1}), \   (x, w_{2}), \. \ldots \. , \  (x, w_{\ell})  \ \in \phi(\overline{\supp \ff}). \]
By applying the same sequence of join/wedge operations to these new elements, we  conclude that
\[  (x,{\color{magenta}{z_2}})  \ \in \ \phi(\overline{\supp \ff}).  \]
Since the choice of \. $x \in \overline{\supp \fF_1}$ \. is arbitrary, we conclude that
\. $\aL_1 \subseteq \phi(\overline{\supp \ff})$. This completes the proof.
\end{proof}

\smallskip

\subsection{Proof of Theorem~\ref{thm:AD-eq}}\label{sec:proof-AD-abstract}
The proof of the $\ts\Leftarrow\ts$ is clear, so we present only the proof of the $\ts\Rightarrow\ts$ direction.
By the same argument as in the beginning of the proof of Theorem~\ref{thm:AD-boolean-2},
without loss of generality, assume that the functions \ts $\fa,\fb,\fc,\fd$ \ts are not identically equal to~$0$.

Let \. $\phi:\aL \to \{0,1\}^{\aJ(\cL)}$ \. be the lattice embedding in Birkhoff's Theorem~\ref{thm:Birkhoff}.
Let \. $\fA,\fB,\fC,\fD: \{0,1\}^{\aJ(\cL)} \to \rr_{\geq 0}$ \. be the unique extension of \ts $\fa,\fb,\fc,\fd$.
It then follows from Theorem~\ref{thm:AD-boolean-2}, that
there exist disjoint subsets \ts $X_1, X_2$ \ts satisfying \ts $X_1\cup X_2=\aJ(\cL)$,  functions
\. $\fF_1, \fG_1: \{0,1\}^{X_1}\to \rr_{\geq 0}$,  \.   $\fF_2, \fG_2: \{0,1\}^{X_2}\to \rr_{\geq 0}$ \. and positive constants \. $\ala, \beb, \gac, \ded>0$ \. satisfying $\ala\beb=\gac\ded$, such that
\begin{align*}
	\fA(x_1,x_2) \ &= \ \ala \.  {\color{blue}\fF_1}(x_1) \. {\color{blue}\fG_2}(x_2)\ts,\qquad
	\ts\fB(x_1,x_2) \ = \  \beb  \.  {\color{red}\fG_1}(x_1) \. {\color{red}\fG_2}(x_2)\ts,\\
	\fC(x_1,x_2) \ &= \  \gac \.  {\color{blue}\fG_1}(x_1) \. {\color{red}\fG_2}(x_2)\ts,\qquad
	\fD(x_1,x_2) \ = \  \ded \.  {\color{red}\fG_1}(x_1) \. {\color{blue}\fF_2}(x_2)\ts,
\end{align*}
for all \.$(x_1,x_2) \in \{0,1\}^{X_1} \times \{0,1\}^{X_2} $.
By rescaling the functions \. $\fa,\fb,\fc,\fd$ \. if necessary, we can without loss of generality assume that
\. $\ala= \beb= \gac = \ded=1$.

Let \. $\fh: \aL \to \rr_{\geq 0}$ \. be defined by \. $\fh := \fa + \fb + \fc + \fd$,
and let \. $\fH:\ \{0,1\}^{\aJ(\cL)} \to \rr_{\geq 0}$ \. be the corresponding unique extension.
Note that
\[ \fH(x_1,x_2) \ =  \ [\fF_1+\fG_1](x_1)\. \cdot \. [\fF_2+\fG_2](x_2)\ts,
 \]
for all \.$(x_1,x_2) \in \{0,1\}^{X_1} \times \{0,1\}^{X_2}$.
Let  \. $\aL_1 \subseteq \{0,1\}^{X_1}$ \. and  \. $\aL_2 \subseteq \{0,1\}^{X_2}$ \. be given by
\[ \aL_1 \ := \  \overline{\supp (\fF_1+\fG_1)}, \qquad   \aL_2 \ := \  \overline{\supp (\fF_1+\fG_1)}.  \]
We then have
\[ \aL \ = \   \overline{\supp \fh} \ \simeq \ \aL_1 \times \aL_2\.,    \]
where the first equality follows from the assumption that \ts $\aL$ \ts is the lattice closure of the support of \.
$\fa+\fb+\fc+\fd$, and the second equality follows from  Lemma~\ref{lem:structure-partition}.

Finally, let \. $\ff_1,\fg_1$ \. be the functions \. $\fF_1,\fG_1$ \. restricted to  \ts $\aL_1$\ts,
and let  \. $\ff_2,\fg_2$ \. be the functions \. $\fF_2,\fG_2$ \. restricted to  \ts $\aL_2$\ts.
It follows from the setup above that these functions satisfy the conclusion of Theorem~\ref{thm:AD-eq}.
This completes the proof.
\qed

\smallskip



\subsection{AD equality for products of chains}\label{ss:special-AD-weaker}
A \defnb{direct product of chains} is a lattice
\begin{equation}\label{eq:chains-product}
\cL \, = \, \cL(N_1,\ldots,N_n) \, := \, \cC_{N_1} \. \times \. \cdots \. \times \. \cC_{N_n}\,,
\end{equation}
where $i$-th chain \ts $\cC_{N_i}$ \ts has elements in
\ts $[N_i]=\{1,\ldots, N_i\}$, and the join and meet operations are given
by pointwise maximum and minimum, respectively.

\smallskip

\begin{thm}[{\rm \defn{AD equality for direct products of chains}\ts}{}]\label{thm:AD-chains}
	Let \ts  $\cL=(\aL,\vee,\wedge)$ \ts be a direct product of $n$ chains \. as in \eqref{eq:chains-product}.
	Let   \. $\fa,\fb,\fc,\fd: \aL \to \rr_{\geq 0}$ \. be  functions satisfying condition \eqref{eq:AD-cd}.
	Then \eqref{eq:AD-eq} holds \ \underline{\em if and only if}  \
there exist a subset \ts $A \subseteq [n]$, lattices
	\[   \cL_1 \, = \, (\aL_1,\vee,\wedge) \ := \  \bigotimes_{i \in A} \cC_{N_i}\,,
\qquad  \cL_2  \, = \, (\aL_2,\vee,\wedge) \ := \  \bigotimes_{j \notin A} \cC_{N_j}\,,  \]
	 functions \. $\ff_1, \fg_1: \aL_1\to \rr_{\geq 0}\ts$,
\. $\ff_2, \fg_2: \aL_2\to \rr_{\geq 0}\ts$,
and nonnegative constants \. $\ala, \beb, \gac, \ded\geq 0$ \.
such that  \. $\ala\ts\beb=\gac\ts\ded$, \ts and \eqref{eq:AD-abstract-2} holds.
\end{thm}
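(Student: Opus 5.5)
The sufficiency direction ($\Leftarrow$) is the same computation as for Theorem~\ref{thm:AD-eq}: substituting \eqref{eq:AD-abstract-2} into \eqref{eq:AD} makes both sides products of four sums over $\aL_1$ and $\aL_2$, and $\ala\beb=\gac\ded$ gives equality. For necessity ($\Rightarrow$), I will not pass through Theorem~\ref{thm:AD-eq} as a black box. That theorem would produce \emph{some} decomposition of the lattice closure of the support, and this decomposition need not respect the chain coordinates. Instead, I plan to run its proof directly with a Boolean encoding adapted to the chains, so that the splitting set in Theorem~\ref{thm:AD-boolean-2} can be chosen to be a union of whole chain blocks.

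First I dispose of the degenerate case. If one of $\fa,\fb$ and one of $\fc,\fd$ vanish identically, I take constants zero as in the proof of Theorem~\ref{thm:AD-boolean-2}; nonnegative constants are allowed here. Next I encode each chain $\cC_{N_i}$ by the Birkhoff embedding $\phi_i\colon[N_i]\to\{0,1\}^{N_i-1}$, $t\mapsto 1^{t-1}0^{N_i-t}$. Thus $\cL$ embeds into $\{0,1\}^J$, where $J=\bigsqcup_i J_i$ and $|J_i|=N_i-1$, and I extend $\fa,\fb,\fc,\fd$ by zero to $\fA,\fB,\fC,\fD$. The extended functions still satisfy \eqref{eq:AD-cd}: if $\fA(u)\fB(v)>0$, then $u,v$ lie in the image of $\phi$, which is a sublattice, so $u\vee v$ and $u\wedge v$ are images as well. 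They also satisfy \eqref{eq:AD-eq}. Theorem~\ref{thm:AD-boolean} and Theorem~\ref{thm:AD-boolean-2} then give a set $A'\subseteq J$ and a cross-factoring of the extensions over $\{0,1\}^{A'}\times\{0,1\}^{J-A'}$.

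The key step, and the main obstacle, is to show that $A'$ can be taken to be a union of blocks $J_i$. My approach is to exploit the freedom in choosing $A'$. The natural choice is $A'=\{j:\ j\text{ has type \eqref{eq:B1}}\}$, and I would show that within a single block $J_i$ either every coordinate can be placed on the \eqref{eq:B1} side or every coordinate on the \eqref{eq:B2} side. Suppose instead that $j<j'$ in the same block are split by $A'$. Because the extension vanishes off the images of the chains, the coordinates satisfy the monotonicity $u_{j'}=1\Rightarrow u_j=1$ on the supports. The factorization $\fA(x_1,x_2)=\ala\fF_1(x_1)\fF_2(x_2)$ combined with this monotonicity should force the support of $\fA$ (and likewise of $\fB,\fC,\fD$) to be constant in one of the two coordinates. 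A coordinate on which all four supports are constant can be moved freely between $A'$ and $J-A'$ without breaking \eqref{eq:AD-equal-boolean}, since the cross-factoring identity does not involve it. I would then formalize the following claim: in each block, the coordinates that are nonconstant on $\supp(\fA+\fB+\fC+\fD)$ all lie on one side, and the constant ones may be reassigned. The argument would mimic Lemma~\ref{lem:structure-partition}, applied to $\fH=\fA+\fB+\fC+\fD=(\fF_1+\fG_1)(\fF_2+\fG_2)$. A product support inside a chain image forces the support to project trivially onto one of the two factors. This is where I expect most of the care to be needed, because the constants must be handled consistently for all four functions.

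Once $A'=\bigsqcup_{i\in A}J_i$, I restrict $\fF_1,\fG_1$ to $\phi(\aL_1)=\prod_{i\in A}\phi_i([N_i])$ and $\fF_2,\fG_2$ to the complementary product, and pull them back along $\phi$. This yields $\ff_1,\fg_1$ on $\aL_1=\bigotimes_{i\in A}\cC_{N_i}$, $\ff_2,\fg_2$ on $\aL_2$, and constants with $\ala\beb=\gac\ded$ satisfying \eqref{eq:AD-abstract-2}. No lattice-closure hypothesis is needed, because the product of the full chains is automatically the domain.
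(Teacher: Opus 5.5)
Your argument is correct, but it takes a different route from the paper.

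\textbf{The paper's route.} The paper does apply Theorem~\ref{thm:AD-eq} as a black box, to the lattice closure of $\supp(\fa+\fb+\fc+\fd)$. It then closes the gap you were worried about with a purely lattice-theoretic fact, Lemma~\ref{lem:lattice-category}: every injective lattice homomorphism $\iota\colon \aL_1\times\aL_2\to\cC_{N_1}\times\cdots\times\cC_{N_n}$ splits the chain coordinates. The split is given by $A=\{i : a_i=C_i\}$, where $a=\iota(Z_1,z_2)$ and $C=\iota(Z_1,Z_2)$, and the factor functions are then extended by zero. So your stated reason for avoiding Theorem~\ref{thm:AD-eq} is mistaken: an abstract product decomposition of a sublattice of a product of chains always respects the chain coordinates.

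\textbf{Your route.} You stay in the Boolean cube; your encoding is exactly Birkhoff's $\phi$ for a product of chains. Instead of the lattice lemma, you exploit the non-uniqueness of the splitting set. The key facts are:
\begin{itemize}
\item $\supp\fH=(\supp\fF_1\cup\supp\fG_1)\times(\supp\fF_2\cup\supp\fG_2)$ lies in the image of the chains.
\item Hence a split pair $j<j'$ in one block forces $u_j\equiv1$ or $u_{j'}\equiv0$ on $\supp\fH$.
\item So the nonconstant coordinates of each block lie on one side, and the constant ones can be moved.
\end{itemize}
This is more self-contained: it needs neither Lemma~\ref{lem:structure-partition} nor the lattice-closure bookkeeping. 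The paper's route is more modular and isolates the unique-decomposability phenomenon.

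\textbf{Details to state carefully.}
\begin{itemize}
\item Constancy must be measured on $\supp\fH$, that is, with one common value for all four functions. For example, take $\fa=\fc=\mathbf{1}_{\{1\}}$ and $\fb=\fd=\mathbf{1}_{\{0\}}$ on $\{0,1\}$. Each support is constant in the coordinate, yet moving it breaks \eqref{eq:AD-equal-boolean}.
\item A coordinate $k$ constant on $\supp\fH$ can be moved not because the identity ``does not involve it''. The real reason is that both sides vanish unless $x_k=y_k$ equals that common value, and in that case the two versions of the identity coincide.
\item After the moves, re-apply Theorem~\ref{thm:AD-boolean-2} to the new set.
\item The detour through types \eqref{eq:B1}/\eqref{eq:B2} is not needed.
\end{itemize}
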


\smallskip

In the setting of Theorem~\ref{thm:AD-eq}, we are claiming that one only has to
consider sublattices which are themselves direct product of chains.  This
\defng{unique decomposability} \ts is well known phenomenon in lattice theory,
see e.g. Corollary~4 in \cite[$\S$III.4]{Gra98}.  For our purposes we need
the following variation:

\smallskip

\begin{lemma}\label{lem:lattice-category}
Let \ts $\cL_1=(\aL_1,\vee',\wedge')$ \ts and \ts
$\cL_2=(\aL_2,\vee'',\wedge'')$\ts  be  finite distributive lattices,
and let \ts $\cL=(\cL,\vee,\wedge)$ \ts be a product of $n$ chains as in \eqref{eq:chains-product}.
Suppose \. $\iota:\aL_1 \times \aL_2 \to \aL$ \. is an injective lattice homomorphism.
Let \ts $z_1\in \aL_1$ \ts and \ts $z_2\in \aL_2$ \ts denote the minimum elements
of \ts $\cL_1$ \ts and \ts $\cL_2$, respectively, and let \.
$(c_1, \ldots, c_n) := \iota(z_1, z_2)$.
Then there exists a subset \ts $A\subseteq [n]$, such that
$$
\iota\big(\aL_1 \times \{z_2\}\big) \, = \, \aL_1^\ast \times \{ (c_j)_{j \in [n]-A} \}, \qquad
\iota\big(\{z_1\} \times \aL_2\big) = \{ (c_i)_{i \in A} \} \times \aL_2^\ast, 
$$
where \ts $\cL_1 ^\ast= (\aL_1^\ast,\vee,\wedge)$ \ts is a sublattice of \.
$\bigotimes_{i \in A} \cC_{N_i}$ \.  and \. $\cL_2^\ast = (\aL_2^\ast,\vee,\wedge)$ \ts
is a sublattice of \. $\bigotimes_{j \notin A} \cC_{N_j}\ts.$
\end{lemma}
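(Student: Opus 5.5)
We will use the componentwise structure of $\cL=\cC_{N_1}\times\cdots\times\cC_{N_n}$ together with the fact that $\iota$ preserves joins and meets. For $u\in\aL_1$ and $v\in\aL_2$ put $\iota_1(u):=\iota(u,z_2)$ and $\iota_2(v):=\iota(z_1,v)$. Since $(u,z_2)\vee(z_1,v)=(u,v)$ and $(u,z_2)\wedge(z_1,v)=(z_1,z_2)$, we obtain two identities:
\begin{equation*}
\iota(u,v) \. = \. \iota_1(u)\vee\iota_2(v), \qquad \iota_1(u)\wedge\iota_2(v) \. = \. c.
\end{equation*}
Both are read coordinatewise as $\max$ and $\min$ in each chain $\cC_{N_k}$.

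The key step is to define
\begin{equation*}
A \. := \. \big\{k\in[n] \. : \. \iota_1(u)_k\neq c_k \ \text{ for some } u\in\aL_1\big\},
\end{equation*}
and to show that $\iota_2(v)_k=c_k$ for all $k\in A$ and $v\in\aL_2$. Monotonicity of $\iota$ gives $\iota_1(u)\ge c$ and $\iota_2(v)\ge c$ coordinatewise, because $(z_1,z_2)$ is the minimum. Now fix $k\in A$ with $\iota_1(u)_k>c_k$. The identity $\min\{\iota_1(u)_k,\iota_2(v)_k\}=c_k$ then forces $\iota_2(v)_k=c_k$ for every $v$. Hence $\iota_2$ moves only the coordinates in $[n]-A$, and $\iota_1$ moves only the coordinates in $A$ by the definition of $A$.

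With this in hand, set $\aL_1^\ast$ to be the projection of $\iota_1(\aL_1)$ onto $\bigotimes_{i\in A}\cC_{N_i}$, and $\aL_2^\ast$ the projection of $\iota_2(\aL_2)$ onto $\bigotimes_{j\notin A}\cC_{N_j}$. The two displayed equalities of the lemma then hold by construction. Each $\aL_i^\ast$ is a sublattice because $\iota_i$ is a lattice homomorphism (restriction of $\iota$ to the sublattice $\aL_1\times\{z_2\}$, resp.\ $\{z_1\}\times\aL_2$), and projection onto a set of coordinates is a lattice homomorphism. Injectivity of $\iota$ is not needed for the statement itself. It does make the projections isomorphisms onto $\cL_1,\cL_2$, which is what the later application to Theorem~\ref{thm:AD-chains} will use.

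The only delicate point I anticipate is the coordinate-disjointness argument. It relies on the meet identity $\iota_1(u)\wedge\iota_2(v)=c$ together with the coordinates being totally ordered chains, so that a minimum equal to $c_k$ forces one of the two entries to equal $c_k$. In a general distributive lattice this fails, and that is exactly why the lemma is stated for products of chains. Everything else is routine bookkeeping.
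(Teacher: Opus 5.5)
Your proof is correct and follows essentially the paper's argument: both rest on the meet identity $\iota(u,z_2)\wedge\iota(z_1,v)=c$ together with the fact that, in a chain, a minimum equal to $c_k$ forces one of the two entries to equal $c_k$. The paper phrases this through the top elements $a=\iota(Z_1,z_2)$ and $C=\iota(Z_1,Z_2)$, setting $A=\{i: a_i=C_i\}$ and then sandwiching $c\le\iota(x_1,z_2)\le a$ by monotonicity, whereas you apply the meet identity pointwise; your $A$ differs from the paper's only on coordinates where neither factor moves, which is harmless.
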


\begin{proof}
Let \ts $Z_1\in \aL_1$ \ts and \ts $Z_2\in \aL_2$ \ts denote the maximum elements
of \ts $\cL_1$ \ts and \ts $\cL_2\ts$, respectively, and let \.
$(C_1,\ldots, C_n) := \iota(Z_1,Z_2)$.
Similarly, let \. $(a_1,\ldots, a_n):= \iota(Z_1,z_2)$\., \. $(b_1,\ldots, b_n):=\iota(z_1,Z_2)$.
Note that
\begin{align*}
(a_1,\ldots, a_n) \. \vee \. (b_1,\ldots, b_n) \ &= \ \iota(Z_1,z_2) \. \vee \. \iota(z_1,Z_2) \ = \  \iota(Z_1, Z_2) \
= \ (C_1,\ldots, C_n), \\
(a_1,\ldots, a_n)\. \wedge \. (b_1,\ldots, b_n) \ &= \ \iota(Z_1,z_2) \. \wedge \. \iota(z_1,Z_2) \ = \  \iota(z_1, z_2) \ \
\ts = \ (c_1,\ldots, c_n).
\end{align*}
Hence we have:
\[  a_i \vee b_i \ = \ C_i\,,  \quad a_i \wedge b_i \ = \ c_i \quad \ \text{for all \ \ $i \in [n]$}.  \]
Define \. $ A := \  \{i \in [n] \, : \, a_i= C_i \}$, and let
$$
\pi_1: \cL \to \bigotimes_{i \in A} \cC_{N_i}\,,  \qquad
\pi_2: \cL \to \bigotimes_{j \notin A} \cC_{N_j}
$$
be projections onto the coordinates indexed by \ts $A$ \ts and \ts $[n]-A$, respectively.
It  suffices to show that, for all \ts $x_1\in \aL_1$ \ts and \ts $x_2 \in \aL_2$, we have:
\begin{align}\label{eq:regrator}
	\pi_2 \circ \iota (x_1,z_2) \ = \  (c_j)_{j \notin A} \quad \text{and} \quad
	\pi_1 \circ \iota (z_1,x_2) \ = \  (c_i)_{i \in A}\..
\end{align}
We prove only the first claim, as the proof of the second claim is analogous.

First, note that
\[  \pi_2 \circ \iota (x_1,z_2)  \ \geqslant \   \pi_2 \circ \iota (z_1,z_2)
\ = \ \pi_2(c_1,\ldots, c_n) \ = \   (c_j)_{j \notin A}\., \]
where $\geqslant$ is the natural partial order on \ts $\bigotimes_{i \in A} \cC_{N_i}\ts$.
On the other hand, we similarly have
\[  \pi_2 \circ \iota (x_1,z_2)  \ \leqslant \ \pi_2 \circ \iota (Z_1,z_2) \ = \
\pi_2 (a_1,\ldots, a_n) \ = \ (c_j)_{j \notin A} \., \]
where the last equality is by the definition of~$A$.
This implies the first claim in \eqref{eq:regrator}, and completes the proof.
\end{proof}

\smallskip

\begin{proof}[Proof of Theorem~\ref{thm:AD-chains}]
	It follows from Theorem~\ref{thm:AD-eq} that the lattice closure of \. $\supp \ts (\fa+\fb+\fc+\fd)$ \.
is isomorphic \ts $\cL_1 \times \cL_2$ \ts for some finite distributive lattices \ts $\cL_1=(\aL_1,\vee',\wedge')$ \ts
and \ts $\cL_2=(\aL_2,\vee'',\wedge'')$. 	Furthermore, there exist \. $\al, \be, \ga, \de>0$,
such that \. $\al\ts\be=\ga \ts \de$, and \eqref{eq:AD-abstract-2} holds.
	
Let $\iota: \aL_1 \times \aL_2 \to \aL$ be the given embedding.
Apply Lemma~\ref{lem:lattice-category}  to $\iota$, to obtain a subset \ts $A\subseteq [n]$.
By the lemma, we can  identify \ts $\cL_1$ \ts and \ts $\cL_2$ \ts with sublattices of \ts
$\bigotimes_{i \in A} \cC_{N_i}$ \ts  and \ts $\bigotimes_{j \notin A} \cC_{N_j}\ts$, respectively.
	
	  We now formally embed $\cL_1$ into the product of chains \ts $\bigotimes_{i \in A} \cC_{N_i}$\ts,
and then  extend the functions \. $\ff_1, \fg_1: \aL_1 \to \rr_{\geq 0}$ \.
to this entire space by setting them to be zero outside the original~$\aL_1$.
	  We apply the same extension to $\cL_2,\ff_2,\fg_2$.
	  The theorem now follows immediately from \eqref{eq:AD-abstract-2}.
\end{proof}

\medskip

\section{FKG equality conditions}\label{s:FKG}

In this section, we prove equality conditions of the FKG inequality (Theorem~\ref{thm:FKG-eq}).
We then state the version for the product of chains (Theorem~\ref{thm:FKG-chain}).

\subsection{Proof of Theorem~\ref{thm:FKG-eq}}\label{sec:proof-FKG-abstract}

The proof of the \ts $\Leftarrow$ \ts direction is clear,
so we only prove the \ts $\Rightarrow$ \ts direction.
By adding positive constants to the functions if necessary,
without loss of generality we can assume that functions \ts
$\ff,\fg$ \ts are strictly positive.
We can then apply Theorem~\ref{thm:AD-eq} to functions
\[ \fa \, := \,  \ff \cdot \ts\mu\ts,  \quad
\fb \, := \,  \fg \cdot  \ts\mu\ts,  \quad
\fc \, := \,  \ff \cdot \fg \cdot  \ts\mu\ts,  \quad
\fd \, := \,  \mu\ts.
\]
Note that these four functions satisfy \eqref{eq:AD-cd} and \eqref{eq:AD-eq}.
Hence there exists finite distributive lattices \. $\cL=(\aL_1,\vee',\wedge')$, \.
$\cL_2=(\aL_2,\vee'',\wedge'')$, such that \. $\cL\simeq \cL_1\times \cL_2\ts,$
 functions \. $\fp_1,\fq_1:\aL_1 \to \rr_{\geq 0}\ts,$
\. $\fp_2,\fq_2:\aL_2 \to \rr_{\geq 0}$\.,
and positive constants \. $\ala, \beb, \gac, \ded>0$, such that \.
$\ala\ts \beb=\gac\ts \ded$, \ts and
\begin{align}
	\label{eq:FKG-1}\ff(x_1,x_2)\.  \mu(x_1,x_2) \ &= \ \ala \.  {\color{blue}\fp_1}(x_1) \. {\color{blue}\fp_2}(x_2), \hskip1.cm\\
	\label{eq:FKG-2}\fg(x_1,x_2) \. \mu(x_1,x_2) \ &= \  \beb \. {\color{red}\fq_1}(x_1) \. {\color{red}\fq_2}(x_2), \hskip1.cm\\
	\label{eq:FKG-no} \ff(x_1,x_2) \.\fg(x_1,x_2) \. \mu(x_1,x_2) \ &= \  \gac \. {\color{blue}\fp_1}(x_1) \. {\color{red}\fq_2}(x_2), \hskip1.cm\\
	\label{eq:FKG-3}\mu(x_1,x_2) \ &= \  \ded \. {\color{red}\fq_1}(x_1) \. {\color{blue}\fp_2}(x_2), \hskip1.cm
\end{align}
for all \. $(x_1,x_2) \in \aL_1\times \aL_2\ts$.
By rescaling  \. $\ff,\fg,\mu$ \. if necessary, without loss of generality, we can assume that
\. $\ala= \beb= \gac = \ded=1$.
Since \ts $f,g,\mu$ \ts are strictly positive, so are  \. $\fp_1,\fp_2,\fq_1,\fq_2\ts$.

We now define  \. $\mu_1, \ff' :\aL_1 \to \rr_{\geq 0}$ \.  and \. $\mu_2,\fg':\aL_2 \to \rr_{\geq 0}$ \. by
\begin{align*}
\mu_1 \ := \  {\color{red}\fq_1}\,, \qquad \mu_2 \ := \ {\color{blue}\fp_2}\,, \qquad
\ff' \ := \  \frac{{\color{blue}\fp_1}}{{\color{red}\fq_1}}\,, \qquad  \fg' \ := \  \frac{{\color{red}\fq_2}}{{\color{blue}\fp_2}}\..
\end{align*}
It then follows from \eqref{eq:FKG-3} that
\begin{align}\label{eq:FKG-4}
	\mu(x_1,x_2) \ = \  \mu_1(x_1) \. \mu_2(x_2) \qquad \text{ for all }  \ (x_1,x_2) \in \aL.
\end{align}
Next, it follows from \eqref{eq:FKG-1}, \eqref{eq:FKG-2} and \eqref{eq:FKG-4} that
\begin{align*}
	\ff({\color{blue}x_1},{\color{red}x_2}) \, = \,  \ff'({\color{blue}x_1})\,, \quad
\fg({\color{blue}x_1},{\color{red}x_2}) \, = \,  \fg'({\color{red}x_2})\,,
 \quad  \text{ for all } \ ({\color{blue}x_1},{\color{red}x_2}) \in \aL_1 \times \aL_2\.,
\end{align*}
as desired. \qed

\smallskip

\subsection{FKG inequality for product of chains}\label{ss:special-FKG}
In applications, the FKG inequality~\eqref{eq:FKG} is often applied to lattices that are direct products of chains.
In  the next theorem we present a version of the equality conditions for the FKG inequality tailored to these specific types of lattices.

\smallskip

 \begin{thm}\label{thm:FKG-chain}
	Let \ts  $\cL=(\aL,\vee,\wedge)$ \ts be a direct product of $n$ chains \. as in \eqref{eq:chains-product}.
	Let \. $\mu,\ff,\fg$ \. be as in Theorem~\ref{thm:FKG}.
	Then \eqref{eq:FKG} is an equality, i.e.\
\ \underline{\em if and only if}  \
there exist a subset \ts $A \subseteq [n]$, lattices
	\[   \cL_1 \, = \, (\aL_1,\vee,\wedge) \ := \  \bigotimes_{i \in A} \cC_{N_i}\,,
\qquad  \cL_2  \, = \, (\aL_1,\vee,\wedge) \ := \  \bigotimes_{j \notin A} \cC_{N_j}\,,
\]	
functions \. $\ff':\aLr_1 \to \rr$, \ $\fg':\aLr_2 \to \rr$, such that
	 \begin{align}
	\ff({\color{blue}x_1},{\color{red}x_2}) \ = \  \ff'({\color{blue}x_1})\ts,
\quad \fg({\color{blue}x_1},{\color{red}x_2}) \ = \  \fg'({\color{red}x_2})
\quad \text{ for all } \quad ({\color{blue}x_1},{\color{red}x_2}) \in \supp \mu\ts,
\end{align}
	and  measures \. $\mu_1:\aLr_1 \to \rr_{\geq 0}$\ts, \ $\mu_2:\aLr_2 \to \rr_{\geq 0}$\ts,  such that
	\begin{align}
		&\mu(x_1,x_2) \ = \  \mu_1(x_1) \. \mu_2(x_2) \qquad \forall \ (x_1,x_2) \in \aLr.
	\end{align}
\end{thm}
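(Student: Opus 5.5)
The plan is to deduce Theorem~\ref{thm:FKG-chain} from Theorem~\ref{thm:FKG-eq} together with Lemma~\ref{lem:lattice-category}, following the same pattern by which Theorem~\ref{thm:AD-chains} was derived from Theorem~\ref{thm:AD-eq}.

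\textbf{Sufficiency.} The $\Leftarrow$ direction is a direct computation. Substitute $\mu=\mu_1\mu_2$, $\ff=\ff'(x_1)$ and $\fg=\fg'(x_2)$ into \eqref{eq:FKG}. Every summation then splits as a product over $\aL_1$ and $\aL_2$, and both sides equal
\[
\sum \ff'\mu_1\cdot\sum\mu_1\cdot\sum\fg'\mu_2\cdot\sum\mu_2\ts.
\]
Only values on $\supp\mu$ enter the sums, so requiring the factorization of $\ff,\fg$ only on $\supp\mu$ costs nothing.

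\textbf{Necessity.} For $\Rightarrow$, first use Remark~\ref{rem:intro-FKG}: $\cL':=\supp\mu$ is a distributive sublattice of $\cL$, and \eqref{eq:FKG} is unchanged when restricted to $\cL'$. The functions $\ff,\fg$ stay increasing there, and $\mu$ is strictly positive on $\cL'$. Theorem~\ref{thm:FKG-eq} then gives
\[
\cL'\simeq \cL_1'\times\cL_2',
\]
with $\mu=\mu_1'\mu_2'$, $\ff$ depending only on the first factor and $\fg$ only on the second. Next apply Lemma~\ref{lem:lattice-category} to the inclusion $\iota:\aL_1'\times\aL_2'\to\aL$. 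This produces $A\subseteq[n]$ and copies $\aL_1^\ast\subseteq\bigotimes_{i\in A}\cC_{N_i}$, $\aL_2^\ast\subseteq\bigotimes_{j\notin A}\cC_{N_j}$.

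\textbf{Main obstacle.} The hard step is to show that $\iota$ is literally the coordinate product, i.e.
\[
\iota(x_1,x_2)=\bigl(\pi_1\iota(x_1,z_2),\ \pi_2\iota(z_1,x_2)\bigr).
\]
The lemma only describes the two "axes" $\aL_1'\times\{z_2\}$ and $\{z_1\}\times\aL_2'$. To get the full statement, write
\[
(x_1,x_2)=(x_1,z_2)\vee(z_1,x_2).
\]
Since $\iota$ preserves joins and the join in a product of chains is coordinatewise, the $A$-coordinates of the image are $\pi_1\iota(x_1,z_2)\vee(c_i)_{i\in A}=\pi_1\iota(x_1,z_2)$, because $c$ is the minimum. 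The same argument handles the complementary coordinates. Hence $\supp\mu=\aL_1^\ast\times\aL_2^\ast$ as subsets of $\bigotimes_{i\in A}\cC_{N_i}\times\bigotimes_{j\notin A}\cC_{N_j}$.

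\textbf{Extension to all of $\aL$.} Transport $\mu_1',\mu_2',\ff',\fg'$ to $\aL_1^\ast,\aL_2^\ast$. Extend $\mu_1,\mu_2$ by zero outside these sets, so that $\mu=\mu_1\mu_2$ holds on all of $\aL$, including the points where $\mu$ vanishes. Extend $\ff',\fg'$ arbitrarily, for instance by zero; the statement only requires $\ff,\fg$ to factor on $\supp\mu$. This gives the claimed description.
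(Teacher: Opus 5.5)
Your proposal is correct and is exactly the alternative route the paper names and then omits: restrict to the sublattice $\supp\mu$, apply Theorem~\ref{thm:FKG-eq}, then use Lemma~\ref{lem:lattice-category} and extend by zero, mirroring how Theorem~\ref{thm:AD-chains} is derived from Theorem~\ref{thm:AD-eq}. Your join argument showing that $\iota$ is the coordinatewise product map spells out the identification step that the paper's proof of Theorem~\ref{thm:AD-chains} leaves implicit. The only addition needed is the trivial case $\mu\equiv 0$: there $\supp\mu$ is empty and Theorem~\ref{thm:FKG-eq} does not apply, but the conclusion holds with $\mu_1\equiv 0$.
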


\smallskip

The theorem is a direct consequence of Theorem~\ref{thm:AD-chains}.  Alternatively,
it can be derived from Theorem~\ref{thm:FKG-eq} and Lemma~\ref{lem:lattice-category}
in the same way Theorem~\ref{thm:AD-chains} is derived from Theorem~\ref{thm:AD-eq}.
We omit both  proofs.

\medskip

 \section{LPP equality conditions}\label{sec:LPP}


\subsection{Setup} \label{ss:LPP-setup}
The proof of the \ts $\Leftarrow$ \ts direction is clear, so we present
only the proof of the \ts $\Rightarrow$ \ts direction.  In fact, we
prove the following much stronger result.

\smallskip

\begin{thm}\label{thm:LPP-eq-eval}
		Let \ts ${\color{blue}\la}/{\color{blue}\mu}$ \ts and \ts ${\color{red}\nu}/{\color{red}\rho}$ \ts be skew shapes.
Assume that \. $(\blue{\la}\vee \red{\nu})/(\blue{\mu} \wedge \red{\rho})$ \.  is  connected.
Let \. $N\geq \max\{\ell(\lambda),\ell(\nu)\}+1$ \.
and let \. $\bz=(z_1,\ldots, z_N) \in \nn_{> 0}^{N}$ \. be a positive vector.
Then:
\begin{equation}\label{eq:LPP-z}
	\ s_{{\color{blue}\la}/{\color{blue}\mu}}(\bz)  \. \cdot \. s_{{\color{red}\nu}/{\color{red}\rho}}(\bz)
	\ = \ s_{{\color{blue}\la}/{\color{blue}\mu} \. \vee \. {\color{red}\nu}/{\color{red}\rho}}(\bz)
\. \cdot \. s_{{\color{blue}\la}/{\color{blue}\mu} \. \wedge \. {\color{red}\nu}/{\color{red}\rho}}(\bz)
\end{equation}
\underline{\em if and only if} \  \eqref{eq:LPP-eq-comb} holds.
\end{thm}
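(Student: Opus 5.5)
Only the $\Rightarrow$ direction needs work: if \eqref{eq:LPP-eq-comb} holds, the two sides of \eqref{eq:LPP-z} are literally the same product. The plan is to recast the evaluated LPP inequality as an instance of \eqref{eq:AD} on a product of chains, and then read off \eqref{eq:LPP-eq-comb} from the cross-factoring in Theorem~\ref{thm:AD-chains}.

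\textbf{Encoding.} Following the approach of \cite{CP-multi}, I would encode semistandard tableaux of a skew shape $\la/\mu$ with entries $\le N$ as chains of partitions
$$\mu=\al^{(0)}\subseteq \al^{(1)}\subseteq\cdots\subseteq \al^{(N)}=\la,$$
with consecutive differences horizontal strips. This is the same as recording, for each row $r$ and each level $t$, the position $\al^{(t)}_r$. The resulting arrays $(\al^{(t)}_r)$ are points of a product of chains $\cL$, indexed by pairs (row, level), with meet and join taken coordinatewise. Let $\fa$ be the weight $\prod_t z_t^{|\al^{(t)}/\al^{(t-1)}|}$ times the indicator that the array is a valid tableau of shape $\la/\mu$. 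Define $\fb,\fc,\fd$ the same way for $\nu/\rho$, for $\la/\mu\vee\nu/\rho$, and for $\la/\mu\wedge\nu/\rho$. The key facts are:
\begin{itemize}
\item valid arrays are closed under coordinatewise max and min (interlacing is preserved);
\item the weights multiply correctly, because the exponent of $z_t$ is a difference of row sums, which is additive under $\max+\min$.
\end{itemize}
Hence \eqref{eq:AD-cd} holds, in fact with equality on supports. Summing gives \eqref{eq:LPP-z} as the equality case \eqref{eq:AD-eq}. Since $\bz>0$, the supports are exactly the sets of tableaux.

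\textbf{Applying Theorem~\ref{thm:AD-chains}.} After restricting to the lattice closure of the supports, I get a set $A$ of coordinates (row, level) and a cross-factorization. The cross-factoring formulas force
$$\supp\fc=\pi_A(\supp\fa)\times\pi_{A^c}(\supp\fb),\qquad \supp\fd=\pi_A(\supp\fb)\times\pi_{A^c}(\supp\fa).$$
Take the minimal and maximal arrays of each support, which are the two "extreme" tableaux with entries $\le N$. The coordinates at level $0$ record $\mu,\rho$, and those at level $N$ record $\la,\nu$. I then need $N>\max\ell$: it guarantees that each row actually has freedom at the intermediate levels, so that distinct coordinates of the same row, and adjacent rows, are tied together within the support.

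\textbf{Main obstacle: combinatorial rigidity.} The hard step is to show that $A$ or $A^c$ contains every coordinate on which $\la/\mu$ and $\nu/\rho$ differ, and in a consistent direction. The argument should go as follows. Suppose the factorization separates a coordinate $(r,t)\in A$ from a neighboring coordinate $(r',t')\notin A$, meaning the same row at an adjacent level or an adjacent row. Interlacing constraints couple these two coordinates. Connectivity of $(\la\vee\nu)/(\mu\wedge\rho)$ ensures that the coupling is nontrivial, i.e.\ some tableau shows the constraint is active. A product support cannot contain such a coupled pair unless one of the two coordinates is constant on both supports. Propagating this along the connected shape, every row in which the shapes differ sits entirely on one side of the partition. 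It then follows that $\fc$'s shape $\la\vee\nu$ agrees with one of $\la,\nu$ and $\mu\vee\rho$ agrees with the same one, which is \eqref{eq:LPP-eq-comb}.

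I expect this propagation to be the delicate part, especially at the boundaries where a row of the shape is empty at some level. If the direct approach stalls, I would instead compare the extreme tableaux: the minimal (row-wise greedy) tableau of each shape has a forced, explicit pattern of values, and one can check that the mixed array prescribed by the cross-factoring is a valid tableau only under \eqref{eq:LPP-eq-comb}.
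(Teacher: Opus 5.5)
\textbf{Your reduction is fine, but the proof stops before its main step.} Gelfand--Tsetlin-type arrays $(\al^{(t)}_r)$ are closed under coordinatewise $\max/\min$, and the weights multiply correctly under $(x,y)\mapsto(x\vee y,x\wedge y)$. So \eqref{eq:AD-cd} holds, \eqref{eq:LPP-z} is an instance of \eqref{eq:AD-eq}, and Theorem~\ref{thm:AD-chains} applies. The paper does the same, except that it indexes the chains by the cells of $(\la\vee\nu)/(\mu\wedge\rho)$, each filled with a value in $[N]\cup\{\pm\infty\}$. What remains is to turn an unknown split $A$ into \eqref{eq:LPP-eq-comb}, and that is the whole content of the proof.

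\textbf{The rigidity principle you propose is false.} A constraint $x_u\le x_v$ with $u\in A$, $v\notin A$ is compatible with a product support as soon as $\max x_u\le\min x_v$ on it. Neither coordinate needs to be constant, and the constraint may even be tight. For example, take $\la/\mu=(2,1)/(1)$. Its arrays split as a product of the row-$1$ and row-$2$ coordinates, both non-constant, even though $\al^{(t)}_2\le\al^{(t-1)}_1$ couples them and is attained. With $\nu/\rho=(2,2)/\varnothing$, the shape $(\la\vee\nu)/(\mu\wedge\rho)$ is connected. So connectivity of the union says nothing about connectivity of $\la/\mu$ or $\nu/\rho$ at a given row boundary, and cannot be applied boundary by boundary.

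Likewise, $N\ge\ell+1$ does not give freedom at all intermediate levels. For $\la=(2,2)$, $\mu=\varnothing$, $N=3$, one always has $\al^{(2)}_1=2$ and $\al^{(1)}_2=0$, and such frozen coordinates are not tied to their neighbours at all. Your fallback of checking a ``mixed extreme tableau'' presupposes knowing $A$, which is exactly what has to be determined.

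\textbf{What is missing is the paper's row analysis.} In the cell encoding the paper proves four facts.
\begin{enumerate}
\item[(a)] Two adjacent cells lying in a common one of the four shapes are on the same side of $A$. The proof mixes the tableau with row $k$ filled by $k$ and the one with row $k$ filled by $k+1$; this is where $N\ge\ell+1$ is used. The mixed filling lies in the support by the product structure, yet violates the tableau conditions across the boundary of $A$.
\item[(b)] For a cell in $A$, membership in $\la/\mu$ is equivalent to membership in $\la/\mu\wedge\nu/\rho$, and membership in $\nu/\rho$ to membership in $\la/\mu\vee\nu/\rho$. Outside $A$ the roles are swapped.
\item[(c)] Hence every row satisfies \eqref{eq:R1} or \eqref{eq:R2}. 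A row with $\la_i<\nu_i$ and $\rho_i<\mu_i$ would place two cells of one row of $\nu/\rho$ on opposite sides of $A$.
\item[(d)] A row of strict type \eqref{eq:R1} lies entirely in $A$, and one of strict type \eqref{eq:R2} lies entirely outside $A$.
\end{enumerate}

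The decisive step is a careful choice of boundary. Take rows $i<j$ of opposite strict types with every row between them of both types. Only at such a boundary does connectivity of the union, combined with \eqref{eq:R1} at $i$ and \eqref{eq:R2} at $i+1$, give $\mu_i<\la_{i+1}$, i.e.\ connectivity of $\la/\mu$ itself there. Then (a) propagates membership in $A$ through $\la/\mu$ down to row $j$, contradicting (d).

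Your sketch contains neither the dichotomy (c)--(d) nor this choice of boundary. In your (row, level) coordinates, even the analogue of (a) would have to be re-derived.
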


\smallskip

Theorem~\ref{thm:LPP-eq-eval} immediately implies Theorem~\ref{thm:LPP-eq},
since \eqref{eq:LPP-z} is an evaluation of the equality case of \eqref{eq:LPP}.
In particular, for \ts $\bz = (1^n)$, Theorem~\ref{thm:LPP-eq-eval} gives
Theorem~\ref{thm:SSYT-eq}.  The proof of Theorem~\ref{thm:LPP-eq-eval}
occupies the rest of the section.

 \smallskip

\subsection{Applying AD equality conditions} \label{ss:LPP-AD-eq}
Let \. $N \geq \max\{\ell({\la}),\ell({\nu})\}+1$.
Let \ts $\cL=(\aL,\vee,\wedge)$ \ts be the direct product of chains $\cC_{ij}$
on the elements \ts $[N] \cup \{-\infty,\infty\}$, where the product is over \.
$(i,j) \in (\la \vee {\nu})/({\mu} \wedge {\rho})$. In particular, we have \.
$|\aL| = (N+2)^K$, where \ts $K = |\la\vee\nu|-|\mu\wedge\rho|$.
Here we assume that elements in \ts $\cC_{(ij)}$ have the natural total order: \ts $-\infty \prec 1 \prec  \ldots \prec  N \prec  \infty$.
The elements \ts $T\in \aL$ \ts are functions \. $T: (\la \vee \nu)/(\mu \wedge \rho) \to [N] \cup \{-\infty,\infty\}$,
and can be viewed as fillings of the shape \ts $(\la \vee \nu)/(\mu \wedge \rho)$.

For a skew shape \ts $\pi/\tau$, where \. ${\mu} \wedge {\rho} \subseteq \tau \subseteq \pi \subseteq {\lambda} \vee {\nu}$\.,
let \. $\varphi^{\pi/\tau}:\aL \to \Rb_{\geq 0}$ \.  be the indicator function on elements  \. $T:({\la} \vee {\nu})/({\mu} \wedge {\rho}) \to [N]$ \.  satisfying all these properties:
\begin{equation*}
\left\{
\aligned \,
	T(i,j) \, =  \, -\infty \qquad &\text{if } \   (i,j) \in \tau\ts,\\
	T(i,j) \, = \, \infty \qquad &\text{if } \ (i,j) \in {(\la \vee \nu)/\pi}\ts,\\
		T(i,j) \,  \leq  \, T(i,j+1) \qquad &\text{if } \ (i,j), \. (i,j+1) \in {\la} \vee {\nu}\ts, \\
T(i,j)+1 \, \leq \, T(i+1,j) \qquad &\text{if } \ (i,j), \. (i+1,j) \in {\la} \vee {\nu}\ts,
\endaligned\right.
\end{equation*}
where we assume that \ts $-\infty+1=-\infty$.  
In other words, we have \. $\varphi^{\pi/\tau}(T)=1$ \. for all functions \. $T^\ast\in \SSYT(\pi/\tau,N)$,
where $T^\ast$ is obtained from~$T$ by removing all boxes with entries in \ts $\{-\infty,\infty\}$,
and we have \. $\varphi^{\pi/\tau}=0$ \. otherwise.

Let \. $\fa,\fb,\fc,\fd:\aL \to \rr_{\geq 0}$ \. be the functions given by
\begin{alignat*}{3}
	\fa(T) \ &:= \ \varphi^{\la/\mu}(T) \prod_{i\in [N]}z_{T(i)}\ts, \qquad
&&	\fb(T) \ &&:= \ \varphi^{\nu/\rho}(T) \prod_{i\in [N]}z_{T(i)}\ts, \\
	\fc(T) \ &:= \ \varphi^{(\la/\mu) \wedge (\nu/\rho)}(T) \prod_{i\in [N]}z_{T(i)}\ts, \qquad
&&	\fd(T) \ &&:= \ \varphi^{(\la/\mu) \vee (\nu/\rho)}(T) \prod_{i\in [N]}z_{T(i)}\ts.
\end{alignat*}
In was shown in \cite[\S8]{CP-multi}, that these four functions satisfy \eqref{eq:AD-cd}.

Now, it follows from \eqref{eq:LPP-z} that
\[ 	
s_{{\la}/{\mu}}(\bz) \. \cdot \. s_{\nu/{\rho}}(\bz)
\ = \ s_{{\la}/{\mu} \. \wedge  \. {\nu}/{\rho}}(\bz)  \. \cdot \.
s_{{\la}/{\mu} \. \vee \. {\nu}/{\rho}}(\bz)\ts.
\]
On the other hand,
it follows from the choice of \. $\fa,\fb,\fc,\fd$ \. that
\begin{alignat*}{4}
 	s_{{\la}/{\mu}}(\bz) \ &= \  	\sum_{T \in \aL} \. \fa(T), \qquad \quad \ \
 	s_{{\nu}/{\rho}}(\bz) \ &= \  	\sum_{T \in \aL} \. \fb(T),\\
 	s_{{\la}/{\mu} \wedge \nu/\rho}(\bz) \ &= \  	\sum_{T \in \aL} \. \fc(T), \qquad
 	s_{{\la}/{\mu} \vee \nu/\rho}(\bz) \ &= \  	\sum_{T \in \aL} \. \fd(T).
\end{alignat*}
We conclude that  \. $\fa,\fb,\fc,\fd$ \.  satisfy \eqref{eq:AD-eq}.
It then follows from Theorem~\ref{thm:AD-chains} that
there exist a subset \. $A \subseteq (\la \vee {\nu})/({\mu} \wedge {\rho})$, the
corresponding sublattices \. $\cL_1=(\aL_1, \vee,\wedge)$, \. $\cL_2=(\aL_2, \vee,\wedge)$,
given by
\[
\cL_1 \ := \  \bigotimes_{(i,j) \in A} \cC_{(i,j)}, \qquad
\cL_2 \ := \  \bigotimes_{(i,j) \notin A} \cC_{(i,j)},
\]
 functions
\. $\ff_1, \fg_1: \aL_1 \to \rr_{\geq 0}$\.,   	\. $\ff_2, \fg_2: \aL_2 \to \rr_{\geq 0}$ \. and positive constants \. $\ala, \beb, \gac, \ded>0$, such that \. $\ala\ts\beb=\gac\ts\ded$ \. and
\begin{align}
\label{eq:umi-1}	\varphi^{\la/\mu}(x_1,x_2) \ &= \ \ala \.  {\color{blue}\ff_1}(x_1) \. {\color{blue}\ff_2}(x_2)\ts,\\
\label{eq:umi-2}	\varphi^{\nu/\rho}(x_1,x_2) \ &= \  \beb \. {\color{red}\fg_1}(x_1) \. {\color{red}\fg_2}(x_2)\ts,\\
\label{eq:umi-3}	\varphi^{\la/\mu \wedge \nu/\rho}(x_1,x_2) \ &= \  \gac \. {\color{blue}\ff_1}(x_1) \. {\color{red}\fg_2}(x_2)\ts,\\
\label{eq:umi-4}	\varphi^{\la/\mu \vee \nu/\rho}(x_1,x_2) \ &= \  \ded \. {\color{red}\fg_1}(x_1) \. {\color{blue}\ff_2}(x_2)\ts,
\end{align}
for all \.$(x_1,x_2) \in \aL_1 \times \aL_2$\..  These equality conditions are
quite far from the desired two possibilities in \eqref{eq:LPP-eq-comb}.  For the
rest of this section we give a detailed analysis bridging the gap.

\smallskip

\subsection{Analysis of equality cases} \label{ss:LPP-under}
The following lemmas that will be used repeatedly throughout the proof.

\smallskip

\begin{lemma}\label{lem:LPP-adjacent}
	Let $(i,j)$ and $(i',j')$ be adjacent squares such that
	\begin{alignat*}{3}
	  &\text{ $(i,j), (i',j') \. \in \. \la/\mu$  \qquad } &&\text{or} \qquad  &&\text{$(i,j), (i',j') \. \in \.  \nu/\rho$ \qquad or }\\
	  &\text{ $(i,j), (i',j') \. \in \.  \la/\mu \vee \nu/\rho$}  \qquad && \text{or} \qquad      &&(i,j), (i',j') \. \in \.  \la/\mu \wedge \nu/\rho\ts.
	\end{alignat*}	
	Then:
	\[   \text{either \quad $(i,j), (i',j') \. \in \.  A$  \quad or \quad  $(i,j), (i',j') \. \notin \. A$}\ts.     \]
\end{lemma}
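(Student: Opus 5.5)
The plan is to exploit the product structure of the support forced by \eqref{eq:umi-1}--\eqref{eq:umi-4}, and then construct two semistandard fillings whose ``mixture'' across the cut $A$ is not semistandard. By symmetry of the four cases, I would treat only $(i,j),(i',j')\in\la/\mu$ and use \eqref{eq:umi-1}; the other three shapes are handled identically using \eqref{eq:umi-2}, \eqref{eq:umi-3}, \eqref{eq:umi-4} in turn. Suppose for contradiction that exactly one of the two squares lies in~$A$.

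The first step is the key consequence of \eqref{eq:umi-1}. Suppose $T=(x_1,x_2)$ and $T'=(x_1',x_2')$ both lie in $\supp\varphi^{\la/\mu}$. Then $\ff_1(x_1)\ne0$ and $\ff_2(x_2')\ne0$, so the mixed filling $(x_1,x_2')$ also lies in $\supp\varphi^{\la/\mu}$. In words, the support is a product set with respect to the coordinate split $A$ versus its complement. Here the weight factor $\prod z_{T(\cdot)}$ plays no role, since it is strictly positive by the positivity of~$\bz$.

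Next I will exhibit two explicit elements of the support. Let $T_0$ be the filling with $T_0(r,c)=r$ on $\la/\mu$, value $-\infty$ on $\mu$ and value $\infty$ on $(\la\vee\nu)/\la$. Let $T_1$ be the same filling but with $T_1(r,c)=r+1$ on $\la/\mu$. Both are valid: rows are constant, so they weakly increase, and columns strictly increase. The entries are at most $\ell(\la)+1\le N$, and this is exactly where the hypothesis $N\ge\max\{\ell(\la),\ell(\nu)\}+1$ is used. The conditions involving $\pm\infty$ cells are also satisfied, using the convention $-\infty+1=-\infty$. Moreover, $T_0$ and $T_1$ agree outside $\la/\mu$.

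Finally, order the two adjacent squares so that $(i',j')$ is either $(i,j+1)$ or $(i+1,j)$. If $(i,j)\in A$ and $(i',j')\notin A$, I form the mixed filling taking the $A$-coordinates from $T_1$ and the remaining coordinates from $T_0$. If instead $(i',j')\in A$ and $(i,j)\notin A$, I take the $A$-coordinates from $T_0$ and the rest from $T_1$. By the product property, the mixed filling lies in $\supp\varphi^{\la/\mu}$.

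However, in the first case the mixed filling has $i+1$ at $(i,j)$, and at $(i',j')$ it has $i$ (horizontal neighbour) or $i+1$ (vertical neighbour). This violates weak increase along the row, respectively strict increase down the column. In the second case it has $i$ at $(i,j)$, and at $(i',j')$ it has $i+1$ (horizontal) or $i+2$ (vertical), which is consistent. So in that case I swap the roles and instead take the $A$-coordinates from $T_1$ and the rest from $T_0$. This gives $i$ at $(i,j)\notin A$ and $i+1$ or $i+2$ at $(i',j')\in A$, which is again fine. Thus the roles need care: the correct rule is to put the larger filling $T_1$ on whichever of the two squares comes first in the order.

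Concretely, the violation always comes from giving the earlier square (the left or upper one) the value from $T_1$ and the later square the value from $T_0$. Since the two squares lie on opposite sides of the cut, one of the two possible mixtures does exactly this, and that mixture is not semistandard. This contradicts the product property, and the lemma follows. The only real subtlety I expect is this bookkeeping of which mixture to take, together with checking that the $\pm\infty$ boundary cells remain consistent; the latter holds because $T_0$ and $T_1$ coincide there.
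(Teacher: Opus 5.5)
Your proposal is essentially the paper's proof. Both use \eqref{eq:umi-1} to see that $\supp\varphi^{\la/\mu}$ is a product set across the cut $A$. Both take the two row-constant fillings with values $k$ and $k+1$ on $\la/\mu$, and mix them so that the adjacent pair violates semistandardness. The paper writes out only the case $(i,j)\in A$, $(i,j+1)\notin A$ and calls the reverse case analogous.

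Your middle paragraph contains a bookkeeping error that you should delete. In the case $(i,j)\notin A$, $(i',j')\in A$, your first assignment was already correct: taking the $A$-coordinates from $T_0$ and the rest from $T_1$ puts $i+1$ at $(i,j)$ and $i$ or $i+1$ at $(i',j')$, which is a violation. Your recomputation for that case is wrong. The ``swapped'' mixture you then adopt is semistandard at that pair, so it gives no contradiction. Keep only the rule from your last paragraph: the side of the cut containing the earlier square gets $T_1$, and the other side gets $T_0$. That rule handles both orientations and both cut directions correctly.
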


\begin{proof}
	Without loss of generality, we assume that \. $(i',j')=(i,j+1)$, as the  case \. $(i',j')=(i+1,j)$ \. is analogous.
	Similarly, we assume that both \ts $(i,j)$ \ts and \ts $(i,j+1)$ \ts are in \ts $\la/\mu$,
as the proof of the other cases are also analogous.
	Suppose the claim in the lemma is false.  Without loss of generality,
we assume that \. $(i,j)\in A$, \ts $(i,j+1)\notin A$, as the reverse case is analogous.
	
	Let \. $T:({\la} \vee {\nu})/({\mu} \wedge {\rho}) \to [N] \cup \{-\infty,\infty\}$ \.  be the function given by
	\begin{align}\label{eq:T-1}
		T(k,\ell) \ := \
		\begin{cases}
			k &\text{ if } \ (k,\ell) \in \la/\mu\ts,\\
			\infty  & \text{ if } \ (k,\ell) \in (\la \vee \nu)/\la\ts,\\
			-\infty & \text{ if } \ (k,\ell) \in \mu\ts.
		\end{cases}
	\end{align}	Note that $T$ corresponds to a SSYT of shape $\la/\mu$, and hence $\varphi^{\la/\mu}(T) >0$ by construction.

		Let \. $T':({\la} \vee {\nu})/({\mu} \wedge {\rho}) \to [N] \cup \{-\infty,\infty\}$ \.  be the function given by
	\begin{align}\label{eq:T'-1}
		T'(k,\ell) \ := \
		\begin{cases}
			k+1 &\text{ if } \ (k,\ell) \in \la/\mu\ts,\\
			\infty & \text{ if } \ (k,\ell) \in (\la \vee \nu)/\la\ts,\\
			-\infty & \text{ if } \ (k,\ell) \in \mu\ts,
		\end{cases}
	\end{align}
	where  the image of $T$ is contained in $[N] \cup \{0,\infty\}$ because $k+1 \leq \ell(\lambda)+1 \leq N$.
	Note  that \ts $\varphi^{\la/\mu}(T) >0$, by construction.
	Finally, let
			 \. $T'':({\la} \vee {\nu})/({\mu} \wedge {\rho}) \to [N] \cup \{-\infty,\infty\}$ \.  be the function given by
	\begin{align*}
		T''(k,\ell) \ := \
		\begin{cases}
			k+1 &\text{ if } \ (k,\ell) \in (\la/\mu) \cap A\ts,\\
			k &\text{ if } \ (k,\ell) \in (\la/\mu)\ts, \. (k,\ell) \notin A\ts,\\
			\infty & \text{ if } \ (k,\ell) \in (\la \vee \nu)/\la\ts,\\
			-\infty  & \text{ if } \ (k,\ell) \in \mu\ts.
		\end{cases}
	\end{align*}
	On the one hand, since \. $\varphi^{\la/\mu}(T), \varphi^{\la/\mu}(T')>0$,
it follows from the decomposition of \ts $\varphi^{\la/\mu}$ \ts
in \eqref{eq:umi-1} that \. $\varphi^{\la/\mu}(T'')> 0$.  On the other hand, by the definition of $T''$, we have
	\[  T''(i,j) \ = \ i+1  \ > \ i \ = \  T''(i,j+1)\ts.   \]
This inequality gives \. $\varphi^{\la/\mu}(T'')= 0$ \. by the definition of~$\varphi^{\la/\mu}$, a contradiction.
\end{proof}

\smallskip

\begin{lemma}\label{lem:LPP-adjacent-2}
	Suppose that \. $(i,j) \in A$.
	Then:
$$
(i,j) \in\la/\mu  \ \. \Leftrightarrow \ \. (i,j) \in  \la/\mu \ts \wedge \ts\nu/\rho\., \qquad
(i,j) \in \nu/\rho  \ \.  \Leftrightarrow \ \. (i,j)\in   \la/\mu\ts \vee \ts\nu/\rho\ts.
$$
Similarly, suppose that \. $(i,j) \notin A$.  Then:
$$
(i,j) \in  \la/\mu  \ \. \Leftrightarrow \ \.  (i,j) \in  \la/\mu \ts \vee \ts \nu/\rho\., \qquad
(i,j) \in \nu/\rho  \ \. \Leftrightarrow \ \. (i,j) \in  \la/\mu \ts \wedge \ts \nu/\rho\ts.
$$
\end{lemma}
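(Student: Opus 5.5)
Fix a square $(i,j)\in A$ lying in $\la/\mu$; I will show it lies in $\la/\mu\wedge\nu/\rho$. The other three implications (and their converses) follow by the same argument after permuting the roles of $\fa,\fb,\fc,\fd$ according to which factors they share. The main tool is the cross-factorization \eqref{eq:umi-1}--\eqref{eq:umi-4}: $\varphi^{\la/\mu}$ and $\varphi^{\la/\mu\wedge\nu/\rho}$ share the factor $\ff_1$ on the $A$-coordinates, while $\varphi^{\nu/\rho}$ and $\varphi^{\la/\mu\vee\nu/\rho}$ share $\fg_1$. The supports of these indicators constrain individual squares. Square $(i,j)$ has entry $-\infty$ in every $T$ with $\varphi^{\pi/\tau}(T)>0$ if $(i,j)\in\tau$, entry $\infty$ if $(i,j)\notin\pi$, and a finite entry if $(i,j)\in\pi/\tau$.

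Concretely, take $T$ to be the filling \eqref{eq:T-1} for $\la/\mu$, so $\varphi^{\la/\mu}(T)>0$. Write $T=(x_1,x_2)$ with $x_1=T|_A$. By \eqref{eq:umi-1}, $\ff_1(x_1)>0$, and $x_1(i,j)$ is finite.

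Now choose any $S=(y_1,y_2)$ with $\varphi^{\la/\mu\vee\nu/\rho}(S)>0$; such an $S$ exists since $N>\ell(\la\vee\nu)$. Then $\fg_1(y_1)>0$, and also $\ff_2(y_2)>0$ by \eqref{eq:umi-4}. Pick any $y'$ with $\varphi^{\nu/\rho}(y')>0$, which gives $\fg_2(y'_2)>0$. Then \eqref{eq:umi-3} gives $\varphi^{\la/\mu\wedge\nu/\rho}(x_1,y'_2)=\gac\,\ff_1(x_1)\fg_2(y'_2)>0$. So $(x_1,y'_2)$ is a valid filling for the shape $\la/\mu\wedge\nu/\rho$. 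Its entry at $(i,j)$ is $x_1(i,j)$, which is finite, so $(i,j)\in\la/\mu\wedge\nu/\rho$.

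For the converse, suppose $(i,j)\in A$ and $(i,j)\in\la/\mu\wedge\nu/\rho$. Take a positive filling for the meet shape, whose $A$-part $u_1$ has $\ff_1(u_1)>0$ and a finite entry at $(i,j)$. Combine $u_1$ with an $A^c$-part $w_2$ satisfying $\ff_2(w_2)>0$; such a $w_2$ exists because $\fa\not\equiv0$. Then $\varphi^{\la/\mu}(u_1,w_2)>0$, so $(i,j)\in\la/\mu$. The $\nu/\rho$ versus $\vee$ statement uses the $\fg_1$ pairing in \eqref{eq:umi-2} and \eqref{eq:umi-4} in the same way. For $(i,j)\notin A$ the relevant shared factors are $\ff_2$ (linking $\la/\mu$ with $\vee$) and $\fg_2$ (linking $\nu/\rho$ with $\wedge$), and the argument is identical.

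The only delicate point is ensuring that every needed function is not identically zero, which requires each of the four shapes to admit an SSYT with entries in $[N]$. This holds because $N\ge \max\{\ell(\la),\ell(\nu)\}+1$, and the row-index fillings \eqref{eq:T-1} work for any skew shape inside $\la\vee\nu$. Once that is in place, membership of $(i,j)$ in a shape is exactly whether its entry is finite, and this is determined by the $A$-factor alone. So the argument is bookkeeping and needs no further case analysis.
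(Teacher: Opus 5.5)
Your proposal is correct and follows essentially the same argument as the paper. You use the shared factor $\ff_1$ in \eqref{eq:umi-1} and \eqref{eq:umi-3} to transplant the $A$-part of the row-index filling \eqref{eq:T-1} into a positive filling of the meet shape, then read off membership from the finiteness of the entry at $(i,j)$, and you also make explicit the choice of an $A^c$-part $y_2'$ with $\fg_2(y_2')>0$, which the paper leaves implicit (your detour through a positive filling of $\la/\mu\vee\nu/\rho$ is harmless but not needed).
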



\begin{proof}
Without loss of generality, we can assume that  \. $(i,j) \in A$.
We will only show the proof of  the implication
\[  (i,j) \in  \la/\mu  \ \.   \Rightarrow \ \.   (i,j) \in \la/\mu  \ts\wedge \ts \nu/\rho\ts,
\]
as the proof of the other cases are analogous.

 Let \. $T:({\la} \vee {\nu})/({\mu} \wedge {\rho}) \to [N] \cup \{-\infty,\infty\}$ \. 
 be the function in \eqref{eq:T-1}. Note that  we have
\[   \varphi^{\la/\mu}(T) \ > \ 0 \quad \text{ and } \quad  T(i,j) \ = \  i  \in \{1,\ldots N\}. \]
Since \. $(i,j) \in A$, it follows from \eqref{eq:umi-1} and \eqref{eq:umi-3} that
that there exists \. $S:({\la} \vee {\nu})/({\mu} \wedge {\rho}) \to [N] \cup \{-\infty,\infty\}$ \.
such that
\[   \varphi^{\la/\mu \wedge \nu/\rho}(S) \, > \, 0 \quad \text{ and } \quad   S(i,j) \, = \,  i \. \in \. \{1,\ldots N\}. \]
It then follows from the definition of \. $\varphi^{\la/\mu \ts\wedge \ts\nu/\rho}$, that \.
$ (i,j) \in \la/\mu  \ts \wedge \ts \nu/\rho$, as desired.
\end{proof}

\smallskip

\begin{lemma}\label{lem:R1R2}
For any $i\geq 1$,  at least one of the following conditions hold:
\begin{align}
\tag{R1} \label{eq:R1}	&\la_i \.\leq \. \nu_i \quad \text{ and } \quad \mu_i \. \leq \. \rho_i\., \quad \ \underline{\text{or}}\\
\tag{R2} \label{eq:R2}	&\nu_i \. \leq \. \la_i \quad \text{ and } \quad \rho_i \. \leq \. \mu_i\..
\end{align}
\end{lemma}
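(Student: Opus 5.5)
The plan is to argue by contradiction, row by row, using only the two local lemmas just proved: Lemma~\ref{lem:LPP-adjacent} (adjacent squares lying together in one of the four shapes are either both in $A$ or both outside $A$) and Lemma~\ref{lem:LPP-adjacent-2} (membership in $A$ pairs $\la/\mu$ with $\la/\mu\wedge\nu/\rho$ and $\nu/\rho$ with $\la/\mu\vee\nu/\rho$, and the pairing is reversed outside $A$). Fix $i\ge 1$ and suppose neither \eqref{eq:R1} nor \eqref{eq:R2} holds. Up to swapping the roles of $\la/\mu$ and $\nu/\rho$, this leaves two cases: either $\la_i>\nu_i$ and $\mu_i<\rho_i$, or $\la_i<\nu_i$ and $\mu_i>\rho_i$. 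Swapping the roles exchanges the two cases and preserves the setup, since $\vee$ and $\wedge$ are symmetric and the two statements of Lemma~\ref{lem:LPP-adjacent-2} are interchanged. So I treat only $\la_i>\nu_i$, $\mu_i<\rho_i$.

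In this case row $i$ of each shape is an interval. Row $i$ of $\la/\mu$ consists of the columns $(\mu_i,\la_i]$. Row $i$ of $\la/\mu\vee\nu/\rho$ consists of $(\rho_i,\la_i]$, and row $i$ of $\la/\mu\wedge\nu/\rho$ consists of $(\mu_i,\nu_i]$. Because $\la_i>\nu_i\ge\rho_i>\mu_i$, the row of $\la/\mu$ is nonempty.

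First consider the square $(i,\la_i)$. It lies in $\la/\mu$ and in the join, but not in the meet, since $\la_i>\nu_i$. If it were in $A$, Lemma~\ref{lem:LPP-adjacent-2} would require that membership in $\la/\mu$ be equivalent to membership in the meet, which fails. Hence $(i,\la_i)\notin A$.

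Next consider the square $(i,\mu_i+1)$. It lies in $\la/\mu$ but not in the join, since $\mu_i+1\le\rho_i$. If it were outside $A$, Lemma~\ref{lem:LPP-adjacent-2} would require that membership in $\la/\mu$ be equivalent to membership in the join, which fails. Hence $(i,\mu_i+1)\in A$.

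Finally, the squares $(i,\mu_i+1),(i,\mu_i+2),\ldots,(i,\la_i)$ all lie in $\la/\mu$, and consecutive ones are adjacent. Applying Lemma~\ref{lem:LPP-adjacent} repeatedly along this row segment shows they are either all in $A$ or all outside $A$. This contradicts the two conclusions above, so at least one of \eqref{eq:R1} or \eqref{eq:R2} holds.

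The only step needing care is the bookkeeping of the row intervals of the join and meet shapes. These intervals come from the definitions $\la/\mu\vee\nu/\rho=(\la\vee\nu)/(\mu\vee\rho)$ and $\la/\mu\wedge\nu/\rho=(\la\wedge\nu)/(\mu\wedge\rho)$, and from the chain of inequalities $\la_i>\nu_i\ge\rho_i>\mu_i$, which guarantees that the endpoint squares exist and belong to the claimed shapes. No connectivity hypothesis is needed for this lemma.
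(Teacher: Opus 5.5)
Your proof is correct and is essentially the paper's argument. The paper treats the mirror case $\la_i<\nu_i$, $\rho_i<\mu_i$ and works along row $i$ of $\nu/\rho$: it uses Lemma~\ref{lem:LPP-adjacent-2} to get $(i,\nu_i)\in A$ and $(i,\rho_i+1)\notin A$, then contradicts Lemma~\ref{lem:LPP-adjacent}. You run exactly the same endpoint argument in the symmetric case along row $i$ of $\la/\mu$.
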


\begin{proof}
	Suppose to the contrary, that the claim is false.
	By the symmetry, without loss of generality, we can assume that
	\. $\la_i < \nu_i$ \. and \. $\rho_i < \. \mu_i\ts$.
	This implies that \. $\rho_i < \nu_i$.
	It then follows that
	\. $(i,\nu_i)\in \nu/\rho$ \. and  \. $(i,\nu_i) \notin \la/\mu \ts\wedge \ts \nu/\rho$.
	Then Lemma~\ref{lem:LPP-adjacent-2} implies that \. $(i,\nu_i)\in A$.
	
Similarly,  it follows that
	\. $(i,\rho_i+1)\in\nu/\rho$ \. and  \. $(i,\rho_i+1)\notin \la/\mu \ts \vee \ts \nu/\rho$.
	Then Lemma~\ref{lem:LPP-adjacent-2} implies that   	\. $(i,\rho_i+1)\notin A$.
	On the other hand, we have
	\. $(i,\rho_i+1), (i,\rho_i+2),\ldots, (i,\nu_i)\in \nu/\rho$,
	so it follows from Lemma~\ref{lem:LPP-adjacent} that they all are in $A$, or they all are not in~$A$.
			This gives us a contradiction.
\end{proof}

\smallskip

\begin{lemma}\label{lem:R3}
	If \. $i$ \. satisfies \ts \eqref{eq:R1} \ts but not \ts \eqref{eq:R2}, then
\begin{equation*}
\left\{\,
\aligned
			 &\text{every square in $i$-th row of \,  $\la/\mu$    \, is in \,  $A$\ts,}   \\
			 &\text{every square in $i$-th row of   \,  $\nu/\rho$   \, is in \,   $A$\ts.  }
\endaligned\right.
\end{equation*}
Similarly, if \. $i$ \. satisfies \ts \eqref{eq:R2} \ts but not \ts \eqref{eq:R1}, then
\begin{equation*}
\left\{\,
\aligned	&\text{every square in $i$-th row of \,  $\la/\mu$  \, is not in  \, $A$\ts,  }  \\
	&\text{every square in $i$-th row of   \,  $\nu/\rho$   \ is not in \, $A$\ts.  }
\endaligned\right.
\end{equation*}
\end{lemma}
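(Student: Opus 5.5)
We treat the first assertion; the second follows by the symmetric argument, exchanging the roles of $\la/\mu$ and $\nu/\rho$ (equivalently, of $\fa,\fc$ with $\fb,\fd$), which swaps $A$ with its complement. So assume row $i$ satisfies \eqref{eq:R1} but not \eqref{eq:R2}. Then $\la_i\le\nu_i$, $\mu_i\le\rho_i$, and at least one of these is strict.

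In row $i$ we have $(\la\vee\nu)_i=\nu_i$, $(\mu\vee\rho)_i=\rho_i$, $(\la\wedge\nu)_i=\la_i$ and $(\mu\wedge\rho)_i=\mu_i$. Hence row $i$ of $\la/\mu\wedge\nu/\rho$ is exactly row $i$ of $\la/\mu$, namely the columns $\mu_i+1,\ldots,\la_i$. Likewise, row $i$ of $\la/\mu\vee\nu/\rho$ is exactly row $i$ of $\nu/\rho$, namely the columns $\rho_i+1,\ldots,\nu_i$. These rows are intervals of consecutive squares lying in one of the four shapes, so Lemma~\ref{lem:LPP-adjacent} says each row segment of $\la/\mu$ and of $\nu/\rho$ lies entirely in $A$ or entirely outside $A$.

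It remains to find one square in each segment that lies in $A$. Lemma~\ref{lem:LPP-adjacent-2} gives the tool: for $(i,j)\notin A$ we have $(i,j)\in\la/\mu\Leftrightarrow(i,j)\in\la/\mu\vee\nu/\rho$ and $(i,j)\in\nu/\rho\Leftrightarrow(i,j)\in\la/\mu\wedge\nu/\rho$. So it suffices to find a square of the segment at which one of these equivalences fails.

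First suppose $\la_i<\nu_i$. Then $(i,\nu_i)\in\nu/\rho$ (since $\rho_i\le\mu_i\le\la_i<\nu_i$ fails to be needed: $\rho_i<\nu_i$ because the segment is nonempty, or argue directly), but $(i,\nu_i)\notin\la/\mu\wedge\nu/\rho$ because $\nu_i>\la_i$. Hence $(i,\nu_i)\in A$, and the whole $\nu/\rho$ segment lies in $A$.

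Next suppose $\mu_i<\rho_i$. Then $(i,\rho_i)$ lies in $\la/\mu$ when $\rho_i\le\la_i$, but not in $\la/\mu\vee\nu/\rho$, whose row $i$ starts at column $\rho_i+1$. Hence $(i,\rho_i)\in A$, and the whole $\la/\mu$ segment lies in $A$.

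The main obstacle is handling the boundary and degenerate cases. For example, when $\la_i<\nu_i$ but $\mu_i=\rho_i$, we must separately reach the $\la/\mu$ segment. There we use $(i,\la_i)$ if $\la_i>\mu_i$: it lies in $\la/\mu$, and we need it outside $\la/\mu\vee\nu/\rho$, which fails because $(i,\la_i)\in\nu/\rho$ as well. Such subcases need an extra link: a square lying in both $\la/\mu$ and $\nu/\rho$ is adjacent to both segments inside a common shape (row $i$ of $\nu/\rho$ contains row $i$ of $\la/\mu$ when $\mu_i=\rho_i$). Lemma~\ref{lem:LPP-adjacent} applied within $\nu/\rho$ then transfers membership in $A$ from the $\nu/\rho$ segment to the $\la/\mu$ segment.

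Symmetrically, when $\la_i=\nu_i$ and $\mu_i<\rho_i$, the $\la/\mu$ row segment contains the $\nu/\rho$ row segment, and Lemma~\ref{lem:LPP-adjacent} applied within $\la/\mu$ transfers the conclusion. Empty segments are vacuous. The careful bookkeeping of these overlap cases is the step I expect to take the most care.
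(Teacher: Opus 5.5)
Your strategy is the paper's: Lemma~\ref{lem:LPP-adjacent} makes each row-$i$ segment of $\la/\mu$ and of $\nu/\rho$ entirely in $A$ or entirely outside it, and Lemma~\ref{lem:LPP-adjacent-2} supplies a witness square in $A$. For $\nu/\rho$ the witness is $(i,\nu_i)$ when $\la_i<\nu_i$, exactly as in the paper. In the degenerate case $\mu_i=\rho_i$ (resp.\ $\la_i=\nu_i$), the $\la/\mu$ segment sits inside the $\nu/\rho$ segment (resp.\ conversely), so membership in $A$ carries over directly.

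There is one case your bookkeeping misses. Your witness for the $\la/\mu$ segment when $\mu_i<\rho_i$ is $(i,\rho_i)$, which needs $\rho_i\le\la_i$. Suppose instead $\mu_i<\la_i<\rho_i$, which forces $\la_i<\nu_i$. Then $(i,\rho_i)\notin\la/\mu$. Your containment transfer does not help either, because the segments $\{\mu_i+1,\ldots,\la_i\}$ and $\{\rho_i+1,\ldots,\nu_i\}$ are disjoint and not even adjacent.

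The fix is the paper's choice of witness, $(i,\mu_i+1)$. It lies in $\la/\mu$ whenever that segment is nonempty. It lies outside $\la/\mu\vee\nu/\rho$, whose row $i$ starts at column $\rho_i+1$, because $\mu_i+1\le\rho_i$. So it handles every case with $\mu_i<\rho_i$ at once, and only the case $\mu_i=\rho_i$ needs the containment argument.

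Separately, your parenthetical ``$\rho_i\le\mu_i\le\la_i<\nu_i$'' has the inequality reversed under \eqref{eq:R1}, where $\mu_i\le\rho_i$. It suffices to say that the $\nu/\rho$ segment is empty, and the claim vacuous, unless $\rho_i<\nu_i$.
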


\begin{proof}
	We prove only the first part, as the proof of the second part is analogous.
	If \. $i$ \. is as  in the first part of the lemma, we have either \. $\la_i < \nu_i$ \.
    or  \. $\mu_i < \rho_i\ts$.
	We will further assume that \. $\la_i < \nu_i\ts,$ as the proof for the other case is analogous.
	
	Let us first show that every square in the $i$-th row of   \. $\nu/\rho$ \. is in~$A$.
	The statement is vacuously true if \. $\rho_i=\nu_i\ts,$ so assume
	that \. $\rho_i<\nu_i\ts.$  This implies that \. $(i,\nu_i)\in \nu/\rho$.
	Since  \. $\la_i < \nu_i\ts,$ we have 	\. $(i,\nu_i)\notin \la/\mu \ts \wedge \ts \nu/\rho$.
	Lemma~\ref{lem:LPP-adjacent-2} then implies that \. $(i,\nu_i)\in A$.
	Thus, by Lemma~\ref{lem:LPP-adjacent},  every square in the $i$-th row of \ts
$\nu/\rho$ \ts is in~$A$.

	Next, let us show that every square in $i$-th row of  \. $\la/\mu$  \. is in~$A$.
	The statement is vacuously true if \. $\mu_i=\lambda_i\ts,$ so assume that \. $\mu_i <\la_i\ts.$
	There are now two cases.  First, suppose that \. $\rho_i >\mu_i\ts$.
	Then  \. $(i,\mu_i+1)\in\la/\mu$, but \. $(i,\mu_i+1)\notin \la/\mu \ts \vee \ts \nu/\rho$.
	Lemma~\ref{lem:LPP-adjacent-2} implies that  \. $(i,\mu_i+1)\in A$.
	Lemma~\ref{lem:LPP-adjacent} then  implies that every square in $i$-th row of  \. $\la/\mu$  \.
    is in~$A$.
	
    Second suppose that \. $\rho_i=\mu_i$.
	In this case we then have every square in \. $\la/\mu$ \. is also in \. $\nu/\rho$.
		Since we have previously shown that every square in the $i$-th row of \. $\nu/\rho$  is in~$A$,
		it then follows that every square in the $i$-th row of \. $\la/\mu$ \. is also in~$A$.
		This completes the proof.
\end{proof}

\smallskip

\subsection{Proof of Theorem~\ref{thm:LPP-eq-eval}}\label{ss:LPP-eval-proof}
The proof of the \ts $\Leftarrow$ \ts direction is straightforward.
For the \ts $\Rightarrow$ \ts direction, suppose to the contrary
that the claim is false.
By the symmetry, and by making a careful choice the rows,
we can assume that there exists \. $i<j$,  such that: \. $i$ \ts
satisfies \eqref{eq:R1} but not \eqref{eq:R2}, \. $j$ \ts satisfies
\eqref{eq:R2} but not \eqref{eq:R1}, and \. $i+1,\ldots, j-1$ \. satisfy both \eqref{eq:R1} and \eqref{eq:R2}.

Because the skew shape \. $(\la \vee \nu)/(\mu \wedge \rho)$ \. is connected, it follows that \.
$\min\{\mu_i,\rho_i\} \ts < \ts \max\{\la_{i+1},\nu_{i+1}\}$.
Because \ts $i$ \ts satisfies \eqref{eq:R1} and \ts $i+1$ \ts satisfies \eqref{eq:R2}, we then have
\. $\mu_i < \la_{i+1}$.
This implies that \. $(i,\mu_i+1)$ \. and \. $(i+1,\mu_{i}+1)$ \. are adjacent squares in $\la/\mu$.

Since every square in the $i$-th row of $\la/\mu$ \ts is in~$A$ by Lemma~\ref{lem:R3},
it follows from Lemma~\ref{lem:LPP-adjacent}  that \. $(i+1,\mu_{i}+1)$ \. is a square
in the $(i+1)$-th row of  \ts  $\la/\mu$  \ts that is also in~$A$.
Iterating this argument, we conclude that  \. $(j,\mu_{j-1}+1)$ \. is a square in the $j$-th row of  $\la/\mu$ is also in~$A$.
On the other hand, it follows from Lemma~\ref{lem:R3} that \emph{every} \ts square in the $j$-th row of \ts
$\la/\mu$ \ts is not in~$A$, a contradiction.  This completes the proof.
\qed

\medskip



\section{Implications of LPP equality conditions} \label{sec:Oko-Bjo-proof}

In this section, we prove equality conditions for both the Okounkov
and the generalized Bj\"orner inequality.  Both proofs use
equality conditions for the LPP inequality (Theorem~\ref{thm:LPP-eq}).

\subsection{Proof of Theorem~\ref{thm:Oko-eq}}\label{sec:proof-Oko-eq}
The following result is an extension of Theorem~\ref{thm:Oko-eq}
to a large set of skew shapes, but with the same equality conditions.
\smallskip

\begin{thm}\label{thm:Oko-eq-gen}
Let \ts $\lambda/\mu$, \ts $\nu/\rho$ \ts be skew shapes such that \ts $\ell(\la), \ell(\nu)\le \ell$.
Assume that for all \ts $i \in [\ell-1]$, we have:
\begin{equation}\label{eq:Oko-Irr}
  \mu_i < \lambda_{i+1} \quad \text{ or } \quad \rho_i < \nu_{i+1}\..
\end{equation}
Then \eqref{eq:Oko} is an equality:
\begin{equation}\label{eq:Oko-eq-again}
	  s_{\lambda/\mu} \. \cdot \. s_{\nu/\rho} \ = \ s_{\lceil (\la+\nu)/2\rceil /\lceil(\mu+\rho)/2\rceil}
\. \cdot \.
s_{\lfloor (\la+\nu)/2\rfloor/\lfloor (\mu+\rho)/2\rfloor}
\end{equation}
	\underline{\em if and only if}
\begin{align}\label{eq:Oko-eq-comb-again}
\la \ts - \ts \nu \, = \, a\ts (\ve_1,\ve_2,\ldots) \. + \. (b,b,\ldots), \quad
\mu \ts - \ts \rho \, = \, a\ts (\ve'_1,\ve'_2,\ldots) \. + \. (b,b,\ldots),
%
\end{align}
for some \. $a \in \{\pm 1\}$, \. $b \in \zz$, \. and \.
$\ve_1,\ve_1',\ve_2,\ve_2',\ldots \in \{0,1\}$.
%
\end{thm}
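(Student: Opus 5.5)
The plan is to deduce Theorem~\ref{thm:Oko-eq-gen} from the LPP equality conditions (Theorem~\ref{thm:LPP-eq}, in its row-by-row form Theorem~\ref{thm:LPP-eq-eval}). The bridge is an $L$-convexity argument in the spirit of Speyer, using translation invariance $s_{\la/\mu}=s_{(\la+\one)/(\mu+\one)}$.

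\textbf{Encoding.} Encode a skew shape by $x=(\la,\mu)\in\zz^{2\ell}$ and write $y=(\nu,\rho)$. Meet and join of skew shapes are then the coordinatewise $\min$ and $\max$ of these vectors, and adding $m\one$ to a vector does not change the skew Schur function. Define the spread
$$D(x,y):=\max_k (x-y)_k-\min_k (x-y)_k .$$

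\textbf{The ``if'' direction.} Condition \eqref{eq:Oko-eq-comb-again} says exactly that, after replacing $y$ by $y+b\one$, we have $x-y\in\{0,1\}^{2\ell}$ or $x-y\in\{0,-1\}^{2\ell}$. In that case
$$\lceil (x+y)/2\rceil=\max\{x,y\}, \qquad \lfloor (x+y)/2\rfloor=\min\{x,y\}$$
coordinatewise, up to the common shift $b/2$. Parity issues are absorbed by translation invariance, since $b$ shifts both coordinates simultaneously. So the right side of \eqref{eq:Oko-eq-again} is literally the multiset $\{s_x,s_y\}$, and equality holds.

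\textbf{The ``only if'' direction.} I will use induction on $D(x,y)$. If $D\le 1$, the condition holds by the above: take $b=\min_k(x-y)_k$, so that $x-y-b\one$ has entries in $\{0,1\}$. If $D\ge 2$, choose an integer $m$ so that $x-(y+m\one)$ has both a strictly positive and a strictly negative entry; for instance, take $m$ near the midpoint of $[\min(x-y),\max(x-y)]$. Set $y_m:=y+m\one$, $x':=x\vee y_m$ and $y':=x\wedge y_m$. Then:
\begin{enumerate}
\item $s_xs_y=s_xs_{y_m}\leqslant_s s_{x'}s_{y'}$ by \eqref{eq:LPP};
\item $x'+y'=x+y_m$, so the midpoint roundings of $(x',y')$ agree with those of $(x,y_m)$, hence with those of $(x,y)$ up to translation;
\item $D(x',y')<D(x,y)$, since $|x'-y'|=|x-y_m|$ coordinatewise and both signs of $x-y_m$ occur.
\end{enumerate}
Applying Theorem~\ref{thm:Oko} to $(x',y')$ yields the chain
$$s_xs_y\ \leqslant_s\ s_{x'}s_{y'}\ \leqslant_s\ \text{RHS}.$$
Equality in \eqref{eq:Oko-eq-again} therefore forces equality in the first LPP step. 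Theorem~\ref{thm:LPP-eq} then gives $x\le y_m$ or $y_m\le x$, contradicting the choice of $m$. So $D\le 1$, and we are done.

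The first step does not literally need to be LPP-equal alone, but Schur positivity of each defect makes both defects vanish, so this is fine.

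\textbf{Main obstacle.} The delicate point is applying Theorem~\ref{thm:LPP-eq} to the pair $(x,y_m)$. That theorem requires $(\la\vee(\nu+m))/(\mu\wedge(\rho+m))$ to be connected, or at least requires controlling connected components via Remark~\ref{rem:intro-LPP}, where inclusions can go in different directions on different components. Hypothesis \eqref{eq:Oko-Irr} is designed for this: it guarantees that consecutive rows overlap in at least one of the two shapes. I would first verify that this row-overlap survives the shift (the shift affects $\nu,\rho$ equally). If it does not survive in general, the fallback is to rerun the proof of Theorem~\ref{thm:LPP-eq-eval} directly. That proof only used Lemmas~\ref{lem:R1R2}, \ref{lem:R3} and a propagation of membership in $A$ between rows $i$ and $i+1$, which needs an adjacent pair of squares in one of the four shapes. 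Condition \eqref{eq:Oko-Irr} applied to $x$ or to $y_m$ should supply such a pair, forcing all rows to be of type \eqref{eq:R1} or all of type \eqref{eq:R2}.
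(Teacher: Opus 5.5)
Your proposal is correct and is essentially the paper's proof: shift one shape so that $(\la,\mu)-(\nu,\rho)$ has entries of both signs, chain \eqref{eq:LPP} with \eqref{eq:Oko} to force equality in the LPP step, and then contradict Theorem~\ref{thm:LPP-eq}. The connectivity step you flagged is immediate, as in the paper: \eqref{eq:Oko-Irr} is unchanged by shifting $(\nu,\rho)$, and $(\la\vee(\nu+m))/(\mu\wedge(\rho+m))$ contains both $\la/\mu$ and $(\nu+m)/(\rho+m)$, so consecutive rows always overlap (no fallback is needed); also, the announced ``induction on $D$'' is never used, since your argument is a direct contradiction.
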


\smallskip

 Note that the condition \eqref{eq:Oko-Irr} does not in fact make Theorem~\ref{thm:Oko-eq-gen}  less general. Indeed,
	given any $\lambda,\mu,\nu,\rho$, we can partition $[\ell]$ into disjoint intervals $[a_1, a_2)$, $[a_2,a_3)$, $\ldots$, $[a_{k-1},a_k)$ for some
	$1=a_1<\cdots < a_k=\ell+1$ , such that restricting to the rows indexed by each interval $[a_i, a_{i+1})$
	satisfies \eqref{eq:Oko-Irr}.

\smallskip

\begin{proof}[Proof of Theorem~\ref{thm:Oko-eq-gen}]
  The direction \ts $\Leftarrow$ \ts is straightforward, so we present only
  the proof of the $\Longrightarrow$ direction.  We start with the following
  observations.
Let \ts $\lambda,\mu,\nu,\rho \in \nn^\ell$ \ts be partitions satisfying \ts
$\mu \subseteq \lambda$, \ts $\rho \subseteq \nu$.
Denote \. $\lambda^\ast:=\lambda+1^\ell$ \. and
  \. $\mu^\ast:=\mu+1^\ell$\..
  Then we have \. $s_{\lambda^\ast/\mu^\ast} \. = \. s_{\lambda/\mu}$ \. and
  \begin{equation*}
  \begin{split}
s_{\lceil (\lambda^\ast+\nu)/2\rceil/\lceil (\mu^\ast+\rho)/2\rceil}
 \, & = \, s_{\lfloor (\lambda+\nu)/2\rfloor/  \lfloor (\mu+\rho)/2\rfloor}, \quad \
  s_{\left \lfloor (\lambda^\ast+\nu)/2 \right\rfloor /  \left \lfloor (\mu^\ast+\rho)/2 \right\rfloor}
  \, = \, s_{\left \lceil (\lambda+\nu)/2 \right\rceil/  \left \lceil (\mu+\rho)/2 \right\rceil},
 \end{split}
  \end{equation*}
by the identities \. $\lceil (x+1)/2\rceil=\lfloor x/2\rfloor+1$ \. and
\. $\lfloor (x+1)/2\rfloor =\lceil x/2\rceil$\..

Let \. $\lambda,\mu,\nu,\rho \in \zz^\ell$ \. be as given in the theorem.
From the equations above, the equality \eqref{eq:Oko-eq-again} is preserved under the
substitution \. $\lambda \gets \lambda+c\cdot 1^\ell$, $\mu\gets \mu +c\cdot 1^\ell$,
$\nu\gets \nu +d\cdot 1^\ell$, $\rho\gets \rho+d\cdot 1^\ell$, for all \ts $c,d \geq 0$.
In particular, we can assume without loss of generality, that all parts of \.
$\lambda,\mu,\nu,\rho$ \. are arbitrarily large.

Suppose to the contrary that partitions \. $\lambda,\mu,\nu,\rho$ \.
give an equality in \eqref{eq:Oko-eq-again}, but
do not satisfy \eqref{eq:Oko-eq-comb-again}. Then, vector \.
$(\lambda,\mu)-(\nu,\rho)\in \zz^{2\ell}$ \. contains at least three distinct entries.
It then follows that we can choose \ts $c \in \zz$ \ts such that the vector
$({\lambda}^\circ, {\mu}^\circ) - (\nu, \rho)$ \. contains at least one strictly
positive and at least one strictly negative entry, where \.
${\lambda}^\circ := \lambda + c \cdot 1^\ell$ \. and \. ${\mu}^\circ := \mu + c\cdot 1^\ell$.
In particular, this implies that
partitions \. ${\lambda}^\circ, {\mu}^\circ, \nu, \rho$ \. do not satisfy \eqref{eq:LPP-eq-comb}.

Now, it follows from the LPP inequality (Theorem~\ref{thm:LPP}), that
\begin{equation}\label{eq:Oko-1}
s_{{\lambda}/{\mu}} \.\cdot \. s_{\nu/\rho}  \ = \ s_{{\lambda}^\circ/{\mu}^\circ} \.\cdot \. s_{\nu,\rho}  \  \leqslant_{s} \
s_{({\lambda}^\circ/ {\mu}^\circ) \vee (\nu/\rho)} \.\cdot \.
s_{({\lambda}^\circ/{\mu}^\circ) \wedge (\nu/\rho)}\..
\end{equation}
Let  \. $\al/\be:= ({\la}^\circ/ {\mu}^\circ) \vee (\nu/\rho)$ \.
 and  \. $\ga/\de:= ({\la}^\circ, {\mu}^\circ) \wedge (\nu,\rho)$ \..
 It follows from \eqref{eq:Oko}
 that
 \begin{equation}\label{eq:Oko-2}
 \begin{split}
s_{\al/\be} \. \cdot \. s_{\ga/\de} \ &\leqslant_s \
s_{\left \lceil (\al+\ga)/2 \right\rceil /  \left \lceil (\be+\de)/2 \right\rceil}
 \. \cdot \.
s_{\left \lfloor (\al+\ga)/2 \right\rfloor /\left \lfloor (\be+\de)/2 \right\rfloor} \\
&= \
s_{\left \lceil ({\la}^\circ+\nu)/2 \right\rceil /  \left \lceil ({\mu}^\circ+\rho)/2 \right\rceil}
\. \cdot \.
s_{\left \lfloor ({\la}^\circ+\nu)/2 \right\rfloor /\left \lfloor ({\mu}^\circ+\rho)/2 \right\rfloor} \\
& = \
s_{\left \lfloor ({\lambda}+\nu)/2 \right\rfloor /  \left \lfloor ({\mu}+\rho)/2 \right\rfloor}
 \. \cdot \. s_{\left \lceil ({\lambda}+\nu)/2 \right\rceil /
	\left \lceil {\mu}+\rho)/2 \right\rceil}.
\end{split}
 \end{equation}
Since \. $\lambda,\mu,\nu,\rho$ \. gives an equality in \eqref{eq:Oko},
we conclude both \eqref{eq:Oko-1} and \eqref{eq:Oko-2} are equalities.

Since partitions \. ${\la}^\circ, {\mu}^\circ, \nu, \rho$ \. do not satisfy \eqref{eq:LPP-eq-comb},
it then follows from Theorem~\ref{thm:LPP-eq}
that skew shape \. $({\la}^\circ \vee \nu) / ({\mu}^\circ \wedge \rho)$ \. is not connected.
This implies that there exists \. $i \in [\ell-1]$ such that
\begin{equation*}
	\min\{{\mu}^\circ_i, \rho_i\}  \ \geq \  \max\{{\lambda^\circ_{i+1}}, \nu_{i+1} \}.
\end{equation*}
This in turn implies that
\[
  {\mu}^\circ_i \geq  {\la}^\circ_{i+1} \quad \text{ and } \quad \rho_i \geq  \nu_{i+1}\.,
      \]
which contradicts \eqref{eq:Oko-Irr}.  This completes the proof.
\end{proof}

\smallskip

\subsection{Proof of Theorem~\ref{thm:Bjo-eq}}\label{sec:proof-Bjo-eq}
The \. $\Leftarrow$ \. direction is straightforward.  Indeed, since \eqref{eq:LPP}
is an equality, for the principal evaluation \.
$(z_1,z_2,z_3,\ldots) \gets (1,q,q^2,\ldots)$ \.  gives:
\begin{equation*}
	\ s_{\lambda/\mu}(1,q,q^2,\ldots) \. \cdot \. s_{\nu/\rho}(1,q,q^2,\ldots)
	\ = \ s_{\la/\mu \. \vee  \. \nu/\rho}(1,q,q^2,\ldots) \. \cdot \.  s_{\la/\mu \. \wedge \. \nu/\rho}(1,q,q^2,\ldots),
\end{equation*}
for all $q \in [0,1)$.
Apply \eqref{eq:P-part} for each of the four principal evaluations.
Multiply both sides of the equation above by \. $(1-q)^m$ \. and take the limit \ts  $q \to 1^-$,
where \. $m:=|\lambda/\mu| +|\nu/\rho|$.  This gives
\begin{equation*}
	\ff(\la/\mu) \. \cdot \.  \ff(\nu/\rho) \ = \ \ff(\la/\mu \ts \wedge \ts \nu/\rho) \. \cdot \. \ff(\la/\mu \ts \vee \ts \nu/\rho),
\end{equation*}
as desired.

The \. $\Rightarrow$ \. direction is proved in a similar manner, but in reverse.
Write \eqref{eq:LPP} as follows:
\begin{equation}\label{eq:Bjo-LPP}
	s_{\la/\mu \ts \wedge  \ts \nu/\rho} \. \cdot \.  s_{\la/\mu \ts \vee \ts \nu/\rho} \ - \  	\ s_{\lambda/\mu} \. \cdot \. s_{\nu/\rho}  \ = \   \sum_{\tau \. \vdash \. m} \. a_{\tau}  \. s_{\tau}\.,
\end{equation}
where all \. $a_{\tau}\geq 0$.
As above, take principal evaluations and apply \eqref{eq:P-part} to each Schur function,
multiply both sides by \. $(1-q)^m$ \. and take the limit \ts  $q \to 1^-$.  We obtain:
\[
\ff(\la/\mu \ts \vee \ts \nu/\rho) \. \ff(\la/\mu \ts \wedge \ts  \nu/\rho)
\, - \, 	\ff(\la/\mu) \. \cdot \.  \ff(\nu/\rho) \, = \,
\sum_{\nu \. \vdash \. m}  \. a_{\tau}  \. \ff({\tau})\ts.
\]
By assumption, the LHS is equal to zero, so we have \. $a_{\tau}=0$ \. for all \. $\tau\vdash m$.
Thus, both sides of \eqref{eq:Bjo-LPP} are also equal to zero, as desired.
\qed

\smallskip
{
\begin{rem}\label{rem:LPP-Bjo}
In fact, the monomial positive version of \eqref{eq:LPP}
would also suffice for the argument above.  As we mentioned
earlier, monomial positivity is much easier to prove, see \cite{LP07,CP-multi}.
\end{rem}}

\medskip

\section{Proof of Theorem~\ref{thm:Fis-eq}}\label{sec:proof-Fis}

\subsection{RLS inequality}\label{ss:Fis-RLS}
We need the following curious generalization of \eqref{eq:AD}.
For a lattice \ts $\cL=(\aL,\vee,\wedge)$ \ts, a function $f:\aL \to \rr$, and a subset $S \subseteq \aL$, we write
\[  f(S) \  := \  \sum_{x \in S} f(x). \]

\smallskip

\begin{thm}[{\rm \defn{RLS inequality}\ts}{}]\label{thm:RLS}\label{t:RLS}
Let \ts $\cL=(\aL,\vee,\wedge)$ \ts be a product of $M$-chains,
let \ts $C,D\subseteq \aL$ \ts be fixed subsets, and let \ts
$\Omega(C,D)$ \ts denote the set of pairs \ts $(A,B)\in \aL^2$,
such that \ts $A\vee B=C$, \ts $A\wedge B=D$. Finally, let \.
$\fa,\fb,\fc,\fd: \aL \to \rr_{\geq 0}$ \. be four
functions satisfying \eqref{eq:AD-cd}.  Then:
	\begin{equation}\label{eq:RLS}\tag{\defn{\em RLS}}
\sum_{(A,B)\in \Omega(C,D)} \, \fa(A) \. \fb(B) \ \leq \
		\sum_{(A,B)\in \Omega(C,D)} \, \fc(A) \. \fd(B)\ts.
	\end{equation}
\end{thm}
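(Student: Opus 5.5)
The plan is to reduce \eqref{eq:RLS} to a statement on a Boolean lattice and then argue by induction on the number of ``free'' coordinates, in the spirit of the classical coordinate-by-coordinate proof of \eqref{eq:AD}.

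\textbf{Reduction to a Boolean lattice.} Write $C=(C_1,\ldots,C_n)$ and $D=(D_1,\ldots,D_n)$ as elements of $\aL$. Since $\cL$ is a product of chains, $A\vee B=C$ and $A\wedge B=D$ hold if and only if $\{A_i,B_i\}=\{C_i,D_i\}$ in every coordinate $i$. Hence $\Omega(C,D)=\emp$ unless $D\preccurlyeq C$, and otherwise the coordinates with $C_i=D_i$ are forced. Let $J:=\{i: C_i\ne D_i\}$. For $S\subseteq J$, let $A_S$ agree with $C$ on $S$ and with $D$ off $S$. Then
\[
\Omega(C,D)\,=\,\{(A_S,A_{J-S}) : S\subseteq J\},\qquad A_S\vee A_T=A_{S\cup T},\qquad A_S\wedge A_T=A_{S\cap T}.
\]
So $S\mapsto A_S$ is a lattice embedding of $\cB_{|J|}$ into $\cL$. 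Pulling back $\fa,\fb,\fc,\fd$ along this embedding preserves \eqref{eq:AD-cd}, and \eqref{eq:RLS} becomes
\[
\sum_{S\subseteq J}\fa(A_S)\,\fb(A_{J-S})\ \le\ \sum_{S\subseteq J}\fc(A_S)\,\fd(A_{J-S}).
\]
From now on I work on $\{0,1\}^J$.

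\textbf{Induction on $|J|$.} When $|J|=0$ the statement is $\fa(D)\fb(D)\le\fc(D)\fd(D)$, which is \eqref{eq:AD-cd}. When $|J|=1$ the statement is $\fa(1)\fb(0)+\fa(0)\fb(1)\le\fc(1)\fd(0)+\fc(0)\fd(1)$. This is exactly the key two-point lemma in the standard proof of \eqref{eq:AD}. It follows from three facts: both cross terms are at most $\fc(1)\fd(0)$, their product is
\[
\fa(0)\fb(0)\cdot\fa(1)\fb(1)\ \le\ \fc(0)\fd(0)\cdot\fc(1)\fd(1),
\]
and the elementary lemma that $x,y\le z$ and $xy\le zw$ imply $x+y\le z+w$.

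For the inductive step, split off one coordinate $j\in J$ and, for $x,y\in\{0,1\}$, set
\[
P(x,y):=\sum_{S'}\fa(x,A_{S'})\,\fb(y,A_{J'-S'}),\qquad R(u,v):=\sum_{S'}\fc(u,A_{S'})\,\fd(v,A_{J'-S'}),
\]
where $J'=J-j$. By Lemma~\ref{lem:AD-cd-res}, the four functions $\fa(x,\bu)$, $\fb(y,\bu)$, $\fc(x\vee y,\bu)$, $\fd(x\wedge y,\bu)$ satisfy \eqref{eq:AD-cd}. The inductive hypothesis therefore gives $P(x,y)\le R(x\vee y,x\wedge y)$ for all $x,y$; in particular $P(1,0),P(0,1)\le R(1,0)$. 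The goal is $P(1,0)+P(0,1)\le R(1,0)+R(0,1)$. By the same elementary lemma, it suffices to prove
\[
P(1,0)\,P(0,1)\ \le\ R(1,0)\,R(0,1).
\]

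\textbf{Main obstacle.} The product inequality above is the step I expect to be hardest. In the one-dimensional case it comes for free from the diagonal terms, but here $P$ and $R$ are sums of products over complementary pairs rather than products of sums. The first thing I would try is to expand $P(1,0)P(0,1)$ as a double sum over pairs $(S',T')$, reindex by $S'\cup T'$ and $S'\cap T'$, and regroup the four factors $\fa(1,\cdot)\fb(0,\cdot)\fa(0,\cdot)\fb(1,\cdot)$ as $[\fa(1,\cdot)\fb(1,\cdot)]\cdot[\fa(0,\cdot)\fb(0,\cdot)]$. Each bracket is then bounded using \eqref{eq:AD-cd} coordinatewise. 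What remains is an AD-type (or \eqref{eq:RLS}-type, with inductive hypothesis) inequality on the doubled lattice $\{0,1\}^{J'}\times\{0,1\}^{J'}$, which is again a product of chains. If this regrouping fails, I would strengthen the inductive statement instead, proving \eqref{eq:RLS} simultaneously for all $(C,D)$ with $D\preccurlyeq C$ so that the diagonal quantities $P(x,x)$ are themselves controlled by the induction.
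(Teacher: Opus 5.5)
Your reduction is correct. Coordinatewise $\{A_i,B_i\}=\{C_i,D_i\}$, the map $S\mapsto A_S$ embeds $\{0,1\}^J$ as a sublattice, and \eqref{eq:RLS} becomes $\sum_{S\subseteq J}\fa(S)\fb(J-S)\le\sum_{S\subseteq J}\fc(S)\fd(J-S)$. The base cases, the bounds $P(x,y)\le R(x\vee y,x\wedge y)$, and the reduction to $P(1,0)P(0,1)\le R(1,0)R(0,1)$ are also fine.

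The gap is that this product inequality is the entire content of the inductive step, and you leave it unproved. The regrouping you suggest cannot prove it. Bounding the brackets $\fa(1,S')\fb(1,J'-T')$ and $\fa(0,T')\fb(0,J'-S')$ by \eqref{eq:AD-cd} gives $P(1,0)P(0,1)\le\sum_{X,Y\subseteq J'}g(X\cup Y)\,h(X\cap Y)$, where $g(W)=\fc(1,W)\fd(0,J'-W)$ and $h(W)=\fd(1,W)\fc(0,J'-W)$. On the other hand, $R(1,0)R(0,1)=\sum_X g(X)\cdot\sum_Y h(Y)$. What remains therefore runs opposite to the direction log-supermodularity pushes, and it is false in general. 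Take $\fa=\fb=\fc=\fd=\mu$ with $\mu(S)=t^{|S|^2}$, $t>1$, and $|J|\ge 3$. Then $P=R$, so your product inequality is an equality. But $g=h$ is strictly log-supermodular, so $\sum_{X,Y}g(X\cup Y)g(X\cap Y)>(\sum g)^2$.

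Your fallback does not help either. By your own reduction, every $(C,D)$ case is a top/bottom case on a smaller cube, which the induction already covers. The diagonal terms are also already controlled, since $P(x,x)\le R(x,x)$. The real obstruction is that $P(1,0)P(0,1)$ no longer equals $P(0,0)P(1,1)$ once the sums do not factor.

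The missing idea is a complementation trick, which makes the induction unnecessary. Set $\phi(S):=\fa(S)\fb(J-S)$ and $\psi(S):=\fc(S)\fd(J-S)$. Applying \eqref{eq:AD-cd} to the pairs $(X,Y)$ and $(J-Y,J-X)$ gives
\[
\phi(X)\,\phi(J-Y)\,=\,[\fa(X)\fb(Y)]\cdot[\fa(J-Y)\fb(J-X)]\,\le\,\psi(X\cup Y)\,\psi\bigl(J-(X\cap Y)\bigr).
\]
So with $\kappa(S)=J-S$, the four functions $\phi$, $\phi\circ\kappa$, $\psi$, $\psi\circ\kappa$ satisfy \eqref{eq:AD-cd} on $\{0,1\}^J$. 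Then \eqref{eq:AD} gives $\bigl(\sum_S\phi(S)\bigr)^2\le\bigl(\sum_S\psi(S)\bigr)^2$, which is exactly the reduced inequality. If you prefer to keep your induction, average this quadruple over the coordinates in $J'$ as in Lemma~\ref{lem:AD-cd-square}. Reading off \eqref{eq:AD-cd} at the point $(0,0)$ then yields exactly $P(0,1)P(1,0)\le R(0,1)R(1,0)$. The paper itself gives no proof of this theorem; it cites \cite{CP-multi,LS06,Reu87}.
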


\smallskip

This inequality was proved by the authors in \cite[Claim~6.3]{CP-multi}.
For the Boolean lattice (the case $M=2$), the result was proved by Lov\'asz--Saks
\cite[Thm~8]{LS06}, generalizing the case \ts $C=\cL$ \ts and \ts $D=\emp$
\ts by  Reuter \cite{Reu87}.  In fact, these were independent developments;
the connection was discovered in \cite{CCPS26}.

\smallskip

\subsection{Proof of the \. $\Rightarrow$ \. direction}
Let \. $n:=|X|$ \. and \. $N:=2n$.
Let $Y$ be the subset of $[N]$ given by
\[
    Y \ := \ \big\{1,2,\ldots, |X-A-B|\big\} \, \cup \,   \big\{n+1,n+2,\ldots, n+|X-C-D|\big\}.
\]
Notice that the two sets in the right hand side of the equation above are disjoint, and
\[ |Y| \, = \,  2\ts |X| \. - \. |A|\. - \.|B|\. - \.|C|\. - \.|D|\ts.
\]
Let \ts $\cL=(\aL,\vee,\wedge)$ \ts be a direct product of $|X|$ copies of
$(N+2)$-chains: \ts $\aL:=([N]\cup \{0,\infty\})^{X}$.  In other words, every element
\ts $T\in \aL$ \ts corresponds to  a function \ts $T: X \to [N]\cup \{0,\infty\}$.

Take variables \. $\bz=(\zq,z_1,\ldots, z_N, \zt)$.
For a function \. $T:X \to [N]\cup \{0,\infty\}$\., we write
\[ \bz^T  \ := \   \zq^{m_{0}(T)} \. z_1^{m_1(T)} \. \cdots \. z_N^{m_N(T)} \.  \zt^{m_{\infty}(T)}\.,   \]
where \ts $m_i(T):= |T^{-1}(i)|$. 
Let \ts $\aK \subseteq \aL \times \aL$ \ts be a subset given by
\[ 
    \aK \ := \ \big\{ \ts (S,T) \in \aL \times \aL  \, : \,  \bz^S \. \bz^T  \, 
           = \,  \zq^{|A|+|B|} \ts \zt^{|C|+|D|} \ts \bz^Y  \ts \big \}.   
\]

Let $W \subseteq X$ be a subset satisfying $X-W \subseteq  A \cup B \cup C \cup D$.
We denote by \. $\phi^{W}: \aL \to \rr_{\geq 0}$ \.
 the indicator function of the set of \. $T:X \to [N]\cup \{0,\infty\}$     \. satisfying
 \begin{alignat*}{2}
 	& T(x) \. < \. T(y) \quad &&\text{ for all }  \quad x \prec y, \ \text{ where \ $x,y\in W$,}\\
 	&T(x) \. = \.  0 \quad &&\text{ for all } \quad x \in (A\cup B) -W, \  \text{and} \\
 	&T(x) \. = \.  \infty \quad &&\text{ for all } \quad x \in (C\cup D) -W.
 \end{alignat*}
 Note that every such  \. $T:X \to  \{0,\ldots,N\}$   \. corresponds to a
 linear extension \ts $\lE\in \Ec(\cPo|_W)$, and that several such functions
 may correspond to the same~$\ts\lE$.

   We define functions \. $\fa,\fb,\fc,\fd:\aL \to \rr_{\geq }0$ \.
   by
   \begin{align*}
   	\fa \, = \,  \phi^{X-A-C}, \qquad  \ \ \fb \, := \, \phi^{X-B-D}, \quad \ \
   	\fc \, := \, \phi^{X-C-D},
    \qquad \fd \ := \   \phi^{X-A-B}.
   \end{align*}
It is straightforward to check that these four functions satisfy \eqref{eq:AD-cd}.
We have by construction:
   \begin{align*}
   	 \ff(X-A-C) \. \cdot \.\ff(X-B-D) \ &= \  \frac{1}{|Y|!}\. \sum_{(S,T) \in \aK} \. \fa(S) \. \fb(T),\\
     	 \ff(X-C-D) \.\cdot \. \ff(X-A-B) \ &= \  \frac{1}{|Y|!} \. \sum_{(S,T) \in \aK} \. \fc(S)\. \fd(T).
   \end{align*}
  It then follows from \eqref{eq:Fis-eq} that
  \begin{equation}\label{eq:Fis-1}
  	\sum_{(S,T) \in \aK} \,  \fc(S) \ts \fd(T) \. - \. \fa(S) \ts \fb(T) \ = \  0.
  \end{equation}

  Let \ts $\aK'\subseteq \aK$ \ts be the subset of elements \ts $(U,V)$, such that \. $U(x) \geq  V(x)$ \.
   for all \ts $x \in X$.  For every \. $(U,V) \in \aK'$, we write
  \[
    \psi(U,V) \ := \   \sum_{\substack{(S,T) \in \aK \ \text{s.t.} \\
    S\vee T = U,  \. S \wedge T = V}}  \, \fc(S) \ts \fd(T) \. - \. \fa(S) \ts \fb(T)\ts.
  \]
   By Theorem~\ref{thm:RLS}, we have \. $\psi(U,V) \geq 0$. 
  On the other hand, we also have
  \[  	\sum_{(S,T) \in \aK} \, \fc(S) \ts \fd(T) \. - \. \fa(S) \ts \fb(T) \ = \     \sum_{U,V \in \aK'} \. \psi(U,V).
 \]
  It then follows from \eqref{eq:Fis-1} that
  \begin{equation}\label{eq:Fis-2}
     \psi(U,V) \ = \ 0  \quad \text{ for all } \ (U,V) \in \aK'.
  \end{equation}

  For the rest of the proof, take \. $(U, V) \in \aK'$ \.
  that satisfies the following properties:  \begin{alignat*}{2}
  	& U(x) \  = \  \infty \qquad && \text{for all} \ \quad  x \in C \cup D\ts,\\
  	& V(x) \  = \  0 \qquad && \text{for all} \ \quad  x \in A \cup B\ts,\\   	
  	& U(x) \in \{n+1,n+2,\ldots, n+|X-C-D|\} \qquad && \text{for all} \ \quad  x \in X-C - D\ts,\\
  	& V(x) \in \{1,2,\ldots, |X-A-B|\} \qquad && \text{for all} \ \quad   x \in X-A - B\ts.
  \end{alignat*}
  Here the function \. $U:X\to [N] \cup \{0,\infty\}$ \. can be constructed
  by selecting a linear extension \ts $\lE\in \Ec(\cPo|_{X-C-D})$,
  and then specifying the value of \. $U(x)$ \. for all \. $x \in X-C-D$ \.
  according to the order established by~$\lE$.
The function \. $V:X\to [N] \cup \{0,\infty\}$ \. can be constructed similarly.
Note that
  \begin{equation}\label{eq:Fis-3}
  	U(x) \, > \, n \, \geq \, V(y) \quad \text{ for all } \ \  x,y \in X\ts.
  \end{equation}

Observe  that
  \. $\fc(U) \ts \fd(V) =1$ \. by construction.
Therefore, by  \eqref{eq:Fis-2},  there exists \. $(S,T)\in \aK$ \. satisfying
\. $S\vee T = U$ \. and   \. $S \wedge T = V$, and  such that \. $\fa(S)\ts\fb(T)=1$.
   Let \. $X_1, X_2$ \. be subsets of $X$ given by
   \begin{align*}
   	X_1 \ &:= \  \{ \. x \in X \. : \. S(x)=U(x) \ \text{ and } \ T(x)=V(x)    \. \}\.,\\
      	X_2 \ &:= \  \{ \. x \in X \. : \. S(x)=V(x) \ \text{ and } \ T(x)=U(x)    \. \}\..
   \end{align*}

  \smallskip

  \begin{lemma}\label{lem:Fis-adjacent}
  	Let \ts $x,y\in X$ \ts be comparable in $\cPo$. Then either \ts $x,y \in X_1$ \ts or \ts  $x,y \in X_2$.
  \end{lemma}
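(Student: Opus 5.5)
The plan is a direct case analysis on where $x$ and $y$ lie relative to $\blue{A},\red{B},\blue{C},\red{D}$. It uses only the constraints encoded in $\fa(S)=\fb(T)=1$, the identities $S\vee T=U$ and $S\wedge T=V$, and the separation \eqref{eq:Fis-3}.

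\emph{Step 1: every element lies in exactly one of $X_1,X_2$.} Since $\cL$ is a product of chains, the conditions $S\vee T=U$ and $S\wedge T=V$ hold coordinatewise. So for each $x\in X$ we have $\{S(x),T(x)\}=\{U(x),V(x)\}$. By \eqref{eq:Fis-3}, $U(x)>V(x)$, hence $X=X_1\sqcup X_2$.

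\emph{Step 2: read off the forced values of $S$ and $T$.} The condition $\fa(S)=\phi^{X-\blue{A}-\blue{C}}(S)=1$ gives three facts. First, $S$ is strictly increasing along $\prec$ on $X-\blue{A}-\blue{C}$. Second, $S=0$ on $(\blue{A}\cup\red{B})-(X-\blue{A}-\blue{C})=\blue{A}$. Third, $S=\infty$ on $\blue{C}$. Likewise, $\fb(T)=1$ says that $T$ is strictly increasing on $X-\red{B}-\red{D}$, that $T=0$ on $\red{B}$, and that $T=\infty$ on $\red{D}$. Comparing these with $V=0$ on $\blue{A}\cup\red{B}$ and $U=\infty$ on $\blue{C}\cup\red{D}$, we get
\[
\red{B}\cup\blue{C}\subseteq X_1, \qquad \blue{A}\cup\red{D}\subseteq X_2 .
\]

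\emph{Step 3: rule out mixed comparable pairs.} Suppose, for contradiction, that $x\prec y$ with $x$ and $y$ in different parts.

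If $x\in X_1$ and $y\in X_2$, then by Step 2 we have $x\notin \blue{A}\cup\red{D}$ and $y\notin\red{B}\cup\blue{C}$. If $x\in\blue{C}$, then $y\in\blue{C}$ because $\blue{C}$ is an upper order ideal, which is a contradiction. If $y\in\blue{A}$, then $x\in\blue{A}$ because $\blue{A}$ is a lower order ideal, again a contradiction. Hence both $x,y\in X-\blue{A}-\blue{C}$, so $S(x)<S(y)$. But $S(x)=U(x)>n\ge V(y)=S(y)$ by \eqref{eq:Fis-3}, a contradiction.

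The case $x\in X_2$ and $y\in X_1$ is symmetric. Here $x\notin\red{B}\cup\blue{C}$ and $y\notin\blue{A}\cup\red{D}$. The same ideal arguments, using $\red{D}$ upward closed and $\red{B}$ downward closed, put both $x,y$ in $X-\red{B}-\red{D}$. Then $T(x)<T(y)$, contradicting $T(x)=U(x)>n\ge V(y)=T(y)$.

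The only delicate point is Step 2. One must correctly identify on which of $\blue{A},\red{B},\blue{C},\red{D}$ each of $S$ and $T$ is pinned to $0$ or $\infty$, since the definition of $\phi^W$ pins only the elements of $(\blue{A}\cup\red{B})-W$ and $(\blue{C}\cup\red{D})-W$. Once that bookkeeping is done, the rest reduces to the gap $U>n\ge V$ and the order-ideal closure properties.
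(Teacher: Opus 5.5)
Your proof is correct and uses the same idea as the paper's: monotonicity of $S$ (or $T$) along $\prec$, combined with the gap $U>n\ge V$ from \eqref{eq:Fis-3}, which forces both endpoints of a comparable pair into the same part. Your order-ideal bookkeeping, together with the inclusions $B\cup C\subseteq X_1$ and $A\cup D\subseteq X_2$ (proved separately in the paper as Lemma~\ref{lem:Fis-adjacent-2}, with no circularity), makes explicit the step $S(y)\ge S(x)$ that the paper's one-line argument takes for granted for elements pinned to $0$ or $\infty$.
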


   \begin{proof}
   	Without loss generality, assume that \ts $x \prec y$ \ts and that \ts $x \in X_1$.
   	By \eqref{eq:Fis-3}, we have:
   	\[  S(y) \,  \geq \, S(x) \, = \,  U(x) \, \geq  \, n+1\ts.   \]
   	 By \eqref{eq:Fis-3} again, this gives \ts $S(y)=U(y)$, which in turn implies that \ts $y \in X_1\ts$,
   	 as desired.
   \end{proof}

   \smallskip

   \begin{lemma}\label{lem:Fis-adjacent-2}
	$A \cup D \subseteq X_2$ \. and \.
$B \cup C \subseteq X_1\ts$.
   \end{lemma}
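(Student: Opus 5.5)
The plan is to read off the values of $S$ and $T$ directly from the indicator conditions $\fa(S)=1$ and $\fb(T)=1$, and then compare them with $U$ and $V$ pointwise.

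\emph{Step 1: pointwise structure.} Since $\cL$ is a product of chains, join and meet act coordinatewise by $\max$ and $\min$. So $S\vee T=U$ and $S\wedge T=V$ give, for every $x\in X$,
\[
\{S(x),T(x)\} \, = \, \{U(x),V(x)\}
\]
as multisets. By \eqref{eq:Fis-3} we have $U(x)>n\ge V(x)$, so $U(x)\neq V(x)$. Hence every $x\in X$ lies in exactly one of $X_1$ and $X_2$. Moreover, to decide which, it suffices to check whether $S(x)=U(x)$ or $S(x)=V(x)$; equivalently, whether $T(x)=V(x)$ or $T(x)=U(x)$.

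\emph{Step 2: values forced by $\fa(S)\fb(T)=1$.} We have $\fa=\phi^{X-A-C}$ and $\fb=\phi^{X-B-D}$, and $A,B,C,D$ are pairwise disjoint. From the definition of $\phi^W$ we get the following:
\begin{itemize}
\item $S(x)=0$ for $x\in A$, since $A\subseteq (A\cup B)-(X-A-C)$;
\item $S(x)=\infty$ for $x\in C$;
\item $T(x)=0$ for $x\in B$;
\item $T(x)=\infty$ for $x\in D$.
\end{itemize}
By the choice of $(U,V)$, we also have $U=\infty$ on $C\cup D$ and $V=0$ on $A\cup B$.

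\emph{Step 3: case check.} We now go through the four sets.
\begin{itemize}
\item For $x\in A$: $S(x)=0=V(x)$, so by Step 1, $T(x)=U(x)$ and $x\in X_2$.
\item For $x\in D$: $T(x)=\infty=U(x)$, so $S(x)=V(x)$ and $x\in X_2$.
\item For $x\in B$: $T(x)=0=V(x)$, so $S(x)=U(x)$ and $x\in X_1$.
\item For $x\in C$: $S(x)=\infty=U(x)$, so $T(x)=V(x)$ and $x\in X_1$.
\end{itemize}
This proves $A\cup D\subseteq X_2$ and $B\cup C\subseteq X_1$.

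There is no serious obstacle. The only point needing care is the strict separation $U(x)\neq V(x)$ from \eqref{eq:Fis-3}. Without it, for instance when $U(x)=V(x)$, the pair $\{S(x),T(x)\}=\{U(x),V(x)\}$ would not determine which of $S(x),T(x)$ equals which, and the membership in $X_1$ versus $X_2$ would be ambiguous.
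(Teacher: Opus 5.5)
Your proof is correct and follows essentially the paper's argument: the paper also reads off $S(x)=0$ for $x\in A$ from $\fa(S)>0$, and then uses the separation \eqref{eq:Fis-3} to force $S(x)=V(x)$, hence $x\in X_2$, treating the other three cases as analogous. You additionally spell out the coordinatewise identity $\{S(x),T(x)\}=\{U(x),V(x)\}$, which gives $T(x)=U(x)$; the paper leaves this step implicit.
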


\begin{proof}
	We will only prove that \. $x \in A \. \Rightarrow \. x \in X_2$\., as the proof of the other cases are analogous.
Since \ts $x \in A$ \ts and \ts $\fa(S)=\phi^{X-A-C}(S)>0$,
we have $S(x) =0$. Thus, in particular,we have \ts $S(x)\leq n$.
	It then follows from \eqref{eq:Fis-3} that \. $S(x)=V(x)$.
	This implies that \ts $x \in X_2\ts$, as desired.
\end{proof}

  \smallskip

\nin
{\em Proof of connectivity conditions}.
For the part \. $\blue{A} \nlra \red{B}$ \. in \eqref{eq:it-Fis}, it follows from
Lemma~\ref{lem:Fis-adjacent-2} that \. $A\subseteq X_2$, and \. $B\subseteq X_1$.
Suppose to the contrary, that some elements of \ts $A$ \ts and \ts
$B$ \ts are connected in the comparability graph \ts $\Ga(\cPo)$.
     It then follows from Lemma~\ref{lem:Fis-adjacent} that these elements
     must both belong to $X_1$ or both belong to $X_2$, a contradiction.
     Other conditions are proved analogously. \qed

  \medskip

\subsection{Proof of the \. $\Leftarrow$ \. direction}
It suffices to prove that, for every connected component \ts $X'\subseteq X$ \ts
in the comparability graph \ts $\Ga(\cPo)$, we have:
\begin{equation}\label{eq:Fis-4}
	\ff(X'-{A}-{C}) \.\cdot \. \ff(X'-{B}-{D}) \ = \ \ff(X'-{C}-{D}) \.\cdot\. \ff(X'-{A}-{B}).
\end{equation}
From~\eqref{eq:it-Fis}, it follows that \ts $X'$ \ts intersects at most two subsets of \ts $A,B,C,D$.

First, suppose that \ts $X'$ \ts intersects none of the subsets \ts $A,B,C,D$.
Then we have:
\[  X'-{A}-{C} \, = \, X'-{B}-{D} \, = \, X'-{C}-{D} \, = \, X'-{A}-{B} \, = \, X'\ts,  \]
and \eqref{eq:Fis-4} follows immediately.
Second suppose that $X'$ intersects exactly one subset of \ts $A,B,C,D$, say~$A$.
Then we have:
\[  X'-{A}-{C} \, = \, X'-{A}-{B} \, =  \, X'-A\., \qquad \ X'-{B}-{D} \, = \, X'-{C}-{D} \, = \, X'\ts,  \]
and \eqref{eq:Fis-4} follows immediately.

Finally, suppose that $X'$ intersects exactly two subsets of $A,B,C,D$.
From  \eqref{eq:it-Fis}, it follows that either $X'$ intersects \ts $A$ \ts and \ts $D$,
or \ts $X'$ \ts intersects \ts $B$ \ts and \ts $C$.  Without loss of generality, assume the former possibility.
Then we have:
\begin{align*}
	X'-{A}-{B} \, = \,  X'-A-C \, = \,   X'-A\., \qquad X'-B-D \, = \,  X'-C-D \, = \,  X'-D\.,
\end{align*}
and \eqref{eq:Fis-4} follows immediately. This completes the proof. \qed

\medskip



\section{Equality conditions for the ADS inequality}\label{s:ADS}

\subsection{Finite support case}\label{ss:ADS-finite}
We now prove Theorem~\ref{thm:ADS-eq} in the case when all four functions
have \emph{finite support}.  Formally, fix \ts $\ell,w \geq 1$.
Denote by \ts $\cY^{\ell,w}$ \ts the set of partitions \ts $\lambda = (\la_1,\ldots,\la_\ell)$,
such that \ts $\lambda_1 \leq w$.  Observe that \ts $\cY^{\ell,w}$ \ts
forms a finite sublattice of~$\cY$.

From this point on, we assume that \.
$$\supp(\fa), \ \supp(\fb), \ \supp(\fc), \ \supp(\fd) \ \subseteq \ \cY^{\ell,w}.
$$
and that none of the functions \ts $\fa, \fb, \fc, \fd$ \ts is identically zero
(otherwise the theorem holds trivially).
Let \ts $\cL^{\ell,w} = (\aL,\vee,\wedge)$ \ts
be a direct product \ts $\cL^{\ell,w} \ts = \ts \cC^1 \times \ldots \times \cC^\ell$ \ts
of \ts $\ell$ \ts copies of chains on \ts $\{0,1\ldots, w\}$.
Observe that \ts $\cY^{\ell,w}$ \ts is a sublattice of \ts $\cL^{\ell,w}$.
Extend  the functions \ts $\fa, \fb, \fc, \fd$ \ts to \ts $\aL$ \ts as
$0$ outside of \ts $\cY^{\ell,w}$.
	
Let \ts $N \geq \ell$, and let \ts $\bz=(z_1,\ldots,z_N) \in \rr^N_{>0}$ \ts
be an arbitrary positive vector.  Define:
$$
	\fa'(\lambda) \, :=  \,  \fa(\lambda) \. s_{\lambda}(\bz)\., \quad
		\fb'(\lambda) \, :=  \,   \fb(\lambda) \. s_{\lambda}(\bz)\., \quad
	\fc'(\lambda) \, :=  \,  \fc(\lambda) \. s_{\lambda}(\bz)\., \quad
	\fd'(\lambda) \, :=  \,  \fd(\lambda) \. s_{\lambda}(\bz)\ts.
$$
It follows from \eqref{eq:LPP},
that four functions \. $\fa',\fb',\fc',\fd':\aL \to \rr_{\geq 0}$ \.
satisfy \eqref{eq:AD-cd}.
It then follows from the equality in \eqref{eq:ADS}, that \ts
$\fa',\fb',\fc',\fd'$ \ts satisfy \eqref{eq:AD-eq}.
By Theorem~\ref{thm:AD-chains}, there exist a subset \ts $A \subseteq [\ell]$, lattices
\[
\cL_1 \, = \, (\aL_1,\vee,\wedge) \ := \  \bigotimes_{i \in A} \cC^i\ts,
\qquad  \cL_2  \, = \, (\aL_1,\vee,\wedge) \ := \  \bigotimes_{j \notin A} \cC^j\ts,
\]
functions \. $\ff_1', \fg_1': \aL_1 \to \rr_{\geq 0}\ts$,   	
\. $\ff_2', \fg_2': \aL_2 \to \rr_{\geq 0}$ \ts, and constants \. $\ala, \beb, \gac, \ded>0$, \. such that \.
$\ala\ts\beb \, = \, \gac\ts \ded$ \. and
\begin{equation}\label{eq:ADS-four}
 \begin{aligned}
			\fa'(x_1,x_2) \ &= \ \ala \.  {\color{blue}\ff'_1}(x_1) \. {\color{blue}\ff'_2}(x_2), \qquad
			& \fb'(x_1,x_2) \ &= \  \beb \. {\color{red}\fg'_1}(x_1) \. {\color{red}\fg'_2}(x_2),\\
			\fc'(x_1,x_2) \ &= \  \gac \. {\color{blue}\ff'_1}(x_1) \. {\color{red}\fg'_2}(x_2), \qquad
			&\fd'(x_1,x_2) \ &= \  \ded \. {\color{red}\fg'_1}(x_1) \. {\color{blue}\ff'_2}(x_2),
\end{aligned}
	\end{equation}
for all \. $(x_1,x_2) \in \aL_1\times \aL_2\ts$.
Let us emphasize that, \emph{a priori}, the subset \ts $A$ \ts can
depend on the underlying choice of \ts $\bz = (z_1,\ldots,z_N)$.

Note that the index sets \ts $A$ \ts and \ts $\ov A := [\ell]-A$ \ts are not necessarily contiguous.
Thus, in order to preserve membership in \ts $\cY^{\ell,w}$ \ts
the isomorphism \ts $\cL \simeq \cL_1 \times \cL_2$ \ts maps
weakly decreasing coordinates in \ts $\cL$ \ts to weakly
decreasing coordinates in \ts $\cL_1\ts, \cL_2\ts.$
We write \ts $\la=(\laone,\latwo)$ \ts to emphasize the decomposition,
which is mapped onto the union of partitions \ts $\la=\laone\cup \latwo$.

Let $\lambda = (\laone, \latwo)$ and $\mu = (\muone, \mutwo)$ be
elements of  $\aL =\aL_1 \times \aL_2$.
It follows from \eqref{eq:ADS-four} that
\begin{align*}
	& \fa(\laone,\latwo) \. \fb(\muone,\mutwo) \.\cdot \. s_{(\laone,\ts\latwo)}(\bz) \.
	s_{(\muone,\ts\mutwo)}(\bz) \  =  \ \fa'(\laone,\latwo) \. \fb'(\muone,\mutwo) \\
    & =  \ \fc'(\laone,\mutwo) \. \fd'(\muone,\latwo) \ = \ \fc(\laone,\mutwo) \. \fd(\muone,\latwo) \. \cdot \. s_{(\laone,\ts\mutwo)}(\bz) \.
s_{(\muone,\ts\latwo)}(\bz).
\end{align*}
Since the choice of $\bz$ varies over all $\rr_{>0}^N\ts$,
this implies the polynomial identity
\begin{equation}\label{eq:stable}
\aligned
	 & \fa(\laone,\latwo) \. \fb(\muone,\mutwo) \. \cdot \. s_{(\laone,\ts\latwo)} \.
	s_{(\muone,\ts\mutwo)} \\
	 & \quad \ = \ \fc(\laone,\mutwo) \. \fd(\muone,\latwo) \.\cdot \. s_{(\laone,\ts\mutwo)} \.
	s_{(\muone,\ts\latwo)}\.,
\endaligned
\end{equation}
where the subset \. $A\subseteq [\ell]$ \. is globally determined.

\smallskip

\begin{lemma}\label{lem:rosewood}
Let \. $(\lambda^{(1)},\lambda^{(2)}) \in \supp(\fa)$ \. and \. $(\mu^{(1)},\mu^{(2)}) \in \supp(\fb)$.
Then we have:
\[ \text{ either } \ \lambda^{(1)} \ = \ \mu^{(1)} \quad \text{ or } \quad \lambda^{(2)} \ = \  \mu^{(2)}.  \]
\end{lemma}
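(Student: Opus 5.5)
The plan is to combine the polynomial identity \eqref{eq:stable} with Rajan's Theorem~\ref{t:Rajan}. Take $(\laone,\latwo)\in\supp(\fa)$ and $(\muone,\mutwo)\in\supp(\fb)$. Then the left side of \eqref{eq:stable} has nonzero coefficient $\fa(\laone,\latwo)\,\fb(\muone,\mutwo)>0$ times the nonzero symmetric function $s_{(\laone,\latwo)}\, s_{(\muone,\mutwo)}$. Hence the right side is nonzero, so $\fc(\laone,\mutwo)\,\fd(\muone,\latwo)>0$. In particular, $(\laone,\mutwo)$ and $(\muone,\latwo)$ lie in $\cY^{\ell,w}$ and are genuine partitions, since the extensions of $\fc,\fd$ vanish outside $\cY^{\ell,w}$.

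Dividing \eqref{eq:stable} by the positive constant on the right, we get
$$s_{(\laone,\latwo)}\cdot s_{(\muone,\mutwo)} \, = \, \theta\cdot s_{(\laone,\mutwo)}\cdot s_{(\muone,\latwo)}\,,$$
where $\theta>0$. Since both products are Schur positive with integer coefficients, comparing the coefficient of any Schur function appearing on both sides should force $\theta=1$. Alternatively, evaluate at $\bz$ with a single variable $z_1=1$ and compare the leading monomials $z_1^{|\la|+|\mu|}$ in a suitable term order: the lexicographically leading monomial of $s_\al s_\be$ is $z^{\al+\be}$ with coefficient $1$. Then Rajan's Theorem~\ref{t:Rajan} gives the multiset equality
$$\{(\laone,\latwo),(\muone,\mutwo)\} \, = \, \{(\laone,\mutwo),(\muone,\latwo)\}.$$

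There are two cases. If $(\laone,\latwo)=(\laone,\mutwo)$, then $\latwo=\mutwo$. Otherwise $(\laone,\latwo)=(\muone,\latwo)$, which gives $\laone=\muone$. The one delicate point is that $(\laone,\latwo)$ denotes the partition obtained by interleaving the coordinates indexed by $A$ and by $[\ell]-A$. Equality of these interleaved vectors therefore means coordinatewise equality on $A$ and on $[\ell]-A$ separately, which is exactly what is needed.

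The main obstacle is justifying $\theta=1$ before invoking Rajan, since Theorem~\ref{t:Rajan} is stated for equality without scalars. I would handle it by the leading-term argument: the dominance-maximal Schur term in $s_\al s_\be$ is $s_{\al+\be}$ with LR coefficient $1$. Comparing these maximal terms shows that $\theta=1$ and also that $\la+\mu$ agrees on both sides, which is automatic here.
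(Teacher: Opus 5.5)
Your proposal is correct and follows the paper's proof: comparing the dominance-leading term of \eqref{eq:stable} (the paper uses $m_{(\laone+\muone,\latwo+\mutwo)}$, you use $s_{\al+\be}$ with coefficient $1$) forces the two scalar coefficients to be equal, and Rajan's Theorem~\ref{t:Rajan} applied to $s_{(\laone,\latwo)}\,s_{(\muone,\mutwo)}=s_{(\laone,\mutwo)}\,s_{(\muone,\latwo)}$ then gives $\laone=\muone$ or $\latwo=\mutwo$. Only the leading-term comparison justifies $\theta=1$; your first suggestion, comparing the coefficient of an arbitrary common Schur function, does not by itself.
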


\begin{proof}
 Taking the
leading terms of \eqref{eq:stable} w.r.t.\ the dominance order gives an equality of monomial symmetric
polynomials:
\begin{equation*}
	  \fa(\laone,\latwo) \. \fb(\muone,\mutwo)  \,  m_{(\laone+\muone,\ts\latwo+\mutwo)} \.
	 \, = \, \fc(\laone,\mutwo) \. \fd(\muone,\latwo) \,  m_{(\laone+\muone,\ts\latwo+\mutwo)}\.,
\end{equation*}
which implies that
\begin{equation}\label{eq:stable-3}
	 \fa(\laone,\latwo) \. \fb(\muone,\mutwo)
\ = \ \fc(\laone,\mutwo) \. \fd(\muone,\latwo).
\end{equation}
We thus have the equality of products of Schur polynomials
\begin{equation}\label{eq:stable-2}
s_{(\laone,\ts\latwo)} \. \cdot \. s_{(\muone,\ts\mutwo)}
	 \ = \  s_{(\laone,\ts\mutwo)} \.\cdot \.	s_{(\muone,\ts\latwo)}\..
\end{equation}
By Rajan's Theorem~\ref{t:Rajan}, this implies that either \. $\laone  = \muone$ \. or \.
$\latwo =   \mutwo$,  as desired.
\end{proof}

\smallskip

A priori, which of the two conditions in Lemma~\ref{lem:rosewood} holds could depend on the choice of $\lambda$ and $\mu$.
In the following lemma, we show that one of these cases must hold globally.

\smallskip

\begin{lemma}\label{lem:Rajan-2}
	One of the following holds:
	either
\begin{equation*}
\left[\,
\aligned
\laone \ &= \ \muone  \qquad \text{for all } \ \
	(\laone, \latwo) \in \supp(\fa) \ \, \text{ and } \ \
	(\muone, \mutwo) \in \supp(\fb), \\
\latwo \ &= \ \mutwo  \qquad \text{for all } \ \
	(\laone, \latwo) \in \supp(\fa) \ \, \text{ and } \ \
	(\muone, \mutwo) \in \supp(\fb).
\endaligned\right.
\end{equation*}
\end{lemma}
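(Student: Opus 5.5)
The plan is to combine Lemma~\ref{lem:rosewood} with the product structure of the supports that comes from the cross-factoring \eqref{eq:ADS-four}. The key point is that the supports of $\fa$ and $\fb$ are rectangles in $\aL_1\times\aL_2$. Once that is known, the pointwise dichotomy of Lemma~\ref{lem:rosewood} can be upgraded to a global one by mixing coordinates.

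First, I identify the supports. Since $\bz\in\rr^N_{>0}$ and $N\ge \ell$, we have $s_\la(\bz)>0$ for every $\la\in\cY^{\ell,w}$. On $\aL$ the functions $\fa'$ and $\fa$ therefore have the same support; here $\fa$ is extended by zero outside $\cY^{\ell,w}$, and so is $\fa'$. By \eqref{eq:ADS-four} with $\ala>0$, we have $\fa'(x_1,x_2)>0$ if and only if $\ff_1'(x_1)>0$ and $\ff_2'(x_2)>0$. Setting $P_1:=\supp \ff_1'$ and $P_2:=\supp\ff_2'$, this gives
$$\supp(\fa)=P_1\times P_2 .$$
By the same argument, with $Q_1:=\supp\fg_1'$ and $Q_2:=\supp\fg_2'$, we get
$$\supp(\fb)=Q_1\times Q_2 .$$
Both sets are nonempty because $\fa,\fb\not\equiv 0$.

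Second, I argue by contradiction. Suppose neither alternative in the lemma holds globally. Then the first alternative fails for some pair, so there are $(\la^{(1)},\la^{(2)})\in\supp(\fa)$ and $(\mu^{(1)},\mu^{(2)})\in\supp(\fb)$ with $\la^{(1)}\ne\mu^{(1)}$. This yields $p\in P_1$ and $q\in Q_1$ with $p\ne q$. The second alternative also fails for some pair, which similarly yields $p'\in P_2$ and $q'\in Q_2$ with $p'\ne q'$. By the rectangle structure, $(p,p')\in\supp(\fa)$ and $(q,q')\in\supp(\fb)$. These two elements differ in both coordinates, which contradicts Lemma~\ref{lem:rosewood}.

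The only delicate step is the first one. One has to make sure the support of $\fa$, viewed inside $\cY^{\ell,w}$, really is the full product $P_1\times P_2$ under the identification $\cL\simeq\cL_1\times\cL_2$. Mixing coordinates $(p,p')$ could a priori leave the set of partitions. This causes no trouble, because \eqref{eq:ADS-four} holds on all of $\aL_1\times\aL_2$, with $\fa'$ extended by zero outside $\cY^{\ell,w}$. Positivity of $\fa'(p,p')$ then forces $(p,p')\in\cY^{\ell,w}$, so Lemma~\ref{lem:rosewood} applies to it.
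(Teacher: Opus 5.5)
Your proof is correct and is essentially the paper's argument: both show that $\supp(\fa)$ and $\supp(\fb)$ are product sets in $\aL_1\times\aL_2$, then combine coordinates from the two failing pairs to contradict Lemma~\ref{lem:rosewood}. The only difference is where the product structure comes from: you read it directly off \eqref{eq:ADS-four} using $s_\la(\bz)>0$ on $\cY^{\ell,w}$, which is fine provided \eqref{eq:ADS-four} is taken with the same globally determined $A$ as in \eqref{eq:stable} and Lemma~\ref{lem:rosewood} (evaluating \eqref{eq:stable} at $\bz$ justifies this), whereas the paper first re-derives the cross-factoring \eqref{eq:split-app} of the unprimed functions from \eqref{eq:stable-go} and reuses it in the rest of the argument.
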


\smallskip

\begin{proof}
	it  follows from \eqref{eq:stable} and Lemma~\ref{lem:rosewood} that, for all   $(\lambda^1, \lambda^2), (\mu^1, \mu^2) \in \aL$,
	\begin{equation}\label{eq:stable-go}
		\fa(\lambda^1,\lambda^2) \. \fb(\mu^1,\mu^2)
		\ = \ \fc(\lambda^1,\mu^2) \. \fd(\mu^1,\lambda^2).
	\end{equation}
By the argument analogous to the proof of Theorem~\ref{thm:AD-boolean-2}, it then follows that there exist
 functions
\. $\ff_1, \fg_1: \aL_1 \to \rr_{\geq 0}$\.,   	\. $\ff_2, \fg_2: \aL_2 \to \rr_{\geq 0}$ \.    such that,
for all \.$(\laone,\latwo) \in \aL_1 \times \aL_2$\.,

\begin{equation}\label{eq:split-app}
	\left\{ \ \begin{split}
			\fa(\laone,\latwo) \ &= \ \ala \.  {\color{blue} \ff_1}(\laone) \. {\color{blue} \ff_2}(\latwo), \qquad
			&& \fb(\laone,\latwo) \ = \  \beb \. {\color{red}\fg_1}(\laone) \. {\color{red}\fg_2}(\latwo),\\
			\fc(\laone,\latwo) \ &= \  \gac \. {\color{blue}\ff_1}(\laone) \. {\color{red}\fg_2}(\latwo), \qquad
			&&\fd(\laone,\latwo) \ = \  \ded \. {\color{red}\fg_1}(\laone) \. {\color{blue}\ff_2}(\latwo),
\end{split}\right.
	\end{equation}

Now, suppose to the contrary that the claim in the lemma is false.
Then it follows from \eqref{eq:split-app}, that there exist \ts
$\laone \in \supp(\ff_1)$ \ts and \ts $\muone \in \supp(\fg_1),$ such that \ts $\laone\neq \muone$.
Similarly there exist \ts $\latwo \in \supp(\ff_2)$ \ts and \ts $\mutwo \in \supp(\fg_2)$,
such that \ts $\latwo\neq \mutwo$.  By \eqref{eq:split-app}, this implies that
\. $(\laone, \latwo) \in \supp(\fa)$ \.
and \. $(\muone, \mutwo) \in \supp(\fb)$, which contradicts Lemma~\ref{lem:rosewood}.
\end{proof}

\smallskip

By Lemma~\ref{lem:Rajan-2}, without loss
of generality,  we may assume that there exists a fixed element \ts $w_1 \in \aL_1$ \ts
 such that
\[ \laone \ = \ \muone \ = \ w_1 \qquad \text{for all } \ \
(\laone, \latwo) \in \supp(\fa) \ \, \text{ and } \ \
(\muone, \mutwo) \in \supp(\fb). \]
By \eqref{eq:split-app}, it follows that
\[ \laone \ = \ \muone \ = \ w_1 \qquad \text{for all } \ \
(\laone, \latwo) \in \supp(\fc) \ \  \text{ and } \ \
(\muone, \mutwo) \in \supp(\fd). \]
Our non-degeneracy assumption ensures these supports are non-empty,
allowing us to find elements \ts $x_2,y_2 \in \aL_2\ts,$ such that
\ts $(w_1, x_2) \in \supp(\fa)$ \ts and \ts $(w_1,y_2)\in \supp(\fb)$.
By \eqref{eq:stable-go}, this gives \ts $(w_1, y_2) \in \supp \fc$ \ts
and \ts $(w_1, x_2) \in \supp \fd$.

We now show that, for all $\lambda :=(\laone,\latwo) \in \aL_1 \times \aL_2\ts$, we have:
\begin{equation}\label{eq:ad-collinear}
 \fd(\lambda) \ = \ \frac{\fb(w_1,y_2)}{\fc(w_1,y_2)} \. \fa(\lambda).
\end{equation}
Indeed,
if $\laone \neq w_1$, then we have \. $\fd(\lambda) = \fa(\lambda) = 0$,
so \eqref{eq:ad-collinear} clearly holds. If \ts $\laone=w_1$, it follows  that
\[\fd(\lambda) \, = \,  \fd(\laone,\latwo) \, = \, \frac{\fb(\laone, y_2)}{\fc(w_1,y_2)} \.\fa(w_1,\latwo) \, = \, \frac{\fb(w_1, y_2)}{\fc(w_1,y_2)} \.\fa(\laone,\latwo)  \, = \, \frac{\fb(w_1,y_2)}{\fc(w_1,y_2)}\. \fa(\lambda),  \]
where the second equality is by \eqref{eq:stable-3}.
This completes the proof of \eqref{eq:ad-collinear}.

By an analogous argument, we have,
for all $\lambda \in \cY^{\ell,w}$,
\begin{equation*}
	\fc(\lambda) \ = \ \frac{\fa(w_1,x_2)}{\fd(w_1,x_2)} \fb(\lambda).
\end{equation*}
This implies that  \eqref{eq:ADS-eq-2} holds, and the proof
of Theorem~\ref{thm:ADS-eq} in case of a finite support is complete. \qed

\smallskip

\subsection{General case} \label{ss:ADS-general}
	Expanding the products in the \eqref{eq:ADS} and grouping the summands, we have:
	\begin{equation}\label{eq:expand-1}
	\begin{split}
	&	\sum_{\lambda \ts \in \ts\cY} \. \fc({\lambda}) \. \fs_{\lambda} \, \cdot \,
        \sum_{\lambda \ts \in \ts\cY} \. \fd({\lambda}) \. \fs_{\lambda} \  - \
        \sum_{\lambda \ts \in \ts\cY} \. \fa({\lambda}) \. \fs_{\lambda}  \, \cdot \,
        \sum_{\lambda \ts \in \ts\cY} \. \fb({\lambda}) \. \fs_{\lambda}  \\
	&	\qquad = \ \sum_{\lambda,\ts\mu \ts \in \ts \cY} \. \big(\fc({\lambda}) \. \fd(\mu) \ - \
                    \fa({\lambda}) \. \fb(\mu)\big) \. \fs_{\lambda} \. \fs_{\mu}  \\
	& \qquad = \ \sum_{\substack{(\om,\ts\pi) }} \ \ \sum_{\substack{(\lambda,\ts\mu) \ts \in \ts \cU(\om,\pi)}} \.
            \big(\fc(\lambda) \.  \fd(\mu)  \. - \.  \fa(\lambda) \. \fb(\mu)\big) \. \fs_{\la} \. \fs_{\mu}\ts.
\end{split}
\end{equation}
 where the outer sum is over all \. $\om,\pi \in \zz^\infty_{\geq 0}$ \. with finite support, and  the inner sum is over the set
$$
\cU(\om,\pi) \ := \
\big\{(\lambda,\mu)\in \cY \times \cY \. : \.
\lambda \vee \mu=\om+\pi, \.  \lambda \wedge \mu =\om \big\}.
$$

Now, define \.
$\Phi_{\fs}(\om,\pi) \. := \. \Phi_{\fs}(\om,\pi;\fa,\fb,\fc,\fd)$ \. to be the inner sum on the RHS of \eqref{eq:expand-1}:
\begin{equation*}
	\Phi_{\fs}(\om,\pi) \ := \ \sum_{(\lambda,\ts\mu) \ts  \in\ts\cU(\om,\pi)}  \big(\fc(\lambda)  \fd(\mu)  \. - \.  \fa(\lambda)  \fb(\mu)\big)  \. \fs_{\la/\mu} \. \fs_{\nu/\rho}.
\end{equation*}

We need the following Schur-positive version of Theorem~\ref{thm:RLS}.

\smallskip

\begin{thm}[{\rm \cite[Thm~8.1]{CCPS26}}{}]\label{thm:SAAD}
	Let  \ts $\om,\pi\in \zz_{\geq 0}^{\ell}$,
	let \. $\fa,\fb,\fc,\fd: \cY \times \cY \to \rr_{\geq 0}$ \.
	be four functions satisfying \eqref{eq:AD-cd}.
	Then we have:
	\begin{align}\label{eq:LSS} 
		\Phi_{\fs}(\om,\pi) \ \geqslant_{\fs} \ 0.
	\end{align}
\end{thm}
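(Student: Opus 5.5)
The plan is to reduce \eqref{eq:LSS} to a statement about Littlewood--Richardson coefficients, and then to prove that statement by a lattice model to which the RLS inequality (Theorem~\ref{thm:RLS}) applies. We read $\Phi_{\fs}(\om,\pi)$ with the product $\fs_\la\ts\fs_\mu$ in the summand, as in \eqref{eq:expand-1}.

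\emph{Reduction to LR coefficients.} Let $P=\{i: \pi_i>0\}$. For a pair $(\la,\mu)\in\cU(\om,\pi)$ and each $i\in P$, exactly one of $\la_i,\mu_i$ equals $\om_i+\pi_i$ and the other equals $\om_i$; for $i\notin P$ we have $\la_i=\mu_i=\om_i$. So $\cU(\om,\pi)$ is indexed by the subsets $S\subseteq P$, with $\la=\la_S$ taking the larger value exactly on $S$ and $\mu=\la_{P-S}$, subject to both being partitions. These admissible $S$ form a sublattice of $2^P$ that is closed under complementation in $P$. Expanding $\fs_{\la_S}\fs_{\la_{P-S}}=\sum_\tau c^\tau_{\la_S,\la_{P-S}}\fs_\tau$, it suffices to prove, for each fixed $\tau$,
\[
\sum_{S} \big(\fc(\la_S)\fd(\la_{P-S})-\fa(\la_S)\fb(\la_{P-S})\big)\, c^\tau_{\la_S,\la_{P-S}} \ \ge \ 0 .
\]

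\emph{Lattice model.} Next we realize $c^\tau_{\la,\mu}$ as the number of lattice points of a product of chains, in the same way that the skew Schur evaluation was encoded in $\S$\ref{ss:LPP-AD-eq} (following \cite[\S8]{CP-multi}). The natural encoding uses Littlewood--Richardson tableaux, or equivalently Berenstein--Zelevinsky/hive-type patterns, of shape $\tau/\la$ and content $\mu$. Each such object is a tuple of row-wise entries with chain constraints. We write it as a filling $T$ of a fixed region with entries in a product of chains, so that the shapes $\la,\mu$ are read off from boundary coordinates of $T$. The aim is to choose the encoding so that, for the indicator $\varphi^{\la,\mu}$ of LR-fillings with parameters $(\la,\mu)$, the coordinatewise join and meet of an LR-filling for $(\la_S,\la_{P-S})$ and one for $(\la_{S'},\la_{P-S'})$ are again LR-fillings, with parameters given by $S\cup S'$ and $S\cap S'$. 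We then define $\fa'(T)=\fa(\la(T))\,\varphi(T)$, and $\fb',\fc',\fd'$ analogously. By \eqref{eq:AD-cd} for $\fa,\fb,\fc,\fd$ on $\cY$, these four functions satisfy \eqref{eq:AD-cd} on the product of chains. Finally, we apply Theorem~\ref{thm:RLS} with the fixed pair (join, meet) $=$ the filling data for $(\om+\pi,\om)$. Summing over the fibres $\Omega(C,D)$ recovers exactly the displayed inequality for $\tau$.

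\emph{Main obstacle.} The hard step is the lattice model itself: proving that the LR-tableau (or hive) constraints are preserved under coordinatewise max/min. The lattice-word condition is not obviously closed under these operations, unlike the row- and column-strictness used for monomial positivity. Two fallbacks are available if the direct encoding fails. The first is to replace LR tableaux by the crystal/jeu-de-taquin model of $\fs_{\la}\fs_\mu$ as $\fs_{(\la+c)\ast\mu}$ of a disconnected skew shape. The monomial statement then follows from the argument of $\S$\ref{ss:LPP-AD-eq}, and Schur positivity would come from restricting to highest-weight (Yamanouchi) reading words, checking max/min-closure only for the Yamanouchi condition. The second is to follow the Temperley--Lieb immanant route of \cite{LPP07}, which gives Schur positivity of $\fs_{\la}\fs_\mu$-differences for the pair $(S,P-S)$ versus $(P,\varnothing)$, and then to assemble these with the weights $\fa,\fb,\fc,\fd$ by a Reuter/Lov\'asz--Saks-type averaging over $S$.
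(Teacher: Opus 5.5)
Your reduction is sound: the pairs in $\mathcal{U}(\om,\pi)$ are the $(\la_S,\la_{P-S})$ with $S\subseteq P$ admissible, and these $S$ form a complement-closed sublattice of $2^P$. Schur positivity is equivalent to the coefficientwise inequality for each $\tau$. The monomial-positive version does follow from the SSYT encoding of $\S$\ref{ss:LPP-AD-eq} together with Theorem~\ref{thm:RLS}, summed over fibres of fixed total content. The paper gives no proof of this statement; it quotes it from \cite{CCPS26}, so your argument has to stand on its own.

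The gap is that everything specific to Schur positivity is delegated to a lattice model you never construct. The natural LR-tableau encoding fails the closure property you need, already in a case where the statement is trivial. Take $\om=(2,1)$ and $\pi=(0,0)$, so $P=\varnothing$ and $\la=\mu=(2,1)$, and take $\tau=(3,2,1)$. The two LR tableaux $T,T'$ of shape $\tau/\la$ and content $(2,1)$ carry the entries $1,1,2$ and $1,2,1$ in the cells $(1,3),(2,2),(3,1)$. Their entrywise maximum $1,2,2$ has content $(1,2)$ and a non-lattice reading word. Hence $\fa'(T)\fb'(T')>0=\fc'(T\vee T')\fd'(T\wedge T')$ whenever $\fa(\om)\fb(\om)>0$, so \eqref{eq:AD-cd} fails for your functions.

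The Yamanouchi restriction in your crystal fallback breaks the same way. For the same data, the $\mu$-parts $\begin{smallmatrix}1&2\\3\end{smallmatrix}$ and $\begin{smallmatrix}1&3\\2\end{smallmatrix}$ of the two highest-weight fillings of the disconnected shape have maximum $\begin{smallmatrix}1&3\\3\end{smallmatrix}$, which is not Yamanouchi. The reason is that lattice-word and content conditions are not difference constraints such as $T(x)\le T(y)$, and those are exactly what make SSYT fillings closed under $\vee$ and $\wedge$.

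There is also a structural mismatch. Theorem~\ref{thm:RLS} controls sums of products $\fa'(A)\fb'(B)$ over pairs of lattice points. By contrast, $c^\tau_{\la,\mu}$ counts single objects whose lattice-word condition couples $\la$ and $\mu$. If each point carries a whole LR tableau, the fibres count products of LR coefficients. If instead $A$ carries the $\la$-data and $B$ the $\mu$-data, the coupling is not of product form, and you give no reason it should be determined by $(A\vee B,A\wedge B)$.

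The Temperley--Lieb fallback, as you state it, does not close the gap either. The comparisons $\fs_{\la_S}\fs_{\la_{P-S}}\leqslant_{\fs}\fs_{\la_P}\fs_{\la_\varnothing}$ do not imply the weighted inequality. Let $g(S)$ be the coefficient of $\fs_{\tau}$ in $\fs_{\la_S}\fs_{\la_{P-S}}$. Take $\om=(6,4,2,0)$ and $\pi=(1,1,1,1)$, so every $S\subseteq P=\{1,2,3,4\}$ is admissible, and put $S_1=\{1,2\}$, $S_2=\{2,3\}$. Let $\fa,\fb,\fc,\fd$ be the indicator functions of $\{\la_{S_1},\la_{S_2}\}$, $\{\la_{P-S_1},\la_{P-S_2}\}$, $\{\la_P,\la_{\{1,2,4\}},\la_{\{2,3,4\}}\}$ and $\{\la_\varnothing,\la_{\{1\}},\la_{\{3\}}\}$. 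These satisfy \eqref{eq:AD-cd}, and the $\fs_{\tau}$-coefficient of $\Phi_{\fs}(\om,\pi)$ is $g(P)+g(\{1,2,4\})+g(\{2,3,4\})-g(S_1)-g(S_2)$. Take $g$ equal to $1$ on $\varnothing,P,S_1,S_2,P-S_1,P-S_2$ and $0$ elsewhere; this gives $-1$. Yet this $g$ satisfies every constraint $0\le g(S)=g(P-S)\le g(P)$ that your averaging step could use.

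So the proof needs finer structural information about $S\mapsto c^\tau_{\la_S,\la_{P-S}}$. One possible source is the expansion, as in \cite{LPP07}, of products of complementary minors of one fixed Jacobi--Trudi-type matrix as sums of Schur-positive Temperley--Lieb immanants over the matchings compatible with $S$. You would then still have to show that, for each matching, the set of compatible $S$ supports an AD/RLS-type inequality. That is the actual content of the statement, and the proposal does not supply it.
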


\smallskip

Now, since \eqref{eq:ADS} is an equality by assumption, it follows that the LHS
of \eqref{eq:expand-1} is~$0$.
Since \eqref{eq:expand-1} is a sum  of nonnegative Schur functions by Theorem~\ref{thm:SAAD}, it then follows that
\begin{equation}\label{eq:comp-zero}
		\Phi_{\fs}(\om,\pi) \ = \ 0,
\end{equation}
for all $\om,\pi \in \zz_{\geq 0}^\infty$ with finite support.

Now, fix \. $\ell, w\ge 1$.
By the same calculation as in \eqref{eq:expand-1}, we obtain
\begin{equation}\label{eq:expand-2}
		\begin{split}
		& \sum_{\lambda \in \ts\cY^{\ell,w}} \fc({\lambda}) \ts \fs_{\lambda} \. \cdot
\sum_{\lambda  \ts\in \ts\cY^{\ell,w}} \fd({\lambda}) \ts \fs_{\lambda} \, -
\sum_{\lambda \in \cY^{\ell,w}} \fa({\lambda}) \ts \fs_{\lambda} \. \cdot
\sum_{\lambda \in \ts\cY^{\ell,w}} \fb({\lambda}) \ts \fs_{\lambda}
		\ = \ \sum_{\substack{(\om,\ts\pi) }}  \Phi_{\fs}(\om,\pi),
	\end{split}
\end{equation}
where the sum is over all pairs $\om,\pi \in \zz_{\geq 0}^\ell$ such that $\om +\pi \in \{0,1,\ldots,w\}^\ell$.
It then follows from \eqref{eq:comp-zero} that the right hand side of \eqref{eq:expand-2} is equal to $0$.
Applying the finite support case above,
we conclude that the four functions \ts $\fa, \fb, \fc, \fd$ \ts restricted to
\ts $\cY^{\ell,w}$ \ts satisfy either \eqref{eq:ADS-eq-1} or \eqref{eq:ADS-eq-2}.
Taking the limit \ts $\ell, w \to \infty$, it follows
that the functions satisfy whichever condition holds for infinitely many
pairs $(\ell, w)$.  This completes the proof of Theorem~\ref{thm:ADS-eq}. \qed

\medskip

\medskip


{\small
\section{Final remarks}\label{sec:finrem}

\subsection{}\label{ss:finrem-why-eq}
Throughout the paper, we took it for granted that equality conditions are
important and worth the effort.  But are they really?
The honest answer is: \defna{that depends entirely on the area of mathematics}.
For \emph{basic analytic inequalities}, such as the Chebyshev inequality \eqref{eq:Cheb},
the equality conditions are usually quite simple and follow easily from the proofs.
Including them along with the statement/proof of the inequality as it is
done in \cite{HLP52}, is a well-established and widely used practice.  This is largely
because strict versions of these inequalities are often needed in more involved
inequalities.

In the opposite extreme, for \emph{geometric inequalities} \ts
discussed in~$\S$\ref{ss:intro-why-other}, the equality conditions
are fundamental and remain a major direction of study, see e.g.\
\cite{Gar02,Sch94}.  In \emph{calculus of variations}, the study of equality cases of
geometric inequalities, such as the \defng{Brunn--Minkowski inequality}, is really
the starting point, as the game is over getting better bounds for the stability
problems, see e.g.\ \cite{Fig14}.  This is similar to \emph{additive combinatorics},
where equality cases are closely related to basic structures, such as arithmetic
progressions or group cosets, and a great deal of effort is made to obtain bounds
on the distance from these structures to the near-equality cases, see e.g.\ \cite{TV06}.

In \emph{Riemannian and differential geometry}, the equality conditions
are also crucial, as they describe rigidity properties of the (often unique)
equality cases,  see e.g., numerous examples in \cite{Pet16}.
A different picture emerges in  \emph{extremal combinatorics}, where the
equality conditions turn out to be interesting structures,
such as Kneser graphs or set designs, see e.g.\ \cite{Bol86}.

Curiously, in \emph{percolation theory} and \emph{interacting particle systems},
the playground of many \eqref{eq:FKG} applications, the equality conditions seem
to be of little importance, cf.~\cite{Gri99,Lig85}.  That is largely because
for estimates one often need explicit lower bounds on the defect \ts $\de>0$,
see \cite{DNS21,Tal96}.  Unfortunately, we are nowhere close to getting sharp
stability result for the FKG inequality in full generality.

While our own motivation lies in applications to \emph{order theory} and
\emph{algebraic combinatorics}, another major reason for the study of
equality cases is a connection to \emph{computational complexity} \ts that we
briefly alluded to in~$\S$\ref{ss:intro-why-other}.  More precisely,
the computational hardness of equality cases is closely related to
whether the defect of the inequality has a combinatorial interpretation,
see \cite{Pak-OPAC} for the introduction and overview.

In summary, understanding the equality conditions goes beyond simply giving
an additional information about the inequality.  Often, and depending on the
area, they represent a new body of knowledge that is complementary to the
inequality in question.

\subsection{}\label{ss:finrem-app}
As the reader may have noticed, the applications of \eqref{eq:AD} and \eqref{eq:FKG}
that we chose to present are interrelated: \eqref{eq:LPP} is a special case of \eqref{eq:ADS},
implies both \eqref{eq:Oko} and \eqref{eq:Bjo}, while the latter is a special case of
\eqref{eq:Fis}.  This is largely a reflection of our own interest in these inequalities
and their equality conditions, but it is also an attempt to keep the presentation coherent.

There are numerous other examples that we could have chosen, many of which can now be
handled using the techniques developed in this paper. As the proofs of equality conditions
of \eqref{eq:LPP} and \eqref{eq:ADS} show, this does not mean that all such
applications would be easy or straightforward.  If anything, the opposite is true
(cf.\ Remark~\ref{rem:ADS-LPP}).
Here is a way to think about this
phenomenon: obtaining equality conditions for the AD inequality is like climbing a mountain.
Reaching the summit is a major step, but the descent on the other side --- applying
these equality conditions --- can be  remarkably difficult.

\subsection{}\label{ss:finrem-inj}
In some cases, log-concave inequalities can be proved by a direct injective
argument, see e.g.\ \cite{CPP23,DDP84,Kra96}, or a near-injective argument \cite{BT02}.
Often, such arguments can be used to obtain the equality conditions,
see a long list in \cite[$\S$14.1]{CP-surv}.  Note  that the monomially positive
version of \eqref{eq:LPP} was proved by Lam--Pylyavskyy \cite{LP07} by a direct
injection (see also Remark~\ref{rem:intro-LPP}).  It would be very
interesting to see if our Theorems~\ref{thm:LPP-eq} and~\ref{thm:SSYT-eq}
can be derived from this injection.

\subsection{}\label{ss:finrem-more}
Recall that \eqref{eq:LPP} can be phrased
as the inequality for LR coefficients:
\begin{equation}\label{eq:LPP-LR}
c^\la_{\mu,\nu} \, \le \,  c^\la_{\mu \vee \nu, \ts\mu \wedge \nu}\quad
\text{for all} \quad \la \vdash |\mu| + |\nu|\ts,
\end{equation}
see Remark~\ref{rem:LPP-LR}.  Note that Theorem~\ref{thm:LPP-eq} does not give
the equality conditions for \eqref{eq:LPP-LR}, since \eqref{eq:LPP}
is an aggregate of inequalities \eqref{eq:LPP-LR} over all~$\la$.
Recently, Speyer~\cite{Spe26} used \eqref{eq:AD} to prove an
inequality for LR coefficients which implies \eqref{eq:LPP-LR}.
It would be interesting to see if our Theorem~\ref{thm:AD-eq}
can be combined with Speyer's approach, to obtain the equality
conditions for \eqref{eq:LPP-LR}.

Let us also briefly mention unusual log-concave and log-supermodular
inequalities for the numbers of linear extensions given in \cite{CP-corr},
see especially $\S$4.2 for applications to standard Young tableaux.
The proofs are based on the combinatorial atlas technology, and the
equality conditions remain open.  It would be especially interesting
to find injective proofs of these inequalities, or show that none exist.

\subsection{}\label{ss:finrem-many}
There are several notable generalizations of the AD and FKG inequalities
to products of multiple functions and events, see \cite{RS93,AK96,Sahi08,LS22,Gla-FKG},
and even to log-supermodular functions on the products \cite{AZ22}.
It would be interesting to generalize the equality conditions
in Theorem~\ref{thm:AD-eq} to these general settings.  It would
be even more interesting to find a natural generalization of
\eqref{eq:AD} where the equality conditions are intractable,
cf.\ \cite{CP-coinc,CP-AF,CP-SY}.

\subsection{}\label{ss:finrem-hist}
The definitive history of the Chebyshev inequality \eqref{eq:Cheb} is given
in \cite{MV74}.  It seems, the integral version is indeed due Chebyshev (1882),
much popularized by Hermite in his analysis course (1883)
with a different proof by Picard, while
the discrete version is due to Jensen (1888).  Note that Hardy--Littlewood--P\'olya
credited both inequalities to Chebyshev \cite[$\S$2.17]{HLP52}.  Curiously,
Seymour--Welsh derived from \eqref{eq:FKG} only a special case of \eqref{eq:Cheb}
for log-convex $\{p_i\}$, which they called a ``new inequality about
log convex sequences'' \cite[$\S$3]{SW75}.  They credited Chebyshev with the
uniform case \ts $p_i=1/n$, an assertion refuted in \cite{Gra83}.

The FKG inequality has its own interesting history which was thoroughly
investigated by Grimmett in \cite[App.]{Gri06}, including the
reason why the authors are listed in this order.  The backstory
of the AD inequality is also somewhat amusing, including a tidbit about
Ahlswede stating the inequality to Daykin while standing on a ladder
\cite[p.~X]{Ahl18}.

\subsection{}\label{ss:finrem-quote}
As we discussed above, we make a special effort to prove a number of
applications of a single theorem (Theorem~\ref{thm:AD-eq}).  This is
in part because we needed these applications for other purposes, but
also to justify the extensive study of the AD equality to begin with.
Compare this to the following sentiment by Bollob\'as, who wrote a
lengthy section of his monograph about the AD inequality (=~FFT)
and its applications:

\begin{center}\begin{minipage}{13.1cm}%
\emph{``At the first glance the FFT looks too general to be true and, if true,
it seems too vague to be of much use. In fact, exactly the opposite is
true: the Four Functions Theorem (FFT) of Ahlswede and Daykin is a
theorem from `the book'. It is beautifully simple and goes to the heart
of the matter. Having proved it, we can sit back and enjoy its power
enabling us to deduce a wealth of interesting results.'' \ts  \cite[$\S$19]{Bol86}.}
%
\end{minipage}\end{center}

\vskip.7cm

\subsection*{Acknowledgements}
We are grateful to Noga Alon, B\'ela Bollob\'as, 
Hong Chen, Nikita Gladkov, Tom Hutchcroft, Jeff Kahn, Thomas Lam, 
Yuval Peres, Pasha Pylyavskyy, Siddhartha Sahi, Daniel Soskin and 
Ramon van Handel for interesting conversations and helpful remarks.
The first author (SHC) was partially supported by the NSF grant DMS-2246845,
and would like to thank the National University of Singapore for their
warm hospitality during his sabbatical in the Spring of 2026.
The second author (IP) was partially supported by the NSF grant CCF-2302173.
}

\vskip1.1cm


\vskip.5cm


{\footnotesize
	
\begin{thebibliography}{abcdefgh}

\bibitem[AZ22]{AZ22}
Dimitris~Achlioptas and Kostas~Zampetakis,
A simpler proof of the four functions theorem and some new variants,
in \emph{Proc.\ 2022~ISIT}, IEEE, 2022, 714--717.

\bibitem[AH93]{AH93}
Ron~Aharoni and Ron~Holzman, Two and a half remarks on the Marica--Sch\"onheim
inequality, \emph{J.\ Lond.\ Math.\ Soc.} \textbf{48} (1993), 385--395.

\bibitem[AK96]{AK96}
Ron~Aharoni and Uri~Keich,
A generalization of the Ahlswede--Daykin inequality,
\emph{Discrete Math.} \textbf{152} (1996), 1--12.

\bibitem[Ahl18]{Ahl18}
Rudolf~Ahlswede, \emph{Combinatorial methods and models},
Springer, Cham, 2018, 385~pp.

\bibitem[AB08]{AB08}
Rudolf~Ahlswede and Vladimir~Blinovsky,
Lectures on advances in combinatorics,
Springer, Berlin, 2008, 314~pp.

\bibitem[AD78]{AD78}
Rudolf~Ahlswede and David~E.~Daykin,
An inequality for the weights of two families of sets, their unions and intersections,
\emph{Z.~Wahrsch.\ Verw.\ Gebiete}~\textbf{43} (1978), 183--185.

\bibitem[AK95]{AK95}
Rudolf~Ahlswede and Levon~H.~Khachatrian,
Towards characterizing equality in correlation inequalities,
\emph{European J.\ Combin.}~\textbf{16} (1995), 315--328.

\bibitem[Ale37]{Ale37}
Alexander~D.~Alexandrov, To the theory of mixed volumes of convex bodies~II.\ New inequalities between
mixed volumes and their applications (in Russian),  \emph{Mat.\ Sb.}~\textbf{2} (1937), 1205--1238;
English translation in A.~D.~Alexandrov, \emph{Selected works}, Part~I, Ch.~IV, CRC Press, 1996, 61--97.

\bibitem[AS16]{AS16}
Noga~Alon and Joel~H.~Spencer, \emph{The probabilistic method} (Fourth ed.),
John Wiley, Hoboken, NJ, 2016, 375~pp.

\bibitem[And87]{And87}
Ian~Anderson, \emph{Combinatorics of finite sets},
Oxford Univ.\ Press, New York, 1987, 250~pp.

\bibitem[BM92]{BM92}
Dominique~Bakry and Dominique~Michel,
Sur les in\'egalit\'es FKG (in French), in \emph{Lecture Notes in Math.}
\textbf{1526}, Springer, Berlin, 1992, 170--188.

\bibitem[Beck90]{Bec90}
Istv\'an~Beck, An inequality for partially ordered sets,
\emph{J.\ Combin.\ Theory, Ser.~A} \textbf{53} (1990), 123--138.


\bibitem[BTW06]{BTW06}
Louis~J.~Billera, Hugh~Thomas and Stephanie~van~Willigenburg,
Decomposable compositions, symmetric quasisymmetric functions and equality
of ribbon Schur functions, \emph{Adv.\ Math.}~\textbf{204} (2006), 204--240.

\bibitem[Bir33]{Bir}
Garrett~Birkhoff,
On the combination of subalgebras, \emph{Proc.\ Camb.\ Phil.\ Soc.}
\textbf{29} (1933), 441--464.
%

\bibitem[Bj\"o11]{Bjo11}
Anders~Bj\"{o}rner, A $q$-analogue of the FKG inequality and some applications,
\emph{Combinatorica}~\textbf{31} (2011), 151--164.

\bibitem[Bol86]{Bol86}
B\'ela~Bollob\'as, \emph{Combinatorics},
Cambridge Univ.\ Press, Cambridge, UK, 1986, 177~pp.

\bibitem[BR06]{BR06}
B\'{e}la~Bollob\'as  and Oliver~Riordan,
\emph{Percolation}, Cambridge Univ.\ Press, New York, 2006, 323~pp.

\bibitem[BL26]{BL26}
Petter~Br\"and\'en and Jonathan~Leake,
Lorentzian polynomials on cones,
\emph{Forum Math.\ Sigma} \textbf{14} (2026),
Paper No.~e16, 34~pp.

\bibitem[Bri88]{Bri88}
Graham~Brightwell,
Linear extensions of infinite posets,
\emph{Discrete Math.}~\textbf{70} (1988), 113--136.

\bibitem[Bri90]{Bri90}
Graham~R.~Brightwell,
Events correlated with respect to every subposet of a fixed poset,
\emph{Graphs Combin.} \textbf{6} (1990), 111--131.

\bibitem[BT02]{BT02}
Graham~R.~Brightwell and William~T.~Trotter,
A combinatorial approach to correlation inequalities,
\emph{Discrete Math.}~\textbf{257} (2002), 311--327.

\bibitem[BZ88]{BZ-book}
Yuri~D.~Burago and  Victor~A.~Zalgaller,
\emph{Geometric inequalities},
Springer, Berlin, 1988, 331~pp.

\bibitem[CCPS26]{CCPS26}
Swee~Hong~Chan, Hong~Chen, Igor~Pak and Daniel~Soskin,
Correlation inequalities for Schur positivity,
preprint (2026), 41~pp.; \ts {\tt arXiv:2606.06688}.

\bibitem[CP22]{CP-intro}
Swee~Hong~Chan and Igor~Pak, Introduction to the combinatorial atlas,
 \emph{Expo.\ Math.}~\textbf{40} (2022), 1014--1048.

\bibitem[CP23a]{CP-multi}
Swee~Hong~Chan and Igor~Pak,
Multivariate correlation inequalities for $P$-partitions,
\emph{Pacific J.\ Math.} \textbf{323} (2023), 223--252.

\bibitem[CP23b]{CP-surv}
Swee~Hong~Chan and Igor~Pak,
Linear extensions of finite posets, preprint (2023), 56~pp,
to appear in \emph{EMS Surv.\ Math.\ Sci.}; \ts {\tt arXiv:2311.02743}.

\bibitem[CP24a]{CP-atlas}
Swee~Hong~Chan and Igor~Pak, Log-concave poset inequalities,
\emph{Jour.\ Assoc.\ Math.\ Res.}~\textbf{2} (2024), 53--153.

\bibitem[CP24b]{CP-corr}
Swee~Hong~Chan and Igor~Pak,
Correlation inequalities for linear extensions,
\emph{Adv.\ Math.} \textbf{458} (2024), Paper~109954, 33~pp.

\bibitem[CP24c]{CP-coinc}
Swee~Hong~Chan and Igor~Pak,
Computational complexity of counting coincidences,
\emph{Theoret.\ Comput.\ Sci.} \textbf{1015} (2024), Paper No.~114776, 19~pp.

\bibitem[CP24d]{CP-AF}
Swee~Hong~Chan and Igor~Pak, Equality cases of the Alexandrov--Fenchel inequality
are not in the polynomial hierarchy, \emph{Forum Math.~Pi}~\textbf{12} (2024),
Paper No.~e21, 38~pp.

\bibitem[CP24e]{CP-SY}
Swee~Hong~Chan and Igor~Pak,
Equality cases of the Stanley--Yan log-concave matroid inequality,
preprint (2024), 36~pp.; \ts {\tt arXiv:2407.19608}.

\bibitem[CP26+]{CP-perm}
Swee~Hong~Chan and Igor~Pak,
Equality cases of the quadratic permanent inequality, in preparation (2026).

\bibitem[CPP23]{CPP23}
Swee~Hong~Chan, Igor~Pak and Greta~Panova, Effective poset inequalities,
\emph{SIAM J.\ Discrete Math.} \textbf{37} (2023), 1842--1880.

\bibitem[Day77]{Day77}
David~E.~Daykin,
A lattice is distributive if and only if \ts
$|A|\cdot |B| \leq |A\vee B|\cdot |A\wedge B|$,
\emph{Nanta Math.} \textbf{10} (1977), 58--60.

\bibitem[DDP84]{DDP84}
David~E.~Daykin,  Jacqueline~W.~Daykin  and Michael~S.~Paterson,
On log concavity for order-preserving maps of partial orders,
\emph{Discrete Math.}~\textbf{50} (1984), 221--226.

\bibitem[DKW79]{DKW79}
David~E.~Daykin, Daniel~J.~Kleitman and Douglas~B.~West,
The number of meets between two subsets of a lattice,
\emph{J.\ Combin.\ Theory, Ser.~A} \textbf{26} (1979), 135--156.

\bibitem[DL76]{DL76}
David~E.~Daykin and L\'aszl\'o~Lov\'asz,
The number of values of a Boolean function,
\emph{J.\ London Math. Soc.} \textbf{12} (1975), 225--230.

\bibitem[DNS21]{DNS21}
Anindya~De, Shivam~Nadimpalli and Rocco~A.~Servedio,
Quantitative correlation inequalities via semigroup interpolation,
in \emph{Proc.\ 12th ITCS} (2021), Art.\ No.~69, 20~pp.

\bibitem[Eng97]{Eng97}
Konrad~Engel, \emph{Sperner theory},
Cambridge Univ.\ Press, Cambridge, 1997, 417~pp.

\bibitem[F+17]{F+17}
Shaun~Fallat, Steffen~Lauritzen,  Kayvan~Sadeghi,
Caroline~Uhler, Nanny~Wermuth and Piotr Zwiernik,
Total positivity in Markov structures,
\emph{Ann.\ Statist.} \textbf{45} (2017), 1152--1184.

\bibitem[Fig14]{Fig14}
Alessio~Figalli,
Quantitative stability results for the Brunn--Minkowski inequality,
in \emph{Proc.\ ICM Seoul}, Vol.~III, Kyung Moon Sa, Seoul, 2014, 237--256.

\bibitem[Fis84]{Fish84}
Peter~C.~Fishburn,
A correlational inequality for linear extensions of a poset,
\emph{Order}~\textbf{1} (1984), 127--137.

\bibitem[FS00]{FS00}
Peter~C.~Fishburn and Lawrence~A.~Shepp,
The Ahlswede--Daykin theorem,
in \emph{Numbers, information and complexity},
Kluwer, Boston, MA, 2000, 501--516.

\bibitem[FKG71]{FKG71}
Cornelius~M.~Fortuin, Pieter~W.~Kasteleyn and Jean~Ginibre,
Correlation inequalities on some partially ordered sets,
\emph{Comm.\ Math.\ Phys.}~\textbf{22} (1971), 89--103.

\bibitem[Gar02]{Gar02}
Richard~J.~Gardner, The Brunn--Minkowski inequality,
\emph{Bull.\ AMS}~\textbf{39} (2002), 355--405.

\bibitem[Gla24a]{Gla-FKG}
Nikita~Gladkov,
A strong FKG inequality for multiple events,
\emph{Bull.\ LMS} \textbf{56} (2024), 2794--2801.

\bibitem[Gla25]{Gla-thesis}
Nikita~Gladkov,
Inequalities for connectivity events in Bernoulli percolation,
Ph.D.\ thesis, UCLA, 2025, 135~pp.

\bibitem[Gra83]{Gra83}
Ronald~L.~Graham,
Applications of the FKG inequality and its relatives, in
\emph{Mathematical programming: the state of the art},
Springer, Berlin, 1983, 115--131.

\bibitem[GYY80]{GYY80}
Ronald~L.~Graham, Andrew~C.~Yao and Frances~F.~Yao,
Some monotonicity properties of partial orders,
\emph{SIAM J. Algebraic Discrete Methods}~\textbf{1} (1980), 251--258.

\bibitem[Gr\"a98]{Gra98}
George~Gr\"atzer,
\emph{General lattice theory} (second ed.), Birkh\"auser, Basel, 1998, 663~pp.

\bibitem[Gri99]{Gri99}
Geoffrey~R.~Grimmett,
\emph{Percolation} (second ed.), Springer, Berlin, 1999, 444~pp.

\bibitem[Gri06]{Gri06}
Geoffrey~R.~Grimmett,
\emph{The random-cluster model}, Springer, Berlin, 2006, 377~pp.

\bibitem[Gri18]{Gri18}
Geoffrey~R.~Grimmett, \emph{Probability on graphs. Random processes on graphs and lattices}
(second ed.), Cambridge Univ. Press, Cambridge, UK, 2018. 265~pp.

\bibitem[HLP52]{HLP52}
Godfrey~H.~Hardy, John~E.~Littlewood and George~P\'olya,
\emph{Inequalities} (Second ed.),
Cambridge Univ.\ Press, 1952, 324~pp.

\bibitem[Har60]{Har60}
Theodore~E.~Harris,
A lower bound for the critical probability in a certain percolation process,
\emph{Math.\ Proc.\ Camb.\ Philos.\ Soc.}~\textbf{56} (1960), 13--20.

\bibitem[HK86]{HK86}
W.~Th.~Frank~den~Hollander and Michael~S.~Keane,
Inequalities of FKG type,
\emph{Phys.~A} \textbf{138} (1986), 167--182.

\bibitem[Hol74]{Hol74}
Richard~Holley,
Remarks on the FKG inequalities,
\emph{Comm.\ Math.\ Phys.}~\textbf{36} (1974), 227--231.

\bibitem[HW20]{HW20}
Daniel~Hug and Wolfgang~Weil, \emph{Lectures on convex geometry},
Springer, Cham, 2020, 287~pp.

\bibitem[Huh18]{Huh18}
June~Huh,
Combinatorial applications of the Hodge--Riemann relations, in
\emph{Proc.\ ICM Rio de Janeiro}, vol.~IV, World Sci.,
Hackensack, NJ, 2018, 3093--3111.


\bibitem[IP22]{IP22}
Christian~Ikenmeyer and Igor~Pak,
What is in~$\SP$ and what is not?,  preprint (2022),
82~pp.; extended abstract
in \emph{Proc.\ 63rd FOCS} (2022), 860--871; \ts {\tt arXiv:2204.13149}.




\bibitem[KKM16]{KKM16}
Gil~Kalai, Nathan~Keller and Elchanan~Mossel,
On the correlation of increasing families,
\emph{J.~Combin.\ Theory, Ser.~A} \textbf{144} (2016), 250--276.

\bibitem[KMS14]{KMS14}
Nathan~Keller, Elchanan~Mossel and Arnab~Sen,
Geometric influences II: correlation inequalities and noise sensitivity,
\emph{Ann.\ Inst.\ Henri Poincar\'e, Probab.\ Stat.}
\textbf{50} (2014), 1121--1139.

\bibitem[Kes82]{Kes82}
Harry~Kesten, \emph{Percolation theory for mathematicians},
Birkh\"auser, Boston, MA, 1982, 423~pp.

\bibitem[Kle66]{Kle66}
Daniel~J.~Kleitman, Families of non-disjoint subsets,
\emph{J.\ Combin.\ Theory}~\textbf{1} (1966), 153--155.

\bibitem[Kra96]{Kra96}
Christian~Krattenthaler,
Combinatorial proof of the log-concavity of the sequence
of matching numbers,
\emph{J.~Combin.\ Theory Ser.~A}~\textbf{74} (1996), 351--354.

\bibitem[LPP07]{LPP07}
Thomas~Lam, Alexander~Postnikov and Pavlo~Pylyavskyy,
Schur positivity and Schur log-concavity,
\emph{Amer.~J.\ Math.}~\textbf{129} (2007), 1611--1622.

\bibitem[LP07]{LP07}
Thomas~Lam and Pavlo~Pylyavskyy, Cell transfer and monomial positivity,
\emph{J.~Algebraic Combin.}~\textbf{26} (2007), 209--224.

\bibitem[LS22]{LS22}
Elliott~H.~Lieb and Siddhartha~Sahi,
On the extension of the FKG inequality to $n$ functions,
\emph{J.~Math.\ Phys.} \textbf{63} (2022), no.~4, Paper No.~043301, 11~pp.

\bibitem[Lig85]{Lig85}
Thomas~M.~Liggett,  \emph{Interacting particle systems},
Springer, New York, 1985, 488~pp.



\bibitem[LS06]{LS06}
L\'aszl\'o~Lov\'asz and Michael~Saks,
A localization inequality for set functions,
\emph{J.\ Combin.\ Theory, Ser.~A} \textbf{113} (2006), 726--735.


\bibitem[Mac95]{Mac}
Ian~G.~Macdonald, \emph{Symmetric functions and Hall polynomials}
(Second ed.), Oxford Univ.\ Press, New York, 1995, 475~pp.

\bibitem[MS69]{MS69}
John~G.~Marica and Johanan Sch\"onheim,
Differences of sets and a problem of Graham,
\emph{Canad.\ Math.\ Bull.} \textbf{12} (1969), 635--637.

\bibitem[MR94]{MR94}
Daniel~McQuillan and R.~Bruce~Richter,
Equality in a result of Kleitman,
\emph{J.~Combin.\ Theory, Ser.~A} \textbf{65} (1994), 330--333.

\bibitem[MV74]{MV74}
Dragoslav~S.~Mitrinovi\'{c} and Petar~M.~Vasi\'{c},
History, variations and generalisations of the \v{C}eby\v{s}ev inequality and the question of some priorities,
\emph{Univ.\ Beograd.\ Publ.\ El.\ Fak.} \textbf{461} (1974), 1--30.


\bibitem[Oko97]{Oko97}
Andrei~Okounkov,
Log-concavity of multiplicities with applications to characters of~$U(\infty)$,
\emph{Adv.\  Math.} \textbf{127} (1997), 258--282.

\bibitem[Oss79]{Oss79}
Robert~Osserman,
Bonnesen--style isoperimetric inequalities,
\emph{Amer.\ Math.\ Monthly}~\textbf{86} (1979), 1--29.

\bibitem[Pak22]{Pak-OPAC}
Igor~Pak, What is a combinatorial interpretation?, in
\emph{Open Problems in Algebraic Combinatorics}, AMS, Providence, RI, 2024, 191--260.

\bibitem[PPY19]{PPY19}
Igor~Pak, Greta~Panova and Damir~Yeliussizov,
On the largest Kronecker and Littlewood--Richardson coefficients,
\emph{J.\ Combin.\ Theory, Ser.~A}~\textbf{165} (2019), 44--77.

\bibitem[PS26+]{PS26}
Igor~Pak and Daniel~Soskin,
Equalities and inequalities for products of Schur functions,
in preparation (2026).

\bibitem[PO80]{PO80}
Michael~D.~Perlman and Ingram~Olkin,
Unbiasedness of invariant tests for MANOVA and other multivariate problems,
\emph{Ann.\ Statist.} \textbf{8} (1980), 1326--1341.

\bibitem[Pet16]{Pet16}
Peter~Petersen, \emph{Riemannian geometry} (third ed.),
Springer, Cham, 2016, 499~pp.

\bibitem[Raj04]{Raj}
Conjeeveram~S.~Rajan,
Unique decomposition of tensor products of irreducible representations of simple algebraic groups,
\emph{Annals of Math.} \textbf{160} (2004), 683--704.

\bibitem[RSW09]{RSW09}
Victor~Reiner, Kristin~M.~Shaw and Stephanie~van~Willigenburg,
Coincidences among skew Schur functions,
\emph{Adv.\ Math.}~\textbf{216} (2007), 118--152;
Corrigendum in \emph{Adv.\ Math.}~\textbf{220} (2009), 1655--1656.

\bibitem[Reu87]{Reu87}
Klaus~Reuter,
Note on the Ahlswede--Daykin inequality,
\emph{Discrete Math.} \textbf{65} (1987), 209--212.

\bibitem[RS93]{RS93}
Yosef~Rinott and Michael~Saks,
Correlation inequalities and a conjecture for permanents,
\emph{Combinatorica} \textbf{13} (1993), 269--277.

\bibitem[Sag01]{Sag01}
Bruce~E.~Sagan,
\emph{The symmetric group}, Springer, New York, 2001, 238~pp.

\bibitem[Sahi08]{Sahi08}
Siddhartha~Sahi,
Higher correlation inequalities, \emph{Combinatorica}~\textbf{28} (2008), 209--227.

\bibitem[Sch85]{Sch85}
Rolf~Schneider,
On the Aleksandrov--Fenchel inequality, in \emph{Discrete geometry and convexity},
New York Acad.\ Sci., New York, 1985, 132--141.

\bibitem[Sch94]{Sch94}
Rolf~Schneider,
Equality in the Aleksandrov--Fenchel inequality --- present state and new results,
in  \emph{Intuitive geometry}, North-Holland, Amsterdam, 1994, 425--438; available
 at \ts \href{https://tinyurl.com/5bwrvu9n}{tinyurl.com/5bwrvu9n}

\bibitem[Sch14]{Sch14}
Rolf~Schneider,
{\em Convex bodies: the Brunn--Minkowski theory} (second ed.),
Cambridge Univ.~Press, Cambridge, UK, 2014, 736~pp.

\bibitem[Sey73]{Sey73}
Paul~D.~Seymour, On incomparable collections of sets,
\emph{Mathematika}~\textbf{20} (1973), 208--209.

\bibitem[SW75]{SW75}
Paul~D.~Seymour and Dominic~J.~A.~Welsh,
Combinatorial applications of an inequality from statistical mechanics,
\emph{Math.\ Proc.\ Cambridge Philos.\ Soc.} \textbf{77} (1975), 485--495.

\bibitem[SvH19]{SvH-pams}
Yair~Shenfeld and Ramon~van~Handel,
Mixed volumes and the Bochner method,
\emph{Proc.\ AMS}~\textbf{147} (2019), 5385--5402.

\bibitem[SvH22]{SvH-duke}
Yair~Shenfeld and Ramon~van~Handel,
The extremals of Minkowski's quadratic inequality,
\emph{Duke Math.~J.} \textbf{171} (2022), 957--1027.

\bibitem[SvH23]{SvH-acta}
Yair~Shenfeld and Ramon~van~Handel,
The extremals of the Alexandrov--Fenchel inequality for convex polytopes,
\emph{Acta Math.}~\textbf{231} (2023), 89--204.

\bibitem[SvH24]{SvH-icbs}
Yair~Shenfeld and Ramon~van~Handel,
The Alexandrov--Fenchel inequality, in
\emph{Proc.\ ICBS} (Frontiers of Science Award), Int.\ Press,
Somerville, MA, 2024, 15~pp.;
available at \ts \href{https://web.math.princeton.edu/~rvan/icbs240806.pdf}{tinyurl.com/23e2pvht}

\bibitem[She80]{She80}
Lawrence~A.~Shepp,
The FKG inequality and some monotonicity properties of partial orders,
\emph{SIAM J.\ Algebraic Discrete Methods}~\textbf{1} (1980), 295--299.


\bibitem[Spe26]{Spe26}
David~E~Speyer,
$L$-log-concavity and a proof of the conjecture of Lam, Postnikov and Pylyavskyy,
preprint (2026), 26 pp.; \ts {\tt arXiv:2601.05007}.



\bibitem[Sta99]{Sta-EC}
Richard~P.~Stanley, {\em Enumerative Combinatorics}, vol.~1 (second edition)
and vol.~2, Cambridge Univ.~Press, 2012 and~1999.

\bibitem[Tal96]{Tal96}
Michel~Talagrand,
How much are increasing sets positively correlated?,
\emph{Combinatorica} \textbf{16} (1996), 243--258.


\bibitem[TV06]{TV06}
Terence~Tao and Van~Vu, \emph{Additive combinatorics},
Cambridge Univ.\ Press, Cambridge, 2006, 512~pp.


\bibitem[Wer09]{Wer09}
Wendelin~Werner, \emph{Percolation et mod\`ele d'Ising} (in French),
Soc.\ Math.\ de France, Paris, 2009, 161~pp.

\bibitem[West21]{West21}
Douglas~B.~West,
\emph{Combinatorial mathematics},
Cambridge Univ.\ Press, Cambridge, UK, 2021, 969~pp.

\bibitem[Win10]{Win10}
Peter Winkler,
A stronger form of the van den Berg--Kesten inequality,
preprint (2010), 10~pp.; available at \ts \href{https://math.dartmouth.edu/~pw/Math100/sample.pdf}{tinyurl.com/y8vmfhna}

\bibitem[YK00]{YK}
Wei-Shih~Yang and David~Klein,
An ``FKG equality'' with applications to random environments,
\emph{Statist.\ Probab.\ Lett.}~\textbf{46} (2000), 203--209.



\end{thebibliography}
}
\end{document}